\newtheorem{dummy}{dummy}[section]
\newtheorem{lemma}[dummy]{Lemma}
\newtheorem{theorem}[dummy]{Theorem}
\newenvironment{customthm}[1]
{\innercustomthm}
  {\endinnercustomthm}
\newtheorem{corollary}[dummy]{Corollary}
\newtheorem{proposition}[dummy]{Proposition}
\theoremstyle{definition}
\newtheorem{definition}[dummy]{Definition}
\newtheorem*{definition*}{Definition}
\newtheorem{example}[dummy]{Example}
\newtheorem{remark}[dummy]{Remark}
\numberwithin{equation}{section}
\newcommand{\pref}{\prettyref}
\newcommand{\Cl}{\operatorname{Cl}}
\newcommand{\Coh}{\operatorname{Coh}}
\newcommand{\colim}{\operatorname{colim}}
\newcommand{\Coker}{\operatorname{Coker}}
\newcommand{\Cone}{\operatorname{Cone}}
\newcommand{\Conv}{\operatorname{Conv}}
\newcommand{\Core}{\operatorname{Core}}
\newcommand{\Crit}{\operatorname{Crit}}
\newcommand{\del}{\partial}
\newcommand{\DG}{\operatorname{DG}}
\newcommand{\ddiv}{\operatorname{div}}
\newcommand{\Edge}{\operatorname{Edge}}
\newcommand{\End}{\operatorname{End}}
\newcommand{\Fuk}{\operatorname{Fuk}}
\newcommand{\GL}{\operatorname{GL}}
\newcommand{\Ham}{\operatorname{Ham}}
\newcommand{\Hom}{\operatorname{Hom}}
\newcommand{\id}{\operatorname{id}}
\newcommand{\ii}{\sqrt{-1}}
\newcommand{\IndCoh}{\operatorname{IndCoh}}
\newcommand{\Ker}{\operatorname{Ker}}
\newcommand{\Lie}{\operatorname{Lie}}
\newcommand{\Log}{\operatorname{Log}}
\newcommand{\MF}{\operatorname{MF}}
\newcommand{\Mod}{\operatorname{Mod}}
\newcommand{\mSh}{\operatorname{\mu Sh}}
\newcommand{\Open}{\operatorname{Open}}
\newcommand{\rank}{\operatorname{rank}}
\newcommand{\ret}{\operatorname{ret}}
\newcommand{\Perf}{\operatorname{Perf}}
\newcommand{\Proj}{\operatorname{Proj}}
\newcommand{\pr}{\operatorname{pr}}
\newcommand{\Qcoh}{\operatorname{Qcoh}}
\newcommand{\Sh}{\operatorname{Sh}}
\newcommand{\Spec}{\operatorname{Spec}}
\newcommand{\ssupp}{\operatorname{ss}}
\newcommand{\VVert}{\operatorname{Vert}}
\newcommand{\cF}{\mathcal{F}}
\newcommand{\cH}{\mathcal{H}}
\newcommand{\cI}{\mathcal{I}}
\newcommand{\cL}{\mathcal{L}}
\newcommand{\cS}{\mathcal{S}}
\newcommand{\cT}{\mathcal{T}}
\newcommand{\bA}{\mathbb{A}}
\newcommand{\bC}{\mathbb{C}}
\newcommand{\bL}{\mathbb{L}}
\newcommand{\bN}{\mathbb{N}}
\newcommand{\bP}{\mathbb{P}}
\newcommand{\bR}{\mathbb{R}}
\newcommand{\bT}{\mathbb{T}}
\newcommand{\bZ}{\mathbb{Z}}
\newcommand{\bff}{\mathbf{f}}
\newcommand{\bfg}{\mathbf{g}}
\newcommand{\bfq}{\mathbf{q}}
\newcommand{\bfv}{\mathbf{v}}
\newcommand{\bfA}{\mathbf{A}}
\newcommand{\bfB}{\mathbf{B}}
\newcommand{\bfH}{\mathbf{H}}
\newcommand{\bfS}{\mathbf{S}}
\newcommand{\bfT}{\mathbf{T}}
\newcommand{\bfX}{\mathbf{X}}
\newcommand{\bfY}{\mathbf{Y}}
\newcommand{\scrA}{\mathscr{A}}
\newcommand{\scrB}{\mathscr{B}}
\newcommand{\scrC}{\mathscr{C}}
\newcommand{\scrD}{\mathscr{D}}
\newcommand{\scrF}{\mathscr{F}}
\newcommand{\scrO}{\mathscr{O}}
\newcommand{\frakj}{\mathfrak{j}}
\newcommand{\frakt}{\mathfrak{t}}
\newcommand{\frakD}{\mathfrak{D}}
\newcommand{\frakL}{\mathfrak{L}}
\newcommand{\frakS}{\mathfrak{S}}
\newcommand{\frakU}{\mathfrak{U}}
\begin{document}

%%%%%%%%%%%%%%%%%%%%%%%%%%%%%%%%%%%%%%%%%%%%%%%%%%%%%%%%%%
\title{Homological mirror symmetry for complete intersections in algebraic tori}
\author[H.~Morimura]{Hayato Morimura}
\address{Kavli IPMU, University of Tokyo, 5-1-5 Kashiwanoha, Kashiwa, Chiba, 277-8583, Japan}
\email{hayato.morimura@ipmu.jp}

\author[N.~Sibilla]{Nicolò Sibilla}
\address{SISSA, via Bonomea 265, 34136 Trieste, Italy}
\email{nsibilla@sissa.it}

\author[P.~Zhou]{Peng Zhou}
\address{University of California, Berkeley, CA 94720-3840, Berkeley, USA}
\email{pzhou.math@berkeley.edu}

\date{}
\pagestyle{plain}

\begin{abstract}
We prove one direction of homological mirror symmetry for complete intersections in algebraic tori,
in all dimensions.
The mirror geometry is not a space but a LG model,
i.e.
a pair given by a space and a regular function.
We show that
the Fukaya category of the complete intersection is equivalent to the category of matrix factorizations of the LG pair.
Our approach yields new results also in the hypersurface setting,
which was treated earlier by
Gammage
and
Shende.
Our argument depends on breaking down the complete intersection into smaller more manageable pieces,
i.e.
finite covers of products of higher dimensional pairs-of-pants,
thus implementing a program first suggested by Seidel.
\end{abstract}

\maketitle

%%%%%%%%%%%%%%%%%%%%%%%%%%%%%%%%%%%%%%%%%%%%%%%%%%%%%%%%%%
\section{Introduction}
Mirror symmetry is a mysterious duality discovered by string theorists in the '80-s.
It asserts that  
string theory backgrounds should come in pairs (called \emph{mirror partners})
that,
despite having different geometric properties,  
give rise to the same physics.
To the untrained eye mirror partners might look nothing alike,
but string theory predicts the existence of an intricate dictionary allowing to transfer geometric information across between them.
Roughly,
complex geometric information on a space is encoded as symplectic data on its mirror partner,
and
vice versa.
Since the early '90-s,
mathematicians have made various attempts to distill the geometric meaning of mirror symmetry.
Homological Mirror Symmetry (HMS) is one of the most influential mathematical formulations of mirror symmetry.
It posits that mirror symmetry is,
at bottom,
an equivalence of categories.
HMS was first proposed by Kontsevich in 1994 and
it is still,
thirty years on,
the focus on intense research.
It is fundamental,
in the sense that
it is expected to encompass most other mathematical formulations of mirror symmetry. 

According to HMS
if
$X$
and
$X^\prime$
are mirror partners the
\emph{derived category}
of coherent sheaves of
$X$
should be equivalent to the
\emph{Fukaya category}
of
$X^\prime$,
and vice versa.
The derived category is a repository of algebraic information.
The objects living inside it include,
for instance,
vector bundles
and
the structure sheaves of subvarieties of
$X$.
The Fukaya category is a highly sophisticated symplectic invariant,
which captures the quantum intersection theory of Lagrangian submanifolds of
$X^\prime$.
The original formulation of HMS requires the mirror partners
$X$
and
$X^\prime$
to be compact Calabi--Yau (CY) varieties,
and
under these assumptions it has been established in many important cases,
starting with
\cite{PZ} \cite{Sei1} \cite{She}.
However HMS has also been generalized to wider non-proper and non-CY settings.
This requires readjusting the nature of the objects involved in the equivalence.
In this article we contribute to this line of research by studying HMS for a particularly interesting class of non-compact symplectic manifolds. 

We prove one direction of HMS for complete intersections in algebraic tori,
in all dimensions.
We adopt the formulation of HMS for complete intersections proposed in
\cite{AAK},
see also
\cite{GKR}.
We remark that
the other direction of HMS for complete intersections was proved in
\cite{AA}.
The mirror geometry is not a space but a LG model,
i.e.
a pair given by
a space
and
a regular function.
Our proof follows by implementing an algorithm 
that allows us to break down the complexity of complicated symplectic manifolds into small computable pieces,
and
that ultimately goes back to ideas of Seidel
\cite{Sei}.
The key input is given by recent advances in the study of the Fukaya category,
which reveal its hidden local nature,
at least for non-compact symplectic manifolds. 
We will briefly review this story in the next section. 

\subsection{Fukaya categories and locality} 
The Fukaya category of a symplectic manifold 
$M$
was introduced by Fukaya
\cite{Fuk}.
It is a highly non-trivial symplectic invariant. 
Providing adequate foundations for the theory in the general setting is delicate,
and this has been accomplished only relatively recently by Fukaya and his collaborators
\cite{FOOO1, FOOO2}.
In fact,
the Fukaya category is not quite a category:
it is an $A_\infty$-category.
In an $A_\infty$-category the composition of morphisms is associative only up to homotopy, 
and
homotopies are themselves just the first layer in an infinite tower of higher composition laws.
Roughly,
the objects of the Fukaya category of
$M$
are Lagrangian submanifolds,
while the Hom space between two Lagrangians is the linear span of their intersection points.
The actual picture is much more complicated:
for instance,
as Lagrangians intersect in finitely many points only under transversality assumptions,
all these data are well defined only up to appropriate choices of perturbations.

The higher $A_\infty$-operations in the Fukaya category are controlled by counts of pseudoholomorphic disks with Lagrangian boundary conditions. 
This is the source of  some of the biggest challenges in the theory.  In particular,
pseudoholomorphic disks are non-local in nature,
so higher operations in the Fukaya category depend on the global geometry of the manifold.
This is in sharp contrast with the derived category of coherent sheaves,
that satisfies descent with respect to the analytic topology and most other Grothendieck topologies commonly used in algebraic geometry. 
Around 2010 however,
two new paradigms emerged suggesting that
under favourable assumptions the Fukaya category should also exhibit a good local-to-global behaviour.
The computational payoffs would be tremendous,
as complicated global computations would be reduced to more manageable local ones.
The first of these approaches breaks down a
\emph{Liouville manifold}
into pieces called
\emph{Liouville sectors};
while the second,
which originated with Seidel
\cite{Sei},
relies on the availability of higher dimensional
\emph{pants decompositions}. 
These two point of views are subtly different,
and rely on somewhat distinct sets of assumptions.
As we will show,
in the setting of symplectic submanifolds of 
$(\mathbb{C}^*)^N$
they turn out to give compatible pictures of the locality of the Fukaya category.
In fact,
this is one of the key inputs in our argument.
Before explaining our results in greater detail,
let us briefly explain these two stories. 

The Fukaya category was long expected to be a kind of quantization of the symplectic manifold. 
A precise proposal was made in the influential paper of Kapustin--Witten 
\cite{KW},
where the authors model the Fukaya category of a holomorphic cotangent bundle in terms of $\frakD$-modules over the base.  Motivated by this and by earlier work of Fukaya,
Nadler--Zaslow show that
the (infinitesimally wrapped) Fukaya category of a cotangent bundle 
$T^*X$
is equivalent to the category of contructible sheaves over
$X$
(which is assumed to be an analytic manifold)
\cite{NZ}.
Via microlocalization,
the category of constructible sheaves sheafifies over
$T^*X$.
This implies,
in particular,
that
the Fukaya category of cotangent bundles displays suprisingly good local-to-global properties.
An extension of this picture to
\emph{Weinstein manifolds}
was later proposed by Kontsevich
\cite{Kon}.
Cotangent bundles are exact:
the standard symplectic form admits a primitive,
called a Liouville $1$-form.
Weinstein manifolds are a class of exact symplectic manifolds satisfying some extra regularity assumptions on the Liouville1-form. 
Weinstein manifolds retract to an exact Lagrangian core,
called the
\emph{skeleton}
which is a kind of generalized zero section with singularities.
%The skeleton is a global counterpart of the characteristic cycle of a D-modules, or the singular support of a constructible sheaves.  
Kontsevich conjectured that the wrapped Fukaya category localizes on the skeleton. That is, it defines a (co)sheaf of categories whose global sections recover the wrapped Fukaya category, and whose local sections are in many cases easily computable.
%This depends on the fact that the local geometry of the skeleton is combinatorial. 
This line of research has been intensely pursued in the last ten years, 
and we now have a robust theory of the local behaviour of the Fukaya category in this setting.
The state-of-the-art is provided by a series of works by Ganatra--Pardon--Shende
\cite{GPS1, GPS2, GPS3},
one of whose main results is a complete descent package for the Fukaya category relative to a class of covers of Weinstein manifolds called Weinstein sectors.
%Another is an equivalence between the wrapped Fukaya category and a category of wrapped microlocal sheaves over the skeleton. 
 
This point of view has had numerous applications to HMS,
starting from
\cite{Kon} \cite{STZ} \cite{DK}.
In
\cite{GS1},
Gammage--Shende use this framework to prove HMS for hypersurfaces in 
$(\mathbb{C}^*)^N$.
In this paper we study HMS for all complete intersections in
$(\mathbb{C}^*)^N$,
but our approach differs from Gammage--Shende already in the hypersurface case,
and yields more general results.
Our methods combine
sectorial descent
and
a different locality with respect to pants decompositions,
that was first suggested by Seidel,
and that we explain next.
 
Pants decompositions have long played a central role in understanding the topology of complex curves.
Higher dimensional pants decompositions were studied by Mikhalkin in 
\cite{Mik}.
A higher dimensional pair-of-pants is the complement of
$N+2$
generic hyperplanes in
$\mathbb{P}^N$.
Mikhalkin proves that
hypersurfaces in
$(\mathbb{C}^*)^N$
admit a higher dimensional pants decomposition.
Mikhalkin's result is formulated in purely topological terms but,
as he points out,
it can be upgraded so as to be compatible with the natural symplectic structures.
Mikhalkin's work has had many applications in HMS,
and it plays for instance an important role in
\cite{She}.
Higher dimensional pants decompositions exist in more general settings,
and sometimes also for compact varieties:
for instance,
hypersurfaces in abelian varieties admit pants decompositions.
Seidel suggested that
when such a decomposition exists,
pairs-of-pants should provide the building blocks of the global Floer theory of the manifold.
In particular,
the Fukaya category should be expressible as a limit of the Fukaya category of the pairs-of-pants making up the decomposition.
It is important to stress that
this provides a very different kind of local-to-global principle for the Fukaya category.
Pants are very different from the Liouville/Weinstein sectors underpinning the locality on the skeleton which we have described above. 

Remarkably the locality of the Fukaya category with respect to pants decomposition is expected to match neatly,
under mirror symmetry,
Zariski descent on the mirror category.
This opens the way to implement divide-and-conquer algorithms in HMS,
reducing a difficult global mirror symmetry statement to a much more computable local one.
Up to now,
there have been only a few attempts to implement rigorously Seidel's proposal.
One instance was the beautiful paper of Lee
\cite{Lee}
that proves Hori-Vafa HMS for curves in
$(\mathbb{C}^*)^2$.
The same result was proved independently,
and with very different methods by
Pascaleff
and
the second author in
\cite{PS1},
with follow-ups in the compact setting in
\cite{PS2} \cite{PS3}.

Since
\cite{PS1}
serves as the blueprint for some of the key arguments in this paper,
it is worthwhile to review its main ideas here.
The critical point is exactly the interplay between the two regimes of locality.
In
\cite{PS1}
the authors work within the framework of microlocal sheaves on skeleta. 
Thus,
sectorial descent is built in their underlying theory.
They show that
sectorial descent,
supplemented with a local calculation,
implies the seemingly very different Seidel type localization on pants. Proving this involves setting up a recursion which builds up the Riemann surface in a step-by-step fashion,
by gluing together the pants making up the pants decomposition.
Crucially,
at each step the Weinstein structure of the surface is modified so as to be adapted to the gluing.
Geometrically,
this means deforming the skeleton in such a way that some of its components are pushed towards the portions on the boundary along which the gluing is taking place.
%This modification of Weinstein structure does not affect the Fukaya category,
%and
%is therefore harmless. 

%This process recovers Seidel's localization on pants. 
The vertices of the diagram implementing Seidel's locality are the Fukaya categories of the pairs-of-pants 
and
of their intersections.
The latter,
in the surface case,
are isomorphic to symplectic annuli.
The arrows are Viterbo restrictions.
The claim is that the limit of this diagram of categories is equivalent to the wrapped Fukaya category.
Geometrically,
the Seidel type localization is a mechanism that allows to glue skeleta along a common closed subskeleton,
on the condition that the latter lies in a separating contact hypersurface.
So it can be rephrased as a kind of descent for the wrapped Fukaya category with respect to
\emph{closed covers}
of the skeleton,
subject to appropriate assumptions.
Sectorial descent,
on the other hand,
captures a more straightforward descent of the wrapped Fukaya category with respect to
\emph{open covers}
of the skeleton. 
 
In this paper,
we adapt this strategy to the higher dimensional case.
We recover Gammage--Shende's result on HMS for hypersurfaces in
$(\mathbb{C}^*)^N$
in a more general form,
as we remove the assumptions that the triangulation of the associated Newton polytope is star-shaped.
Our methods extend to give a proof of HMS for complete intersections in 
$(\mathbb{C}^*)^N$.
We remark that
extending the approach of Gammage--Shende to complete intersections is unfeasible with current technology.
Their argument requires controlling the global skeleton of the Weinstein manifolds,
which is not known with current methods in the case of complete intersections.
Our approach bypasses this delicate issue,
as it depends on constructing only smaller local pieces of the skeleton near the place where the gluing is taking place.
As such it provides an algorithm for proving HMS that has potential to be applicable in more general settings beyond the one we consider in this article.

We explain our main results and the structure of the paper next. 
 
\subsection{Main results. Hypersurfaces} 
Let
$\bT
=
M^\vee_{\bR / \bZ}
=
M^\vee_\bR / M^\vee$
be a real $(d+1)$-dimensional torus with cocharacter lattice
$M^\vee$.
Let
$M$
be the character lattice of
$T$. 
We denote by
$\bT_\bC$ 
the associated complex torus. Let 
$H \subset \bT^\vee_\bC$  
be a hypersurface cut out by the Laurent polynomial 
$$
W \colon \bT^\vee_\bC \to \bC, \
x 
\mapsto
\sum_{\alpha \in A} c_\alpha x^\alpha
$$
where
$A \subset M^\vee$
is a finite set of monomials.
The hypersurface
$H$
is a closed subvariety of  
$\bT^\vee_\bC$
and
is thereofore naturally Weinstein.
An
\emph{adapted triangulation}
$\cT$
of the convex hull
$\Conv(A)$
of
$A$
is,
by definition,
a triangulation arising as the corner locus of a convex piecewise linear function.
These data determine a tropical  hypersurface
$\Pi$
in
$M_\mathbb{R}$,
the
\emph{tropicalization}
of
$H$,
and
equip
$H$
with a higher-dimensional pair-of-pants decomposition.
As an abstract topological space,
$\Pi$
is a homeomorphic to the dual intersection complex of the  pair-of-pants decomposition of
$H$. 
 
The mirror of
$H$
is a $(d+2)$-dimensional toric LG model.
Let
$Y$
be the noncompact toric variety associated with the fan
\begin{align*}
\Sigma_Y
=
\Cone(-\cT \times \{ 1 \})
\subset
M^\vee_\bR \times \bR
\end{align*} 
Via the usual toric dictionary,
the map of fans induced by the projection 
$M^\vee_\bR \times \bR \to \bR$
determines a regular funtion 
$W_Y\colon Y \to \bC$.
On this side of the mirror correspondence,
the tropicalization
$\Pi$
of
$H$
is naturally identified with the  image of the singular locus of
$W_Y^{-1}(0)$
under the moment map.

\begin{customthm}{A} 
\label{intromain1}
There is an equivalence 
$$
\Fuk(H) \simeq \MF(Y, W_Y)
$$
\end{customthm}

Let us outline the argument.
Both side of the equivalence are local in nature.
The key is that,
as we explained,
the manifold
$H$
carries a pants decomposition,
and
therefore its wrapped Fukaya category can be built out of the local Fukaya categories of the individual pants
$P$.
Although our proof of this fact is remarkably simple,
it relies in a crucial way on the machinery developed by Ganatra--Pardon--Shende. 
The locality of
$\MF(Y, W_Y)$
is straightforward.
Indeed,
the category of matrix factorizations satisfies Zariski descent.
As
$Y$
is smooth,
it has a canonical toric open cover by affine spaces. Further the restriction of
$W_Y$
on each patch coincides,
up to coordinates change, with the standard superpotential 
$$
y_1 \cdots y_{d+2} \colon \bA^{d+2} \to \bA^1
$$  
Our argument involves two steps. The first consists in establishing  the local equivalence 
\begin{equation}
\label{local}
\Fuk(P)
\simeq
\MF(\bA^{d+2}, y_1 \cdots y_{d+2})
\end{equation}
This was proved by Nadler in
\cite{Nad}
using an alternative model for the B-side category.
Recall that, by a theorem of Orlov,
there is an equivalence 
$$
\MF(\mathbb{A}^{d+2}, y_1 \ldots y_{d+2})
\simeq
\Coh(\{ y_1 \cdots y_{d+1}=0 \})_{\mathbb{Z}_2}
$$
where the latter is the $\mathbb{Z}_2$-folding of the ordinary category of coherent sheaves.
Nadler describes a family of Weinstein structures on
$P$ depending on the choice of a
\emph{leg}
of
$P$. 
The corresponding skeleton has non-trivial intersections with all the legs of
$P$,
except with the chosen one.
Nadler computes the Fukaya category in terms of microlocal sheaves on the  skeleton,
and
proves in this way that
$\Fuk(P)$
is equivalent to
$\Coh(\{ y_1 \cdots y_{d+1}=0 \})_{\mathbb{Z}_2}$.  The choice of skeleton collapses the natural $\frakS_{d+2}$-symmetry of the pair-of-pants to the smaller $\frakS_{d+1}$-symmetry of the skeleton.
This matches the $\frakS_{d+1}$-action on
$\Coh(\{ y_1 \cdots y_{d+1}=0 \})_{\mathbb{Z}_2}$
by permutation of coordinates,
which is also the residue of the richer $\frakS_{d+2}$-symmetry of
$$\MF(\mathbb{A}^{d+2}, y_1 \cdots y_{d+2})$$
For our purposes,
we need to restore the complete $\frakS_{d+2}$-symmetry of the problem  
which remains hidden in Nadler's formulation.
The locality of the two mirror categories is neatly encoded in the combinatorics of the tropicalization
$\Pi$.
Both
$\Fuk(H)$
and
$\MF(Y, W_Y)$
define in a natural way two constructible sheaves of categories over
$\Pi$,
where
$\Pi$
is equipped with its natural stratification.
The final globalization step consists in noticing that as the local sections and local restrictions of these two sheaves match,
their global sections must also be equivalent.
This is the content of Theorem
\ref{intromain1}. 

\subsection{Main results. Complete intersections}
Let us describe next the complete intersection setting.
The underlying toric geometry is a simple extension of the ideas entering in the hypersurface setting,
so we will give a somewhat abbreviated treatment of this story
and
refer the reader to the main text for full details.
We keep the notations from the previous section. 

Consider hypersurfaces  
$H_1, \ldots, H_r \subset \bT_\bC$
in sufficiently general position,
cut out by Laurent polynomials
$$
W_1, \ldots, W_r \colon \bT^\vee_\bC \to \bC, \
x
\mapsto
\sum_{\alpha \in A_i} c_\alpha x^\alpha
$$
By the genericity assumption,
they meet transversely in a subvariety of dimension
$n-r$
$$
\bfH = H_1 \cap \cdots \cap H_r \subset \bT^\vee_\bC
$$
The subvariety
$\bfH$
carries a natural Weinstein structure.
We form a total superpotential
$W_{\bfH}$
by adding an extra factor
$\bC^r$
with coordinates
$u_1, \ldots, u_r$
$$
W_{\bfH}=u_1 W_1 + \cdots + u_r W_r
\colon
\bT^\vee_\bC \times \bC^r \to \bC
$$
Note that
$\bfH$
can be recovered as the critical locus of
$W_{\bfH}$. 

The Newton polytope of
$W_{\bfH}$
is the convex hull of
$$
\bfA = \bigcup_{i} - A_i \times \{e_i\} \subset M^\vee_\mathbb{R} \times \mathbb{R}^r
$$
where
$e_1, \ldots, e_r$
is the standard basis of
$\mathbb{R}^r$. 
A choice of adapted triangulations of the convex polytopes
$\Conv(A_i) \subset M^\vee_\mathbb{R}$ 
determines a triangulation
$\bfT$ of $\Conv(\bfA)$.
Following
\cite{AAK},
the mirror of
$\bfH$
is the higher dimensional LG model determined by
$\bfT$.
More precisely,
let
$$\Sigma_\bfY \subset M^\vee_\bR \times \bR^r$$
be the fan corresponding to
$\bfT$, 
and
let 
$\bfY$
be 
the noncompact $(d+r+1)$-dimensional toric variety associated with
$\Sigma_\bfY$.
The fan
$\Sigma_\bfY$
admits
$r$
projections to
$\mathbb{R}$,
and
the sum of the corresponding monomials induces a  regular function
$W_{\bfY}$
on
$\bfY$.
The mirror of
$\bfH$
is the LG model
$(\bfY, W_{\bfY})$.  

\begin{customthm}{B} 
\label{intromain2}
There is an equivalence 
$$
\Fuk(\bfH)
\simeq
\MF(\bfY, W_{\bfY})
$$
\end{customthm}

The proof strategy follows the pattern of  the hypersurface case,
but there are some  features which are specific to the complete intersection setting which are worth highlighting.
Strictly speaking,
complete intersections do not admit a higher dimensional pair-of-pants decomposition.
Rather,
generic intersections of pants are locally isomorphic to (finite covers of) products of lower dimensional pairs-of-pants.
The appearance of finite covers cannot be avoided,
however it is easily controlled.
For clarity,
in this introduction,
we shall ignore this issue.
Via the K\"unneth formula for the wrapped Fukaya category,
the Fukaya category of
$\bfH$
is thus locally equivalent to the tensor product of the Fukaya categories of the factors,
i.e.
lower dimensional pairs-of-pants.
This is matched,
on the B-side,
by the Zariski local behaviour of
$\MF(\bfY, W_{\bfY})$.
 
On each affine toric open subset of 
$\bfY$,
the superpotential 
$W_{\bfY}$
can be written as a sum of monomials.
Preygel's Thom-Sebastiani theorem implies that,
locally,
the category of matrix factorizations factors  as a tensor product of matrix factorizations of lower dimensional superpotentials.
Thus,
in the complete intersection case,
the local HMS equivalence is just a tensor product of the fundamental local equivalences 
$$
\Fuk(P)
\simeq
\MF(\mathbb{A}^{d+2}, y_1 \cdots y_{d+2})
$$
underpinning the hypersurface case.
The globalization step follows along exactly parallel lines as in the hypersurface case.  

As we have already remarked,
our methods allow us to give a description of the Fukaya category of
$\bfH$
bypassing the difficult task of describing a global skeleton. 
The explicit calculation of the skeleton is,
in contrast,
a key input in the approach of Gammage--Shende in the hypersurface case.
We obviate the absence of a computable model of the  global skeleton by setting up a recursion that builds the complicated global symplectic geometry of
$\bfH$
out of simple pieces amenable to computation:
products of lower dimensional pants,
and
their finite covers.
This is a mild generalization of the set-up originally envisioned by Seidel in terms of pants decompositions. 
This allows us to get away with building skeleta,
or
rather Weinstein structures,
for these local pieces only.
This extra flexibility crucially relies on the invariance of the wrapped Fukaya category under Liouville homotopy,
which allows us to engineer Weinstein structures with good properties near a boundary where local pieces are glued together.
In the last section of the paper,
we revisit Gammage–Shende’s setting,
and show that
in fact we can recover their main result with our methods.
In the setting they place themselves in,
the local skeleta built on the individual pairs-of-pants glue to yield global skeleta.
This,
together with a well-known dictionary relating
matrix factorizations
and
categories of coherent sheaves,
readily implies their result.  Additionally, the paper contains an Appendix where the locality of the Fukaya category with respect to pants decompositions (and their generalizations, in the complete intersection setting) is rigorously established by means of the semi-tropicalization technique due to the third author
\cite{Zho}.
We believe that
the results in the Appendix might be of independent interest,
as they introduce an approach to the locality of the Fukaya category 
that  should  be applicable to more general settings than the one considered in the present paper.

Our
methods
and
results
open the way to several potential directions for future investigations.
We limit ourselves to mention one,
which we intend to pursue in future work.
In this paper we espouse the viewpoint on  HMS for complete intersections proposed in
\cite{AAK}.
There is however another important model for mirror symmetry for complete intersections in toric ambient varieties,
which was developed by Batyrev--Borisov in
\cite{BB}.
That framework encompasses both the non-compact regime where the ambient manifold is a torus
(which is the setting we work in this article),
and
its toric compactifications.
In future work we will explore in which ways our methods can be used to obtain Batyrev--Borisov type HMS for complete intersections.

$ $

{\bf Acknowledgements: }Some of the key inputs in this article are generalizations of ideas that were developed in joint work by the second author with James Pascaleff.
Over the years,
the second author has benefited  from countless discussions with James that have shaped his understanding of
the Fukaya category
and
mirror symmetry. 
We want thank James for his generosity in sharing his ideas,
and
for his encouragement and interest in this project.     
H.M. is partially supported by
JSPS KAKENHI Grant Number
JP23KJ0341.

%%%%%%%%%%%%%%%%%%%%%%%%%%%%%%%%%%%%%%%%%%%%%%%%%%%%%%%%%%
\section{Review on HMS for pairs of pants}
In this section,
we review HMS for pairs of pants established by Nadler in
\cite{Nad}.
This gives local equivalences
which we glue to yield HMS for hypersurfaces in an algebraic torus.
Also,
thorough understanding of such local equivalences plays an important role
when working in the complete intersection setting.

\subsection{Tailored pants}
Let
$T^{d+1} = (\bR / 2 \pi \bZ)^{d+1}$
be the real torus with coordinates
$\theta = (\theta_1, \ldots, \theta_{d+1})$.
Fix the usual identification
$T^* T^{d+1} \cong T^{d+1} \times \bR^{d+1}$
with canonical coordinates
$(\theta, \xi)$
for
$\xi = (\xi_1, \ldots, \xi_{d+1})$.
The symplectic manifold
$T^* T^{d+1}$
carries the standard Liouville structure
\begin{align*}
\alpha_{d+1} = \sum^{d+1}_{i = 1} \xi_i d \theta_i, \
\omega_{d+1} = \sum^{d+1}_{i = 1} d \xi_i \wedge d \theta_i
\end{align*}
whose skeleton is the zero section
$T^{d+1} \subset T^* T^{d+1}$. 
The self-action of
$T^{d+1}$
lifts to a Hamiltonian action on
$T^* T^{d+1}$
with the moment map
\begin{align*}
\mu_{d+1}
\colon T^* T^{d+1}
\to
\Lie(T^{d+1})^\vee
\cong
\bR^{d+1}, \
(\theta, \xi) \mapsto \xi.
\end{align*}
Taking its squared length,
one obtains a Weinstein manifold
$(T^* T^{d+1}, \alpha_{d+1}, |\mu_{d+1}|^2)$.
Note that
the function
$|\mu_{d+1}|^2$
is Morse--Bott.

Fix the identification
$T^* T^{d+1} \cong T^{d+1}_\bC = (\bC^*)^{d+1}$
with coordinates
$x = (x_1, \ldots, x_{d+1})$
via
$x_i = e^{\xi_i + \ii \theta_i}$.
Then
$\mu_{d+1}$
transports to the log projection
\begin{align*}
\Log_{d+1} \colon T^{d+1}_\bC \to \bR^{d+1}, \
x \mapsto (\log |x_1|, \ldots, \log |x_{d+1}|).
\end{align*}

%Dfn
\begin{definition}
The $d$-dimensional
\emph{standard pair of pants}
is a complex hypersurface
\begin{align*}
P_d
=
\{ 1 + x_1 + \cdots + x_{d+1} = 0 \}
\subset
T^{d+1}_\bC.
\end{align*}
\end{definition}

We regard
$P_d$
as an exact symplectic manifold
equipped with the restricted standard Liouville structure.
Via the open embedding
$T^{d+1}_\bC
\hookrightarrow
\bP^{d+1}_\bC
=
\Proj \bC[x_0, x_1, \ldots, x_{d+1}]$
the pants
$P_d$
maps to the complement of
\begin{align*}
\bigcup^{d+1}_{i = 0}
(\{ x_0 + x_1 + \cdots + x_{d+1} = 0 \}
\cap
\{ x_i = 0 \})
\end{align*}
in the hyperplane
$\{ x_0 + x_1 + \cdots + x_{d+1} = 0 \}
\subset
\bP^{d+1}_\bC$.
Hence the symmetric group
$\frakS_{d+2}$
naturally acts on
$P_d$
by permutation of the homogeneous coordinates.

The next statement gives the key defining properties of a Liouville submanifold
$(\tilde{P}_d, \alpha_{\tilde{P}_d} = \alpha_{d+1}|_{\tilde{P}_d})$
which is in a precise sense a better-behaved replacement of the pair of pants.

%Lem
\begin{lemma}[{\cite[Proposition 4.6]{Mik}, \cite[Proposition 4.2, 4.9]{Abo}, \cite[Section 5.2]{Nad}}] \label{lem:tailored}
There is a $\frakS_{d+2}$-equivariant Hamiltonian isotopy of symplectic submanifolds of
$T^{d+1}_\bC$
from
$P_{d}$
to
$(\tilde{P}_d, \alpha_{\tilde{P}_d} = \alpha_{d+1}|_{\tilde{P}_d})$
with the following properties:
\begin{itemize}
\item
The isotopy is constant inside
$\Log^{-1}_{d+1}(\Delta_d(R))$
for some constant
$0 \ll R$
with
\begin{align*}
\Delta_d(R)
=
\{ \xi \in \bR^{d+1} \ | -R \leq \xi_1, \ldots, -R \leq \xi_{d+1}, \ \sum^{d+1}_{i=1} \xi_i \leq R  \}.
\end{align*}
\item
We have the inductive compatibility
\begin{align*}
L_{d, d+1}(K)
=
\tilde{P}_d \cap T^{d+1}_{\bC, d+1}(K)
\cong
\tilde{P}_{d-1} \times \bC^*_{d+1}(K)
\end{align*}
for some constant
$0 \ll K$,
where
\begin{align*}
\bC^*_{d+1}(K)
=
\{ x_{d+1} \in \bC^* \ | \log |x_{d+1}| < -K \} ,\
T^{d+1}_{\bC, d+1}(K)
=
\{ x \in T^{d+1}_\bC \ | \log |x_{d+1}| < -K \}.
\end{align*}
\end{itemize}
\end{lemma}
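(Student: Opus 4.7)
The plan is to construct $\tilde{P}_d$ by deforming $P_d$ only in the ``legs'' of its amoeba while leaving it unchanged over $\Log^{-1}_{d+1}(\Delta_d(R))$. The amoeba $\Log_{d+1}(P_d) \subset \bR^{d+1}$ retracts onto a spine consisting of $d+2$ rays emanating from a bounded central region: for each $i \in \{1, \ldots, d+1\}$ one ray in the direction $-e_i$ (the leg where $x_i \to 0$), and one ray in the direction $e_1 + \cdots + e_{d+1}$ (the leg where all $|x_j| \to \infty$). The simplex $\Delta_d(R)$ is precisely the bounded region whose $d+2$ outward facet normals are exactly these tropical ray directions; the $\frakS_{d+2}$-action, which permutes the homogeneous coordinates $x_0, \ldots, x_{d+1}$ on the ambient projective hyperplane, permutes the $d+2$ legs transitively.

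The key step is to deform $P_d$ inside a single leg and then symmetrize. In the $(d+1)$-th leg, where $\log|x_{d+1}| \ll 0$, the defining equation $1 + x_1 + \cdots + x_{d+1} = 0$ is dominated by the truncated equation $1 + x_1 + \cdots + x_d = 0$, so $P_d$ is asymptotically close to the product $P_{d-1} \times \bC^*_{d+1}$. To make this splitting exact, I modify the defining equation to $1 + x_1 + \cdots + x_d + \chi(\log|x_{d+1}|)\, x_{d+1} = 0$, where $\chi$ is a smooth cutoff equal to $1$ on $[-R, \infty)$ and vanishing on $(-\infty, -K]$ for some $K \gg R$. Over $\Log^{-1}_{d+1}(\Delta_d(R))$ the equation is unchanged, while on $\log|x_{d+1}| < -K$ the new zero locus becomes the honest product $P_{d-1} \times \bC^*_{d+1}(K)$. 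Performing the analogous modifications in the remaining $d+1$ legs — working in the appropriate affine chart of the ambient projective hyperplane for each, with cutoffs depending only on the signed distance from $\Delta_d(R)$ along the corresponding tropical ray — yields a new symplectic submanifold $\tilde{P}_d$. For $K$ large the modifications have pairwise disjoint support, so the construction is manifestly $\frakS_{d+2}$-symmetric and globally smooth. A standard extension argument (express the deformation as a Hamiltonian vector field on a Weinstein neighborhood of $P_d$, cut it off compactly, and flow) then produces the desired ambient Hamiltonian isotopy, and equivariance of the construction translates to equivariance of the isotopy.

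The inductive compatibility in the second bullet is immediate from the construction: on $T^{d+1}_{\bC, d+1}(K)$ the modified equation reduces to $1 + x_1 + \cdots + x_d = 0$, and identifying the first factor with $\tilde{P}_{d-1}$ — using the inductive fact that the tailored pants agrees with the classical pants on the central simplex — yields the stated isomorphism. I expect the main obstacle to be verifying that the modifications in all $d+2$ legs can be simultaneously packaged into a globally smooth, fully $\frakS_{d+2}$-equivariant construction, since the natural framing for each leg uses a different affine chart of the ambient projective hyperplane. Taking $K \gg R$ renders the cutoffs essentially disjoint, so compatibility across charts reduces to checking that each modification is invisible in charts other than its own; this reduction to $d+2$ copies of the single-leg analysis is precisely the content of the references cited in the statement.
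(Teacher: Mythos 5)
The paper does not prove this lemma at all: it is imported verbatim from the references in its header (Mikhalkin's tailoring, Abouzaid's tropical localization, Nadler's Section 5.2), so there is no internal proof to compare against. Your outline is indeed the same construction those references carry out — damp the subdominant monomial of each leg by a cutoff in the log-norms, leaving the equation untouched over $\Log^{-1}_{d+1}(\Delta_d(R))$ — so the route is the right one in principle.

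However, the way you dispose of what you yourself call the main obstacle contains a genuine gap. The claim that for $K\gg R$ the modifications attached to the $d+2$ legs have pairwise disjoint support is false as soon as $d\geq 2$: the legs $\{\log|x_i|<-K\}$ overlap along large (noncompact, codimension-zero in $P_d$) regions where several of the $d+2$ monomials are simultaneously subdominant, corresponding to the higher-codimension strata of the tropical hypersurface (e.g. for $d=2$ the locus $\log|x_2|,\log|x_3|\ll 0$ meets $P_2$ in a region where the hypersurface is approximately $\{1+x_1=0\}\times\bC^*\times\bC^*$). In such regions two or more cutoffs act at once, so smoothness, symplecticity and $\frakS_{d+2}$-equivariance do not reduce to ``each modification is invisible in the other charts''; one must verify a genuine transversality/positivity estimate where the derivatives of several cutoffs contribute simultaneously to the defining $1$-form, and this is exactly the content of \cite[Proposition 4.2, 4.9]{Abo} that your disjointness reduction skips. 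Worse, if the supports really were disjoint your second bullet would fail: with only the single-leg modification active, the cross-section over $T^{d+1}_{\bC,d+1}(K)$ is the \emph{classical} $P_{d-1}$, whereas the lemma requires the \emph{tailored} $\tilde{P}_{d-1}$ on the nose, and the two differ precisely in the legs; it is the continued action of the other legs' cutoffs inside $T^{d+1}_{\bC,d+1}(K)$ that produces $\tilde{P}_{d-1}\times\bC^*_{d+1}(K)$ and later allows one to iterate \pref{eq:recursive} into multi-leg intersections. Finally, the proposed extension step (``cut the Hamiltonian vector field off compactly and flow'') is not available: the deformation is supported in the noncompact collars $\{-K<\log|x_i|<-R\}$ of the legs, so a compactly supported cutoff would not realize the isotopy at infinity; the Hamiltonian-isotopy statement for this non-compactly-supported deformation is again part of what the cited propositions supply and needs to be invoked or reproved rather than asserted.
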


%Rmk
\begin{remark}
As mentioned in
\cite[Remark 5.3.3]{GS1},
presumably the above isotopy is an $\frakS_{d+2}$-equivariant isotopy of Liouville submanifolds.
\end{remark}

Note that
$\tilde{P}_d$
coincides with
$P_d$
inside
$\Log^{-1}_{d+1}(\underline{\Delta}_d(R))$.
The $\frakS_{d+2}$-action implies similar compatibilities in other directions.
Namely,
we have the inductive compatibility
\begin{align} \label{eq:recursive}
L_{d, i}(K)
=
\tilde{P}_d \cap T^{d+1}_{\bC, i}(K)
\cong
\tilde{P}_{d-1} \times \bC^*_{i}(K)
\end{align}
for
some constant
$0 \ll K$
and
fixed
$i = 1, \ldots, d$,
where
\begin{align*}
\bC^*_i(K)
=
\{ x_i \in \bC^* \ | \log |x_i| < -K \} ,\
T^{d+1}_{\bC, i}(K)
=
\{ x \in T^{d+1}_\bC \ | \log |x_i| < -K \}.
\end{align*}

%Dfn
\begin{definition}
We call the Liouville manifold
$(\tilde{P}_d, \alpha_{\tilde{P}_d})$
the $d$-dimensional
\emph{tailored pants}.
We call the open Liouville submanifold
$L_{d, i}(K)$
the
\emph{$i$-th leg}
of
$(\tilde{P}_d, \alpha_{\tilde{P}_d})$.
The
\emph{$(d+2)$-th leg}
of
$(\tilde{P}_d, \alpha_{\tilde{P}_d})$
is the remaining open Liouville submanifold
with respect to the $\frakS_{d+2}$-action,
which corresponds to the positive diagonal direction in
$\bR^{d+1}$.
\end{definition}

In order to provide
$\tilde{P}_d$
with particularly simple skeleton,
Nadler
broke symmetry by introducing the translated Liouville structure
\begin{align} \label{eq:tLiouville}
\alpha^l_{d+1} = \sum^{d+1}_{i=1} (\xi_i + l) d \theta_i,
\omega^l_{d+1} = \sum^{d+1}_{i = 1} d (\xi_i + l) \wedge d \theta_i = \omega_{d+1}
\end{align}
on
$T^{d+1}_\bC$
for some constant
$0 \ll l$.

%Dfn
\begin{definition}
We call the Weinstein structure given by a triple
\begin{align*}
(\tilde{P}_d, \beta_{\tilde{P}_d} = \alpha^l_{d+1} |_{\tilde{P}_d}, \Sigma^{d+1}_{i = 1} (\log |x_i| + l)^2)
\end{align*}
\emph{Nadler's Weinstein structure}.
We write
$\Core(\tilde{P}_d)$
for its skeleton.
We call the $(d+2)$-th leg of
$\tilde{P}_d$
equipped with Nadler's Weinstein structure the
\emph{final leg}.
\end{definition}

%Rmk
\begin{remark}
All the legs but the final remain symmetric under the $\frakS_{d+2}$-action. 
\end{remark}

For a proper subset
$I \subsetneq \{ 1, \ldots, d+1 \}$
let
\begin{align*}
\Delta_I(l)
=
\{ x \in \tilde{P}_d \cap T^{d+1}_{\bR < 0} \ | \ \log |x_i| = -l \ \text{for} \ i \in I, \ \log |x_j| > -l \ \text{for} \ j \in I^c \}
\end{align*}
be the relatively open subsimplex of the closed simplex
\begin{align*}
\Delta_d(l)
=
\{ x \in \tilde{P}_d \cap T^{d+1}_{\bR < 0} \ | \ \log |x_i| \geq -l \}.
\end{align*}
We denote by
$\delta_I(l)$
the barycenter
\begin{align*}
\{ x \in \tilde{P}_d \cap T^{d+1}_{\bR < 0} \ | \ \log |x_i| = -l \ \text{for} \ i \in I, \ \log |x_j| = \log |x_{j^\prime}| > -l \ \text{for} \ j, j^\prime \in I^c \}
\end{align*}
of the subsimplex
$\Delta_I(l)$.

%Lem
\begin{lemma}[{\cite[Theorem 5.13]{Nad}}] 
For a proper subset
$I \subsetneq \{ 1, \ldots, d+1 \}$
let
$T^I \subset T^{d+1}$
be the subtorus defined by
$\theta_i = 0, i \in I^c$.
Then we have
\begin{align*}
\Core(\tilde{P}_d)
=
\bigcup_{I \subsetneq \{ 1, \ldots, d+1 \}}
T^I \cdot \Delta_I(l).
\end{align*}
\end{lemma}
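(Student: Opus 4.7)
The plan is to realize $\Core(\tilde{P}_d)$ as the set of points whose forward orbit under the Liouville flow of $\tilde{Z}^l$ is bounded, equivalently the union of stable manifolds of critical submanifolds of the Morse--Bott function $f = \sum_i (\log|x_i|+l)^2$. The ambient Liouville vector field of $\alpha^l_{d+1}$ is $Z^l = \sum_i (\xi_i+l)\partial_{\xi_i}$, which is radial in the $\xi$-coordinates centered at $\xi = (-l, \ldots, -l)$, and it satisfies the Lyapunov identity $Z^l \cdot f = 2f$. The symplectic-tangential projection of $Z^l$ to $\tilde{P}_d$ is the Liouville vector field $\tilde{Z}^l$ of $\beta_{\tilde{P}_d}$, and the same identity persists on $\tilde{P}_d$, so $\Core(\tilde{P}_d)$ is the retraction locus of the backward flow.

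The first step is to identify the critical submanifolds of $f|_{\tilde{P}_d}$. Since $Z^l$ has no $\theta$-component and vanishes in $\partial_{\xi_i}$ precisely where $\xi_i = -l$, any submanifold of $\tilde{P}_d$ on which $\xi_i$ is pinned to $-l$ for every $i$ in a fixed subset $I \subsetneq \{1,\ldots,d+1\}$ is stationary under $\tilde{Z}^l$ in those $|I|$ radial directions, while the conjugate angular coordinates $\theta_i$ for $i \in I$ are automatically tangent to the locus. This is the source of the $T^I$-factor in the claimed formula. The tailoring procedure of \pref{lem:tailored} is arranged $\frakS_{d+2}$-equivariantly so that the ambient $T^I$-orbit of the open face $\Delta_I(l) \subset \tilde{P}_d \cap T^{d+1}_{\bR<0}$ is literally contained in $\tilde{P}_d$; this is a nontrivial input, since for the untailored pants $P_d = \{1 + \sum x_i = 0\}$ the imaginary part of the defining equation would cut that ambient orbit down to a lower-dimensional locus.

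The next step is a Morse--Bott stable-manifold analysis. Along each cell $T^I \cdot \Delta_I(l)$, the Hessian of $f|_{\tilde{P}_d}$ in the normal coordinates $\xi_j$ with $j \in I^c$ is positive-definite, so the backward flow attracts a neighborhood, and the retraction along the simplex $\Delta_d(l)$ glues the individual cells into a closed Lagrangian CW complex, yielding the containment $\bigcup_I T^I \cdot \Delta_I(l) \subset \Core(\tilde{P}_d)$. For the reverse containment, any point $x \in \tilde{P}_d$ outside this union has $\log|x_i|>-l$ with some imbalance that pushes $\xi$ out of $\Delta_d(l)$ under the forward flow, or already sits inside one of the cylindrical legs; in either case the product decomposition $L_{d,i}(K) \cong \tilde{P}_{d-1} \times \bC^*_i(K)$ of \pref{lem:tailored} shows that the $\tilde{Z}^l$-orbit escapes to infinity along the $\bC^*_i(K)$-factor, so $x$ is not in the skeleton.

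The main obstacle is the geometric verification underpinning the first step: establishing that $T^I \cdot \Delta_I(l) \subset \tilde{P}_d$ and carries the described critical Morse--Bott structure. This is precisely where one must use the full strength of the tailoring isotopy from \pref{lem:tailored}, both its $\frakS_{d+2}$-equivariance (to propagate the analysis symmetrically across all legs) and its inductive compatibility with $\tilde{P}_{d-1} \times \bC^*_i(K)$ (to ensure that the skeleton pieces built in the core and in the legs glue consistently across their common boundary). Once this geometric setup is verified, the remainder of the argument is a direct assembly of the stable manifolds of the critical cells.
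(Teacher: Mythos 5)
Your general framing (the skeleton as the set of bounded forward Liouville orbits, $\tilde{Z}^l$ as the symplectic-orthogonal projection of the ambient radial field, the leg product structure of \pref{lem:tailored} as the essential geometric input) is in the right genre, and your observation that the naive $T^I$-orbit of a real point fails to lie in the untailored $P_d$ is a genuine and relevant point. However, the two mechanisms that carry your argument are incorrect, and they sit exactly where the content of the lemma lies. First, the Lyapunov identity $Z^l\cdot f = 2f$ does \emph{not} persist for the projected field: $\tilde{Z}^l\cdot f = Z^l\cdot f - N\cdot f$, where $N$ is the component of $Z^l$ in $(T\tilde{P}_d)^{\omega}$, and this correction is not negligible. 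Indeed $f$ is non-constant on the real Lagrangian $\tilde{P}_d\cap T^{d+1}_{\bR<0}$ (e.g.\ for $d=1$, $f=(\log s+l)^2+(\log(1-s)+l)^2$ along $x_1=-s$, $x_2=-(1-s)$), yet the forward Liouville flow there \emph{attracts} points to the barycenter $\delta_\emptyset(l)$, so along the open top cell the flow decreases $f$; if your identity held, the open simplex could not lie in the skeleton at all. That $\tilde{Z}^l$ is suitably gradient-like is precisely the nontrivial input the paper imports from Mikhalkin (identification of the critical locus of the log projection restricted to the pants with the negative real locus, hence containment of the zero locus of $\beta_{\tilde{P}_d}$ there), not a formal consequence of the ambient computation.

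Second, the cells $T^I\cdot\Delta_I(l)$ are not critical submanifolds of $f|_{\tilde{P}_d}$: along $\Delta_I(l)$ one has $f=\sum_{j\in I^c}(\xi_j+l)^2$, which varies with the free coordinates, so there is no Morse--Bott normal Hessian to invoke, and the Liouville flow genuinely moves points within each cell (toward $T^I\cdot\delta_I(l)$, resp.\ $\delta_\emptyset(l)$). The cells are stable manifolds of lower-dimensional pieces of the zero locus: via the inductive compatibility of \pref{lem:tailored}, near the stratum indexed by $I$ the pants splits as $\tilde{P}_{d-|I|}\times(\bC^*)^I$ with split Liouville form, the zero locus there is the isotropic torus $T^I\cdot\delta_I(l)$, and its stable set is $T^I\cdot\Delta_I(l)$. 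This is why the paper's proof is an induction on $d$: the base case combines the Mikhalkin critical-locus identification with the attraction of the real-locus flow to $\delta_\emptyset(l)$, and the inductive step feeds $\Core(\tilde{P}_{d-1})$ through the leg decomposition. Your escape argument for the reverse inclusion is also too narrow as stated (a point in the central region off the real locus escapes through the outer, $(d+2)$-th end, not along a $\bC^*_i(K)$-factor). As written, neither inclusion of the asserted equality is established, so there is a genuine gap rather than an alternative proof.
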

\begin{proof}
The original proof uses induction on
$d$.
The case
$d = 0$ 
is obvious.
When
$d = 1$,
a nonempty proper subset
$I \subsetneq \{ 1, \ldots, d+1 \}$
is either
$\{ 1 \}$
or
$\{ 2 \}$.
On
$\delta_i(l) = \Delta_i(l)$
the Liouville form
$\beta_{\tilde{P}_1}$
vanishes
and
their stable manifolds are isomorphic to
$T^{i} \cdot \delta_{ \{ i \} }(l) = T^{i} \cdot \Delta_{ \{ i \} }(l)$.
From
\cite[Corollary 4.4, 4.5, Proposition 4.6]{Mik}
it follows that
the critical locus
$\Crit(\Sigma^{d+1}_{i = 1} (\log |x_i| + l)^2)$
coincides with
$\tilde{P}_d \cap T^{d+1}_{\bR_{<0}}$.
In particular,
since
$\tilde{P}_d$
is Weinstein, 
$\tilde{P}_d \cap T^{d+1}_{\bR_{<0}}$
contains the zero locus
$Z(\beta_{\tilde{P_d}})$.
The negative real points
$\tilde{P}_d \cap T^{d+1}_{\bR_{<0}}$
is a Lagrangian
as
$d \theta_i$
vanishes there
and
the Liouville flow on
$\tilde{P}_d \cap T^{d+1}_{\bR_{<0}}$
attracts the points to a point
$\delta_\emptyset(l)$.
Hence,
aside
$T^{i} \cdot \Delta_{ \{ i \} }(l)$,
only the stable manifold
$T^\emptyset \cdot \Delta_\emptyset(l)$
of
$T^\emptyset \cdot \delta_\emptyset(l)$
contributes to
$\Core(\tilde{P}_1)$.
For general
$d$,
combine
the same argument
and
the inductive compatibility from
\pref{lem:tailored}.
\end{proof}

%%%%%%%%%%%%%%%%%%%%%%%%%%%%%%%%%%%%%%
\subsection{Microlocal interpretation}
Next,
we review the geometry of
$\Core(\tilde{P}_d).$
The action of the diagonal circle
$T^1_\Delta \subset T^{d+1}$
by translation lifts to a Hamiltonian action with the moment map
\begin{align*}
\mu_\Delta \colon T^* T^{d+1} \to \bR, \
(\theta, \xi) \mapsto \sum^{d+1}_{i = 1} \xi_i.
\end{align*}
Distinguishing the final coordinate
$\theta_{d+1}$
on
$T^{d+1}$,
we identify the quotient 
$\bT^d = T^{d+1} / T^1_\Delta$
with
$T^d$
via
$[\theta]
\mapsto
(\theta_1 - \theta_{d+1}, \ldots, \theta_d - \theta_{d+1})$.
Denoting by
$\frakt^*_d$
the dual of
$\Lie (\bT^d)
=
\{ \xi \in \bR^{d+1} | \sum^{d+1}_{i = 1} \xi_i = 0 \}$,
we identify
$T^* \bT^d$
with
$\bT^d \times \frakt^*_d$.
The product conic Lagrangian
\begin{align*}
\Lambda_{d+1}
=
(\Lambda_1)^{d+1}
\subset
\mu^{-1}_\Delta(\bR_{\geq 0})
\subset
T^* T^{d+1}
\end{align*}
is transverse to
$\mu^{-1}_\Delta(\chi)$
for
$\chi > 0$,
where
\begin{align*}
\Lambda_1
=
\{ (\theta, 0) \ | \ \theta \in T^1 \}
\cup
\{ (0, \xi) | \ \xi \in \bR_{\geq 0} \}
\subset
T^1 \times \bR
\cong
T^* T^1.
\end{align*}

Consider the twisted Hamiltonian reduction correspondence
\begin{align*}
T^*T^{d+1}
\overset{q_\chi}{\hookleftarrow}
\mu^{-1}_\Delta(\chi)
=
\{ (\theta, \xi) \in T^* T^{d+1} | \ \sum^{d+1}_{i = 1} \xi_i = \chi \}
\overset{p_\chi}{\twoheadrightarrow}
T^* \bT^d
\end{align*}
where
$q_\chi$
is the canonical inclusion
and
$p_\chi$
is the translated projection
\begin{align*}
p_\chi((\theta, \xi))
=
([\theta], \xi_1 - \hat{\chi}, \ldots, \xi_{d+1} - \hat{\chi}), \
\hat{\chi} = \chi / (d+1).
\end{align*}
The map
$p_{\chi}$
becomes an isomorphism
\begin{align*}
\mu^{-1}_\Delta(\chi) \cap \Lambda_{d+1}
=
\mu^{-1}_{d+1}(\tilde{\Xi}_d(\chi))
\to
\frakL_d
\coloneqq
p_\chi (q^{-1}_\chi(\Lambda_{d+1}))
\end{align*}
when restricted to the inverse image under
$\mu_{d+1}$
of the closed subsimplex
\begin{align*}
\tilde{\Xi}_d(\chi)
=
\{ \xi \in \bR^{d+1}_{\geq 0} | \ \xi_1, \ldots, \xi_{d+1} \geq 0, \sum^{d+1}_{i=1} \xi_i = \chi \}.
\end{align*}

Let
$\tilde{\Xi}_I(\chi)
=
\tilde{\Xi}_d(\chi)
\cap
\sigma_I$
be the relatively open subsimplex with
\begin{align*}
\sigma_I
=
\{ \xi \in \bR^{d+1}_{\geq 0} | \ \xi_i = 0 \ \text{for} \ i \in I, \ \xi_j > 0 \ \text{for} \ j \in I^c \}
\end{align*}
for a proper subset
$I \subsetneq \{ 1, \ldots, d+1 \}$.
From
$\mu_{d+1}(\Lambda_{d+1}) = (\bR_{\geq 0})^{d+1}$
it follows
$\mu^{-1}_{d+1}(\xi) \cap \Lambda_{d+1}
\cong
T^I$
for
$\xi \in \sigma_I$
and
$p_\chi$
restricts to an isomorphism
$\bigcup_{I \subsetneq \{ 1, \ldots, d+1\}}
T^I \times \tilde{\Xi}_I(\chi)
\to
\frakL_d$.
Hence we obtain an identification of subspaces
\begin{align*}
\frakL_d
\cong
\bigcup_{I \subsetneq \{ 1, \ldots, d+1\}}
\bT^I \times \Xi_I(\chi)
\subset
\bT^d \times \frakt^*_d
\cong
T^* \bT^d,
\end{align*}
where
$\bT^I \subset \bT^d$
are isomorphic images of
$T^I$
under the quotient
and
\begin{align*}
\Xi_I(\chi)
=
\{ \xi \in \bR^{d+1}_{\geq -\hat{\chi}} | \ \xi_i = -\hat{\chi} \ \text{for} \ i \in I, \ \xi_j > -\hat{\chi} \ \text{for} \ j \in I^c, \sum^{d+1}_{i=1} \xi_i = 0 \}
\end{align*}
are isomorphic images of
$\tilde{\Xi}_I(\chi)$
under translation by
$-\hat{\chi}$.

The symplectic geometry of a certain open neighborhood of
$\Core(\tilde{P}_d)$
in
$\tilde{P}_d$
is equivalent to that of the associated open neighborhood of
$\frakL_d$
in
$T^* \bT^d$.

%Lem
\begin{lemma}[{\cite[Theorem 5.13]{Nad}}] \label{lem:Symp-Symp1}
There is an open neighborhood
$U_d \subset \tilde{P}_d$
of
$\Core(\tilde{P}_d)$
with an open symplectic embedding
$\frakj \colon U_d \hookrightarrow T^* \bT^d$
which makes the diagram
\begin{align*}
\begin{gathered}
\xymatrix{
U_d \ar[r]^-{\sim}
& \frakU_d \coloneqq \frakj(U_d) \\
\Core(\tilde{P}_d) \ar@{_(->}[u] \ar[r]^-{\sim}
& \frakL_d \ar@{_(->}[u]
}
\end{gathered}
\end{align*}
commute,
where the vertical arrows are the canonical inclusions.
\end{lemma}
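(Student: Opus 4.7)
I plan to construct $\frakj$ as the first projection in a symplectic splitting of $T^*T^{d+1}$ coming from reduction by the diagonal circle $T^1_\Delta$, corrected via Moser's trick.

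First I would introduce the coordinates $\tilde\theta_i = \theta_i - \theta_{d+1}$ for $i = 1, \ldots, d$, $\beta = \frac{1}{d+1}\sum_{i=1}^{d+1}\theta_i$, $\mu_\Delta = \sum_i\xi_i$, and $\tilde\xi_i = \xi_i - \mu_\Delta/(d+1)$ on $T^*T^{d+1}$, and verify the symplectic splitting
\[
\omega_{d+1} = \sum_{i=1}^d d\tilde\xi_i \wedge d\tilde\theta_i + d\mu_\Delta \wedge d\beta,
\]
which identifies $T^*T^{d+1}$ symplectically with $T^*\bT^d \times T^*T^1_\Delta$. Under this splitting the Hamiltonian reduction $p_\chi$ is simply the first projection restricted to $\{\mu_\Delta = \chi\}$.

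Next I would define $\frakj_0\colon\tilde P_d\to T^*\bT^d$ as the restriction of the first projection. The kernel of this projection in $TT^*T^{d+1}$ is the 2-plane spanned by $\sum_k\partial_{\theta_k}$ and $\sum_k\partial_{\xi_k}$. Writing the tangent space to $\tilde P_d = \{W = 0\}$ as the vanishing locus of the real and imaginary parts of $dW = \sum_k x_k(d\xi_k + i\,d\theta_k)$, the transversality $\ker\cap\,T\tilde P_d = 0$ reduces to a $2\times2$ determinant equal to $-|\sum_k x_k|^2 = -1$ (using $\sum_k x_k = -1$ on $\tilde P_d$), hence nonzero. So $\frakj_0$ is a local diffeomorphism on all of $\tilde P_d$. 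Inspection of the stratum-by-stratum descriptions then shows $\frakj_0$ sends $T^I\cdot\Delta_I(l) \subset \Core(\tilde P_d)$ onto $\bT^I\times\Xi_I(\chi) \subset \frakL_d$ via the quotient $T^{d+1}\to\bT^d$ and the shift by $-\hat\chi$, so the commutative diagram holds on the core.

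Finally, to make $\frakj_0$ exactly symplectic on an open neighborhood $U_d$ of $\Core(\tilde P_d)$, I would apply the relative Moser trick. The two closed 2-forms $\omega_{d+1}|_{\tilde P_d}$ and $\frakj_0^*\omega_{T^*\bT^d}$ differ by the exact 2-form $-d(\mu_\Delta\,d\beta)|_{\tilde P_d}$, and one checks that the primitive $\mu_\Delta\,d\beta|_{\tilde P_d}$ vanishes on vectors tangent to $\Core(\tilde P_d)$: along the $\Delta_I$-directions all $d\theta_k$ are zero (as these are real directions in $T^{d+1}_{\bR<0}$), while along the $T^I$-directions the tangency to $\tilde P_d$ forces $\sum_{i\in I} d\theta_i = 0$ (since all $|x_i|$ equal $e^{-l}$ there). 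The relative Moser trick then produces a Hamiltonian isotopy $\phi$ of $U_d$ fixing the core pointwise and intertwining the two symplectic structures; setting $\frakj = \frakj_0 \circ \phi^{-1}$ gives the required open symplectic embedding. The main obstacle is ensuring that the interpolating path of 2-forms stays nondegenerate throughout $U_d$; this is handled by shrinking $U_d$ suitably, using the stratified geometry of $\Core(\tilde P_d)$ together with a compactness argument on a chosen neighborhood of the core.
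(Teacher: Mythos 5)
There is a genuine gap, in fact two, and both sit at the heart of the argument. First, your map $\frakj_0$ does not carry $\Core(\tilde{P}_d)$ onto $\frakL_d$, so the diagram cannot commute for $\frakj=\frakj_0\circ\phi^{-1}$ with $\phi$ fixing the core. Under the projection $\xi\mapsto\xi-\tfrac{\mu_\Delta}{d+1}(1,\dots,1)$ the stratum $\Delta_I(l)$ goes to a set whose $I$-coordinates equal $-l-\mu_\Delta/(d+1)$, a quantity that varies along the stratum (and the stratum itself sits on the curved locus $\sum_j e^{\xi_j}=1$, not on an affine slice), whereas $\Xi_I(\chi)$ has these coordinates pinned at the constant $-\hat{\chi}$ and is a flat piece of a polytope. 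Already for $d=1$ the image of the core is a trivalent graph whose vertices sit at distance roughly $l/2$ from the origin rather than at $\hat{\chi}$, so $\frakj_0(\Core(\tilde{P}_d))\neq\frakL_d$ as subsets of $T^*\bT^d$. What is still missing is precisely the nontrivial content of the lemma: a symplectomorphism of a neighborhood of $\frakj_0(\Core(\tilde{P}_d))$ onto a neighborhood of $\frakL_d$ matching the two stratified Lagrangian skeleta. The paper (following Nadler) produces this by applying the Lagrangian-neighborhood theorem to the Lagrangian faces ($U^\circ_d\cap T^{d+1}_{\bR<0}$ versus $\frakU^\circ_d\cap\frakt^*_d$) compatibly with the coisotropic foliations cut out by $\log|x_i|$, $i\in I$, and $\xi_i$, $i\in I$, and then extending $T^I$-equivariantly via the moment maps; no global projection is used, and this is the step your proposal skips.

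Second, the relative Moser step is not justified: the primitive $\mu_\Delta\,d\beta$ does \emph{not} vanish on vectors tangent to $\Core(\tilde{P}_d)$ along the pieces $T^I\cdot\Delta_I(l)$ with $I\neq\emptyset$. These pieces lie in the leg region, where by the inductive compatibility $\tilde{P}_d$ is a product $\tilde{P}_{d-|I|}\times(\bC^*)^I$; hence every vector $\partial_{\theta_i}$, $i\in I$, is tangent both to $\tilde{P}_d$ and to the core, and your assertion that tangency forces $\sum_{i\in I}d\theta_i=0$ "since all $|x_i|$ equal $e^{-l}$" is a non sequitur (fixing the moduli constrains nothing about the phases). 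On such a vector $\mu_\Delta\,d\beta$ evaluates to $\mu_\Delta/(d+1)$, and $\mu_\Delta\approx-|I|\,l\neq0$ there, so the Moser vector field does not vanish along the core and the isotopy does not fix it pointwise; for a Moser argument rel the core you would need a primitive vanishing on all of $T\tilde{P}_d$ at core points, which this one is not. (A smaller point: your transversality computation for $\frakj_0$ uses $\sum_k x_k=-1$, which holds only where $\tilde{P}_d$ coincides with $P_d$; on the leg region containing the torus strata of the core it needs a separate, inductive check.) Even if the Moser step were repaired, the first issue would remain, so the proposal does not establish the lemma without importing the piecewise neighborhood-matching argument that constitutes the paper's proof.
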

\begin{proof}
Let
$U^\circ_d$
be a sufficiently small open neighborhood of
$\Delta_d(l)$
in
$\tilde{P}_d$.
By the inductive compatibility from
\pref{lem:tailored},
near
$\Delta_I(l)$
for
$\emptyset \neq I \subsetneq \{ 1, \ldots, d+1 \}$
the functions
$\log |x_i|, i \in I$
define a local coisotropic foliation of
$U^\circ_d$.
If
$I^\prime \subsetneq \{ 1, \ldots, d+1 \}$
contains
$I$,
then the foliation defined by
$\log |x_i|, i \in I^\prime$
refines that defined by
$\log |x_i|, i \in I$.
Let
$\frakU^\circ_d$
be a sufficiently small open neighborhood of
$\Xi_d(\chi)$
in
$T^* \bT^d$.
Near
$\Xi_I(\chi)$
for
$\emptyset \neq I \subsetneq \{ 1, \ldots, d+1 \}$
the functions
$\xi_i, i \in I$
define a local coisotropic foliation of
$\frakU^\circ_d$.
If
$I^\prime \subsetneq \{ 1, \ldots, d+1 \}$
contains
$I$,
then the foliation defined by
$\xi_i, i \in I^\prime$
refines that defined by
$\xi_i, i  \in I$.

The neighborhoods
$U^\circ_d$
and
$\frakU^\circ_d$
are symplectomorphic to the cotangent bundles of their Lagrangians
$U^\circ_d \cap T^{d+1}_{\bR < 0}$
and
$\frakU^\circ_d \cap \frakt^*_d$. 
Hence one finds a symplectomorphism
$\frakj^\circ \colon U^\circ_d \to \frakU^\circ_d$
restricting to a diffeomorphism
$U^\circ_d \cap T^{d+1}_{\bR < 0} \to \frakU^\circ_d \cap \frakt^*_d$,
which is compatible with the above local coisotropic foliations. 
Choose a sufficiently small open neighborhood
$U^\circ_I$
of
$\Delta_I(l)$
in
$U^\circ_d$
for each
$I \subsetneq \{ 1, \ldots, d+1 \}$.
We denote by
$\frakU^\circ_I$
the open neighborhood
$\frakj^\circ(U^\circ_I)$
of
$\Xi_I(\chi)$.
Then
\begin{align*}
U_d
=
\bigcup_{I \subsetneq \{ 1, \ldots, d+1 \}}
T^I \cdot (U^\circ_I \cap T^{d+1}_{\bR < 0}), \
\frakU_d
=
\bigcup_{I \subsetneq \{ 1, \ldots, d+1 \}}
\bT^I \cdot (\frakU^\circ_I \cap \frakt^*_d)
\end{align*}
are respectively open neighborhoods of
$\Core(\tilde{P}_d), \frakL_d$.
Since the matched local coisotropic foliations correspond to the moment maps for the Hamiltonian actions of
$T^I, \bT^I$,
the symplectomorphism
$\frakj^\circ$
canonically extends to
$\frakj \colon U_d \to \frakU_d$.
\end{proof}

On
$\frakU_d$
there are two Liouville forms
$\alpha_{T^*\bT^d} |_{\frakU_d}$
and
$\beta_{d+1}
=
(\frakj^{-1})^* \beta_{\tilde{P}_d}$.
When
$\hat{\chi} = \chi / (d+1) \in \bZ$
the function
\begin{align*}
\mu^{-1}_\Delta (\chi) \cap \Lambda_{d+1}
\to
T^1, \
(\theta, \xi)
\mapsto
\sum^{d+1}_{i = 1} (\xi_i + \hat{\chi}) \theta_i
\end{align*}
is invariant under the $T^1_\Delta$-action
and
descends to an integral structure
$f
\colon
\frakL_d
\to
T^1$
\cite[Definition 5.17(1)]{Nad}.
By
\cite[Remark 5.18(1)]{Nad}
the graph
$\Gamma_{\frakL_d, -f}$
of
$-f$
gives a Legendrian lift of
$\frakL_d$
to the circular contactification
\begin{align*}
(N_d, \lambda_d)
=
(\frakU_d \times T^1, \alpha_{T^*\bT^d} |_{\frakU_d} + dt).
\end{align*}
%The Lagrangian
%$p_\chi (q^{-1}_\chi(\Lambda_{d+1}))
%\subset
%T^* \bT^d$
%is conic with respect to the Liouville vector field dual to
%$\beta^l_d$.
Since we have
$\beta_{d+1} |_{\frakL_d} = 0$,
the Lagrangian
$\frakL_d$
is exact
\cite[Definition 5.17(2)]{Nad}.
By
\cite[Definition 5.18(2)]{Nad}
the zero section
$\frakL_d \times \{ 0 \}$
gives a Legendrian lift of
$\frakL_d$
to the circular contactification
\begin{align*}
(N^\prime_d, \lambda^\prime_d)
=
(\frakU_d \times T^1, \beta_{d+1} + dt).
\end{align*}

The contact geometry of
$(N_d, \lambda_d)$
near
$\Gamma_{\frakL_d, -f}$
is equivalent to that of
$(N^\prime_d, \lambda^\prime_d)$
near
$\frakL_d \times \{ 0 \}$.

%Lem
\begin{lemma}[{\cite[Section 5.3]{Nad}}] \label{lem:Conta-Conta1}
There is a contactomorphism
\begin{align*}
G \colon (N_d, \lambda_d) \to (N^\prime_d, \lambda^\prime_d), \
(([\theta], \xi), t)
\mapsto
(([\theta], \xi), t + g([\theta], \xi))
\end{align*}
which makes the diagram
\begin{align*}
\begin{gathered}
\xymatrix{
N_d \ar[r]^-{\sim}
& N^\prime_d \\
\Gamma_{\frakL_d, -f} \ar@{_(->}[u] \ar[r]^-{\sim}
& \frakL_d \times \{ 0 \} \ar@{_(->}[u]
}
\end{gathered}
\end{align*}
commute,
where the vertical arrows are the canonical inclusions.
\end{lemma}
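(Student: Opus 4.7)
The plan is to exhibit a function $g \colon \frakU_d \to T^1$ satisfying $dg = \alpha_{T^*\bT^d}|_{\frakU_d} - \beta_{d+1}$ and $g|_{\frakL_d} = f$; granted such a $g$, the map $G(([\theta],\xi), t) = (([\theta],\xi), t + g([\theta],\xi))$ will automatically have all the required properties. The verification reduces to a direct computation of $G^*\lambda^\prime_d$ and an evaluation on the Legendrian $\Gamma_{\frakL_d, -f}$.

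First I would check that $\alpha_{T^*\bT^d}|_{\frakU_d} - \beta_{d+1}$ is closed. By \pref{lem:Symp-Symp1}, $\frakj$ is a symplectomorphism, so $d\beta_{d+1} = (\frakj^{-1})^* d\beta_{\tilde{P}_d}$ agrees with the restriction of the canonical symplectic form on $T^*\bT^d$; hence $d(\alpha_{T^*\bT^d}|_{\frakU_d} - \beta_{d+1}) = 0$. Next, I would construct $g$. On $\frakL_d$ the closed $1$-form $\alpha_{T^*\bT^d}|_{\frakU_d} - \beta_{d+1}$ restricts to $df$: the $\beta_{d+1}$-term vanishes by the exactness of $\frakL_d$ recalled just before the statement, while the $\alpha_{T^*\bT^d}$-term equals $df$ by the Legendrian condition defining $\Gamma_{\frakL_d, -f}$ in $(N_d, \lambda_d)$. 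Since $\frakU_d$ is a Weinstein neighborhood of its core $\frakL_d$, it deformation retracts onto $\frakL_d$, so the inclusion induces an isomorphism on first cohomology. This allows one to extend $f$ to a globally defined $T^1$-valued primitive $g$ on all of $\frakU_d$, and the remaining additive $T^1$-constant freedom on each component is pinned down by the normalization $g|_{\frakL_d} = f$.

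Finally, the verification is a direct computation:
\begin{align*}
G^*(\beta_{d+1} + dt) = \beta_{d+1} + d(t + g) = \beta_{d+1} + dg + dt = \alpha_{T^*\bT^d}|_{\frakU_d} + dt = \lambda_d,
\end{align*}
so $G$ is a strict contactomorphism. For $x \in \frakL_d$, one computes $G(x, -f(x)) = (x, -f(x) + g(x)) = (x, 0)$, which sends $\Gamma_{\frakL_d, -f}$ isomorphically onto $\frakL_d \times \{0\}$ and exhibits the commutativity of the diagram.

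The main obstacle will be the second step: constructing $g$ as a genuine $T^1$-valued (rather than merely $\bR$-valued) function with the prescribed restriction. One must verify that the periods of the closed $1$-form $\alpha_{T^*\bT^d}|_{\frakU_d} - \beta_{d+1}$ lie in the period lattice of $T^1$, so that $g$ descends. This integrality is forced by the fact that $f$ is already $T^1$-valued on $\frakL_d$ and the retraction of $\frakU_d$ onto $\frakL_d$ transports the integrality to the ambient; unwinding this requires a careful use of the explicit geometry of Nadler's Weinstein structure on $\tilde{P}_d$, in particular the description of $\Core(\tilde{P}_d)$ in terms of the subsimplices $\Delta_I(l)$ and their toric orbits.
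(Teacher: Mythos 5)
Your proposal is correct and follows essentially the same route as the paper: both arguments produce the $T^1$-valued primitive $g$ of the closed, integral $1$-form $\alpha_{T^*\bT^d}|_{\frakU_d} - \beta_{d+1}$ (integrality transported from $\frakL_d$, where the form restricts to $df$, via the homotopy equivalence $\frakL_d \subset \frakU_d$), normalize $g|_{\frakL_d} = f$, and take $G$ to be fiberwise translation by $g$. The only differences are cosmetic: you verify $G^*\lambda'_d = \lambda_d$ and the mapping of $\Gamma_{\frakL_d,-f}$ to $\frakL_d \times \{0\}$ by hand where the paper cites Nadler's Remark 5.16, and your closing worry that the integrality check needs the explicit simplicial geometry of $\Core(\tilde{P}_d)$ is unnecessary — the homotopy-equivalence argument you already gave suffices.
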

\begin{proof}
The difference
$\alpha_{T^*\bT^d} |_{\frakU_d} - \beta_{d+1}$
is
closed
and
integral,
as we have
$\beta_{d+1} |_{\frakL_d} = 0$.
Since the inclusion
$\frakL_d 
\subset
\frakU_d$
is a homotopy equivalence,
there is a unique function
$g \colon \frakU_d \to T^1$
such that
$dg = \alpha_{T^*\bT^d} |_{\frakU_d} - \beta_{d+1}$
with normalization
$g |_{\frakL_d} = f$.
Then one obtains the desired map from
\cite[Remark 5.16]{Nad}.
\end{proof}

We denote by
$\Omega^\infty_{d+1}
\subset
S^* T^{d+1}
=
(T^* T^{d+1} \setminus T^{d+1}) / \bR_{>0}$
and
$\Lambda^\infty_{d+1}$
the spherical projectivizations of the open conic subset
$\Omega_{d+1}
=
\mu^{-1}_\Delta(\bR_{> 0})
\subset T^* T^{d+1}$
and
the Lagrangian
$\Lambda_{d+1}$.
The projection
$\Omega_{d+1}
\to
\Omega^\infty_{d+1}
=
\Omega_{d+1} / \bR_{>0}$
induces a contactomorphism from
$\mu^{-1}_\Delta(\chi)$
to
$\Omega^\infty_{d+1}$.
Let
$\delta \colon T^{d+1} \to T^1$
be the diagonal character.
By
\cite[Lemma 5.19]{Nad}
the map
\begin{align*}
(p_\chi, \delta)
\colon
\Omega^\infty_{d+1}
\cong
\mu^{-1}_\Delta(\chi)
\to 
T^* \bT^d \times T^1, \
(\theta, \xi) \mapsto (([\theta], \xi_1 - \hat{\chi}, \ldots, \xi_{d+1} - \hat{\chi}), \sum^{d+1}_{i = 1} \theta_i)
\end{align*}
defines a finite contact cover for
$\chi = d+1$.
The cover is trivializable over
$(N_d, \lambda_d)$
with a canonical section
$s \colon N_d \to \Omega^\infty_{d+1}$
satisfying
$s(\Gamma_{\frakL_d, -f})
=
\Lambda^\infty_{d+1}$.

The contact geometry of
$(N_d, \lambda_d)$
near
$\Gamma_{\frakL_d, -f}$
is equivalent to that of
$\Omega^\infty_{d+1}$
near
$\Lambda^\infty_{d+1}$.
 
%Lem
\begin{lemma}[{\cite[Lemma 5.19]{Nad}}] \label{lem:Conta-Conta2}
There is an open contactomorphism
$s \colon (N_d, \lambda_d) \to \Omega^\infty_{d+1}$
which makes the diagram
\begin{align*}
\begin{gathered}
\xymatrix{
N_d \ar[r]^-{\sim}
& s(N_d) \\
\Gamma_{\frakL_d, -f} \ar@{_(->}[u] \ar[r]^-{\sim}
& \Lambda^\infty_{d+1} \ar@{_(->}[u]
}
\end{gathered}
\end{align*}
commute,
where the vertical arrows are the canonical inclusions.
\end{lemma}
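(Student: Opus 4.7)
The plan is to observe that essentially all the ingredients have been assembled in the paragraph preceding the statement, so the task is to show that the canonical section $s$ inherits the desired properties from the cover $(p_\chi,\delta)$. First I would recall that $(p_\chi,\delta)$ is a finite contact cover of degree $d+1$, hence in particular a local contactomorphism onto its image. The asserted trivializability of this cover over $(N_d, \lambda_d)$ yields a continuous section $s\colon N_d \to \Omega^\infty_{d+1}$, and the normalization $s(\Gamma_{\frakL_d,-f}) = \Lambda^\infty_{d+1}$ pins down which of the $d+1$ sheets is selected, making $s$ unique.

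Next I would verify that any section of a local diffeomorphism is itself a local diffeomorphism: the identity $(p_\chi,\delta)\circ s = \id_{N_d}$ forces $ds$ to be pointwise injective by the chain rule, and a dimension count upgrades this to a local diffeomorphism. Since $s$ is automatically injective as a section, it is an open embedding, and in particular $s(N_d)$ is open in $\Omega^\infty_{d+1}$.

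To check that $s$ preserves the contact structure, I would use that $(p_\chi,\delta)$ is a contact cover, which means it pulls the contact form on $T^* \bT^d \times T^1$ back to (a positive multiple of) the contact form on $\Omega^\infty_{d+1}$. Applying $s^*$ to this identity and using $(p_\chi,\delta)\circ s = \id_{N_d}$ then shows that $s^*$ intertwines the contact form on $\Omega^\infty_{d+1}$ with $\lambda_d$, so $s$ is a contactomorphism onto its open image. Commutativity of the diagram is precisely the defining relation $s(\Gamma_{\frakL_d,-f}) = \Lambda^\infty_{d+1}$ stated just before the lemma.

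The main work, all of which is subsumed in \cite[Lemma 5.19]{Nad}, lies in justifying the trivializability of the cover over $N_d$ and in picking the correct sheet. The hard part I expect is verifying that the map $\delta = \sum_{i=1}^{d+1} \theta_i$ modulo $2\pi$, together with the coordinates on $T^* \bT^d$, actually separates the $d+1$ sheets over a neighbourhood of $\Lambda^\infty_{d+1}$, and that the integral structure $f\colon \frakL_d \to T^1$ used to build the Legendrian lift $\Gamma_{\frakL_d,-f}$ is compatible with this separation in such a way that the sheet selection extends globally over $N_d$ rather than merely in a neighbourhood of $\Gamma_{\frakL_d,-f}$.
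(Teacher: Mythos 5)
Your proposal is correct and follows essentially the same route as the paper, which gives no separate argument for this lemma: it simply invokes Nadler's Lemma 5.19 together with the preceding observation that $(p_\chi,\delta)$ is a finite contact cover, trivializable over $(N_d,\lambda_d)$, with canonical section $s$ normalized by $s(\Gamma_{\frakL_d,-f})=\Lambda^\infty_{d+1}$. Your elaboration (a section of a contact covering is automatically an open contact embedding, and the normalization gives the commutativity of the diagram) is exactly the routine content the paper leaves implicit, with the genuinely nontrivial input — trivializability and sheet selection — correctly attributed to Nadler.
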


Consider the symplectization
$\tilde{P}_d \times T^1 \times \bR$
of the circular contactification
$(\tilde{P}_d \times T^1, \beta_{d+1} + dt)$
whose Liouville form is given by
$e^u(\beta_{d+1} + dt)$.
The skeleton
$\Core(\tilde{P}_d)
\subset
\tilde{P}_d$
lifts to the Legendrian submanifold
$\Core(\tilde{P}_d) \times \{ 0 \}
\subset
\tilde{P}_d \times T^1$,
which in turn lifts to a conic Lagrangian
$\Core(\tilde{P}_d) \times \{ 0 \} \times \bR
\subset
\tilde{P}_d \times T^1 \times \bR$
along the canonical projections.
Note that
the contact geometry of a cooriented contact manifold is equivalent to the conic symplectic geometry of its symplectization.
In particular,
taking the inverse image under the canonical projection induces a bijection from subspaces of the contact manifold to conic subspaces of its symplectization.

The symplectic geometry of the open neighborhood
$U_d \times T^1 \times \bR$
of
$\Core(\tilde{P}_d) \times \{ 0 \} \times \bR$
in
$\tilde{P}_d \times T^1 \times \bR$
is equivalent to that of the open neighborhood
$s(N_d) \times \bR$
of
$\Lambda^\infty_{d+1} \cap \Omega_{d+1}$
in
$\Omega_{d+1}$.

%Thm
\begin{theorem}[{\cite[Theorem 5.23]{Nad}}] \label{lem:Symp-Symp2}
There is an open symplectomorphism
$U_d \times T^1 \times \bR
\hookrightarrow
\Omega_{d+1}$
which makes the diagram
\begin{align*}
\begin{gathered}
\xymatrix{
U_d \times T^1 \times \bR \ar@{^(->}[r]
& \Omega_{d+1} \\
\Core(\tilde{P}_d) \times \{ 0 \} \times \bR \ar@{_(->}[u] \ar[r]^-{\sim}
& \Lambda^\infty_{d+1} \cap \Omega_{d+1} \ar@{_(->}[u]
}
\end{gathered}
\end{align*}
commute,
where the vertical arrows are the canonical inclusions.
\end{theorem}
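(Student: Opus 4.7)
The plan is to stack together the three preceding lemmas via the symplectization functor, which converts a contactomorphism of cooriented contact manifolds into a conic symplectomorphism of the associated symplectizations, carrying Legendrians to their conic Lagrangian lifts.

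First, I would view $U_d \times T^1 \times \bR$ equipped with the Liouville form $e^u (\beta_{\tilde{P}_d} + dt)$ as the symplectization of the circular contactification $(U_d \times T^1, \beta_{\tilde{P}_d} + dt)$, inside which the conic subspace $\Core(\tilde{P}_d) \times \{ 0 \} \times \bR$ is the cone on the Legendrian lift $\Core(\tilde{P}_d) \times \{ 0 \}$. The symplectomorphism $\frakj \colon U_d \to \frakU_d$ of \pref{lem:Symp-Symp1} extends via $\frakj \times \id_{T^1 \times \bR}$ to identify this picture with the symplectization of $(N^\prime_d, \lambda^\prime_d) = (\frakU_d \times T^1, \beta_{d+1} + dt)$, sending the conic Lagrangian to $\frakL_d \times \{ 0 \} \times \bR$.

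Next, the contactomorphism $G \colon N_d \to N^\prime_d$ of \pref{lem:Conta-Conta1} canonically lifts to a conic symplectomorphism of symplectizations, producing an identification with $(\frakU_d \times T^1 \times \bR, \, e^u(\alpha_{T^*\bT^d}|_{\frakU_d} + dt))$ that sends $\frakL_d \times \{ 0 \} \times \bR$ to the conic lift of $\Gamma_{\frakL_d, -f}$, by the commutative diagram in \pref{lem:Conta-Conta1}. Finally, the open contactomorphism $s \colon N_d \hookrightarrow \Omega^\infty_{d+1}$ of \pref{lem:Conta-Conta2} likewise lifts to an open conic symplectic embedding of symplectizations. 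Since $\Omega_{d+1} \subset T^* T^{d+1}$ is a conic open subset of a cotangent bundle with the restricted standard Liouville form, it coincides with the symplectization of $\Omega^\infty_{d+1} = \Omega_{d+1} / \bR_{>0}$, with the underlying $\bR_{>0}$-bundle trivialized on $s(N_d)$ by the contact hypersurface $\mu^{-1}_\Delta(d+1)$. Under this lift, the conic lift of $\Gamma_{\frakL_d, -f}$ is sent to the conic lift of $\Lambda^\infty_{d+1}$, which is exactly $\Lambda^\infty_{d+1} \cap \Omega_{d+1}$.

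Concatenating the three open symplectic embeddings produces the desired $U_d \times T^1 \times \bR \hookrightarrow \Omega_{d+1}$, and the commutativity of the square in the statement is obtained by stacking the three commutative squares from the preceding lemmas. The main technical care, which I expect to be only mild, lies in making precise the identification of $\Omega_{d+1}$ with the symplectization of $\Omega^\infty_{d+1}$ and in checking that the lifts to symplectizations are globally well-defined on the required neighborhoods; both follow directly from the standard fact that the Liouville vector field of the cotangent bundle generates fiberwise $\bR_{>0}$-scaling on $\Omega_{d+1}$, together with the trivialization implicit in \pref{lem:Conta-Conta2}.
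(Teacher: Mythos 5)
Your proposal is correct and follows essentially the same route as the paper's proof: composing the symplectomorphism induced by $\frakj$ from \pref{lem:Symp-Symp1}, the lift of (the inverse of) $G$ from \pref{lem:Conta-Conta1}, and the lift of $s$ from \pref{lem:Conta-Conta2}, and concluding with the observation that $\Omega_{d+1}$ is the symplectization of $\Omega^\infty_{d+1}$ because it avoids the zero section. No substantive difference from the paper's argument.
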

\begin{proof}
The restriction of
$\frakj$
from
\pref{lem:Symp-Symp1}
induces a symplectomorphism
\begin{align*}
U_d \times T^1 \times \bR
\to
\frakU_d \times T^1 \times \bR 
\end{align*}
which sends
$\Core(\tilde{P}_d) \times \{ 0 \} \times \bR$
to
$\frakL_d \times \{ 0 \} \times \bR$.
The inverse of the contactomorphism
$G$
from
\pref{lem:Conta-Conta1}
induces a symplectomorphism
\begin{align*}
(N^\prime_d, \lambda^\prime_d) \times \bR
=
\frakU_d \times T^1 \times \bR
\to
\frakU_d \times T^1 \times \bR
=
(N_d, \lambda_d) \times \bR
\end{align*}
which sends
$\frakL_d \times \{ 0 \} \times \bR$
to
$\Gamma_{\frakL_d, -f} \times \bR$.
The contactomorphism
$s$
from
\pref{lem:Conta-Conta2}
induces a symplectomorphism
\begin{align*}
(N_d, \lambda_d) \times \bR
=
\frakU_d \times T^1 \times \bR
\to
s(\frakU_d \times T^1) \times \bR 
=
s(N_d) \times \bR
\end{align*}
which sends
$\Gamma_{\frakL_d, -f} \times \bR$
to
$\Lambda^\infty_{d+1} \cap \Omega_{d+1}$.
Note that
the symplectization of
$\Omega^\infty_{d+1}$
is isomorphic to
$\Omega_{d+1}$,
as
$\Omega_{d+1}$
does not intersect the zero section
$T^{d+1}$.
\end{proof}

%%%%%%%%%%%%%%%%%%%%%%%%%%%%%%%%%%%%%%
\subsection{$A$-side category}
Let
$Z$
be a real analytic manifold. 
We denote by
$\Sh^\diamondsuit (Z)$
the dg category of large constructible sheaves on
$Z$.
Here,
by a
\emph{large constructible sheaf}
$\scrF$
we will mean a complex of $\bC$-vector spaces for
which there exists a Whitney stratification
$\cS = \{ Z_\alpha \}$
of
$Z$
such that
$\cH^i (\scrF) |_{Z_\alpha}$
are locally constant for all
$i$.
We denote by
$\Sh^\diamondsuit_\cS(Z)$
the full dg subcategory of such sheaves,
called
large $\cS$-constructible sheaves.

Fix a point
$(z, \xi) \in T^*Z$.
Let
$B \subset Z$
be a sufficiently small open ball around
$z \in Z$
and
$f \colon B \to \bR$
a
\emph{compatible test function},
i.e.,
a smooth function with
$f(z) = 0$
and
$df|_z = \xi$.
Consider the
\emph{vanishing cycle functor}
\begin{align*}
\phi_f \colon \Sh^\diamondsuit(Z) \to \Mod(\bC), \
\scrF \mapsto \Gamma_{f \geq 0}(B, \scrF |_B). 
\end{align*}

%Dfn
\begin{definition}
The
\emph{microsupport}
$\ssupp(\scrF) \subset T^*Z$
of
$\scrF \in \Sh^\diamondsuit(Z)$
is the largest closed subset
with
$\phi_f(\scrF) \cong 0$
for any
$(z, \xi) \in T^*Z \setminus \ssupp(\scrF)$
and
its compatible test function
$f$.
\end{definition}

By
\cite[Theorem 8.4.2]{KS}
the microsupport
$\ssupp(\scrF)$
is a closed conic Lagrangian submanifold of
$T^*Z$.
For a conic Lagrangian
$\Lambda \subset T^*Z$,
we denote by
$\Sh^\diamondsuit_\Lambda(Z)$
the full dg subcategory of large constructible sheaves with microsupport in
$\Lambda$.
Consider the dg category
$\Sh(Z)$
of constructible sheaves on
$Z$.
Here,
by a
\emph{constructible sheaf}
$\scrF$
we will mean a complex of $\bC$-vector spaces for
which there exists a Whitney stratification
$\cS = \{ Z_\alpha \}$
of
$Z$
such that
$\cH^i (\scrF) |_{Z_\alpha}$
are locally constant of finite rank for all
$i$.
We denote by
$\Sh_\cS(Z)$
the full dg subcategory of such sheaves,
called
$\cS$-constructible sheaves.

For
a closed conic Lagrangian
$\Lambda \subset T^*Z$
and
an open conic subspace
$\Omega \subset T^* Z$,
consider the Verdier localization
\begin{align*}
\mSh^{\diamondsuit, pre}_\Lambda(\Omega)
=
\Sh^\diamondsuit_{\Lambda \cup (T^* Z \setminus \Omega)}(Z)
/
\Sh^\diamondsuit_{T^* Z \setminus \Omega}(Z).
\end{align*}
Given an inclusion
$\Omega \subset \Omega^\prime$
of open conic subspaces of
$T^*Z$,
there is a restriction functor
\begin{align*}
\rho^{pre}_{\Omega \subset \Omega^\prime}
\colon
\mSh^{\diamondsuit, pre}_\Lambda(\Omega^\prime)
\to
\mSh^{\diamondsuit, pre}_\Lambda(\Omega).
\end{align*}
The assignments
$\Omega
\mapsto
\mSh^{\diamondsuit, pre}_\Lambda(\Omega)$
and
$(\Omega \subset \Omega^\prime)
\mapsto
\rho^{pre}_{\Omega \subset \Omega^\prime}$
assemble into a presheaf of dg categories supported along
$\Lambda$.
We denote
by
$\mSh^\diamondsuit_\Lambda$
its sheafification
and
by
$\rho_{\Omega \subset \Omega^\prime}$
the restriction functor associated with
$\rho^{pre}_{\Omega \subset \Omega^\prime}$.

%Dfn
\begin{definition}
The
\emph{category of large microlocal sheaves}
on
$\Omega$
supported along
$\Lambda$
is the dg category
$\mSh^\diamondsuit_\Lambda (\Omega)$.
\end{definition}

%Rmk
\begin{remark}
The sheaf
$\mSh^\diamondsuit$
is called the
\emph{Kashiwara--Shapira stack}
in
\cite[Section 7.3.1]{GS1}
and
$\mSh^\diamondsuit_\Lambda$
is a sheaf of full dg subcategories of
$\mSh^\diamondsuit$.
%One can check that
%$\mSh^\diamondsuit_\Lambda$
%is conic,
%i.e.,
%invariant under the cotangent scaling of
%$T^*Z$.
%Since the intersection of
%the microsupport
%and
%$Z$
%coincides with the support,
%$\mSh^\diamondsuit_\Lambda$
%is the pushforward of a sheaf supported on
%$\Lambda$,
%which we will also denote by
%$\mSh^\diamondsuit_\Lambda$.
\end{remark}

Given an inclusion
$\Lambda
\subset
\Lambda^\prime$
of closed conic Lagrangians,
there is a full embedding
\begin{align*}
i_{\Lambda \subset \Lambda^\prime}
\colon
\mSh^\diamondsuit_\Lambda
\hookrightarrow
\mSh^\diamondsuit_{\Lambda^\prime}
\end{align*}
which preserves products.
Hence it admits a left adjoint 
\begin{align*}
i^l_{\Lambda \subset \Lambda^\prime}
\colon
\mSh^\diamondsuit_{\Lambda^\prime}
\to
\mSh^\diamondsuit_\Lambda
\end{align*}
which preserves coproducts.
Moreover,
$i^l_{\Lambda \subset \Lambda^\prime}$
is a Verdier localization.

%Dfn
\begin{definition}
For
a closed conic Lagrangian
$\Lambda \subset T^*Z$
and
an open conic subspace
$\Omega \subset T^* Z$,
the
\emph{category of wrapped microlocal sheaves}
on
$\Omega$
supported along
$\Lambda$
is the full dg category
$\mSh_\Lambda (\Omega)
\subset
\mSh^\diamondsuit_\Lambda (\Omega)$
of compact objects.
\end{definition}

Since
$\rho_{\Omega \subset \Omega^\prime}$
preserves products,
it admits a left adjoint
which preserves coproducts
and
the restriction to compact objects yields a corestriction functor 
\begin{align*}
\rho^l_{\Omega \subset \Omega^\prime}
\colon
\mSh_\Lambda(\Omega)
\to
\mSh_\Lambda(\Omega^\prime).
\end{align*}

%Lem
\begin{lemma}[{\cite[Proposition 3.16]{Nad}}] \label{lem:cosheaf}
The assignments
$\Omega
\mapsto
\mSh_\Lambda(\Omega)$
and
$(\Omega \subset \Omega^\prime)
\mapsto
\rho^l_{\Omega \subset \Omega^\prime}$
assemble into a cosheaf of dg categories supported along
$\Lambda$.
Moreover,
there exists a Whitney stratification of
$\Lambda$ 
the restriction of
$\mSh_\Lambda$
to whose strata are locally constant.
\end{lemma}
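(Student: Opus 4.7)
The claim has two parts: a cosheaf property for $\mSh_\Lambda$, and the existence of a stratification along which $\mSh_\Lambda$ is locally constant. The plan is to derive the cosheaf property formally from the sheaf property of $\mSh^\diamondsuit_\Lambda$ by passing to left adjoints and then restricting to compact objects, and to obtain the stratification from general properties of conic Lagrangians combined with the microlocal invariance of the Kashiwara--Shapira stack.

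\textbf{Cosheaf property.} Recall that $\mSh^\diamondsuit_\Lambda$ is a sheaf of presentable dg categories whose restriction maps $\rho_{\Omega \subset \Omega^\prime}$ preserve products, and hence admit left adjoints $\rho^l_{\Omega \subset \Omega^\prime}$. First I would check that $\rho_{\Omega \subset \Omega^\prime}$ also commutes with filtered colimits: this follows from the fact that $\mSh^{\diamondsuit,pre}_\Lambda$ is defined via Verdier localization of $\Sh^\diamondsuit$, both of which operations commute with filtered colimits, and sheafification of a presheaf of presentable categories preserves the property of having colimit-preserving restrictions. Because $\rho_{\Omega \subset \Omega^\prime}$ preserves all colimits, its left adjoint $\rho^l_{\Omega \subset \Omega^\prime}$ preserves compact objects, and therefore the functors $\rho^l$ do descend to the subcategories $\mSh_\Lambda$ of compact objects, producing a genuine precosheaf. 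The cosheaf condition then follows from the sheaf condition by a standard adjunction argument: for a cover $\{\Omega_i\}$ of $\Omega$, the Čech limit diagram computing $\mSh^\diamondsuit_\Lambda(\Omega)$ becomes, after taking left adjoints throughout, a colimit diagram in the $\infty$-category of presentable dg categories with left adjoints as morphisms; restricting to compact objects (which commutes with such colimits by Step~1) yields the cosheaf condition for $\mSh_\Lambda$.

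\textbf{Local constancy along a stratification.} For the second assertion, the strategy is to produce a Whitney stratification $\cS = \{\Lambda_\alpha\}$ of $\Lambda$ such that for any two points $p, p^\prime$ in the same stratum there exists a small contact isotopy carrying a conic neighborhood of $p$ to one of $p^\prime$ in a manner that preserves $\Lambda$. Once this is in hand, the invariance of microlocal sheaves under contact isotopy (a fundamental property of the Kashiwara--Shapira stack) gives an equivalence between the corresponding local sections, hence local constancy of $\mSh_\Lambda$ along $\Lambda_\alpha$. To construct such a stratification, I would begin from the canonical Whitney stratification of $\Lambda$ as a closed subanalytic subset of $T^*Z$, refine it so that the restriction of the projection $T^*Z \to Z$ to $\Lambda$ is of locally constant rank on each stratum, and further refine so that each stratum admits a local trivialization compatible with the contact structure at infinity. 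The existence of such refinements is standard in stratified Morse theory for Lagrangians.

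\textbf{Main obstacle.} The formal adjunction argument is clean, so the main work is concentrated in Step~1, namely the verification that $\rho_{\Omega \subset \Omega^\prime}$ genuinely commutes with filtered colimits through the sheafification step; one needs to be slightly careful that sheafification of the presheaf $\mSh^{\diamondsuit,pre}_\Lambda$ does not destroy this property, which relies on $\Lambda$ being a closed conic Lagrangian and on working with large (rather than finite-rank) constructible sheaves. The second subtlety is producing the Whitney stratification with the contact-trivialization property in sufficient generality; here one leverages the subanalytic structure of $\Lambda$ and the homogeneity of the contact structure on $T^*Z \setminus Z$.
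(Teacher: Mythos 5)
This lemma is not proved in the paper at all: it is imported verbatim from Nadler (\cite[Proposition 3.16]{Nad}), so there is no internal argument to compare yours with; what follows is an assessment of your sketch on its own terms. Your first half (the cosheaf statement) is the standard and essentially correct route, and it is the same mechanism by which the wrapped statement is deduced from the ``large'' statement in the literature: the restriction functors of $\mSh^\diamondsuit_\Lambda$ are continuous and cocontinuous, hence admit left adjoints preserving compact objects, the \v{C}ech limit diagram in $\PPr^R$ transposes to a colimit diagram in $\PPr^L$, and passing to compact objects (via the equivalence between compactly generated categories with compact-preserving functors and small idempotent-complete stable categories) gives the cosheaf condition for $\mSh_\Lambda$. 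The one point you rightly flag -- that cocontinuity of the restriction functors survives sheafification -- is fine, since sections of the sheafification are limits of presheaf sections along cocontinuous transition functors, so filtered colimits are computed componentwise.

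The second half has a genuine gap. You propose to get local constancy along strata by producing, for any two points $p,p'$ of a stratum, a contact isotopy carrying a conic neighborhood of $p$ to one of $p'$ while preserving $\Lambda$, and you assert the existence of a Whitney refinement with such ``contact-compatible local trivializations'' as standard. It is not: Whitney conditions give only topological local triviality along strata (Thom--Mather), i.e.\ stratified homeomorphisms, and microlocal sheaf categories are invariant under (quantized) contact transformations, not under stratified homeomorphisms of the pair $(T^*Z,\Lambda)$. There is no general fact producing ambient $\Lambda$-preserving contact isotopies between germs at distinct points of a stratum of an arbitrary closed subanalytic conic Lagrangian, and this is precisely where the nontrivial content of the ``moreover'' lies. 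The arguments in the literature establish constructibility of the Kashiwara--Schapira stack along a suitable stratification of $\Lambda$ by microlocal Morse theory / noncharacteristic deformation (propagation) arguments, supplemented by quantized contact transformations at smooth Lagrangian points and induction over strata, rather than by the isotopy mechanism you describe. So your Step~2 as written assumes the hard part; either invoke the constructibility of $\mSh^\diamondsuit_\Lambda$ from the microlocal sheaf literature and transport it to compact objects, or replace the isotopy claim by an actual propagation argument.
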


Recall that
the intersection of
the microsupport
$\ssupp(\scrF)$
and
$Z$
coincides with the support of
$\scrF$.
The second statement of
\pref{lem:cosheaf}
means that
$\mSh_\Lambda$
is a constructible cosheaf of dg categories over
$\Lambda$.
So it makes sense to denote its global sections by
$\mSh_\Lambda(\Lambda)$.
Up to taking the opposite category,
$\mSh_\Lambda(\Lambda)$
is equivalent to the wrapped Fukaya category of some Weinstein manifold in the following sense.

%Lem
\begin{lemma}[{\cite[Theorem 1.4]{GPS3}}] \label{lem:Fuk-mSh}
Let
$H$
be a real analytic Weinstein manifold.
For any stable polarization of
$H$,
there is a canonical equivalence
\begin{align} \label{eq:GPS3}
\Fuk^c(H)^{op} = \mSh_{\Core(H)}(\Core(H))
\end{align}
where
$\Fuk^c(H)^{op}$
is the wrapped Fukaya category of
$H$.
\end{lemma}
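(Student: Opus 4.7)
My plan is to exploit the compatibility of cosheaf-of-categories structures on the two sides. On the B-side, the assignment $\Omega \mapsto \mSh_\Lambda(\Omega)$ is a cosheaf of dg categories by \pref{lem:cosheaf}, so the global sections $\mSh_{\Core(H)}(\Core(H))$ can be reconstructed from sections on arbitrarily small open neighborhoods in $\Core(H)$. On the A-side, the wrapped Fukaya category of a Weinstein manifold is expected to satisfy a parallel descent with respect to sectorial covers of $H$; this is the central content of the Ganatra--Pardon--Shende program in \cite{GPS1,GPS2}. The stable polarization enters to fix compatible $\bZ$-gradings on both sides, which is a necessary piece of data since otherwise the two categories are only canonically $\bZ/2$-graded.

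The strategy is to reduce the global comparison to a local equivalence and then invoke cosheaf descent. Over a neighborhood of a smooth top-dimensional stratum of $\Core(H)$, a standard Weinstein neighborhood theorem identifies the local model with a cotangent bundle; there the desired equivalence is a wrapped version of the Nadler--Zaslow theorem, identifying wrapped microlocal sheaves supported on the zero section with the (partially) wrapped Fukaya category of that cotangent bundle. Over lower-dimensional strata of $\Core(H)$, the Weinstein structure locally factors as a product of a standard singular piece (a Weinstein sector with corners) with a transverse cotangent factor; each such local model is handled by the same descent mechanism applied to the finitely many standard Weinstein building blocks, together with a K\"unneth-type result.

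Having the local equivalences and their compatibility with the restriction/corestriction functors $\rho^l_{\Omega \subset \Omega^\prime}$, one constructs a cosheaf-to-cosheaf morphism and checks stratum by stratum that it is an isomorphism, yielding the global equivalence $\Fuk^c(H)^{op} \simeq \mSh_{\Core(H)}(\Core(H))$ by the universal property of cosheafification. The sign conventions and the opposite category on the left match because the Yoneda-type construction producing the equivalence contravariantly intertwines composition of Lagrangian correspondences with composition of sheaf morphisms.

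The hardest step is the A-side sectorial descent. The non-local character of pseudoholomorphic disks makes it a priori unclear that $\Fuk^c(-)^{op}$ descends at all along sectorial covers, let alone in a form compatible with the B-side cosheaf. Establishing this, together with identifying the local cotangent-bundle sections with the Nadler--Zaslow picture, is the major technical achievement of \cite{GPS3}; for the purposes of the present paper, this is invoked as a black box.
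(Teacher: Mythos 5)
Your proposal is consistent with the paper's treatment: the paper gives no proof of this lemma at all, but simply quotes it as \cite[Theorem 1.4]{GPS3}, and your outline correctly identifies that the substantive content (sectorial descent for the wrapped Fukaya category, the local cotangent-bundle comparison with microlocal/constructible sheaves, and the role of the stable polarization in fixing gradings) is exactly what is established in \cite{GPS3} and is to be invoked as a black box here. Since both you and the paper ultimately defer entirely to that reference, there is nothing further to check.
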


It is known that
all complete intersections in
$\bC^N$
are stably polarized.
Rather than passing to the opposite category,
one can equivalently negate the symplectic form on
$H$.
Throughout the paper,
this way we will avoid passing to the opposite category without comments.
For instance,
when we write
\begin{align*}
\Fuk(H) = \mSh^\diamondsuit_{\Core(H)}(\Core(H))
\end{align*}
for the equivalence between their ind-completions,
we tacitly negate the symplectic form on
$H$
to obtain
\begin{align*}
\Fuk^c(H) = \mSh_{\Core(H)}(\Core(H))
\end{align*}
from
\pref{eq:GPS3}
and
then pass to ind-completions.

%%%%%%%%%%%%%%%%%%%%%%%%%%%%%%%%%%%%%%
\subsection{$B$-side category}
For any stable dg category
$\scrC$
we denote by
$\scrC_{\bZ_2}$
its folding,
i.e.,
$\scrC_{\bZ_2}$
is the stable envelope of the
$\bZ_2$-dg category with the same objects as
$\scrC$
whose morphism complex for
$c_1, c_2 \in \scrC_{\bZ_2}$
is given by
\begin{align*}
\Hom^0_{\scrC_{\bZ_2}}(c_1, c_2)
=
\bigoplus_{n \in \bZ} \Hom^{2n}_\scrC (c_1, c_2), \
\Hom^1_{\scrC_{\bZ_2}}(c_1, c_2)
=
\bigoplus_{n \in \bZ} \Hom^{2n+1}_\scrC (c_1, c_2).
\end{align*} 
For any stable $\bZ_2$-dg category
$\scrC$
we denote by
$\scrC_{2 \bZ}$
its unfurling,
i.e.,
$\scrC_{2 \bZ}$
is the $2$-periodic dg category with the same objects as
$\scrC$
whose morphism complex for
$c_1, c_2 \in \scrC_{2 \bZ}$
is given by
\begin{align*}
\Hom^n_{\scrC_{2 \bZ}}(c_1, c_2)
=
\Hom^{\bar{n}}_\scrC (c_1, c_2), \
n \in \bZ
\mapsto
\bar{n} \in \bZ_2.
\end{align*}
The folding
and
unfurling
define equivalences between stable
$\bZ_2$-dg categories
and
$2$-periodic dg categories
\cite[Section 2.1]{Nad}. 

Let
$\bA^{d+2} = \Spec R_{d+2}$
for
$R_{d+2} = \bC [y_1, \ldots, y_{d+2}]$
and
$W_{d+2} = y_1 \cdots y_{d+2} \in R_{d+2}$.

%Dfn
\begin{definition}
A matrix factorization for the pair
$(\bA^{d+2}, W_{d+2})$
is given by the diagram
\begin{align*}
V^0 \xrightarrow{d_0} V^1 \xrightarrow{d_1} V^0
\end{align*}
where
$V^0 \oplus V^1$
is a $\bZ_2$-graded free $R_{d+2}$-modules of finite rank
and
$d_0 \in \Hom(V^0, V^1), d_1 \in \Hom(V^1, V^0)$
satisfy
$d_1 d_0
=
W_{d+2} \cdot \id_{V^0},
d_0 d_1
=
W_{d+2} \cdot \id_{V^1}$.
We denote by
$\MF(\bA^{d+2}, W_{d+2})$
the $\bZ_2$-dg category of the matrix factorizations for
$(\bA^{d+2}, W_{d+2})$
with obvious morphisms.
\end{definition}

Let
$\underline{\scrO}^i_{d+1}$
be the matrix factrization
\begin{align*}
R_{d+2}
\xrightarrow{W^i_{d+2}}
R_{d+2}
\xrightarrow{y_i}
R_{d+2}
\end{align*}
with
$W^i_{d+2} = W_{d+2} / y_i$
for
$i = 1, \ldots, d+2$.

%Lem
\begin{lemma}[{\cite[Proposition 2.1]{Nad}}]
The $\bZ_2$-dg category
$\MF(\bA^{d+2}, W_{d+2})$
is split-generated by
$\{ \underline{\scrO}^i_{d+1} \}^{d+1}_{i = 1}$.
There are isomorphisms of $\bZ_2$-graded $\bC$-modules
\begin{align*}
\begin{gathered}
H^*(\Hom(\underline{\scrO}^i_{d+1}, \underline{\scrO}^i_{d+1}))
\cong
R_{d+2} / (y_i, W^i_{d+2}), \
1 \leq i \leq d+2, \\
H^*(\Hom(\underline{\scrO}^i_{d+1}, \underline{\scrO}^j_{d+1}))
\cong
R_{d+2} / (y_i, y_j) [-1], \
1 \leq i \neq j \leq d+2.
\end{gathered}
\end{align*}
\end{lemma}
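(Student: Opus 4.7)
The lemma combines a split-generation claim with two explicit Hom cohomology computations; I address them in that order. For split-generation, the natural route is through Orlov's theorem, which identifies (the unfurling of) $\MF(\bA^{d+2}, W_{d+2})$ with the singularity category of the nodal hypersurface $X = \{W_{d+2} = 0\} = \bigcup_{i=1}^{d+2} H_i$, where $H_i = \{y_i = 0\}$. Under this equivalence $\underline{\scrO}^i_{d+1}$ corresponds to $\scrO_{H_i}$, because the 2-periodic complex $R_{d+2} \xrightarrow{W^i_{d+2}} R_{d+2} \xrightarrow{y_i} R_{d+2}$ is exactly the eventually 2-periodic tail of a minimal free resolution of $R_{d+2}/(y_i)$ over $R_{d+2}/(W_{d+2})$. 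A d\'evissage along the stratification $X_{\leq k} = H_1 \cup \cdots \cup H_k$, using the Mayer--Vietoris sequences $0 \to \scrO_{X_{\leq k}} \to \scrO_{X_{\leq k-1}} \oplus \scrO_{H_k} \to \scrO_{X_{\leq k-1} \cap H_k} \to 0$ together with the fact that each smooth component $H_i \cong \bA^{d+1}$ contributes trivially to the singularity category, then shows that the $\scrO_{H_i}$ split-generate. The relation $\sum_i [\scrO_{H_i}] = [\scrO_X] = 0$ (since $\scrO_X$ is perfect on $X$) absorbs one generator, leaving $d+1$ of them sufficient.

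For the Hom cohomology computations, I would work directly with the 2-periodic Hom complex of matrix factorizations, which has $R_{d+2} \oplus R_{d+2}$ in each $\bZ_2$-degree, with differentials built from multiplication by $y_i$, $y_j$, $W^i_{d+2}$, and $W^j_{d+2}$. For the diagonal case $i = j$, the degree-$0$ differential sends $(a, b)$ to $((a-b) W^i_{d+2},\ (b-a) y_i)$, forcing cycles to satisfy $a = b$ so $Z^0 \cong R_{d+2}$; the incoming differential from degree-$1$ hits precisely the diagonal copy of the ideal $(y_i, W^i_{d+2})$, yielding $H^0 \cong R_{d+2}/(y_i, W^i_{d+2})$. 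A dual calculation parametrizes degree-$1$ cycles as $(-W^i_{d+2} t,\ y_i t)$ and shows that each such cycle is already a boundary, giving $H^1 = 0$. For the off-diagonal case $i \neq j$, the factorizations $W^i_{d+2} = y_j W^{ij}$ and $W^j_{d+2} = y_i W^{ij}$ (with $W^{ij} = \prod_{k \neq i, j} y_k$) let one check that every degree-$0$ cycle $(y_j t, y_i t)$ is a boundary, so $H^0 = 0$; meanwhile $Z^1 \cong R_{d+2}$ via $t \mapsto (-W^{ij} t, t)$, and under this identification the boundaries correspond to the ideal $(y_i, y_j)$, giving $H^1 \cong R_{d+2}/(y_i, y_j)$. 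This accounts for the odd-degree shift $[-1]$ in the statement.

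The main difficulty is the careful bookkeeping of signs and $\bZ_2$-parities, together with the technical point that the correspondence $\underline{\scrO}^i_{d+1} \leftrightarrow \scrO_{H_i}$ under Orlov's equivalence must be established without accidental parity shifts. Once these identifications are fixed, both assertions follow from the elementary module-theoretic manipulations sketched above.
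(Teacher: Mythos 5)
The statement you are proving is quoted in the paper from \cite[Proposition 2.1]{Nad} without proof, so I can only judge your reconstruction on its own merits. Your cohomology computations of the Hom complexes are correct: the cycle/boundary analysis in both the diagonal case (cycles $a=b$, boundaries the diagonal copy of $(y_i,W^i_{d+2})$, and $H^1=0$) and the off-diagonal case (using $W^i_{d+2}=y_jW^{ij}$, $W^j_{d+2}=y_iW^{ij}$, with $Z^1\cong R_{d+2}$ and boundaries the ideal $(y_i,y_j)$) is exactly the elementary argument one wants, and the parity placement matches the shift $[-1]$ in the statement. The identification of $\underline{\scrO}^i_{d+1}$ with $\scrO_{H_i}=\Coker(y_i)$ under Orlov's equivalence is also fine.

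The split-generation part, however, has a genuine gap. The Mayer--Vietoris d\'evissage you describe only produces relations among the particular objects $\scrO_{X_{\le k}}$, $\scrO_{H_k}$, $\scrO_{X_{\le k-1}\cap H_k}$; at no point does it address an arbitrary object of $D_{sing}(X)$, so it cannot establish that the $\scrO_{H_i}$ (split-)generate the whole category. What is missing is a generation statement for $D^b\Coh(X)$ itself: for instance, any module $M$ over $R_{d+2}/(W_{d+2})$ carries the finite filtration $M\supset y_1M\supset y_1y_2M\supset\cdots\supset W_{d+2}M=0$, whose subquotients are killed by a single $y_k$ and hence are pushed forward from the smooth affine component $H_k\cong\bA^{d+1}$; since such a pushforward has a finite free resolution on $H_k$, it lies in the thick subcategory generated by $\scrO_{H_k}$. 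This is the step that reduces arbitrary objects to the $\scrO_{H_i}$, and it also corrects your phrase ``each smooth component contributes trivially'': $D_{sing}(H_i)=0$, but pushforwards from $H_i$ are certainly not zero in $D_{sing}(X)$ --- the relevant fact is that they lie in the thick subcategory of $\scrO_{H_i}$. Finally, discarding the $(d+2)$-nd generator via ``$\sum_i[\scrO_{H_i}]=[\scrO_X]=0$'' is not a proof: a $K_0$-relation does not give membership in a thick or split-closed subcategory (and the identity as written is not even the correct class relation for a normal crossings union). The correct route is the exact sequence $0\to\scrO_X\to\scrO_{H_{d+2}}\oplus\scrO_{X'}\to\scrO_{H_{d+2}\cap X'}\to 0$ with $X'=H_1\cup\cdots\cup H_{d+1}$: since $\scrO_X$ is perfect it vanishes in $D_{sing}(X)$, and $\scrO_{H_{d+2}\cap X'}$ lies in the thick subcategory generated by $\scrO_{H_1},\ldots,\scrO_{H_{d+1}}$ by the same filtration argument, so $\scrO_{H_{d+2}}$ is exhibited as a direct summand of an object of that subcategory --- which is precisely where ``split''-generation is used, consistently with the Remark following the lemma.
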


%Rmk
\begin{remark}
The matrix factorization
$\underline{\scrO}^{d+2}_{d+1}$
belongs to the stable envelope of
$\{ \underline{\scrO}^i_{d+1} \}^{d+1}_{i = 1}$.
\end{remark}

Let
$Y_d = \Spec R_{d+1} / (W_{d+1})$
be the union of the coordinate hyperplanes
$Y^i_d = \Spec R_{d+1} / (y_i),
1 \leq i \leq d+1$.
We denote by
$\scrO^i_d$
the structure sheaf of
$Y^i_d$.

%Lem
\begin{lemma}[{\cite[Proposition 2.2]{Nad}}]
The dg category
$\Coh(Y_d)$
is split-generated by
$\{ \scrO^i_d \}^{d+1}_{i = 1}$.
There are isomorphisms of $\bZ$-graded $\bC$-modules
\begin{align*}
H^*(\Hom(\scrO^i_d, \scrO^i_d))
\cong
R_{d+1}[u] / (y_i, uW^i_{d+1}), \
1 \leq i \leq d+1, \\
H^*(\Hom(\scrO^i_d, \scrO^j_d))
\cong
R_{d+1}[u] / (y_i, y_j) [-1], \
1 \leq i \neq j \leq d+1,
\end{align*}
where
$u$
is a variable of cohomological degree
$2$
\end{lemma}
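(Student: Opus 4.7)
The plan is to treat the two claims—split-generation and the cohomology of morphism complexes—separately, with the latter occupying most of the work.

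For the split-generation statement, I would start from the observation that since $Y_d$ is affine its structure sheaf $\scrO_{Y_d}$ is a compact generator of $\Coh(Y_d)$, so it is enough to build $\scrO_{Y_d}$ from $\{\scrO^i_d\}_{i=1}^{d+1}$ via cones and direct summands. To do so I would use the Čech/inclusion–exclusion resolution
\[
0 \to \scrO_{Y_d} \to \bigoplus_i \scrO^i_d \to \bigoplus_{i<j} \scrO_{Y^i_d \cap Y^j_d} \to \cdots \to \scrO_{Y^1_d \cap \cdots \cap Y^{d+1}_d} \to 0,
\]
whose exactness is a consequence of the fact that the $\{Y^i_d\}$ form a normal-crossings decomposition of $Y_d$. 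Each intersection $Y^{i_1}_d \cap \cdots \cap Y^{i_k}_d$ is cut out inside the smooth hyperplane $Y^{i_1}_d \cong \bA^d$ by the regular sequence $y_{i_2},\ldots,y_{i_k}$, and the corresponding Koszul complex realizes its structure sheaf as an iterated cone of copies of $\scrO^{i_1}_d$. Substituting these expressions into the resolution above places $\scrO_{Y_d}$ in the thick subcategory generated by $\{\scrO^i_d\}_{i=1}^{d+1}$.

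For the $\Ext$ computation the key input is an explicit $2$-periodic free resolution of $\scrO^i_d = R_{d+1}/(y_i)$ over $R_{d+1}/(W_{d+1})$, coming directly from the factorization $W_{d+1} = y_i \cdot W^i_{d+1}$:
\[
\cdots \xrightarrow{W^i_{d+1}} R_{d+1}/(W_{d+1}) \xrightarrow{y_i} R_{d+1}/(W_{d+1}) \xrightarrow{W^i_{d+1}} R_{d+1}/(W_{d+1}) \xrightarrow{y_i} R_{d+1}/(W_{d+1}) \twoheadrightarrow \scrO^i_d.
\]
Exactness reduces to the two identities $\{x \in R_{d+1}/(W_{d+1}) : y_i x = 0\} = (W^i_{d+1})/(W_{d+1})$ and $\{x : W^i_{d+1} x = 0\} = (y_i)/(W_{d+1})$, both of which follow from $R_{d+1}$ being a unique factorization domain. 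Applying $\Hom_{R_{d+1}/(W_{d+1})}(-, \scrO^j_d)$ yields the cochain complex
\[
R_{d+1}/(y_j) \xrightarrow{y_i} R_{d+1}/(y_j) \xrightarrow{W^i_{d+1}} R_{d+1}/(y_j) \xrightarrow{y_i} R_{d+1}/(y_j) \xrightarrow{W^i_{d+1}} \cdots
\]
computing $\Ext^*(\scrO^i_d, \scrO^j_d)$.

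For $i \neq j$ the factor $y_j$ divides $W^i_{d+1}$, so $W^i_{d+1}$ acts as zero on the domain $R_{d+1}/(y_j)$, while $y_i$ acts injectively with image $(y_i)$. The cohomology therefore vanishes in even degrees and equals $R_{d+1}/(y_i,y_j)$ in each odd degree, which assembles into $R_{d+1}[u]/(y_i,y_j)[-1]$ with $u$ of cohomological degree $2$. For $i = j$ the roles swap: $y_i$ acts as zero while $W^i_{d+1}$ acts injectively with image $(W^i_{d+1})$, giving $R_{d+1}/(y_i)$ in degree $0$ and $R_{d+1}/(y_i, W^i_{d+1})$ in each positive even degree, packaged as $R_{d+1}[u]/(y_i, u W^i_{d+1})$ as claimed. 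The only mildly delicate point I anticipate is organizing the split-generation argument cleanly through the normal-crossings inclusion–exclusion resolution; once this is in place the remainder of the proof is a direct calculation that closely mirrors the matrix-factorization computation in the previous lemma.
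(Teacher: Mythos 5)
The paper itself does not prove this lemma (it is quoted from Nadler's Proposition 2.2), so I can only assess your argument on its own merits. Your $\Ext$ computation is fine: the $2$-periodic complex $\cdots \xrightarrow{W^i_{d+1}} S \xrightarrow{y_i} S \to S/(y_i)$ over $S = R_{d+1}/(W_{d+1})$ is indeed exact by the two annihilator identities you state, affineness identifies the cohomology of the dg Hom with $\Ext^*_S$, and the case analysis for $i=j$ and $i\neq j$ gives exactly the stated graded modules.

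The split-generation argument, however, has a genuine gap. Your reduction rests on the claim that $\scrO_{Y_d}$ is a (split-)generator of $\Coh(Y_d)$ because $Y_d$ is affine. That is false here: $Y_d$ is a singular (normal crossings) scheme, and the thick subcategory of $\Coh(Y_d)=D^b_{coh}(Y_d)$ classically generated by $\scrO_{Y_d}$ is only $\Perf(Y_d)$, which is a proper subcategory precisely because $D_{sing}(Y_{d})\neq 0$ — this non-triviality is the whole reason matrix factorizations appear in the paper, and your own computation shows it: $H^{2k}(\Hom(\scrO^i_d,\scrO^i_d))\cong R_{d+1}/(y_i,W^i_{d+1})\neq 0$ for all $k\geq 1$, so $\scrO^i_d$ is not perfect. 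Consequently your inclusion–exclusion/Koszul argument (which is correct as far as it goes) only proves $\scrO_{Y_d}\in\langle \scrO^1_d,\ldots,\scrO^{d+1}_d\rangle$, i.e. $\Perf(Y_d)\subseteq\langle\scrO^i_d\rangle$; it does not touch the non-perfect objects, which is the actual content of the lemma. A correct route is a dévissage in the opposite direction: for any finitely generated $S$-module $M$, the filtration $M\supseteq y_1M\supseteq y_2y_1M\supseteq\cdots\supseteq y_{d+1}\cdots y_1M=0$ has subquotients annihilated by a single $y_i$, hence they are finitely generated modules over the regular ring $R_{d+1}/(y_i)$ and admit finite free resolutions by direct sums of $\scrO^i_d$; since any bounded complex is a finite iterated cone on its cohomology sheaves, this yields generation of all of $\Coh(Y_d)$ (no summands even needed), which is the statement you actually have to prove.
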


%Lem
\begin{lemma}[{\cite[Theorem 3.7]{Orl}}]
Let
$D_{sing}(Y_{d+1}) = \Coh(Y_{d+1}) / \Perf(Y_{d+1})$
be the $2$-periodic dg quotient category of singularities.
Then there is an equivalence
\begin{align*}
\begin{gathered}
\MF(\bA^{d+2}, y_1 \cdots y_{d+2})
\to
D_{sing}(Y_{d+1}), \\
(V^0 \xrightarrow{d_0} V^1 \xrightarrow{d_1} V^0)
\mapsto
\Coker(d_1). 
\end{gathered}
\end{align*} 
\end{lemma}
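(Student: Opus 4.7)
The plan is to recognise this as an instance of Orlov's theorem identifying matrix factorizations with singularity categories, factored through the stable category of maximal Cohen--Macaulay (MCM) modules. Set $S = R_{d+2}/(W_{d+2})$, so $Y_{d+1} = \Spec S$. I would establish the composition
\[
\MF(\bA^{d+2}, W_{d+2}) \xrightarrow{\sim} \underline{\mathrm{MCM}}(S) \xrightarrow{\sim} D_{sing}(Y_{d+1}),
\]
the first arrow being Eisenbud's equivalence and the second Buchweitz's equivalence for the Gorenstein hypersurface ring $S$. The explicit cokernel formula in the statement will emerge as the composite.

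First I would check that the cokernel functor is well-defined. The relation $d_1 d_0 = W_{d+2} \cdot \id_{V^0}$ forces $W_{d+2}$ to annihilate $\Coker(d_1)$, so $\Coker(d_1)$ descends to an $S$-module, giving a coherent sheaf on $Y_{d+1}$. The short exact sequence $0 \to V^1/d_0(V^0) \to V^0 \to \Coker(d_1) \to 0$, combined with the freeness of $V^0, V^1$ over $R_{d+2}$ and a depth count, shows that $\Coker(d_1)$ is MCM over $S$. A null-homotopic morphism of matrix factorizations induces a morphism of cokernels that factors through an $S$-module of the form $V/W_{d+2} V$, which is free over $S$ hence perfect, so it vanishes in $D_{sing}$. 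Essential surjectivity of the first equivalence uses Eisenbud's construction: given an MCM module $M$, take a finite free resolution $F_\bullet \to M$ over $R_{d+2}$, which exists by the Hilbert syzygy theorem; since $W_{d+2}$ annihilates $M$, multiplication by $W_{d+2}$ on $F_\bullet$ is null-homotopic, and the components of a chosen null-homotopy together with the differential assemble into a matrix factorization whose cokernel recovers $M$. For Buchweitz's equivalence I would pass from an arbitrary coherent sheaf on $Y_{d+1}$ to a sufficiently high syzygy, which is MCM, and verify that this yields an inverse to the natural functor $\underline{\mathrm{MCM}}(S) \to D_{sing}(Y_{d+1})$ modulo $\Perf(Y_{d+1})$.

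The main obstacle is Eisenbud's essential surjectivity, i.e.\ realising every MCM module as the cokernel of an honest matrix factorization. This rests on the non-trivial structural fact that minimal free resolutions over a hypersurface ring are eventually $2$-periodic, which is what allows the pair $(d_0, d_1)$ to be extracted globally rather than only asymptotically. Once this input is in place, full faithfulness for both equivalences reduces to a direct computation of $\Hom$-complexes, matching the $\bZ_2$-grading on $\MF$ with the $2$-periodicity of $D_{sing}$ inherited from the Gorenstein property of $S$, and the combined functor is easily checked to send $(V^0 \xrightarrow{d_0} V^1 \xrightarrow{d_1} V^0)$ to the class of $\Coker(d_1)$.
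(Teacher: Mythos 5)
The paper offers no internal proof of this lemma---it is quoted directly as Orlov's Theorem 3.7---and your sketch reproduces the standard Eisenbud--Buchweitz argument underlying that theorem (cokernel functor landing in maximal Cohen--Macaulay modules, the null-homotopy trick giving essential surjectivity, high syzygies handling arbitrary objects of the singularity category), which is correct in outline and essentially the cited route. Two cosmetic points: the exact sequence exhibiting $\Coker(d_1)$ as MCM should read $0 \to V^1 \xrightarrow{d_1} V^0 \to \Coker(d_1) \to 0$ over $R_{d+2}$ (equivalently, with $V^0/W_{d+2}V^0$ in the middle over $R_{d+2}/(W_{d+2})$), since $d_1$ is injective because $W_{d+2}$ is a non-zero-divisor; and because the statement is an equivalence of $\bZ_2$-dg (not merely triangulated) categories, one should remark that the argument upgrades to the dg level in the standard way.
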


%Lem
\begin{lemma}[{\cite[Proposition 2.3]{Nad}}] \label{lem:Coh-MF}
Let
$\pi_{d+1, d} \colon Y_{d+1} \to Y_d$
be the natural projection.
Then the pullback functor
$\pi^*_{d+1, d} \colon \Coh(Y_d) \to \Coh(Y_{d+1})$
induces an equivalence
\begin{align*}
\Coh(Y_d)_{\bZ_2}
\simeq
\MF(\bA^{d+2}, y_1 \cdots y_{d+2})
\end{align*}
which sends
$\scrO^i_d$
to
$\underline{\scrO}^i_{d+1}$
for
$1 \leq i \leq d+1$
and
$u$
to
$y_{d+2}$.
\end{lemma}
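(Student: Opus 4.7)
The plan is to prove the equivalence by matching compatible split-generators and comparing morphism complexes. The two preceding lemmas give explicit split-generating sets on each side: $\{\scrO^i_d\}_{i=1}^{d+1}$ for $\Coh(Y_d)$, and $\{\underline{\scrO}^i_{d+1}\}_{i=1}^{d+1}$ for $\MF(\bA^{d+2}, W_{d+2})$ (the remaining matrix factorization $\underline{\scrO}^{d+2}_{d+1}$ already lies in the stable envelope of the others, as noted in the remark above). Thus it suffices to construct a dg functor matching these two generating sets and inducing isomorphisms on morphism complexes; the equivalence of the idempotent completions then follows formally.

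To construct the functor, I would compose the derived pullback $\pi^*_{d+1, d}$ with the quotient to the singularity category and Orlov's equivalence, obtaining
\[
\Coh(Y_d) \xrightarrow{\pi^*_{d+1, d}} \Coh(Y_{d+1}) \twoheadrightarrow D_{sing}(Y_{d+1}) \xrightarrow{\sim} \MF(\bA^{d+2}, W_{d+2}).
\]
A direct computation on generators shows that this sends $\scrO^i_d$ to an object isomorphic to $\underline{\scrO}^i_{d+1}$ for $1 \leq i \leq d+1$: under Orlov's equivalence $\underline{\scrO}^i_{d+1}$ corresponds to $\Coker(y_i\, \cdot) = R_{d+2}/(y_i)$, which is the structure sheaf of the hyperplane $Y^i_{d+1} \subset Y_{d+1}$. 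One then checks that the functor descends to the folded category $\Coh(Y_d)_{\bZ_2}$: the target is $\bZ_2$-graded by construction, and the degree-$2$ periodicity generator $u$ of the $\scrO^i_d$ on $\Coh(Y_d)$, induced by the matrix factorization structure of $Y_d \subset \bA^{d+1}$, is carried to the action of $y_{d+2}$ on matrix factorizations. This establishes the identification $u \leftrightarrow y_{d+2}$ claimed in the statement.

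Full faithfulness on the generators is then a direct comparison of the morphism complexes recorded in the preceding lemmas. Using $W^i_{d+2} = y_{d+2} \cdot W^i_{d+1}$ together with $u \leftrightarrow y_{d+2}$, the two rings
\[
R_{d+1}[u]/(y_i, u W^i_{d+1}) \quad \text{and} \quad R_{d+2}/(y_i, W^i_{d+2})
\]
become canonically isomorphic after collapsing $u$ from cohomological degree $2$ to degree $0$ via the folding; the off-diagonal case $i \neq j$ reduces on both sides to $R_{d+2}/(y_i, y_j)$ in degree $1 \pmod{2}$. Hence the functor is fully faithful on split-generators and carries them to split-generators, so it extends to the claimed equivalence of idempotent-complete dg categories.

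The main obstacle I anticipate is verifying rigorously that the composite descends to the folded category and that the periodicity class $u$ corresponds precisely to the function $y_{d+2}$. This requires tracing carefully through Orlov's equivalence and the natural $\bZ_2$-periodicity on $D_{sing}(Y_{d+1})$ coming from Eisenbud's matrix factorization theorem, and matching it with the degree-$2$ Ext class $u$ on the $\Coh(Y_d)$ side. The remaining steps involve some bookkeeping with the specific form of the projection $\pi_{d+1, d}$ and with Orlov's equivalence, but are essentially direct computations.
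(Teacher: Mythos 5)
The paper does not prove this lemma at all; it is imported verbatim from Nadler (\cite[Proposition 2.3]{Nad}), and your argument --- compose with the Verdier quotient to $D_{sing}(Y_{d+1})$ and Orlov's equivalence, match the split-generators $\scrO^i_d \leftrightarrow \underline{\scrO}^i_{d+1}$, and compare the folded Hom computations after identifying $u$ with $y_{d+2}$ --- is essentially that cited proof, with the one genuinely delicate point ($u \mapsto y_{d+2}$ under Orlov's identification) correctly flagged. The only thing to phrase carefully is that there is literally no morphism of schemes $Y_{d+1} \to Y_d$ (the coordinate projection does not carry the component $\{y_{d+2}=0\}$ into $Y_d$), so the ``pullback'' must be read as extension of scalars along $R_{d+1} \hookrightarrow R_{d+2}$ followed by pushforward along the closed embedding $Y_d \times \bA^1 \hookrightarrow Y_{d+1}$; with that reading your computation $\pi^*_{d+1,d}\scrO^i_d \cong R_{d+2}/(y_i) = \scrO_{Y^i_{d+1}}$ for $1 \leq i \leq d+1$ is exactly right and the rest of your outline goes through.
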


%%%%%%%%%%%%%%%%%%%%%%%%%%%%%%%%%%%%%%
\subsection{Homological mirror symmetry}
Let
$\cI_{d+1}$
be the category whose objects are subsets
$I \subset \{ 1, \ldots, d+1 \}$
and
whose morphisms are given by inclusions.
We denote by
$\cI^\circ_{d+1}$
the full subcategory of proper subsets.
For
$I \in \cI^\circ_{d+1}$
we define
$\Lambda_I$
as the product conic Lagrangian
$(\Lambda_1)^I \subset (T^* T^1)^I$.
Consider the hyperbolic restriction
\begin{align*}
\eta_{I \subset I^\prime}
=
(p_{I \subset I^\prime})_* (q_{I \subset I^\prime})^!
\colon
\Sh^\diamondsuit_{\Lambda_{I^\prime}}(T^{I^\prime})
\to
\Sh^\diamondsuit_{\Lambda_I}(T^I)
\end{align*}
where
$p_{I \subset I^\prime}
\colon
T^I \times [0, \frac{1}{2})^{I^\prime \setminus I}
\to
T^I$
is the projection
and
$q_{I \subset I^\prime}
\colon
T^I \times [0, \frac{1}{2})^{I^\prime \setminus I}
\hookrightarrow
T^{I^\prime}$
is the canonical inclusion.
Note that
$\eta_{I \subset I^\prime}$
is the product of hyperbolic restrictions in the coordinate directions indexed by
$I^\prime \setminus I$
and
the identity in the coordinate directions indexed by
$I$.
We denote by
$\eta_I$
the hyperbolic restriction with
$I^\prime = \{ 1, \ldots, d+1 \}$.

%Lem
\begin{lemma}[{\cite[Lemma 5.25, 5.26]{Nad}}]
There is an equivalence
\begin{align*}
\Sh^\diamondsuit_{\Lambda_{d+1}}(T^{d+1})
\simeq
\Qcoh(\bA^{d+1})
\end{align*}
which makes the diagram
\begin{align*}
\begin{gathered}
\xymatrix{
\Sh^\diamondsuit_{\Lambda_{d+1}}(T^{d+1}) \ar[r]^-{\sim} \ar_{\eta_I}[d]
& \Qcoh(\bA^{d+1}) \ar^{\iota^*_I}[d] \\
\Sh^\diamondsuit_{\Lambda_I}(T^I) \ar[r]^-{\sim}
& \Qcoh(\bA^I)
}
\end{gathered}
\end{align*}
commute,
where
\begin{align*}
\iota_I
\colon
\bA^I = \Spec \bC [y_i \ | \ i \in I]
\hookrightarrow
\bA^{d+1} = \Spec \bC [y_1, \ldots, y_{d+1}]
\end{align*}
is the canonical inclusion of the subvariety defined by
$y_j = 0$
for
$j \in I^c$.
\end{lemma}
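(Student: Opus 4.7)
The strategy is to reduce the equivalence to the one-dimensional case and then globalize by a K\"unneth type argument. Both sides naturally decompose as tensor products: on the sheaf side, since $\Lambda_{d+1} = (\Lambda_1)^{d+1}$ is a product conic Lagrangian inside $T^* T^{d+1} = (T^* T^1)^{d+1}$ and microsupport behaves well under external tensor products, one has $\Sh^\diamondsuit_{\Lambda_{d+1}}(T^{d+1}) \simeq \Sh^\diamondsuit_{\Lambda_1}(T^1)^{\otimes (d+1)}$; on the coherent side, $\Qcoh(\bA^{d+1}) \simeq \Qcoh(\bA^1)^{\otimes (d+1)}$ since $\bA^{d+1} = (\bA^1)^{d+1}$. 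Moreover, by its very definition, $\eta_I$ is the external tensor product of $\eta_\emptyset$ in the coordinate directions indexed by $\{1, \ldots, d+1\} \setminus I$ and the identity in the directions indexed by $I$, and $\iota_I^*$ has the matching factorization. Thus it is enough to prove both the equivalence and the commuting square in the single-variable case $d = 0$, and then to assemble the higher-dimensional statement from the tensor power of the one-variable data (including applying the same argument to the bottom row with $I$ in place of $\{1,\ldots,d+1\}$).

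For the one-variable case, $\Lambda_1 \subset T^* T^1$ is the zero section together with a positive cotangent ray over a distinguished basepoint. By standard microlocal sheaf theory, a large constructible sheaf $\scrF$ on $T^1$ with microsupport in $\Lambda_1$ is locally constant away from the basepoint (from the zero-section component) and satisfies a one-sided constructibility condition at the basepoint (from the conic ray component). Such an $\scrF$ is therefore determined by its generic stalk $M$ together with a distinguished endomorphism $y$ coming from the variation across the ray, giving an equivalence $\Sh^\diamondsuit_{\Lambda_1}(T^1) \simeq \Mod(\bC[y]) \simeq \Qcoh(\bA^1)$. Under this dictionary, the hyperbolic restriction $\eta_\emptyset = (p_\emptyset)_* (q_\emptyset)^!$, with $q_\emptyset \colon [0, 1/2) \hookrightarrow T^1$ a one-sided inclusion at the basepoint, computes the cofiber of multiplication by $y$ on $M$, yielding $M / yM = M \otimes_{\bC[y]} \bC$. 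This is precisely the pullback $\iota_\emptyset^*$ along $\{0\} \hookrightarrow \bA^1$ on the coherent side, establishing the commuting square in the base case.

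The main obstacle is the careful verification of the one-variable equivalence together with the identification of the functors $\eta_\emptyset$ and $\iota_\emptyset^*$. While the microlocal characterization of sheaves with prescribed microsupport is classical, some care is needed in matching orientation conventions: distinguishing the shriek-restriction $q^!$ from the star-restriction, tracking the direction of the positive ray, and ensuring that the polynomial generator $y$ on the two sides is identified the right way (so that, e.g., multiplication by $y$ on the coherent side really corresponds to the monodromy class on the sheaf side). Once this is in place, the K\"unneth decomposition and the factorization of $\eta_I$ and $\iota_I^*$ render the higher-dimensional statement essentially formal, and the commutativity of the displayed square reduces to the $(d+1)$-fold tensor product of the one-variable commutativity.
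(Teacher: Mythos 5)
The paper does not reprove this lemma---it is quoted from Nadler---and your argument is essentially the one from the cited source: classify large sheaves on $T^1$ with microsupport in $\Lambda_1$ as $\bC[y]$-modules via the generic stalk together with the composite of the two co-specialization maps at the marked point (the positive-ray condition makes one of them invertible), identify the one-variable hyperbolic restriction with $\iota_\emptyset^*$, and assemble the general case from the product decomposition $\Lambda_{d+1}=(\Lambda_1)^{d+1}$, which matches the factorizations of $\eta_I$ and $\iota_I^*$. The only point to watch is the one you already flag: with the paper's convention $\eta=p_*q^!$ for $q\colon[0,\tfrac12)\hookrightarrow T^1$ and the \emph{positive} ray in $\Lambda_1$, the one-variable hyperbolic restriction computes $\operatorname{fib}(y\colon M\to M)$ rather than the cofiber $M\otimes^L_{\bC[y]}\bC$, so the equivalences must be normalized coherently (a shift by $[\,|I^c|\,]$ on the $I$-th row, or the opposite half-interval/ray convention) for the displayed square---and its compatibility over nested $I\subset I'$ used later in the limit---to commute on the nose.
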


For
$I \in \cI^\circ_{d+1}$
we define
$\Omega_I$
as the open conic subset
\begin{align*}
\Omega_I
=
\{ (\theta, \xi) \in T^* T^{d+1} | \Sigma^{d+1}_{i=1} \xi_i > 0, \xi_j \neq 0 \ \text{for}\ j \in I^c \}
\subset
\Omega_{d+1}.
\end{align*}
The collection
$\{ \Omega_I \}_{I \in \cI^\circ_{d+1}}$
forms an open conic cover of
$\Omega_{d+1}$
satisfying
$\Omega_{I \cap I^\prime} = \Omega_I \cap \Omega_{I^\prime}$.
Note that
we have
$\Omega_I \subset \Omega_{I^\prime}$
whenever
$I \subset I^\prime$.
Let
$^{**}\DG$
be the category of
cocomplete dg categories
and
functors
which preserve
colimits
and
compact objects
\cite[Section 7.3.1]{GS1}.
Consider a functor
\begin{align*}
\mSh^\diamondsuit
\colon
(\cI^\circ_{d+1})^{op}
\to
^{**}\DG, \
I
\mapsto
\mSh^\diamondsuit_{\Lambda_I}(\Omega_I) 
=
\mSh^\diamondsuit_{\Lambda_{d+1}}(\Omega_I)
\end{align*}
which sends inclusions
$I \subset I^\prime$
to the restriction functors
$\rho_{I \subset I^\prime}$
along the inclusions
$\Omega_{I} \subset \Omega_{I^\prime}$.
We denote by
$\rho_I$
the restriction functor with
$I^\prime = \{ 1, \ldots, d+1 \}$.
As
$\mSh^\diamondsuit_{\Lambda_{d+1}}$
is a sheaf,
the canonical functor
\begin{align*}
\mSh^\diamondsuit_{\Lambda_{d+1}}(\Omega_{d+1})
\to
\lim_{I \in (\cI^\circ_{d+1})^{op}} \mSh^\diamondsuit_{\Lambda_I}(\Omega_I)
\end{align*}
is an equivalence.

%Thm
\begin{theorem}[{\cite[Theorem 5.27]{Nad}}] \label{thm:mSh-Coh}
There is an equivalence
\begin{align*}
\mSh^\diamondsuit_{\Lambda_{d+1}}(\Omega_{d+1})
\simeq
\IndCoh(Y_d)
=
\lim_{I \in (\cI^\circ_{d+1})^{op}} \Qcoh(\bA^I)
\end{align*}
which makes the diagram
\begin{align*}
\begin{gathered}
\xymatrix{
\mSh^\diamondsuit_{\Lambda_{d+1}}(\Omega_{d+1}) \ar[r]^-{\sim} \ar_{\rho_I}[d]
& \IndCoh(Y_d) \ar^{\tau_I}[d] \\
\mSh^\diamondsuit_{\Lambda_I}(\Omega_I) \ar[r]^-{\sim}
& \Qcoh(\bA^I)
}
\end{gathered}
\end{align*}
commute,
where
$\tau_I$
is the canonical functor.
\end{theorem}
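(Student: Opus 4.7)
The plan is to use the sheaf property of $\mSh^\diamondsuit_{\Lambda_{d+1}}$ (established prior to the statement) to reduce the theorem to the cover $\{\Omega_I\}_{I \in \cI^\circ_{d+1}}$, then to compute each local piece in terms of ordinary constructible sheaves on the subtorus $T^I$, and finally to match the restriction maps on the two sides so that passage to the limit produces the desired equivalence with $\IndCoh(Y_d) = \lim_{I \in (\cI^\circ_{d+1})^{op}} \Qcoh(\bA^I)$.

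\textbf{Step 1: Local identification on each $\Omega_I$.} The cover $\{\Omega_I\}$ refines $\Omega_{d+1}$, and $\Omega_I \cap \Omega_{I'} = \Omega_{I \cap I'}$. By the sheaf property, $\mSh^\diamondsuit_{\Lambda_{d+1}}(\Omega_{d+1}) \simeq \lim_I \mSh^\diamondsuit_{\Lambda_{d+1}}(\Omega_I)$. On $\Omega_I$, for every coordinate $j \in I^c$ we have $\xi_j \ne 0$, and since $\Lambda_{d+1}$ is built from the positive-conormal $\Lambda_1$ in each factor, only the ray $\xi_j > 0$ survives. Microlocally in that direction the sheaf is determined by its behaviour on one side of the wall $\theta_j = 0$, so the restriction of microlocal sheaves to $\Omega_I$ is computed by the hyperbolic restriction $\eta_{I \subset \{1,\ldots,d+1\}}$ applied in the $I^c$-directions, yielding an equivalence
\[
\mSh^\diamondsuit_{\Lambda_{d+1}}(\Omega_I) \;\xrightarrow{\ \sim\ }\; \Sh^\diamondsuit_{\Lambda_I}(T^I).
\]
Composing with the equivalence $\Sh^\diamondsuit_{\Lambda_I}(T^I) \simeq \Qcoh(\bA^I)$ of the previous lemma gives the desired local equivalence.

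\textbf{Step 2: Functoriality of restrictions.} For $I \subset I'$ in $\cI^\circ_{d+1}$ one has $\Omega_I \subset \Omega_{I'}$, and I need to verify that the diagram
\[
\xymatrix{
\mSh^\diamondsuit_{\Lambda_{d+1}}(\Omega_{I'}) \ar[r]^-{\sim} \ar_{\rho_{I \subset I'}}[d] & \Qcoh(\bA^{I'}) \ar^{\iota_{I \subset I'}^*}[d] \\
\mSh^\diamondsuit_{\Lambda_{d+1}}(\Omega_I) \ar[r]^-{\sim} & \Qcoh(\bA^I)
}
\]
commutes. The left vertical is the microlocal restriction, and by the identification of Step 1 it factors as the hyperbolic restriction in the extra $I' \setminus I$ directions. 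By the previous lemma this hyperbolic restriction matches the pullback along $\iota_{I \subset I'} \colon \bA^I \hookrightarrow \bA^{I'}$, which is exactly the right vertical.

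\textbf{Step 3: Globalization.} Assembling Steps 1 and 2 yields an equivalence of diagrams of dg categories indexed by $(\cI^\circ_{d+1})^{op}$, and hence an equivalence on limits:
\[
\mSh^\diamondsuit_{\Lambda_{d+1}}(\Omega_{d+1}) \simeq \lim_{I \in (\cI^\circ_{d+1})^{op}} \mSh^\diamondsuit_{\Lambda_I}(\Omega_I) \simeq \lim_{I \in (\cI^\circ_{d+1})^{op}} \Qcoh(\bA^I) = \IndCoh(Y_d).
\]
The commutativity of the square featuring $\rho_I$ and $\tau_I$ is then immediate, since $\tau_I$ is by definition the projection out of the limit and Step 2 says that this is exactly what $\rho_I$ transports to.

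\textbf{Main obstacle.} The heart of the argument is Step 1, the identification of microlocal sheaves on the open cone $\Omega_I$ with constructible sheaves on $T^I$. One must control carefully what the Lagrangian $\Lambda_{d+1} \cap \Omega_I$ looks like after the positivity restriction on the $I^c$-directions, and use that the antimicrolocal direction collapses the full cotangent geometry to the smaller subtorus via hyperbolic restriction; this requires compatibility of microsupport estimates with taking preimages along $q_{I \subset I'}$ and proper pushforward along $p_{I \subset I'}$. Once this sheaf-theoretic input is in place, the remaining steps are formal manipulations with the sheaf condition and the limit defining $\IndCoh(Y_d)$.
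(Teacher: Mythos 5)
Your proposal follows essentially the same route as the paper's proof: localize over the cover $\{\Omega_I\}$ using the sheaf property, identify $\mSh^\diamondsuit_{\Lambda_{d+1}}(\Omega_I)$ with $\Sh^\diamondsuit_{\Lambda_I}(T^I)$ via the hyperbolic restriction factoring through microlocalization, match the restriction functors on both sides, invoke the preceding lemma to pass to $\Qcoh(\bA^I)$, and take limits. The one ingredient you leave open (your ``main obstacle'') is closed in the paper by noting that hyperbolic restriction in a direction $j \in I^c$ kills sheaves whose microsupport avoids $\{\xi_j > 0\}$, and by exhibiting the pushforward along $T^I \hookrightarrow T^{d+1}$ as an inverse of the induced functor $\tilde{\eta}_I$.
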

\begin{proof}
There is a natural isomorphism
$\mSh^\diamondsuit_{\Lambda_{d+1}}
\to
\Sh^\diamondsuit_{\Lambda_{d+1}}$
as a sheaf of categories over
$\cI^\circ_{d+1}$
induced by
$\eta_I$.
Indeed,
$\eta_I$
factors through the microlocalization
\begin{align*}
\Sh^\diamondsuit_{\Lambda_{d+1}}(T^{d+1})
\to
\mSh^\diamondsuit_{\Lambda_I}(\Omega_I)
\xrightarrow{\tilde{\eta}_I}
\Sh^\diamondsuit_{\Lambda_I}(T^I)
\end{align*}
and
for inclusions
$I \subset I^\prime$
the diagrams
\begin{align*}
\begin{gathered}
\xymatrix{
\mSh^\diamondsuit_{\Lambda_{I^\prime}}(\Omega_{I^\prime}) \ar[r]^-{\tilde{\eta}_{I^\prime}} \ar_{\rho_{I \subset I^\prime}}[d]
& \Sh^\diamondsuit_{\Lambda_{I^\prime}}(T^{I^\prime}) \ar^{\eta_{I \subset I^\prime}}[d] \\
\mSh^\diamondsuit_{\Lambda_I}(\Omega_I) \ar[r]^-{\tilde{\eta}_I}
& \Sh^\diamondsuit_{\Lambda_I}(T^I)
}
\end{gathered}
\end{align*}
commute.
Note that
the hyperbolic restriction in the coordinate direction indexed by
$j \in I^c$
vanishes on sheaves
whose microsupport does not intersect the locus
$\{ \xi_j > 0 \} \subset T^* T^{d+1}$. 
For each
$I \in (\cI^\circ_{d+1})^{op}$ 
the functor
$\tilde{\eta}_I$
is an equivalence,
since it admits an inverse induced by the pushforward
\begin{align*}
\Sh^\diamondsuit_{\Lambda_I}(T^I)
\to
\Sh^\diamondsuit_{\Lambda_{d+1}}(T^{d+1})
\end{align*}
along the inclusion
$T^I \hookrightarrow T^{d+1}$.
\end{proof}

%Cor
\begin{corollary}[{\cite[Corollary 5.28]{Nad}}] \label{cor:Nadler}
There is an equivalence
\begin{align*}
\Fuk(\tilde{P}_d)_{\bZ_2}
=
\mSh^\diamondsuit_{\Lambda_{d+1}}(\Omega_{d+1})_{\bZ_2}
\simeq
\MF^\infty(\bA^{d+2}, y_1 \cdots y_{d+2})
\end{align*}
where
$\MF^\infty(\bA^{d+2}, y_1 \cdots y_{d+2})$
is the ind-completion of
$\MF(\bA^{d+2}, y_1 \cdots y_{d+2})$. 
\end{corollary}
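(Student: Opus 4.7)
The plan is to chain together four identifications: GPS's Floer-to-sheaf equivalence, Nadler's symplectic transport into $\Omega_{d+1}$, the microlocal-to-coherent equivalence \pref{thm:mSh-Coh}, and Orlov's matrix factorization equivalence \pref{lem:Coh-MF}. Folding to $\bZ_2$-graded categories and ind-completing turns the chain into the stated equivalence.

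Concretely, I would first invoke \pref{lem:Fuk-mSh} to identify $\Fuk(\tilde{P}_d)$ (with the symplectic form negated, following the convention fixed right after \pref{lem:Fuk-mSh}) with $\mSh^\diamondsuit_{\Core(\tilde{P}_d)}(\Core(\tilde{P}_d))$; the stable polarization hypothesis is satisfied since $\tilde{P}_d$ is a complex hypersurface in $(\bC^*)^{d+1}$. I would then transport this identification along the open symplectic embedding $U_d \times T^1 \times \bR \hookrightarrow \Omega_{d+1}$ from \pref{lem:Symp-Symp2}, which carries $\Core(\tilde{P}_d) \times \{0\} \times \bR$ onto $\Lambda^\infty_{d+1} \cap \Omega_{d+1}$. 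Invariance of the Kashiwara--Schapira stack under open symplectomorphisms then yields the first equality
\[
\Fuk(\tilde{P}_d)_{\bZ_2} = \mSh^\diamondsuit_{\Lambda_{d+1}}(\Omega_{d+1})_{\bZ_2}.
\]
The $\bZ_2$-folding enters because the symplectization involves the $T^1$-factor from the circular contactification: looping the Liouville direction collapses the cohomological grading from $\bZ$ to $\bZ_2$.

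For the second equivalence, I would apply \pref{thm:mSh-Coh} to rewrite $\mSh^\diamondsuit_{\Lambda_{d+1}}(\Omega_{d+1}) \simeq \IndCoh(Y_d)$, fold to obtain $\mSh^\diamondsuit_{\Lambda_{d+1}}(\Omega_{d+1})_{\bZ_2} \simeq \IndCoh(Y_d)_{\bZ_2}$, and then appeal to \pref{lem:Coh-MF}, which supplies $\Coh(Y_d)_{\bZ_2} \simeq \MF(\bA^{d+2}, y_1\cdots y_{d+2})$ via pullback along $\pi_{d+1,d}$ and sends the cohomological degree-$2$ generator $u$ to $y_{d+2}$. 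Ind-completing yields $\IndCoh(Y_d)_{\bZ_2} \simeq \MF^\infty(\bA^{d+2}, y_1 \cdots y_{d+2})$, and splicing with the A-side identification completes the argument.

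The delicate point will be reconciling the two sources of the $\bZ_2$-folding: on the A-side the loop variable $t$ from the circular contactification, and on the B-side the degree-$2$ generator $u$ in $\Coh(Y_d)$, which under \pref{lem:Coh-MF} becomes $y_{d+2}$. I expect the required compatibility to follow from tracking the image of $t$ through the chain $\frakj$, $G$, $s$ of \pref{lem:Symp-Symp2} and through the hyperbolic-restriction construction underlying \pref{thm:mSh-Coh}; the ingredients for this verification are all in place in the preceding sections, but the matching is the sole nontrivial technical point in the argument.
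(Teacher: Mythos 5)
Your chain of identifications---\pref{lem:Fuk-mSh} (with the geometric input of \pref{lem:Symp-Symp2} identifying $\Core(\tilde{P}_d)$ with $\Lambda^\infty_{d+1}\cap\Omega_{d+1}$) for the first equality, then $\bZ_2$-folding of \pref{thm:mSh-Coh} followed by \pref{lem:Coh-MF} extended to ind-completions---is exactly the paper's proof. One small correction to your closing remark: the $\bZ_2$-folding is not generated by the $T^1$-factor of the circular contactification (that factor accounts for the degree-$2$ variable $u$, which \pref{lem:Coh-MF} sends to $y_{d+2}$); the folding is simply applied externally to both sides of the $\bZ$-graded equivalence, so the grading compatibility you flag as the delicate point is already absorbed into \pref{lem:Coh-MF} and requires no further verification.
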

\begin{proof}
The first equivalence follows from
\pref{lem:Fuk-mSh}.
Passing to $\bZ_2$-folding in
\pref{thm:mSh-Coh},
one obtains the second equivalence from
\pref{lem:Coh-MF}
extended to ind-completions.
\end{proof}

%%%%%%%%%%%%%%%%%%%%%%%%%%%%%%%%%%%%%%%%%%%%%%%%%%%%%%
%%%%%%%%%%%%%%%%%%%%%%%%%%%%%%%%%%%%%%%%%%%%%%%%%%%%%%%%%%
\section{Critical loci of Landau--Ginzburg models for very affine hypersufaces}
In this section,
following
\cite[Section 3]{AAK},
we realize the mirror pair for very affine hypersuface as critical loci of the associated Landau--Ginzburg model.
They give rise to fibrations over the tropical hypersurface equipped with the canonical stratification.
%Away from lower dimensional strata,
%the fibers over a point of any $k$-stratum contain real $k$-tori dual to each other.
 
\subsection{Very affine hypersurfaces}
Let
$\bT
=
M^\vee_{\bR / \bZ}
=
M^\vee_\bR / M^\vee$
be a real $(d+1)$-dimensional torus with cocharacter lattice
$M^\vee$.
We denote by
$\bT_\bC = M^\vee_{\bC^*}$
the associated complex torus.
Taking its dual,
one obtains the complex torus
$\bT^\vee_\bC = M_{\bC^*}$
associated with
$\bT^\vee = M_{\bR / \bZ} = M_\bR / M$
whose cocharacter lattice is
$M$.
We choose an inner product to identify
$T \bT^\vee$
with
$T^* \bT^\vee$
and
regard
$\bT^\vee_\bC \cong T \bT^\vee \cong T^* \bT^\vee$
as an exact symplectic manifold
equipped with the standard Liouville structure.

%Dfn
\begin{definition}
Let
$\cT$
be a triangulation of a lattice polytope
$\Delta^\vee \subset M^\vee_\bR$.
We call
$\cT$
\emph{adapted}
if there is a convex piecewise function
$\rho \colon \Delta^\vee \to \bR$
whose corner locus is
$\cT$.
We call
$\cT$
\emph{unimodular}
if each cell is congruent to the standard $(d+1)$-simplex
$\Delta_{d+1}$
under the $\GL(d+1, \bZ)$-action. 
\end{definition}

For a lattice polytope
$\Delta^\vee \subset M^\vee_\bR$,
choose an adapted unimodular triangulation
$\cT$.
We denote by
$A$
the set of vertices of
$\cT$.
The convex piecewise function
$\rho \colon \Delta^\vee \to \bR$
defines a Laurent polynomial
\begin{align} \label{eq:Laurent}
W_t \colon \bT^\vee_\bC \to \bC, \
x
\mapsto
\sum_{\alpha \in A} c_\alpha t^{-\rho(\alpha)}x^\alpha
\end{align}
in coordinates
$x = (x_1, \ldots, x_{d+1})$
on
$\bT^\vee_\bC$,
where
$c_\alpha \in \bC^*$
are arbitrary constants
and
$t \gg 0$
is a tropicalization parameter.
In the sequel,
we will assume
$c_\alpha = 1$
for all
$\alpha$.
This does not affect the isomorphism type of
$H_t$
as a Liouville manifold,
as long as
$t$
is sufficiently large,
and
therefore it does not impact the HMS statement for
$H_t$
which is our main result.

%Dfn
\begin{definition}
For sufficiently large
$t$
we call the hypersurface
$H_t = W^{-1}_t(0)$
\emph{very affine}.
\end{definition}

Since
$t$
is sufficiently large,
a very affine hypersurface
$H_t$
is smooth.
Due to the above choice of inner product,
we may regard
$H_t$
as a Liouville submanifold of
$\bT^\vee_\bC$.

%Dfn
\begin{definition}
The
\emph{amoeba}
$\Pi_t$
of
$H_t$
is its image under
$\Log_{d+1} \colon \bT^\vee_\bC \to \bR^{d+1}$.
\end{definition}

%Dfn
\begin{definition}
The
\emph{tropical hypersurface}
$\Pi_\Sigma$
associated with
$H_t$
is the hypersurface defined by the
\emph{tropical polynomial}
\begin{align*}
\varphi_W \colon M_\bR \to \bR, \
\varphi_W(m)
=
\max\{\langle m, n \rangle - \rho(n) \ | \ n \in \Delta^\vee \}.
\end{align*}
Namely,
$\Pi_\Sigma$
is the set of points
where the maximum is achieved more than once.
\end{definition}

According to
\cite[Corollary 6.4]{Mik},
when
$t \to \infty$
the rescaled amoeba
$\Pi_t / \log t$
converges to
$\Pi_\Sigma$.
It is known that
$\Pi_\Sigma$
is a deformation retract of
$\Pi_t$
for
$t \gg 0$.
%The tropical hypersurface of a tailored pants is deformation retract of the amoeba.
%Due to the recursive structure of tailored pants,
%one can glue the deformation retract for each piece of pants decompositions. 
Combinatorially,
$\Pi_\Sigma$
is the dual cell complex of
$\cT$.
In particular,
the set of connected components of
$M_\bR \setminus \Pi_\Sigma$
bijectively corresponds to
$A$
according to
which
$\alpha \in A$
achieves the maximum of
$\langle m, \alpha \rangle - \rho(\alpha)$
for
$m \in M_\bR \setminus \Pi_\Sigma$.
Note that
$M_\bR \setminus \Pi_t$
for
$t \gg 0$
has the same combinatrics as
$M_\bR \setminus \Pi_\Sigma$.

%Rmk
\begin{remark}
Each connected component
$C_\alpha$
of
$M_\bR \setminus \Pi_\Sigma$
is the locus
where the monomial
$t^{- \rho(\alpha)} x^\alpha$
becomes dominant.
\end{remark}

In the sequel,
we will
fix sufficiently large 
$t$
and
drop
$t$
from the notation,
as the specific choice of
$t$
is immaterial for our results.
See Appendix for more detailed explanations.

\subsection{Landau--Ginzburg $A$-models for very affine hypersurfaces}
For
$X = \bT^\vee_\bC \times \bC$
with coordinates
$(x, u) = (x_1, \ldots, x_{d+1}, u)$,
consider a Laurent polynomial
\begin{align*}
W_X \colon X \to \bC, \
(x, u)
\mapsto
u W(x)
\end{align*}
where
$W$
is the Laurent polynomial
\pref{eq:Laurent}.

%Dfn
\begin{definition}
Let
$H \subset \bT^\vee_\bC$
be a very affine hypersurface defined by the Laurent polynomial
$W$
from
\pref{eq:Laurent}.
We call the pair
$(X, W_X)$
the
\emph{Landau--Ginzburg $A$-model}
for
$H$.
\end{definition}

%Dfn
\begin{definition}
The
\emph{Newton polytope}
$\Delta^\vee_X$
of
$W_X$
is the convex hull
\begin{align*}
\Conv(0, -\Delta^\vee \times \{ 1 \})
\subset
M^\vee_\bR \times \bR.
\end{align*}
\end{definition}

%Rmk
\begin{remark}
The polytope
$\Delta^\vee_X$
admits an adapted unimodular star-shaped triangulation
$\tilde{\cT}$
canonically induced by
$\cT$.
Recall that
a triangulation of
$\Delta^\vee_X$
is
\emph{star-shaped}
if all of its simplices not contained in the boundary
$\del \Delta^\vee_X$
share a common vertex
$0$
\cite[Definition 3.3.1]{GS1}.
\end{remark}

%Lem
\begin{lemma} \label{lem:A-critical}
The critical locus
$\Crit(W_X)$
is given by
$\{ u = 0 \}
\cap
\{ W = 0 \}
\subset
X$.
\end{lemma}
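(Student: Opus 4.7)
The plan is a short direct computation of the critical equations of $W_X$, combined with the smoothness of the very affine hypersurface $H$ for $t$ sufficiently large.

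First, I would compute the partial derivatives. Using the product structure $W_X(x,u) = u\, W(x)$ on $X = \bT^\vee_\bC \times \bC$, one has $\partial W_X/\partial u = W(x)$ and $\partial W_X/\partial x_i = u \cdot \partial W/\partial x_i$ for $i = 1, \ldots, d+1$. A point $(x,u) \in X$ is critical precisely when $W(x) = 0$ and $u \cdot \partial W/\partial x_i = 0$ for every $i$.

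Next I would split into cases according to whether $u$ vanishes. If $u = 0$, then the constraints $u \cdot \partial W/\partial x_i = 0$ are automatic, and the only remaining condition is $W(x) = 0$; this produces exactly the locus $\{u=0\} \cap \{W=0\}$ claimed in the statement. If $u \neq 0$, one may divide out $u$ from the remaining equations to deduce that $\partial W/\partial x_i = 0$ for all $i$ while simultaneously $W(x) = 0$. Such a point $x$ would be a singular point of the hypersurface $H = \{W=0\} \subset \bT^\vee_\bC$.

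The final step is to rule out this second case. By the standing assumption that $t$ is sufficiently large, the very affine hypersurface $H$ is smooth, so its singular locus is empty and no such $x$ exists. Combining the two cases gives $\Crit(W_X) = \{u=0\} \cap \{W=0\}$ as asserted. There is no real obstacle in this argument; the only point worth flagging is the appeal to the genericity/largeness of $t$ ensuring smoothness of $H$, which is a hypothesis already in force throughout this subsection.
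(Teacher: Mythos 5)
Your proof is correct and follows essentially the same route as the paper: write $dW_X = (u\,dW, W)$, note that criticality forces $W=0$ and $u\,dW=0$, and use the smoothness of $H$ (for $t \gg 0$) to exclude the case $u \neq 0$. Your case split is in fact slightly more careful than the paper's phrasing, since smoothness of $H$ only gives nonvanishing of $dW$ along $\{W=0\}$, which is exactly what your argument uses.
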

\begin{proof}
Express the tangent map
$d W_X$
of
$W_X$
as a vector
$(u dW, W)$.
Since
$H \subset \bT^\vee_\bC$
is smooth,
$dW$
nowhere vanishes.
Hence
$\rank (dW_X) = 0$
if and only if 
$u = 0$
and
$W = 0$.
\end{proof}

%Rmk
\begin{remark}
By
\pref{lem:A-critical}
the projection
$\pr_1 \colon X = \bT^\vee_\bC \times \bC \to \bT^\vee_\bC$
preserves
$\Crit(W_X)$.
Let
$\ret \colon \Pi \to \Pi_\Sigma$
be the continuous map induced by the retraction.
Then the composition
\begin{align} \label{eq:A-fibration}
f
\colon
H
\cong
\Crit(W_X)
\hookrightarrow
X
\xrightarrow{\Log_{d+1} \circ \pr_1}
\Pi
\xrightarrow{\ret}
\Pi_\Sigma
\end{align}
gives the stratified fibration from
\cite[Theorem 1']{Mik}.
Recall from
\pref{lem:tailored}
that
$k$-th intersections of legs of a tailored pants for
$k \leq d$
has torus factor of dimension
$k$.
Away from lower dimensional strata,
the fiber over a point in a $k$-stratum of
$\Pi_\Sigma$
contains a real $k$-torus in the torus factor.
\end{remark}

\subsection{Landau--Ginzburg $B$-models for very affine hypersurfaces}
Let
$Y$
be the noncompact $(d+2)$-dimensional toric variety associated with the fan
\begin{align*}
\Sigma_Y
=
\Cone(-\cT \times \{ 1 \})
\subset
M^\vee_\bR \times \bR.
\end{align*}
The primitive ray generators of
$\Sigma_Y$
are the vectors of the form
$(-\alpha, 1)$
with
$\alpha \in A$.
Such vectors span a smooth cone of
$\Sigma_Y$
if and only if
$\alpha$
span a cell of
$\cT$.

Dually,
$Y$
is associated with the noncompact moment polytope
\begin{align*}
\Delta_Y
=
\{ (m, u) \subset M_\bR \times \bR \ | \ u \geq \varphi(m) \}.
\end{align*}
The facets of
$\Delta_Y$
correspond to the maximal domains of linearity of
$\varphi$.
Hence the irreducible toric divisors of
$Y$
bijectively correspond to the connected components of
$M_\bR \setminus \Pi_\Sigma$.
In particular,
the combinatrics of toric strata of
$Y$
can be read off
$\Pi_\Sigma$.

%Rmk
\begin{remark}
The noncompact polytope
$\Delta_Y$
is homeomorphic to the image of
$Y$
under the composition
\begin{align} \label{eq:moment}
Y \to (Y)_{\geq 0} \to M_\bR \times \bR
\end{align}
of
the map induced by retraction to the nonnegative real points
with
the restriction of negated algebraic moment map
\cite[Section 12.2]{CLS}.
\end{remark}

%Lem
\begin{lemma} \label{lem:projection}
Let
$q \colon M_\bR \times \bR \to M_\bR$
be the natural projection.
Then under
$q$
the union of facets of
$\Delta_Y$
homeomorphically maps to
$M_\bR$.
Moreover,
the union of codimension
$2$
faces of
$\Delta_Y$
homeomorphically maps to
$\Pi_\Sigma$.
\end{lemma}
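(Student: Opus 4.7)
The starting observation is that $\Delta_Y$ is precisely the epigraph of the tropical polynomial $\varphi = \varphi_W$, so the topological boundary of $\Delta_Y$ in $M_\bR \times \bR$ coincides with the graph
\[
\Gamma(\varphi) = \{(m, \varphi(m)) \mid m \in M_\bR\}.
\]
Since the boundary of a (possibly unbounded) polyhedron equals the union of its facets, I would first identify the union of facets of $\Delta_Y$ with $\Gamma(\varphi)$. The natural map $M_\bR \to \Gamma(\varphi)$, $m \mapsto (m,\varphi(m))$, is then a two-sided inverse to $q|_{\Gamma(\varphi)}$; continuity of $\varphi$ (a max of finitely many affine functions) makes it a homeomorphism. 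This settles the first assertion.

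For the second assertion, the plan is to identify the union of codimension $2$ faces of $\Delta_Y$ with $\Gamma(\varphi|_{\Pi_\Sigma})$ and then apply the same projection argument. Recall that the facets of $\Delta_Y$ are in bijection with the maximal domains of linearity of $\varphi$, which in turn correspond to the connected components of $M_\bR \setminus \Pi_\Sigma$; concretely, the facet attached to $\alpha \in A$ is $\Gamma(\varphi|_{\overline{C}_\alpha})$. The relative interior of this facet is $\Gamma(\varphi|_{C_\alpha})$, so the complement of the union of facet interiors inside $\partial \Delta_Y$ is exactly $\Gamma(\varphi|_{\Pi_\Sigma})$. Since the union of codimension $\geq 2$ faces of a polyhedron equals this complement, and since any codimension $k \geq 3$ face is a face of some codimension $2$ face, the union of codimension $2$ faces is precisely $\Gamma(\varphi|_{\Pi_\Sigma})$.

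Once this identification is in place, the restriction of $q$ sends $\Gamma(\varphi|_{\Pi_\Sigma})$ bijectively and continuously onto $\Pi_\Sigma$, with inverse $m \mapsto (m,\varphi(m))$ a continuous section. This gives the claimed homeomorphism.

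The only genuinely substantive step is the combinatorial identification of facets of $\Delta_Y$ with maximal domains of linearity of $\varphi$, together with the claim that two facets $\Gamma(\varphi|_{\overline{C}_\alpha})$ and $\Gamma(\varphi|_{\overline{C}_\beta})$ meet in a codimension $2$ face iff $\overline{C}_\alpha \cap \overline{C}_\beta$ is a top-dimensional cell of $\Pi_\Sigma$; this is the place where one has to invoke the definition of $\Pi_\Sigma$ as the locus where the max defining $\varphi$ is attained more than once, together with the fact that adapted triangulations are induced by convex piecewise linear functions so that there are no hidden degeneracies in the epigraph. Everything else is the tautology that the boundary of an epigraph is a graph and that graphs project homeomorphically onto their domain.
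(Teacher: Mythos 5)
Your argument is correct, and for the first assertion it is the same as the paper's: both rest on the identification of the facets of $\Delta_Y$ with the closed maximal domains of linearity of $\varphi$ and on the fact that a graph of a continuous function projects homeomorphically onto its domain. For the second assertion your route differs slightly from the paper's. The paper argues pointwise through the face combinatorics: every codimension $2$ face is the intersection of two distinct facets, so $q$ restricted to their union injects into $\Pi_\Sigma$, and surjectivity is obtained from the fact that each full-dimensional cell of $\Pi_\Sigma$ is adjacent to exactly two maximal domains of linearity. You instead take complements inside the boundary of the epigraph: the boundary is the disjoint union of relative interiors of proper faces, the facet interiors are the graphs over the open components $C_\alpha$, so the union of codimension $\geq 2$ faces is $\Gamma(\varphi|_{\Pi_\Sigma})$, and codimension $\geq 3$ faces are absorbed into codimension $2$ ones. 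This buys you a proof that never invokes the ``exactly two adjacent domains'' duality (it comes out as a byproduct rather than an input), at the cost of the standard polyhedral facts about the stratification of $\partial\Delta_Y$ and the face lattice; conversely the paper's version makes the dual-cell-complex structure of $\Pi_\Sigma$ explicit, which is what the later combinatorial arguments reuse. One small point worth spelling out in your write-up is why the relative interior of the facet over $\overline{C}_\alpha$ is exactly $\Gamma(\varphi|_{C_\alpha})$, i.e.\ why no point of $\Pi_\Sigma$ lies in the interior of $\overline{C}_\alpha$: if the maximum were attained at $\alpha$ and $\beta\neq\alpha$ at an interior point, the affine function $\langle m,\alpha-\beta\rangle-\rho(\alpha)+\rho(\beta)$ would be nonnegative near that point and vanish there, forcing it to vanish identically, a contradiction; this is the ``no hidden degeneracies'' step you flagged, and it is routine but should be recorded.
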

\begin{proof}
By construction of
$\Sigma_Y$
under
$q$
each facet of
$\Delta_Y$
homeomorphically maps to the maximal domain of linearity of
$\varphi$
corresponding to the same
$\alpha \in A$.
Any codimension
$2$
face of
$\Delta_Y$
can be obtained as the intersection of two distinct facets.
Hence
$q$
restricted to the union of codimension
$2$
faces of
$\Delta_Y$
gives an injection to
$\Pi_\Sigma$.
This is also surjective,
as each full dimensional face of
$\Pi_\Sigma$
is adjacent to exactly two maximal domains of linearity.
\end{proof}

For each
$\alpha = (\alpha_1, \ldots, \alpha_{d+1}) \in A$
let
$Y_\alpha =(\bC^*)^{d+1} \times \bC$
with coordinates
$y_\alpha = (y_{\alpha, 1}, \ldots, y_{\alpha, d+1}, v_\alpha)$,
where
$y_{\alpha, 1}, \ldots, y_{\alpha, d+1}, v_\alpha$
are the monomials with weights
\begin{align*}
\eta_1 = (-1,  0, \ldots, 0, -\alpha_1),
\ldots,
\eta_{d+1} = (0, \ldots, 0, -1, -\alpha_{d+1}),
\eta_{d+2} = (0, \ldots, 0, 1)
\in
M \times \bZ.
\end{align*}
Their pairing with the monomial with weight
$(-\alpha, 1) \subset M^\vee \times \bZ$
yields
$0, \ldots, 0, 1$
respectively.

%Lem
\begin{lemma} \label{lem:chart}
The complex algebraic variety
$Y_\alpha$
is the affine open subset of
$Y$
associated with the ray spanned by
$(-\alpha, 1) \subset M^\vee \times \bZ$.
\end{lemma}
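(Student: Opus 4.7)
The plan is to apply the standard toric dictionary: the affine open $U_\tau$ associated with a cone $\tau$ in the fan $\Sigma_Y \subset (M^\vee \times \bZ)_\bR$ is $\Spec \bC[\tau^\vee \cap (M \times \bZ)]$, and I need to verify that for the ray $\tau = \bR_{\geq 0} \cdot (-\alpha, 1)$ the resulting coordinate ring is exactly $\bC[y_{\alpha,1}^{\pm 1}, \ldots, y_{\alpha,d+1}^{\pm 1}, v_\alpha]$. Since the defining fan $\Sigma_Y$ has $(-\alpha, 1)$ as one of its primitive ray generators for each $\alpha \in A$, such a $U_\tau$ is a well-defined affine chart of $Y$.

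First I would compute the dual cone: a pair $(m, u) \in M_\bR \times \bR$ lies in $\tau^\vee$ iff $\langle (m,u), (-\alpha, 1)\rangle \geq 0$, i.e.\ $u \geq \langle m, \alpha\rangle$. So $\tau^\vee$ is the closed half-space with boundary hyperplane $\tau^\perp = \{(m,u) : u = \langle m, \alpha\rangle\}$. This hyperplane meets the lattice $M \times \bZ$ in a rank $d+1$ sublattice.

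Next I would exhibit a basis adapted to $\tau^\vee$. Direct evaluation gives $\langle \eta_i, (-\alpha, 1)\rangle = \alpha_i - \alpha_i = 0$ for $i = 1, \ldots, d+1$, so the $\eta_i$ lie in $\tau^\perp$; their projections to the first factor $M$ are $-e_1, \ldots, -e_{d+1}$, a $\bZ$-basis of $M$, which forces them to be a $\bZ$-basis of $\tau^\perp \cap (M \times \bZ)$. The remaining vector $\eta_{d+2} = (0, 1)$ satisfies $\langle \eta_{d+2}, (-\alpha, 1)\rangle = 1 > 0$, so it lies in the interior of $\tau^\vee$, and reading off the projection to the second factor shows that $\eta_1, \ldots, \eta_{d+2}$ together form a $\bZ$-basis of $M \times \bZ$.

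Finally, with respect to this basis every lattice point $(m, u) \in \tau^\vee \cap (M \times \bZ)$ writes uniquely as $\sum_{i=1}^{d+1} a_i \eta_i + b\, \eta_{d+2}$ with $a_i \in \bZ$, and the condition $(m,u) \in \tau^\vee$ translates into $b = \langle (m,u), (-\alpha,1)\rangle \geq 0$. Thus the semigroup $\tau^\vee \cap (M \times \bZ)$ is $\bZ^{d+1} \oplus \bN$ with generators $\eta_1, \ldots, \eta_{d+1}, \eta_{d+2}$, whose monomials are exactly $y_{\alpha, 1}^{\pm 1}, \ldots, y_{\alpha, d+1}^{\pm 1}, v_\alpha$. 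This identifies $U_\tau$ with $(\bC^*)^{d+1} \times \bC = Y_\alpha$ as claimed. The statement is essentially a bookkeeping exercise once the dual cone is set up, so there is no substantive obstacle; the only point that requires any care is checking that the $\eta_i$ really do form a basis of $M \times \bZ$, which is transparent from the projection to $M$.
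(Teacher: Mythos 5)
Your proof is correct, and it is a genuinely different route from the paper's. You argue in the forward direction: take the ray $\tau = \bR_{\geq 0}\cdot(-\alpha,1)$, compute the dual half-space $\tau^\vee = \{(m,u) \mid u \geq \langle m,\alpha\rangle\}$, check that $\eta_1,\ldots,\eta_{d+1}$ give a $\bZ$-basis of $\tau^\perp \cap (M\times\bZ)$ (via the graph projection to $M$) and that adjoining $\eta_{d+2}$, which pairs to $1$ with the generator, yields a basis of $M\times\bZ$, so the semigroup $\tau^\vee\cap(M\times\bZ)$ is $\bZ^{d+1}\oplus\bN$ on these generators and the chart is $\Spec\bC[y_{\alpha,1}^{\pm1},\ldots,y_{\alpha,d+1}^{\pm1},v_\alpha] \cong (\bC^*)^{d+1}\times\bC = Y_\alpha$. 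The paper instead argues in reverse: supposing $Y_\alpha$ is the chart of some ray $\sigma\in\Sigma_Y(1)$, it uses the toric divisor formula $\ddiv(\chi^{\eta}) = \sum_{\xi\in\sigma(1)}\langle \eta,u_\xi\rangle D_\xi$ to read off that the primitive generator must pair to $0$ with each $\eta_i$ (since $y_{\alpha,i}^{\pm1}$ are units on $Y_\alpha$) and to $1$ with $\eta_{d+2}$ (since $v_\alpha$ is a coordinate cutting the boundary divisor once), which pins the generator down as $(-\alpha,1)$. Your version is more self-contained and explicitly produces the coordinate ring of the chart; the paper's divisor-pairing formulation is chosen because the identical argument is reused verbatim in the complete intersection setting (Lemma \ref{lem:cchart}) and because the pairing data it highlights feed directly into the critical-locus computation of Lemma \ref{lem:B-critical}. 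Either argument suffices.
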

\begin{proof}
Suppose that
$\sigma \in \Sigma_Y(1)$
is the cone associated with the affine open subset
$Y_\alpha \subset Y$.
We have
\begin{align*}
\ddiv (y^{\pm 1}_{\alpha, i})
=
\sum_{\xi \in \sigma(1)}
\langle \pm \eta_i, u_\xi \rangle D_\xi, \
\ddiv (v_\alpha)
=
\sum_{\xi \in \sigma(1)}
\langle \eta_{d+2}, u_\xi \rangle D_\xi, 
\end{align*}
where
$u_\xi$
are primitive ray generators of
$\xi$
and
$D_\xi = \overline{O(\xi)}$
are the closures of the orbits corresponding to
$\xi$.
Since
$y^{\pm 1}_{\alpha, 1}, \ldots, y^{\pm 1}_{\alpha, d+1}$
never vanish on
$Y_\alpha$,
pairing of
$\eta_i$
with the primitive ray generators in
$\sigma$
must yield
$0$
for
$1 \leq i \leq d+1$.
On the other hand,
pairing of
$\eta_{d+2}$
with the primitive ray generators of
$\sigma$
must yield
$1$.
\end{proof}

Due to the above lemma,
$Y_\alpha$
covers the open stratum of
$Y$
and
the open stratum of the irreducible toric divisor corresponding to
$\alpha$.
If
$\alpha, \beta \in A$
are connected by an edge in
$\cT$,
then we glue
$Y_\alpha$
to
$Y_\beta$
with the coordinate transformations
\begin{align*}
y_{\alpha, i} = v^{\beta_i - \alpha_i}_\beta y_{\beta, i}, \
v_\alpha = v_\beta, \
1 \leq i \leq d+1.
\end{align*}
Thus the coordinate charts
$\{ Y_\alpha \}_{\alpha \in A}$
cover the complement in
$Y$
of the codimension more than
$1$
strata.

We may write
$v$
for
$v_\alpha$
as it does not depend on the choice of
$\alpha \in A$. 
Since the weight
$(0, \ldots, 0, 1)$
pairs nonnegatively with the primitive ray generators of
$\Sigma_Y$,
the monomial
$v$
defines a regular function on
$Y$,
which we denote by
$W_Y$.

%Dfn
\begin{definition}
Let
$H \subset \bT^\vee_\bC$
be a very affine hypersurface defined by the Laurent polynomial
$W$
from
\pref{eq:Laurent}.
We call the pair
$(Y, W_Y)$
the
\emph{Landau--Ginzburg $B$-model}
for
$H$.
\end{definition}

%Rmk
\begin{remark}
The pair
$(Y, W_Y)$
is a conjectural SYZ mirror to
$H$
\cite[Theorem 1.6]{AAK}.
\end{remark}

The critical locus
$\Crit(W_Y)$
is the preimage of the codimension
$2$
strata of
$\Delta_Y$
under
\pref{eq:moment}.
One can check this locally in each affine chart, which is isomorphic to
$\bC^{d+2}$
as
$\cT$
is unimodular.  

%Lem
\begin{lemma} \label{lem:B-critical}
The critical locus
$\Crit(W_Y)$
is given by
$\bigcup_{\alpha \in A}
\overline{Y}_\alpha \setminus Y_\alpha
\subset
Y$.
\end{lemma}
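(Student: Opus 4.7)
The plan is to verify the claim locally on the smooth affine toric charts of $Y$ corresponding to the maximal cones of $\Sigma_Y$, and then patch. Every maximal cone $\sigma \in \Sigma_Y$ has the form $\Cone((-\alpha_0, 1), \ldots, (-\alpha_{d+1}, 1))$ for some top-dimensional simplex $\{\alpha_0, \ldots, \alpha_{d+1}\}$ of $\cT$, and because $\cT$ is unimodular the associated chart $U_\sigma$ is isomorphic to $\bC^{d+2}$ with coordinates $z_0, \ldots, z_{d+1}$ dual to the ray generators. Since the maximal charts cover $Y$, it suffices to describe $\Crit(W_Y) \cap U_\sigma$ in each such chart.

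The computation in $U_\sigma$ is straightforward. The function $W_Y$ is the character of weight $(0,\ldots,0,1) \in M \times \bZ$, and since this weight pairs to $1$ with every ray generator $(-\alpha_i, 1)$ of $\sigma$, one gets $W_Y|_{U_\sigma} = z_0 z_1 \cdots z_{d+1}$, which is the standard superpotential appearing in \pref{cor:Nadler}. A direct computation of the partial derivatives $\partial W_Y/\partial z_i = \prod_{j \neq i} z_j$ shows that $\Crit(W_Y) \cap U_\sigma$ is the locus where at least two of the $z_i$ vanish. This is exactly the union of the toric strata of $U_\sigma$ of codimension $\geq 2$, or equivalently the union of orbit closures $O(\tau)$ for $\tau \subseteq \sigma$ of dimension $\geq 2$. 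Gluing these descriptions over the cover by maximal charts yields $\Crit(W_Y) = \bigcup_{\tau \in \Sigma_Y,\, \dim \tau \geq 2} O(\tau)$.

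The final step is to match this global description with $\bigcup_{\alpha \in A} \overline{Y}_\alpha \setminus Y_\alpha$. By \pref{lem:chart}, each chart $Y_\alpha$ consists of the big torus together with the open orbit of the divisor corresponding to the ray $(-\alpha, 1)$, so the boundary $\overline{Y}_\alpha \setminus Y_\alpha$ inside $Y$ is precisely the union of the orbit closures $O(\tau)$ for cones $\tau \in \Sigma_Y$ that strictly contain the ray $(-\alpha, 1)$, all of which have dimension $\geq 2$. Running $\alpha$ over $A$ exhausts every cone of $\Sigma_Y$ of dimension $\geq 2$, since each such cone contains at least one ray of the form $(-\alpha, 1)$. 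The two descriptions therefore agree. I expect no real obstacle here: the heart of the proof is the one-chart monomial computation, and the remaining work is bookkeeping in the toric dictionary guaranteed by unimodularity of $\cT$.
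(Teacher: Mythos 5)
Your argument is correct, and it is in substance the computation the paper itself gestures at in the sentence just before the lemma (``one can check this locally in each affine chart, which is isomorphic to $\bC^{d+2}$ as $\cT$ is unimodular''), although the written proof is organized differently. The paper argues in two steps using other charts: on each ray chart $Y_\alpha \cong (\bC^*)^{d+1}\times\bC$ the superpotential is the coordinate $v_\alpha$, hence a submersion, which gives $\Crit(W_Y)\subset Y\setminus\bigcup_{\alpha}Y_\alpha$; the reverse inclusion is then obtained by computing in the chart of a two-dimensional cone $\Cone((-\alpha,1),(-\alpha',1))$ for $\alpha,\alpha'$ joined by an edge of $\cT$, where $\ddiv(v)=D_{\xi_\alpha}+D_{\xi_{\alpha'}}$ forces $dW_Y$ to vanish on $D_{\xi_\alpha}\cap D_{\xi_{\alpha'}}$. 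Your single computation in the maximal-cone charts $\bC^{d+2}$, where $W_Y=z_0\cdots z_{d+1}$, delivers both inclusions at once and has the extra virtue of exhibiting $W_Y$ locally as the standard superpotential entering the local equivalence of Corollary \ref{cor:Nadler}; the only ingredients it uses beyond the paper's version are the standard facts that the maximal charts cover $Y$ (every cone of $\Sigma_Y$ is a face of the cone over a top-dimensional simplex of $\cT$) and that every cone of dimension at least $2$ contains some ray $(-\alpha,1)$, which is automatic since all rays of $\Sigma_Y$ are of this form. Finally, your reading of $\overline{Y}_\alpha\setminus Y_\alpha$ as the codimension $\geq 2$ part of the divisor $D_{\xi_\alpha}$ — equivalently, $\bigcup_{\alpha\in A}\overline{Y}_\alpha\setminus Y_\alpha = Y\setminus\bigcup_{\alpha\in A}Y_\alpha$ — is the intended one (the topological closure of the dense open set $Y_\alpha$ would be all of $Y$), and it is exactly how the lemma is used later, e.g.\ in the proof of Lemma \ref{lem:cB-critical}, so your bookkeeping step matches the statement as the paper means it.
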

\begin{proof}
For each
$\alpha \in A$
the intersection
$\Crit(W_Y) \cap Y_\alpha$
is empty.
Indeed,
when restricted to
$Y_\alpha$,
the tangent map
$d W_Y$
of
$W_Y$
is expressed as a vector
whose last factor is
$1$.
Hence
$d W_Y |_{Y_\alpha}$
is surjective
and
we obtain
\begin{align*}
\Crit(W_Y)
\subset
Y \setminus \bigcup_{\alpha \in A} Y_\alpha
=
\bigcup_{\alpha \in A} \overline{Y}_\alpha \setminus Y_\alpha.
\end{align*}

Take any point
$y \in \overline{Y}_\alpha \setminus Y_\alpha$.
Suppose that
there is a vertex
$\alpha^\prime \in A$
connected to
$\alpha$
by an edge in
$\cT$
such that
$y \in \overline{Y}_{\alpha^\prime} \setminus Y_{\alpha^\prime}$.
Let
$\sigma \in \Sigma_Y$
be the cone generated by two rays
$\xi_\alpha = \Cone(-\alpha, 1),
\xi_{\alpha^\prime} = \Cone(-\alpha^\prime, 1)$.
Then we have
\begin{align*}
\ddiv (v)
=
\langle \eta_{d+2}, u_{\xi_\alpha} \rangle D_{\xi_\alpha}
+
\langle \eta_{d+2}, u_{\xi_{\alpha^\prime}} \rangle D_{\xi_{\alpha^\prime}}
\end{align*}
on the associated affine open subset
$\Spec \bC[\sigma^\vee \cap (M \times \bZ)]
\cong
(\bC^*)^{d} \times \bC^2$
of
$Y$.
Hence the restriction of
$dW_Y$
vanishes on
$D_{\xi_\alpha} \cap D_{\xi^\prime_{\alpha^\prime}}$,
which is
\begin{align*}
((\overline{Y}_\alpha \setminus Y_\alpha)
\cap
(\overline{Y}_{\alpha^\prime} \setminus Y_{\alpha^\prime})) |_{\Spec \bC[\sigma^\vee \cap (M \times \bZ)]}.
\end{align*}
The union of such intersections for all
$\alpha^\prime \in A$
is
$\overline{Y}_\alpha \setminus Y_\alpha$.
Applying the same argument to the other cases,
we obtain
\begin{align*}
\bigcup_{\alpha \in A} \overline{Y}_\alpha \setminus Y_\alpha
\subset
\Crit(W_Y).
\end{align*}
\end{proof}

%Rmk
\begin{remark}
Since the map
\pref{eq:moment}
sends each $k$-th intersection of
$D_{\xi_\alpha}, \alpha \in A$
to a codimension
$k$
face of
$\Delta_Y$,
by
\pref{lem:B-critical}
it sends
$\Crit(W_Y)$
to the union of codimension
$2$
faces.
On the other hand,
by
\pref{lem:projection}
the map
$q \colon M_\bR \times \bR \to M_\bR$
homeomorphically sends the union of codimension
$2$
faces of
$\Delta_Y$
to
$\Pi_\Sigma$.
Hence the composition
\begin{align} \label{eq:B-fibration}
g
\colon
\Crit(W_Y)
\hookrightarrow
Y
\xrightarrow{q \circ \pref{eq:moment}}
\Pi_\Sigma
\end{align}
gives a stratified fibration.
The fiber over a point in a $k$-stratum is a real $k$-torus
\cite[Prop 12.2.3(b)]{CLS}. 
\end{remark}

%%%%%%%%%%%%%%%%%%%%%%%%%%%%%%%%%%%%%%%%%%%%%%%%%%%%%%
%%%%%%%%%%%%%%%%%%%%%%%%%%%%%%%%%%%%%%%%%%%%%%%%%%%%%%%%%%
\section{Constructible sheaves of categories}
In this section,
we define two constructible sheaves of categories over the tropical hypersurface
$\Pi_\Sigma \subset \bR^{d+1}$
with the canonical stratification
and
a certain topology generated by the vertices.
In the sequel,
by a
\emph{Liouville manifold}
we will mean a Liouville manifold of finite type,
i.e.,
the completion of some Liouville domain.
By a
\emph{Weinstein manifold}
we will mean a Liouville manifold together with a Morse--Bott function constant on the cylindrical ends for
which the Liouville vector field is gradient-like.

%Dfn
\begin{definition} \label{dfn:topology}
We define a topology on
$\Pi_\Sigma$
induced by its canonical stratification.
Namely,
for each vertex
$v \in \Pi_\Sigma$
we define the associated open subset
$U_v$
as the union of all strata adjacent to
$v$,
which is homeomorphic to a $d$-dimensional tropical pants.
For each edge
$e \subset \Pi_\Sigma$
connecting two vertices
$v_1, v_2$
we define the associated open subset
$U_e$
as the intersection
$U_{v_1} \cap U_{v_2}$.
Similarly,
for each $k$-stratum
$S^{(k)} \subset \Pi_\Sigma$
adjacent to
$l$
vertices
$v_1, \ldots, v_l$
we define the associated open subset
$U_{S^{(k)}}$
as the intersection
$U_{v_1} \cap \cdots \cap U_{v_l}$.
%A general open subset
%$U$
%is of the form
%$U_{S^{(k_1)}} \cup \cdots \cup U_{S^{(k_m)}}$
%for some strata
%$S^{(k_1)}, \ldots, S^{(k_m)} \subset \Pi_\Sigma$.
\end{definition}

\subsection{$A$-side partially defined presheaf of categories for very affine hypersurfaces}
Fix a pants decomposition of
$H \subset \bT^\vee_\bC \cong T^* T^{d+1}$
\cite[Theorem 1']{Mik}.
Equip
$\Pi_\Sigma$
with the topology from Definition
\pref{dfn:topology}
whose basis is given by the open subsets
$U_{S^{(k)}}$.

%Dfn
\begin{definition}
The
\emph{$A$-side partially defined presheaf}
$\cF^{pre}_A$
of categories for
$H$
is a collection
\begin{align*}
\{ \cF^{pre}_A(U_{S^{(k)}}), R^A_{S^{(k)}, S^{(l)}} \}
\end{align*}
of
sections
and
restriction functors
defined on the basis
$U_{S^{(k)}}$
as follows:
\begin{itemize}
\item
The section over
$U_{S^{(k)}}$
is given by $\bZ_2$-folding of the ind-completion of the wrapped Fukaya category
\begin{align*}
\cF^{pre}_A(U_{S^{(k)}})
=
\Fuk(H_{S^{(k)}})_{\bZ_2}
\end{align*}
where
$H_{S^{(k)}}$
is the inverse image of suitably shrunk
$U_{S^{(k)}}$
under
\pref{eq:A-fibration}.
In other words,
$H_{S^{(k)}}$
is symplectomorphic to the intersection
$\tilde{P}_{S^{(k)}}$
of the corresponding
$k$
legs of
$\tilde{P}_d$.
Here,
we equip
$H_{S^{(k)}}$
with the induced Weinstein structure by the canonical one on
$\tilde{P}_d$.
\item
Along an inclusion
$U_{S^{(l)}} \hookrightarrow U_{S^{(k)}}$
the restriction functor
\begin{align*}
R^A_{S^{(k)}, S^{(l)}}
\colon
\Fuk(H_{S^{(k)}})_{\bZ_2}
\to
\Fuk(H_{S^{(l)}})_{\bZ_2}
\end{align*}
is induced by $\bZ_2$-folding of the ind-completion of the Viterbo restriction
\begin{align*}
\Fuk(\tilde{P}_{d-k}, \beta^i_{\tilde{P}_{d-k}})
\to
\Fuk(L_{d-k, j}(K), \beta^i_{\tilde{P}_{d-k}} |_{L_{d-k, j}(K)})
\end{align*}
from
\cite[Proposition 11.2]{GPS2}
to some leg
$L_{d-k, j}(K)$
for Nadler's Weinstein structure
$\beta^i_{\tilde{P}_{d-k}}$
on
$\tilde{P}_{d-k}$
and
the inductive compatibility
\pref{eq:recursive}
of
$\tilde{P}_{d-k}$.
Here,
$L_{d-k, j}(K)$
is associated with an edge not bounding
$S^{(k)}$
but
$S^{(l)}$
to
which the final leg with respect to
$\beta^i_{\tilde{P}_{d-k}}$
does not correspond. 
\end{itemize}
\end{definition}

To see that
$R^A_{S^{(k)}, S^{(l)}}$
is well defined,
first recall the invariance of the wrapped Fukaya category up to canonical equivalence under deformations of Liouville structures,
which follows from our assumption on Liouville manifolds to be of finite type.
Here,
what we need is the following special case of
\cite[Lemma 3.4]{GPS2}.

%Lem
\begin{lemma} \label{lem:A-section_unique}
Let
$\lambda_{S^{(k)}}, \lambda^\prime_{S^{(k)}}$
be the completions of two Liouville forms on a Liouville domain
$[H_{S^{(k)}}]$
completing to
$H_{S^{(k)}}$.
Then there is a canonical equivalence
\begin{align*}
\Fuk(H_{S^{(k)}}, \lambda_{S^{(k)}})
\simeq
\Fuk(H_{S^{(k)}}, \lambda^\prime_{S^{(k)}}). 
\end{align*}
\end{lemma}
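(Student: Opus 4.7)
The plan is to invoke directly the invariance of the wrapped Fukaya category under Liouville homotopies established in \cite[Lemma 3.4]{GPS2}, and reduce the statement to producing such a homotopy between $\lambda_{S^{(k)}}$ and $\lambda^\prime_{S^{(k)}}$. The finite-type hypothesis is used in two ways: it guarantees that the completion procedure $([H_{S^{(k)}}], \lambda) \mapsto (H_{S^{(k)}}, \hat\lambda)$ is well defined for any Liouville form on the domain, and it is precisely the regime in which the cited result of Ganatra--Pardon--Shende applies.

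First, I would set up the space of Liouville structures on the fixed smooth domain $[H_{S^{(k)}}]$ whose completion (as a smooth manifold-with-cylindrical-end) coincides with the underlying manifold of $H_{S^{(k)}}$. The key observation is that this space is path-connected: given $\lambda_{S^{(k)}}$ and $\lambda^\prime_{S^{(k)}}$, one can first modify each independently within its Liouville homotopy class so that the two Liouville vector fields agree and point strictly outward in a collar of $\partial [H_{S^{(k)}}]$, and then take the convex combination $\lambda_t = (1-t) \lambda_{S^{(k)}} + t \lambda^\prime_{S^{(k)}}$ for $t \in [0,1]$. Convexity near the boundary guarantees that $\lambda_t$ remains Liouville for all $t$, so completing fibrewise yields a smooth family of Liouville manifolds $(H_{S^{(k)}}, \hat\lambda_t)$.

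Second, I would apply \cite[Lemma 3.4]{GPS2} to this family to obtain continuation equivalences
\begin{equation*}
\Phi_{t,s} \colon \Fuk(H_{S^{(k)}}, \hat\lambda_s) \xrightarrow{\sim} \Fuk(H_{S^{(k)}}, \hat\lambda_t)
\end{equation*}
satisfying $\Phi_{t,t} \simeq \id$ and $\Phi_{t,u} \circ \Phi_{u,s} \simeq \Phi_{t,s}$ up to canonical homotopy. Taking $s = 0$, $t = 1$ produces the desired equivalence.

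The only real subtlety is canonicality, i.e.\ independence from the chosen homotopy. The hard part will be to argue that any two Liouville homotopies between $\lambda_{S^{(k)}}$ and $\lambda^\prime_{S^{(k)}}$ induce the same equivalence up to canonical homotopy. This follows, once again, from \cite[Lemma 3.4]{GPS2} applied one dimension up: two such homotopies can be joined by a homotopy-of-homotopies parametrized by the $2$-disk (using the convex/collar trick to interpolate between paths), and the resulting $2$-parameter family yields the required $2$-isomorphism between continuation functors. Iterating this argument to higher simplices shows that the assignment $\lambda \mapsto \Fuk(H_{S^{(k)}}, \hat\lambda)$ factors through the contractible nerve of the space of Liouville forms, which is exactly the statement that the equivalence is canonical.
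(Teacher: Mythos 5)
Your overall strategy coincides with the paper's: interpolate between the two Liouville forms on the compact domain and then invoke the invariance machinery of Ganatra--Pardon--Shende via \cite[Lemma 3.4]{GPS2}. However, two steps as written have gaps. First, the interpolation: a convex combination $(1-t)\lambda_{S^{(k)}}+t\lambda^\prime_{S^{(k)}}$ of two Liouville forms is \emph{not} in general Liouville, because $d\lambda_t$ can degenerate in the interior of $[H_{S^{(k)}}]$ --- ``convexity near the boundary'' only controls the outward-pointing condition there, not nondegeneracy of the $2$-form on the whole domain. The argument is valid precisely when the two forms are primitives of one and the same symplectic form on $[H_{S^{(k)}}]$, which is the intended reading (and how the paper phrases it: the space of Liouville forms for a fixed compact symplectic manifold-with-boundary is convex); in that case your preliminary step of modifying the forms so that the Liouville fields agree in a collar is unnecessary, while in the greater generality you seem to allow (only the underlying smooth manifold fixed) the lemma itself would be false.

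Second, the passage from the homotopy $\lambda_t$ to an equivalence of wrapped Fukaya categories is not literally what \cite[Lemma 3.4]{GPS2} provides: that lemma concerns a \emph{trivial inclusion} of Liouville sectors in the sense of \cite[Definition 3.3]{GPS2}, which induces an equivalence via the pushforward functor of \cite[Section 3.6]{GPS1}. To apply it one must first convert the homotopy into such data; the paper does this by applying \cite[Proposition 11.8]{CE} to produce a strictly exact symplectomorphism $\psi$ of the completions with $\psi^* \lambda^\prime_{S^{(k)}} - \lambda_{S^{(k)}} = df$ for compactly supported $f$, which then defines a trivial inclusion. Your ``continuation equivalences'' $\Phi_{t,s}$, and the two-parameter coherences you invoke for canonicality, presuppose this (or an equivalent) construction and are not supplied by the citation as stated. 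Once the straight-line homotopy is fixed canonically by convexity, canonicality of the resulting pushforward equivalence comes for free, so the higher-simplex argument is also not needed.
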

\begin{proof}
Our argument is essentially the same as
\cite[Lemma 2]{Jef}.
Since the space of Liouville forms for a compact symplectic manifold-with-boundary is convex,
any two Liouville forms on
$[H_{S^{(k)}}]$
are canonically homotopic
and
the homotopy completes to that for
$\lambda_{S^{(k)}}, \lambda^\prime_{S^{(k)}}$.
Then one can apply
\cite[Proposition 11.8]{CE}
to obtain a strictly exact symplectomorphism
$\psi \colon H_{S^{(k)}} \to H_{S^{(k)}}$.
By definition it satisfies
$\psi^* \lambda^\prime - \lambda = df$
for some compactly supported function
$f \colon H_{S^{(k)}} \to \bR$.
In particular,
$\psi$
defines a trivial inclusion of open Liouville sectors in the sense of
\cite[Definition 3.3]{GPS2}.
Then one can apply
\cite[Lemma 3.4]{GPS2}
to see that
the pushforward functor from
\cite[Section 3.6]{GPS1}
gives the canonical equivalence.
\end{proof}

%Cor
\begin{corollary} \label{cor:A-local}
The section
$\cF^{pre}_A(U_{S^{(k)}})$
is canonically equivalent to
$\Fuk(\tilde{P}_{S^{(k)}}, \beta^i_{\tilde{P}_d} |_{\tilde{P}_{S^{(k)}}})_{\bZ_2}$,
where the final leg with respect to
$\beta^i_{\tilde{P}_d}$
does not correspond to any edge bounding
$S^{(k)}$.
\end{corollary}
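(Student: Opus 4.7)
The plan is to reduce the claim to the invariance of the wrapped Fukaya category under deformation of the Liouville structure on a fixed underlying symplectic manifold, i.e.\ to apply \pref{lem:A-section_unique}. Both sides of the desired equivalence are, after ind-completion and $\bZ_2$-folding, the wrapped Fukaya category of the same smooth symplectic manifold $\tilde{P}_{S^{(k)}}$, but equipped with two a priori different Liouville primitives: on the left, the restriction of the canonical form $\alpha_{\tilde{P}_d} = \alpha_{d+1}|_{\tilde{P}_d}$ (under the symplectomorphism identifying $H_{S^{(k)}}$ with $\tilde{P}_{S^{(k)}}$); on the right, the restriction of Nadler's translated form $\beta^i_{\tilde{P}_d} = \alpha^l_{d+1}|_{\tilde{P}_d}$. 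Since $\omega^l_{d+1} = \omega_{d+1}$ by \eqref{eq:tLiouville}, these two primitives share the same symplectic form and differ by an exact one-form.

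First I would fix the identification $H_{S^{(k)}} \simeq \tilde{P}_{S^{(k)}}$ coming from \pref{lem:tailored} together with the inductive compatibility \eqref{eq:recursive}, so that the two Liouville forms can be compared on the same underlying manifold. Then I would verify that $\beta^i_{\tilde{P}_d}|_{\tilde{P}_{S^{(k)}}}$ really defines a Liouville structure of finite type on $\tilde{P}_{S^{(k)}}$, meaning that its Liouville vector field is outward-pointing along the cylindrical ends of $\tilde{P}_{S^{(k)}}$. These ends correspond to the legs of $\tilde{P}_d$ whose associated edges do \emph{not} bound $S^{(k)}$; the hypothesis that the final leg with respect to $\beta^i_{\tilde{P}_d}$ corresponds to none of the edges bounding $S^{(k)}$ guarantees that Nadler's skeleton meets each of these ends in the standard way, so the Liouville flow exits through them correctly.

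With this in place, \pref{lem:A-section_unique} supplies a canonical equivalence
\begin{align*}
\Fuk(\tilde{P}_{S^{(k)}}, \alpha_{\tilde{P}_d}|_{\tilde{P}_{S^{(k)}}})
\simeq
\Fuk(\tilde{P}_{S^{(k)}}, \beta^i_{\tilde{P}_d}|_{\tilde{P}_{S^{(k)}}}),
\end{align*}
and the stated equivalence then follows by applying the functorial operations of ind-completion and $\bZ_2$-folding to both sides.

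The main obstacle I expect is the verification that the restricted Nadler form is indeed a finite-type Liouville structure adapted to the induced cylindrical ends of $\tilde{P}_{S^{(k)}}$. This is essentially a check that Nadler's Weinstein structure is compatible with the iterated leg structure of $\tilde{P}_d$ described in \pref{lem:tailored}, away from the chosen final leg. The condition on the placement of the final leg isolates exactly the configuration where this compatibility holds; once it is established, the rest of the argument is a mechanical application of the Liouville-invariance lemma.
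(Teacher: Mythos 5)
Your proposal is essentially the paper's own argument: identify $H_{S^{(k)}}$ with the model intersection of legs $\tilde{P}_{S^{(k)}}$ via the symplectomorphism coming from Mikhalkin's pants decomposition (Lemma \ref{lem:tailored} and \cite[Remark 5.2]{Mik}), and then compare the induced canonical Liouville form with Nadler's restricted form $\beta^i_{\tilde{P}_d}|_{\tilde{P}_{S^{(k)}}}$ by the canonical deformation-invariance equivalence of Lemma \ref{lem:A-section_unique}, after ind-completion and $\bZ_2$-folding. One minor slip: the difference $\beta^i_{\tilde{P}_d}-\alpha_{\tilde{P}_d}=l\sum_j d\theta_j$ restricted to the pants is closed but in general not exact (the pants has nontrivial $H^1$); this is harmless, since Lemma \ref{lem:A-section_unique} only requires two Liouville forms on the same underlying domain, not that they differ by an exact one-form.
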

\begin{proof}
Each piece of the pants decomposition can be made symplectomorphic to
the domain
$[\tilde{P}_d]$
by
\cite[Remark 5.2]{Mik}.
\end{proof}

Next,
recall that
the Viterbo restriction along an inclusion of Weinstein domains coincides with the quotient by the cocores not in the subdomain.
Here,
what we need is the following special case of
\cite[Proposition 11.2]{GPS2}.

%Lem
\begin{lemma} \label{lem:A-local-prerestriction}
The Viterbo restriction
\begin{align*}
\Fuk(\tilde{P}_{S^{(k)}}, \beta^i_d |_{\tilde{P}_{S^{(k)}}})
\to
\Fuk(\tilde{P}_{S^{(l)}}, \beta^i_d |_{\tilde{P}_{S^{(l)}}})
\end{align*}
coincides with the quotient by the cocores of
$\tilde{P}_{S^{(k)}}$
not in
$\tilde{P}_{S^{(l)}}$.
\end{lemma}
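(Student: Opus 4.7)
The plan is to realize the inclusion $\tilde{P}_{S^{(l)}} \hookrightarrow \tilde{P}_{S^{(k)}}$ as an inclusion of Weinstein subdomains compatible with $\beta^i_d$, and then invoke \cite[Proposition 11.2]{GPS2} verbatim. That proposition asserts exactly that, for an inclusion of Weinstein manifolds $W^\prime \hookrightarrow W$ (more generally, for a Liouville sector inclusion obtained by cutting along a separating hypersurface coming from Weinstein data), the Viterbo restriction functor $\Fuk(W) \to \Fuk(W^\prime)$ is the Verdier quotient by the linking disks / cocores of $W$ that do not lie in $W^\prime$. So the entire content of the proof is to check that our situation is one to which this theorem applies.

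First I would use the inductive compatibility from \pref{lem:tailored}, in the form of the product decomposition \pref{eq:recursive}, to identify $\tilde{P}_{S^{(k)}}$ near its cylindrical end corresponding to the $j$-th leg with a product $\tilde{P}_{S^{(k)}-1} \times \bC^*_j(K)$. Passing to the subdomain where the coordinate on $\bC^*_j(K)$ is bounded away from the puncture determines a codimension-zero Weinstein subdomain whose completion is precisely $\tilde{P}_{S^{(l)}}$ (iterating if $\dim S^{(l)} < \dim S^{(k)} - 1$). By \pref{cor:A-local} and the invariance of the wrapped Fukaya category under Liouville homotopy (\pref{lem:A-section_unique}), we may assume without loss of generality that $\beta^i_d$ is already cylindrical in a neighborhood of the separating hypersurface, so the inclusion is a genuine Weinstein subdomain inclusion with respect to $\beta^i_d|_{\tilde{P}_{S^{(k)}}}$.

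Second, Nadler's Weinstein structure $\beta^i_d$ has been explicitly arranged so that the final leg (the one to which $i$ corresponds) is the only one along which the Liouville flow runs outward to infinity in an asymmetric way; all the remaining legs carry the symmetric tailored structure. By the constraint on $i$ in the statement, the final leg is not among the legs bounding $S^{(k)}$, so the restriction $\beta^i_d|_{\tilde{P}_{S^{(k)}}}$ still defines a genuine Weinstein structure, and the same holds after passing to $\tilde{P}_{S^{(l)}}$.

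With the Weinstein subdomain picture in place, \cite[Proposition 11.2]{GPS2} applies and yields the claim: the Viterbo restriction is the localization at (equivalently, the quotient by) the cocores of $\tilde{P}_{S^{(k)}}$ whose critical points of the Lyapunov function lie outside $\tilde{P}_{S^{(l)}}$. The main obstacle is the verification in the previous paragraph that $\beta^i_d$ can be put in a form cylindrical near the separating hypersurface without affecting the Fukaya category; this is where one genuinely uses finite-typeness and the Liouville-homotopy invariance of \pref{lem:A-section_unique}. Once this is arranged, the cocore-quotient description is a direct citation.
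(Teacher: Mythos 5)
Your route is the same as the paper's: realize $\tilde{P}_{S^{(l)}}$ as (the completion of) a Liouville subdomain of $(\tilde{P}_{S^{(k)}},\beta^i_d|_{\tilde{P}_{S^{(k)}}})$ using the inductive product structure of the legs, and then quote \cite[Proposition 11.2]{GPS2}. The paper does exactly this (reducing to the case of a full pants and a single leg, the general case being analogous), so there is no divergence of method. However, there is one substantive imprecision that amounts to a gap as written: you paraphrase \cite[Proposition 11.2]{GPS2} as applying to ``an inclusion of Weinstein manifolds $W'\hookrightarrow W$,'' but that is not its hypothesis. The proposition requires the \emph{complementary cobordism} $[\tilde{P}_{S^{(k)}}]\setminus[\tilde{P}_{S^{(l)}}]^\circ$ to be Weinstein; Weinsteinness of the ambient manifold and of the subdomain alone does not give the cocore-quotient description of Viterbo restriction. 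This is precisely the (only) hypothesis the paper's proof records before citing GPS2: ``both $[\tilde{P}_e]$ and the cobordism $[\tilde{P}_d]\setminus[\tilde{P}_e]^\circ$ are Weinstein.'' Your checks concern the subdomain being cut out cylindrically (which is comparatively soft, being handled by Liouville-homotopy invariance as in \pref{lem:A-section_unique}) and the final-leg constraint, but you never verify Weinsteinness of the removed cobordism, which is where the choice of final leg actually enters: with Nadler's structure the cocores are the Lagrangian cocore planes through the points $\delta^j_\emptyset(l)$, and the piece removed when passing to a non-final leg is the part of the pants carrying exactly those cocores, with the restricted structure still gradient-like for the Morse--Bott function. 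Adding this verification (or at least the assertion, as the paper does) closes the gap; without it the citation of \cite[Proposition 11.2]{GPS2} is not justified.
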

\begin{proof}
As the other cases can be proved similarly,
we restrict ourselves to the case
where
$S^{(k)} = U_v$
and
$S^{(l)} = U_e$
for some edge
$e$
connecting
$v$
to another vertex
$v^\prime$.
Then the leg
$\tilde{P}_e$
differs from the final leg of
$\tilde{P}_d$
with respect to
$\beta^i_d$.
Since both
$[\tilde{P}_e]$
and
the cobordism
$[\tilde{P}_d] \setminus [\tilde{P}_e]^\circ$
are Weinstein,
one can apply
\cite[Proposition 11.2]{GPS2}
to see that
the Viterbo restriction coincides with the quotient by the cocores of
$\tilde{P}_d$
not in
$\tilde{P}_e$.
\end{proof}

Combining
Corollary
\pref{cor:A-local}
and
\pref{lem:A-local-prerestriction},
we obtain

%Lem
\begin{lemma} \label{lem:A-local-restriction}
The restriction functor
$R^A_{S^{(k)}, S^{(l)}}$
coincides with the quotient by the cocores of
$H_{S^{(k)}}$
not in
$H_{S^{(l)}}$.
\end{lemma}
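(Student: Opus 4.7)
The plan is to unpack the definition of $R^A_{S^{(k)}, S^{(l)}}$, normalize the Weinstein structures using \pref{cor:A-local}, and then invoke \pref{lem:A-local-prerestriction} verbatim. Since $\bZ_2$-folding and ind-completion are functorial, it suffices to prove the analogous statement for the underlying wrapped Fukaya categories of Liouville domains and their canonical Viterbo restriction; the folded, ind-completed statement follows by applying the two exact functors to the resulting equivalence of functors.

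First I would choose Nadler's Weinstein structure $\beta^i_{\tilde{P}_d}$ on $\tilde P_d$ in such a way that the associated final leg does not correspond to any edge bounding $S^{(k)}$; in particular the final leg also fails to bound the smaller stratum $S^{(l)}\subset\overline{S^{(k)}}$, so the same structure is admissible for both sides. By \pref{cor:A-local}, restriction of $\beta^i_{\tilde P_d}$ to $\tilde P_{S^{(k)}}$, respectively $\tilde P_{S^{(l)}}$, realizes canonical equivalences
\[
\cF^{pre}_A(U_{S^{(k)}}) \simeq \Fuk(\tilde P_{S^{(k)}},\beta^i_{\tilde P_d}|_{\tilde P_{S^{(k)}}})_{\bZ_2},\quad
\cF^{pre}_A(U_{S^{(l)}}) \simeq \Fuk(\tilde P_{S^{(l)}},\beta^i_{\tilde P_d}|_{\tilde P_{S^{(l)}}})_{\bZ_2},
\]
where the uniqueness up to canonical equivalence is guaranteed by \pref{lem:A-section_unique}.

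Next I would verify that, under these identifications, $R^A_{S^{(k)}, S^{(l)}}$ coincides with the ($\bZ_2$-folded, ind-completed) Viterbo restriction
\[
\Fuk(\tilde P_{S^{(k)}},\beta^i_{\tilde P_d}|_{\tilde P_{S^{(k)}}})\to \Fuk(\tilde P_{S^{(l)}},\beta^i_{\tilde P_d}|_{\tilde P_{S^{(l)}}}).
\]
This is essentially tautological: $R^A_{S^{(k)}, S^{(l)}}$ is defined by iterating Viterbo restriction to a chain of legs picked out by the combinatorics of edges bounding $S^{(l)}$ but not $S^{(k)}$, and the inductive compatibility \pref{eq:recursive} together with the $\frakS_{d+2}$-symmetry of $\tilde P_d$ (away from the final leg) identifies this iterated restriction with the single Viterbo restriction of the Weinstein-domain pair $[\tilde P_{S^{(l)}}]\subset[\tilde P_{S^{(k)}}]$. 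The functoriality of Viterbo restriction under composition of trivial sectorial inclusions, combined with \pref{lem:A-section_unique}, makes this identification canonical.

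Finally, with the definitional reduction in place, I would invoke \pref{lem:A-local-prerestriction} directly: it identifies the Viterbo restriction from $\tilde P_{S^{(k)}}$ to $\tilde P_{S^{(l)}}$, with the chosen Weinstein structures, with the quotient by the cocores of $\tilde P_{S^{(k)}}$ that do not lie in $\tilde P_{S^{(l)}}$. Transporting this equality of functors through the canonical equivalences above and applying ind-completion and $\bZ_2$-folding yields the claimed description of $R^A_{S^{(k)}, S^{(l)}}$. The main subtlety, and the step I would double-check carefully, is the coherent choice of Nadler's final leg so that the same Weinstein structure on $\tilde P_d$ restricts compatibly to both $\tilde P_{S^{(k)}}$ and $\tilde P_{S^{(l)}}$; everything else is a bookkeeping exercise with already-established functorial statements.
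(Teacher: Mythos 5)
Your proposal is correct, but it follows the short definitional route that the paper only alludes to in the first sentence of its own proof (``the claim follows immediately from the definition of $R^A_{S^{(k)},S^{(l)}}$ and compatibility of the Liouville homotopies in the proof of \pref{lem:A-section_unique} with restrictions''), whereas the argument actually written out in the paper is a deliberately different, more explicit one. Concretely, you normalize both sections to Nadler's structure via Corollary \pref{cor:A-local}, identify $R^A_{S^{(k)},S^{(l)}}$ with the Viterbo restriction of the Weinstein pair $[\tilde{P}_{S^{(l)}}]\subset[\tilde{P}_{S^{(k)}}]$, and quote \pref{lem:A-local-prerestriction}; the paper instead stays with the canonical Weinstein structure on $H_{S^{(k)}}$, identifies its cocores (up to Hamiltonian isotopy) with the Lagrangian cocore planes attached to the points $\delta^i_\emptyset(l)$, and tracks these objects through the chain of canonical equivalences $\Fuk(\tilde{P}_d)\to\Fuk(H_v)\to\Fuk(\tilde{P}_d,\beta^j_d)\to\Fuk(\tilde{P}_e,\beta^j_d|_{\tilde{P}_e})\to\Fuk(H_e)$, observing that the only cocore not carried to a cocore is the one attached to the final leg, which by the choice of final leg is not among the objects being quotiented. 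What this buys, and what your write-up leaves implicit, is precisely the passage between the two Weinstein structures: \pref{lem:A-local-prerestriction} describes the Viterbo restriction as a quotient by cocores \emph{for Nadler's structure} $\beta^i_d$, while the lemma is stated for cocores of $H_{S^{(k)}}$ with the structure induced from the canonical one on $\tilde{P}_d$, so ``transporting the equality of functors through the canonical equivalences'' tacitly uses that these equivalences send the relevant cocores to cocores; the subtlety is therefore not only the coherent choice of final leg, and your last step should record this verification. The paper's cocore-tracking version also sets up the explicit dictionary of generators that is reused later, e.g.\ in \pref{thm:lift2}, to match cocores with the objects $\underline{\scrO}^i_{d+1}$ on the B-side, which is why that is the proof recorded there; your shorter route is adequate for the lemma itself once that one point is made explicit.
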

\begin{proof}
The claim follows immediately from the
definition of
$R^A_{S^{(k)}, S^{(l)}}$
and
compatibility of the Liouville homotopies in the proof of
\pref{lem:A-section_unique}
with restrictions.
Here,
we give an alternative proof
which will clarify the relationship to the $B$-side counterpart. 
Again,
we restrict ourselves to the case
where
$S^{(k)} = U_v$
and
$S^{(l)} = U_e$
for some edge
$e$
connecting
$v$
to another vertex
$v^\prime$.
First,
we claim that
up to Hamiltonian isotopy the cocores of
$\tilde{P}_d$
with the canonical Weinstein structure are given by the extension of
$\Delta^i_\emptyset(l)$
by attaching suitable cylindrical ends.
Here,
$\Delta^i_\emptyset(l)$
are the stable manifolds of the points
$\delta^i_\emptyset(l)$
to
which the Liouville flow of Nadler's Weinstein structure
$\beta^i_d$
attracts.
In other words,
the cocores are the Lagrangian cocore planes for
$\delta^i_\emptyset(l)$
in the sense of
\cite[Section 2.2]{CDGG}.
By definition a cocore of
$\tilde{P}_d$
is an unstable manifold of a critical point of index
$\dim_\bR \tilde{P}_d / 2 = d$.
In particular,
it is an exact cylindrical Lagrangian
which intersects the core transversally once.
With respect to the canonical Weinstein structure
$\delta^i_\emptyset(l)$
are the critical points of index
$d$.

By
\cite[Theorem 1.1]{CDGG},
\cite[Theorem 1.13]{GPS2}
the cocores generate
$\Fuk(\tilde{P}_d)$.
Their images under the canonical equivalence
$\Fuk(\tilde{P}_d)
\to
\Fuk(H_v)$
are the cocores of
$H_v$
and
generate 
$\Fuk(H_v)$.
The canonical equivalence
$\Fuk(H_v)
\to
\Fuk(\tilde{P}_d, \beta^j_d)$
sends any of the cocores to a cocore but the image of the Lagrangian cocore plane for
$\delta^j_d(l)$,
which becomes a mere exact cylyindrical Lagrangian.
Still the images must generate 
$\Fuk(\tilde{P}_d, \beta^j_d)$.
By
\pref{lem:A-local-prerestriction}
the quotient
$\Fuk(\tilde{P}_d, \beta^j_d)
\to
\Fuk(\tilde{P}_e, \beta^j_d |_{\tilde{P}_e})$
kills the cocore of
$\tilde{P}_d$
not in
$\tilde{P}_e$,
which is the Lagrangian cocore plane for
$\delta^e_d(l)$.
Here,
$\delta^e_d(l)$
is the point
to
which the Liouville flow of Nadler's Weinstein structure
whose final leg corresponds to
$\tilde{P}_e$
attracts.
Then the images under the quotient of Lagrangian cocore planes for
$\delta^i_d(l) \neq \delta^e_d(l)$
are the cocores of
$\tilde{P}_e$
and
generate
$\Fuk(\tilde{P}_e, \beta^j_d |_{\tilde{P}_e})$.
Since the canonical equivalence
$\Fuk(\tilde{P}_e, \beta^j_d |_{\tilde{P}_e})
\to
\Fuk(H_e)$
in turn sends the images to the cocores of
$H_e$,
the restction functor
$R^A_{v, e}$
coincides with the quotient by the cocores of
$H_v$
not in
$H_e$.
\end{proof}

\subsection{$A$-side constructible sheaves of categories for very affine hypersurfaces}
Since
$U_{S^{(k)}}$
form a basis of the topology of
$\Pi_\Sigma$,
we may pass to the sheafification.

%Dfn
\begin{definition} \label{dfn:cF_A}
The
\emph{$A$-side constructible sheaf of categories}
for
$H$
is the sheafification
\begin{align*}
\cF_A \colon \Open(\Pi_\Sigma)^{op} \to ^{**}\DG,
\end{align*}
where
$\Open(\Pi_\Sigma)$
is the category of open subsets of
$\Pi_\Sigma$
with respect to the topology from Definition
\pref{dfn:topology}.
\end{definition}

%Rmk
\begin{remark}
In general,
the existence of sheafification might be delicate because of size issues.
However,
this is not the case in our setting as
$^{**}\DG$
has
small limits
and 
colimits,
and
the topology on
$\Pi_\Sigma$
has finite cardinality.
\end{remark}

We will show that
the global sections can be identified with the wrapped Fukaya category of
$H$. 
In our proof,
the following two lemmas play key roles.

%Lem
\begin{lemma} \label{lem:key1}
Let
$\scrC$
be a stable presentable dg category
and
$\scrA, \scrB$
its full presentable dg subcategories
such that
\begin{align*}
\Hom_\scrC(A, B) = \Hom_\scrC(B, A) = 0
\end{align*}
for any
$A \in \scrA, B \in \scrB$.
Then there is a fiber product
\begin{align*}
\begin{gathered}
\xymatrix{
\scrC \ar[d]_{} \ar[r]^{} & \scrC / \scrB \ar[d]_{}\\
\scrC / \scrA  \ar[r]^{} & \scrC / \langle \scrA, \scrB \rangle.
}
\end{gathered}
\end{align*}
%\begin{align*}
%\begin{gathered}
%\xymatrix{
%& \scrC \ar[dl]_{} \ar[dr]^{} & \\
%\scrC / \scrA  \ar[dr]^{} & & \ar[dl]_{} \scrC / \scrB \\
%& \scrC / \langle \scrA, \scrB \rangle &
%}
%\end{gathered}
%\end{align*}
\end{lemma}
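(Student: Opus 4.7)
The plan is to exploit the mutual orthogonality assumption to show that the Bousfield localizations at $\scrA$ and $\scrB$ commute; once this is established, the fiber-product claim follows from an explicit reconstruction of objects in $\scrC$. Since $\scrA$ and $\scrB$ are presentable full subcategories of the presentable stable $\scrC$, each Verdier quotient is a reflective localization: the right adjoint is fully faithful and identifies $\scrC/\scrA$, $\scrC/\scrB$, $\scrC/\langle\scrA,\scrB\rangle$ with the right orthogonals $\scrA^\perp$, $\scrB^\perp$, $\langle\scrA,\scrB\rangle^\perp \subset \scrC$ respectively. The orthogonality hypothesis translates into $\scrB \subset \scrA^\perp$, $\scrA \subset \scrB^\perp$, and $\langle\scrA,\scrB\rangle^\perp = \scrA^\perp \cap \scrB^\perp$. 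I write $L_\scrA \colon \scrC \to \scrA^\perp$ and $\Gamma_\scrA \colon \scrC \to \scrA$ for the reflection and coreflection, so every $C$ sits in a fiber sequence $\Gamma_\scrA C \to C \to L_\scrA C$, and analogously for $\scrB$ and for $\langle\scrA,\scrB\rangle$.

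The core step is to verify $L_\scrA L_\scrB \simeq L_\scrB L_\scrA \simeq L_{\langle\scrA,\scrB\rangle}$. To see $L_\scrA L_\scrB C \in \scrB^\perp$, apply $\Hom_\scrC(B,-)$ for $B \in \scrB$ to the fiber sequence $\Gamma_\scrA L_\scrB C \to L_\scrB C \to L_\scrA L_\scrB C$: the left term vanishes because $\Gamma_\scrA L_\scrB C \in \scrA$ and $\Hom_\scrC(B, A) = 0$ for $A \in \scrA$, while the middle term vanishes because $L_\scrB C \in \scrB^\perp$. Hence $L_\scrA L_\scrB C \in \scrA^\perp \cap \scrB^\perp = \langle\scrA,\scrB\rangle^\perp$, and uniqueness of Bousfield reflections gives the identification; the symmetric computation handles $L_\scrB L_\scrA$. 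As a corollary, every extension in $\langle\scrA,\scrB\rangle$ splits (the connecting map lies in $\Hom_\scrC(B, \Sigma A) = 0$), so $\langle\scrA,\scrB\rangle \simeq \scrA \oplus \scrB$ and $\Gamma_{\langle\scrA,\scrB\rangle} C \simeq \Gamma_\scrA C \oplus \Gamma_\scrB C$ for all $C$.

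With commuting localizations in hand, I would construct an inverse $\Phi$ to the comparison functor $\scrC \to \scrA^\perp \times_{\langle\scrA,\scrB\rangle^\perp} \scrB^\perp$ by the pullback formula $\Phi(X, Y, \phi) := X \times_{L_\scrB X} Y$, formed in $\scrC$, where the map $Y \to L_\scrB X$ factors as $Y \to L_\scrA Y \xrightarrow{\phi^{-1}} L_\scrB X$ and $X \to L_\scrB X$ is the unit. The fiber of $\Phi(X, Y, \phi) \to Y$ equals $\Gamma_\scrB X \in \scrB$, so $L_\scrB \Phi(X, Y, \phi) \simeq Y$; symmetrically $L_\scrA \Phi(X, Y, \phi) \simeq X$. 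For the opposite composition, the canonical map $C \to L_\scrA C \times_{L_{\langle\scrA,\scrB\rangle} C} L_\scrB C$ is an equivalence: both sides sit in fiber sequences over $L_\scrA C$, with fibers $\Gamma_\scrA C$ and $\Gamma_\scrA L_\scrB C$, and these agree because $\Gamma_\scrA \Gamma_\scrB C = 0$ (since $\Gamma_\scrB C \in \scrB \subset \scrA^\perp$ implies $L_\scrA \Gamma_\scrB C = \Gamma_\scrB C$).

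The main obstacle is coherence in the presentable/dg setting: the functor $\Phi$ and the various natural equivalences must be constructed at the $\infty$-categorical level, not merely on homotopy categories, and the pullbacks must be interpreted in the $(\infty,1)$-sense. Once the Bousfield-localization formalism is in place and limits in $\scrC$ are understood functorially, however, the argument reduces to the fiber-sequence manipulations above, all of which are standard in stable presentable $\infty$-category theory.
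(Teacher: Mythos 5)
Your argument is correct in substance, and its core coincides with the paper's: both proofs rest on the semiorthogonal (Bousfield) decompositions $C_\scrA \to C \to C_{\scrA^\perp}$, the splitting $\langle \scrA, \scrB \rangle \simeq \scrA \oplus \scrB$ forced by orthogonality, and the cartesian square exhibiting $C$ as the pullback of $L_\scrA C$ and $L_\scrB C$ over $L_{\langle \scrA, \scrB\rangle} C$ (your ``unit'' computation via the fiber comparison $\Gamma_\scrA C \simeq \Gamma_\scrA L_\scrB C$, using $\Gamma_\scrA \Gamma_\scrB C = 0$, is exactly the paper's bicartesian square, which the paper instead obtains by pasting pushouts through the cofiber sequence $C_\scrA \to C_{\scrB^\perp} \to C_{(\scrA\oplus\scrB)^\perp}$). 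Where you diverge is in how the categorical equivalence is extracted from this object-level statement: the paper applies $\Hom_\scrC(C,-)$ to the cartesian square and reduces the claim to a fiber product of morphism complexes (full faithfulness of the comparison functor, with essential surjectivity onto the fiber product left implicit), whereas you prove commutation of the localizations $L_\scrA L_\scrB \simeq L_{\langle\scrA,\scrB\rangle}$ and then build an explicit inverse $\Phi(X,Y,\phi) = X \times_{L_\scrB X} Y$. Your route has the merit of addressing essential surjectivity head-on, but it is also where the only real gap sits, and you flag it yourself: $\Phi$ and the identifications $L_\scrA\Phi \simeq X$, $L_\scrB\Phi \simeq Y$ (and the compatibility of the induced gluing datum with $\phi$) must be produced functorially and coherently, not objectwise. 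A clean way to close this, which essentially merges your argument with the paper's, is to drop $\Phi$ as a constructed inverse and instead show the canonical comparison functor $\scrC \to \scrC/\scrA \times_{\scrC/\langle\scrA,\scrB\rangle} \scrC/\scrB$ is fully faithful --- this follows from your unit equivalence plus the identification of quotient Homs via the right adjoints, i.e.\ the paper's Hom-complex computation --- and then use your pullback formula only objectwise to verify essential surjectivity, for which no naturality is needed.
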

\begin{proof}
Since we have the pushouts
\begin{align*}
\begin{gathered}
\xymatrix{
\scrA \ar[d]_{} \ar[r]^{} & \scrC \ar[d]_{} & \scrB \ar[d]_{} \ar[r]^{} & \scrC \ar[d]_{} \\
0  \ar[r]^{} & \scrC / \scrA, & 0 \ar[r]^{} & \scrC / \scrB,
}
\end{gathered}
\end{align*}
the Verdier localizations
$\scrC \to \scrC / \scrA, \scrC \to \scrC / \scrB$
admit right adjoints as well as the inclusions
$\scrA \hookrightarrow \scrC, \scrB \hookrightarrow \scrC$.
Hence we obtain two semiorthogonal decompositions of
$\scrC$
by
$\scrA, \scrA^\perp$
and
by
$\scrB, \scrB^\perp$,
which respectively yield cofiber sequences
\begin{align*}
C_\scrA \to C \to C_{\scrA^\perp}, \
C_\scrB \to C \to C_{\scrB^\perp}, \
C_{\scrA \oplus \scrB} \to C \to C_{(\scrA \oplus \scrB)^\perp}
\end{align*}
for any object
$C \in \scrC$.
Note that
the full dg subcategory
$\langle \scrA, \scrB \rangle \subset \scrC$
is equivalent to
$\scrA \oplus \scrB$,
as
$\scrA, \scrB$
are mutually orthogonal.
Here,
the morphism
$C_{\scrA \oplus \scrB} \to C$
in the last cofiber sequence is the direct sum of that
$C_\scrA \to C, C_\scrB \to C$
in the first two.
Since
$\scrA, \scrB$
are mutually orthogonal,
its cone can be computed by taking the cone of
$C_\scrB \to C$
followed by taking the cone of
$C_\scrA \to C_{\scrB^\perp}$.
Hence we obtain a cofiber sequence
\begin{align*}
C_\scrA \to C_{\scrB^\perp} \to C_{(\scrA \oplus \scrB)^\perp}.
\end{align*}

The conclusion is equivalent to there being a fiber product
\begin{align} \label{eq:fiberproduct1}
\begin{gathered}
\xymatrix{
\Hom_\scrC(C_1, C_2) \ar[d]_{} \ar[r]^{} & \Hom_{\scrC / \scrB} (C_1, C_2) \ar[d]_{}\\
\Hom_{\scrC / \scrA}(C_1, C_2)  \ar[r]^{} & \Hom_{\scrC / \langle \scrA, \scrB \rangle}(C_1, C_2)
}
\end{gathered}
\end{align}
of morphism complexes for any
$C_1, C_2 \in \scrC$.
Note that
it suffices to check the latter
when
$C= C_1 = C_2$.
Since
$C_{\scrA^\perp}, C_{\scrB^\perp}, C_{(\scrA \oplus \scrB)^\perp}$
are the images of
$C$
under the right adjoints of the Verdier localizations
$\scrC \to \scrC / \scrA, \scrC \to \scrC / \scrB, \scrC \to \scrC / \langle \scrA, \scrB \rangle$,
one can rewrite
\pref{eq:fiberproduct1}
as
\begin{align*}
\begin{gathered}
\xymatrix{
\Hom_\scrC(C, C) \ar[d]_{} \ar[r]^{} & \Hom_\scrC(C, C_{\scrB^\perp}) \ar[d]_{}\\
\Hom_\scrC(C, C_{\scrA^\perp})  \ar[r]^{} & \Hom_\scrC(C, C_{(\scrA \oplus \scrB)^\perp}).
}
\end{gathered}
\end{align*}
Now,
as the functor
$\Hom_\scrC(C, -)$
preserves fiber products,
it suffices to show that
\begin{align} \label{eq:fiberproduct2}
\begin{gathered}
\xymatrix{
C \ar[d]_{} \ar[r]^{} & C_{\scrB^\perp} \ar[d]_{}\\
C_{\scrA^\perp}  \ar[r]^{} & C_{(\scrA \oplus \scrB)^\perp}
}
\end{gathered}
\end{align}
is a fiber product in
$\scrC$.
Consider the diagram
\begin{align} \label{eq:fiberproduct3}
\begin{gathered}
\xymatrix{
C_\scrA \ar[d]_{} \ar[r]^{} & C \ar[d]_{} \ar[r]^{} & C_{\scrB^\perp} \ar[d]_{} \\
0 \ar[r]_{} & C_{\scrA^\perp} \ar[r]^{} & C_{(\scrA \oplus \scrB)^\perp}.
}
\end{gathered}
\end{align}
Since both
the left
and
the outer
squares are pushouts,
the right square is also a pushout. 
It follows that
\pref{eq:fiberproduct2}
is a fiber product,
as for stable dg categories
any fiber product is a bicartesian.
\end{proof}

%Lem
\begin{lemma} \label{lem:key2}
Let
$W$
be a Weinstein manifold obtained by gluing its Weinstein submanifolds
$W_1, W_2$
along their Weinstein submanifolds
$W_1 \cap W_2$.
Assume that
$W_1 \cap W_2$
separates
$W_1$
and
$W_2$
in the sense that
there is no Reeb chord connecting the ideal boundary in
$W_1 \setminus W_2$
to that in
$W_2 \setminus W_1$.
Then we have
\begin{align*}
\Hom_{\Fuk(W)}(L_1, L_2)
=
\Hom_{\Fuk(W)}(L_2, L_1)
=
0
\end{align*}
for any cocores
$L_1, L_2$
of
$W$
respectively not in
$W_2, W_1$.
\end{lemma}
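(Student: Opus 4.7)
The plan is to compute the wrapped Floer cohomology as a directed colimit and exploit the separating assumption to force it to vanish. Recall that in the wrapped Fukaya category one has
\begin{align*}
\Hom_{\Fuk(W)}(L_1, L_2) = \colim_w CF^*(L_1^w, L_2),
\end{align*}
where the colimit runs over positive wrappings $L_1 \leadsto L_1^w$ at infinity, obtained by flowing along a positive contact isotopy.

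First, I would pin down the geometry of the cocores. Since $L_1$ is a cocore of $W$ that does not lie in $W_2$, it is (up to compactly supported Hamiltonian isotopy compatible with the Weinstein structure) a cocore of a critical index handle of $W_1 \setminus W_2$; in particular its ideal boundary is contained in the ideal contact boundary of $W_1 \setminus W_2$, and $L_1$ can be chosen disjoint from $W_1 \cap W_2$. The analogous statement holds for $L_2$ in $W_2 \setminus W_1$.

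Next I would invoke the separating hypothesis. The assumption that no Reeb chord connects the ideal boundary of $W_1 \setminus W_2$ to that of $W_2 \setminus W_1$ means that one can choose a contact form at infinity adapted to the Liouville hypersurface $W_1 \cap W_2$, with respect to which the Reeb flow preserves a collar neighbourhood of $\partial_\infty (W_1 \cap W_2)$. Consequently a positive wrapping starting in $W_1$ remains in $W_1$ for all time, and similarly for $W_2$. A convenient formal packaging of this is via the stop defined by $W_1 \cap W_2$, which by \cite{GPS1, GPS2} splits the wrapped Fukaya category as a product of the two partially wrapped sides; the separating Reeb condition is exactly what ensures that no holomorphic strip can bridge the two components.

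With this in place, every positively wrapped image $L_1^w$ remains disjoint from $L_2$: the former lies in $W_1$, the latter in $W_2$, and their intersection is forced into $W_1 \cap W_2$, where both were arranged to be empty. Hence $CF^*(L_1^w, L_2) = 0$ for every cofinal $w$, and passing to the colimit gives $\Hom_{\Fuk(W)}(L_1, L_2) = 0$. The other vanishing follows by the symmetric argument after reversing the roles. The main obstacle is making rigorous the claim that positive wrappings of a Lagrangian in $W_1$ remain in $W_1$; this requires an adapted choice of contact form near $\partial_\infty (W_1 \cap W_2)$, and is precisely what the no-Reeb-chord hypothesis is designed to deliver, so I would cite the stop formalism of \cite{GPS1, GPS2} rather than reproducing it.
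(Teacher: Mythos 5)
Your proposal is correct and follows essentially the same route as the paper: the paper's proof is exactly the observation that the cocores are disjoint in the initial position and that, by the no-Reeb-chord hypothesis, the Reeb-flow wrapping of $L_1$ never reaches $L_2$, so the wrapped morphism spaces vanish. Your colimit formulation and the optional repackaging via stops are just more explicit versions of the same argument (the confinement of the wrapping is spelled out geometrically in the paper's Example~\ref{eg:key} and justified in the Appendix).
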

\begin{proof}
Any cocores
$L_1, L_2$
respectively not in
$W_2, W_1$
do not intersect in the initial position.
By assumption the time
$1$
trajectory of 
$L_1$
under the Reeb flow never intersects
$L_2$.
\end{proof}

%Rmk
\begin{remark} \label{rmk:adv}
The fact that
the above separatedness assumption applies in our setting will be carefully
formulated
and
justified in Appendix.
\end{remark}

%Cor
\begin{corollary} \label{cor:key}
Under the same assumption as above,
there is a fiber product
\begin{align*}
\begin{gathered}
\xymatrix{
\Fuk(W) \ar[d]_{} \ar[r]^{} & \Fuk(W_1) \ar[d]_{}\\
\Fuk(W_2)  \ar[r]^{} & \Fuk(W_1 \cap W_2).
}
\end{gathered}
\end{align*}
\end{corollary}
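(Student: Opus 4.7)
The plan is to reduce the statement directly to \pref{lem:key1} applied to $\scrC = \Fuk(W)$, taking $\scrA$ (respectively $\scrB$) to be the full presentable dg subcategory split-generated by the cocores of $W$ that are \emph{not} contained in $W_1$ (respectively, not contained in $W_2$). The three quotients $\scrC/\scrA$, $\scrC/\scrB$, and $\scrC/\langle \scrA, \scrB \rangle$ should then be identified with $\Fuk(W_1)$, $\Fuk(W_2)$, and $\Fuk(W_1 \cap W_2)$, respectively, so that \pref{lem:key1} delivers the desired fiber product square.

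The first step is to identify the Viterbo restrictions with Verdier quotients. By the generation-by-cocores theorem of \cite{CDGG} and \cite[Theorem 1.13]{GPS2}, $\Fuk(W)$ is split-generated by the cocores of $W$. The Viterbo restriction $\Fuk(W) \to \Fuk(W_i)$ is the quotient by the cocores of $W$ not lying in $W_i$; this is the same input used in the proof of \pref{lem:A-local-prerestriction}, namely \cite[Proposition 11.2]{GPS2}, now applied to the inclusion of Weinstein subdomains $W_i \hookrightarrow W$. The analogous statement for $W_1 \cap W_2$ identifies $\Fuk(W_1 \cap W_2)$ with the quotient of $\Fuk(W)$ by the cocores not in $W_1 \cap W_2$; these are precisely the cocores belonging to $\scrA \cup \scrB$, and hence split-generate $\langle \scrA, \scrB \rangle$.

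The second step is to check the mutual orthogonality hypothesis of \pref{lem:key1}. This is precisely the content of \pref{lem:key2}: for any cocore $L_1 \not\subset W_2$ and any cocore $L_2 \not\subset W_1$, the separation assumption on $W_1 \cap W_2$ forces no Reeb chord of the positive wrapping to cross the separating hypersurface, so the wrapped morphism complexes vanish in both directions. Combined with the first step, this supplies all hypotheses of \pref{lem:key1}, producing the pullback square.

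The main obstacle, as already flagged in \pref{rmk:adv}, is not the purely formal step above but the verification that the separation hypothesis actually holds for the Liouville/Weinstein pieces arising in our pants decomposition; this requires an engineered choice of Liouville structure near the gluing locus and is the reason the argument is deferred to the Appendix via the semi-tropicalization technique. A secondary technical point is keeping track of presentability and compact generation: cocores only generate the compact part $\Fuk^c$, so one must pass to ind-completions throughout before invoking \pref{lem:key1}, which takes place in the presentable setting; invariance under Liouville homotopy (\pref{lem:A-section_unique}) is implicitly used to ensure that the ambient Liouville structures can be arranged to make the Viterbo restriction functors behave as described.
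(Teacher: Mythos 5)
Your proposal is correct and follows essentially the same route as the paper: identify the Viterbo restrictions with quotients by cocores via \cite[Proposition 11.2]{GPS2} (together with generation by cocores), and then apply \pref{lem:key1} with the mutual orthogonality supplied by \pref{lem:key2}. Your closing remarks on ind-completion and on deferring the separatedness hypothesis to the Appendix match the paper's treatment, since that hypothesis is part of the corollary's assumptions rather than something to be proved here.
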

\begin{proof}
Since the subdomains
$[W_1], [W_2]$
and
the cobordisms
$[W] \setminus [W_1]^\circ
=
[W_2],
[W] \setminus [W_2]^\circ
=
[W_1]$
are Weinstein,
by
\cite[Proposition 11.2]{GPS2}
the Viterbo restrictions
\begin{align*}
\Fuk(W) \to \Fuk(W_1), \
\Fuk(W) \to \Fuk(W_2)
\end{align*}
are the quotients by the cocores respectively not in
$W_2, W_1$.
Similarly,
the Viterbo restrictions
\begin{align*}
\Fuk(W_1) \to \Fuk(W_1 \cap W_2), \
\Fuk(W_2) \to \Fuk(W_1 \cap W_2)
\end{align*}
are the quotients by the cocores not in
$W_1 \cap W_2$.
Now,
the claim follows from
\pref{lem:key1}
and
\pref{lem:key2}.
\end{proof}

%Eg
\begin{example} \label{eg:key}
Let
$\tilde{P}^1_d \cup_{\Core(C)} \tilde{P}^2_d$
be the gluing of two $d$-dimensional tailored pants along a leg
$C$,
where
$\tilde{P}^1_d, \tilde{P}^2_d$
are equipped with Nadler's Weinstein structures.
Here,
we choose their final legs different from
$C$.
Since the subdomains
$[\tilde{P}^1_d], [\tilde{P}^2_d]$
and
the cobordisms
$[\tilde{P}^2_d], [\tilde{P}^1_d]$
are Weinstein,
by
\cite[Proposition 11.2]{GPS2}
the Viterbo restrictions
\begin{align*}
\Fuk(\tilde{P}^1_d \cup_{\Core(C)} \tilde{P}^2_d) \to \Fuk(\tilde{P}^1_d), \
\Fuk(\tilde{P}^1_d \cup_{\Core(C)} \tilde{P}^2_d) \to \Fuk(\tilde{P}^2_d)
\end{align*}
are the quotients by the cocores respectively not in
$\tilde{P}^1_d, \tilde{P}^2_d$.
Similarly,
the Viterbo restrictions
\begin{align*}
\Fuk(\tilde{P}^1_d) \to \Fuk(C), \
\Fuk(\tilde{P}^2_d) \to \Fuk(C)
\end{align*}
are the quotients by the cocores not in
$C$.

Any cocores
$L_1, L_2$
respectively not in
$W_2, W_1$
do not intersect in the initial position.
Suppose that the time
$1$
trajectory
$\phi^1_{\Ham}(L_1)$
of 
$L_1$
under the Reeb flow
$\phi_{\Ham}$
intersects
$L_2$.
Then one finds a point
$p \in L_1$
which needs to be pushed from the initial position through
$W_1 \cap W_2$
to reach
$L_2$.
Let
$0 < t < 1$
be the minimum time
such that
$\phi^t_{\Ham}(p)
\in
\del \overline{W_1 \cap W_2}$
and
$\phi^{t + \epsilon}_{\Ham}(p)
\in
W_1 \cap W_2$
for 
$0 < \epsilon \ll 1$.
%We may assume that
%the wrapping occurs only on cylindrical ends.
%Then the it is defined by the Hamiltonian flow
%which is orthogonal to the Liouville flow.
Now,
$\phi^{t + \epsilon}_{\Ham}(p)$
belongs to
a cylindrical end of
$W_1 \cap W_2$.
Consider the product decompositions
\begin{align*}
C \cong \bC^*_z \times \tilde{P}_{d-1}, \
C \cong \bC^*_{\bar{z}} \times \tilde{P}_{d-1}
\end{align*}
of
$C$
respectively as a leg of
$\tilde{P}^1_d, \tilde{P}^2_d$.
The restricted Liouville flow to
$\bC^*_z, \bC^*_{\bar{z}}$
is parallel to the radial coordinate direction,
while that of the Reeb flow is orthogonal to it.
Hence all points of
$C$
remains confined under the wrapping.
\end{example}

Due to the following result,
our $A$-side category can be computed as the global sections of
$\cF_A$.
See
\pref{thm:main1-Peng}
for stronger statement with simpler proof,
which should be of independent interest.
Here,
we give an alternative proof to
introduce notations for later use
and
clarify the relationship to
\cite{GS1},
the $B$-side counterpart
and
complete intersections.

%Thm
\begin{theorem} \label{thm:global}
There is an equivalence
\begin{align*}
\Fuk(H)_{\bZ_2}
\simeq
\cF_A(\Pi_\Sigma)
=
\lim
\left( \prod_{v \in \VVert(\Pi_\Sigma)} \cF_A(U_v)
\to
\prod_{e \in \Edge(\Pi_\Sigma)} \cF_A(U_e)
\to
\cdots \right).
\end{align*}
\end{theorem}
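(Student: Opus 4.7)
The plan is to realize $\Fuk(H)_{\bZ_2}$ as an iterated fiber product that matches the \v{C}ech limit computing $\cF_A(\Pi_\Sigma)$. Since the open sets $U_v$ indexed by vertices form a basis of the topology from \pref{dfn:topology} and cover $\Pi_\Sigma$, the sheafification in \pref{dfn:cF_A} makes the global sections equal to the \v{C}ech limit
\begin{equation*}
\cF_A(\Pi_\Sigma) \simeq \lim \Bigl(\prod_{v} \cF_A(U_v) \to \prod_{e} \cF_A(U_e) \to \cdots \Bigr)
\end{equation*}
for the cover $\{U_v\}_{v \in \VVert(\Pi_\Sigma)}$. So the theorem reduces to a descent statement for the wrapped Fukaya category of $H$ along the corresponding cover by the pants $H_v = f^{-1}(U_v)$, where $f$ is the stratified fibration \pref{eq:A-fibration}.

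First I would fix an enumeration $v_1, \ldots, v_N$ of the vertices of $\Pi_\Sigma$ so that each $v_{k+1}$ is connected by an edge to at least one earlier vertex; this is possible because $\Pi_\Sigma$ is connected. Set $H_k = H_{v_1} \cup \cdots \cup H_{v_k}$, so $H_N = H$. At the $(k{+}1)$-th step, $H_k \cap H_{v_{k+1}}$ is a disjoint union of legs, each locally of the form $\tilde{P}_{d-1} \times \bC^*$ by the inductive compatibility \pref{eq:recursive}, and is itself Weinstein. Using \pref{lem:A-section_unique} I would arrange, up to Liouville homotopy, that the ``final leg'' of each of the two Weinstein pieces being glued lies away from the shared gluing locus, which is exactly the hypothesis needed to apply \pref{cor:key} and produce a fiber product
\begin{equation*}
\Fuk(H_{k+1}) \simeq \Fuk(H_k) \times_{\Fuk(H_k \cap H_{v_{k+1}})} \Fuk(H_{v_{k+1}}).
\end{equation*}

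The main obstacle is verifying the Reeb-separation hypothesis of \pref{lem:key2} at each inductive step, namely that no Reeb chord joins the ideal boundary of $H_k \setminus H_{v_{k+1}}$ to that of $H_{v_{k+1}} \setminus H_k$. This is the delicate geometric input flagged in \pref{rmk:adv}. The right picture, already sketched in \pref{eg:key}, is that each gluing leg factors as $\tilde{P}_{d-1} \times \bC^*$ with the Liouville flow parallel to the $\log|z|$-direction and the Reeb flow orthogonal to it, so wrapping cannot traverse the gluing locus. Making this rigorous requires carefully chosen Weinstein deformations near each gluing region, and is effectively the technical content that is deferred to the Appendix via the semi-tropicalization technique.

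Finally, I would assemble the sequence of pairwise fiber products into a single \v{C}ech limit. A $p$-fold intersection $U_{v_{i_1}} \cap \cdots \cap U_{v_{i_p}}$ equals $U_{S^{(k)}}$ for the stratum $S^{(k)}$ of appropriate codimension bounded by the $v_{i_j}$, and the corresponding Weinstein piece is $\tilde{P}_{d-p+1} \times (\bC^*)^{p-1}$ up to Liouville homotopy by iterated use of \pref{eq:recursive}, which matches $\cF_A(U_{S^{(k)}})$ by definition. A recursion on $N$, using that iterated pullbacks against a finite simplicial cover compute the total \v{C}ech limit, then identifies $\Fuk(H_N)_{\bZ_2} = \Fuk(H)_{\bZ_2}$ with $\cF_A(\Pi_\Sigma)$ as desired.
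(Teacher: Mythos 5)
Your overall strategy---enumerate the vertices, attach one pair-of-pants at a time, obtain at each step a fiber product of Fukaya categories from Corollary \pref{cor:key}, and defer the Reeb-separation hypothesis of \pref{lem:key2} to the semi-tropicalization argument in the Appendix---is the same as the paper's. The gap is at the heart of the inductive step. You propose to use \pref{lem:A-section_unique} to ``arrange, up to Liouville homotopy, that the final leg of each of the two Weinstein pieces being glued lies away from the shared gluing locus.'' That works when both pieces are single pants (the case $k=1$ in the paper), but at a general step one of the two pieces is the accumulated union $H_k$ of many pants, and the paper stresses that it is in general impossible to transport Nadler's Weinstein structures to all pieces of the fixed pants decomposition so that they glue to a global Weinstein structure; there is no single ``final leg'' of $H_k$ to move. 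What Corollary \pref{cor:key} requires is one Weinstein structure on $H_k \cup H_{v_{k+1}}$ making $H_k$, $H_{v_{k+1}}$, their intersection, and the complementary cobordisms all Weinstein. The paper manufactures such a structure explicitly: it translates the ambient Liouville form on $\bT^\vee_\bC$ by the triangulation vertex $\alpha^k_{ji}$ dual to the region adjacent to the gluing edge and restricts, obtaining $\lambda_k(v^{k-1}_j,v^k_i)$ which extends the glued Nadler structures near the attachment region.

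Moreover, because the Weinstein structure on the union changes at every step, one must also verify that the equivalence $\Fuk(H_k)_{\bZ_2}\simeq\cF_A(\Pi_k(v^0_1))$ built at the previous step is still compatible with the Viterbo restrictions taken with respect to the new structure; otherwise the fiber-product square over $\Fuk(H_{e^k_{ji}})$ cannot be matched with the restriction functors of $\cF_A$, which were defined using Nadler's structures on single pants. The paper handles this by strengthening the inductive hypothesis: the equivalence must restrict, via \pref{lem:A-section_unique}, to the canonical equivalence on each vertex piece for \emph{any} Weinstein structure turning that piece into a Weinstein submanifold, and this stronger statement is checked and propagated at each step. Your sketch, which only invokes \pref{lem:A-section_unique} piece by piece, omits exactly this bookkeeping, and without it the induction does not close. (Your final identification of the iterated fiber products with the full \v{C}ech limit, and the description of the multi-fold intersections via \pref{eq:recursive}, are consistent with the paper.)
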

\begin{proof}
The argument is recursive
and
depends on an enumeration of the vertices of
$\Pi_\Sigma$.
We say that
the two vertices 
$v$
and
$v^\prime$
are at distance
$1$
if they are
distinct 
and
connected by a single edge.
More generally,
for any
$k \in \bN$
we say that
$v$
and
$v^\prime$
are at distance
$k$
if the following conditions hold:
\begin{itemize}
\item
There is a sequence of
$k+1$
vertices of
$\Pi_\Sigma$
\begin{align*}
v_0 = v,
\ldots,
v_k = v^\prime
\end{align*}
such that
$v_j$
and
$v_{j+1}$
are at distance
$1$
for all
$0 \leq j \leq k-1$.
\item
$k$
is the smallest integer for
which such a sequence exists.
\end{itemize}
Let
$v^0_1 \in \Pi_\Sigma$
be an external vertex of
$\Pi_\Sigma$,
i.e.,
a vertex to
which some free edge is adjacent.
Here,
by a free edge we will mean an edge not connecting any pair of distinct vertices. 
Replacing it if necessary,
we may assume that
$v^0_1 \in \Pi_\Sigma$
has the maximum number of free edges among all vertices.
We enumerate the vertices as follows.
There are in total
$l_1 < d+2$
vertices
$v^1_1, \ldots, v^1_{l_1} \in \Pi_\Sigma$
at distance
$1$
from
$v^0_1$,
i.e.,
connected to it by single edges
$e^1_{1 1}, \ldots, e^1_{1 l_1}$.
Similarly,
for any
$k \in \bN$
let
$v^k_1, \ldots, v^k_{l_k}$
be the vertices of
$\Pi_\Sigma$
at distance
$k$
from
$v^0_1$.
Each
$v^k_i$
is connected to at least one vertex
$v^{k-1}_j$
by a single edge
$e^k_{j i}$
for some
$1 \leq j \leq l_{k-1}$.

We will compute the section of
$\cF_A$
over the union
\begin{align*}
\Pi_k(v^0_1)
=
U_{v^0_1}
\cup
\bigcup^{l_1}_{i_1 = 1} U_{v^1_{i_1}}
\cup
\cdots
\cup
\bigcup^{l_k}_{i_k = 1} U_{v^k_{i_k}}.
\end{align*}
Let
$H_k(v^0_1)$
be the inverse image of suitably shrunk
$\Pi_k(v^0_1)$
under
\pref{eq:A-fibration}.
By induction on
$k$
we will show that
there is an equivalence
\begin{align*}
\Fuk(H_k(v^0_1))_{\bZ_2}
\simeq
\cF_A(\Pi_k(v^0_1))
\end{align*}
where
$H_k(v^0_1)$
is equipped with a certain Weinstein structure.
Note that
the distance between
$v^0_1$
and
any other vertex is bounded.
Thus the family of tropical submanifolds
$\Pi_k(v^0_1)$
eventually
stabilizes
and
exhausts
$\Pi_\Sigma$,
i.e.,
we have
$\Pi_\Sigma
=
\Pi_l(v^0_1)$
for some
$l \in \bN$.
The base case immediately follows from the definition.
In order to build
$H_{k+1}(v^0_1)$
from
$H_k(v^0_1)$,
we attach a single tailored pants at a time. 
The key is that
having sliced
$\Pi_\Sigma$
in these layers
$H_k(v^0_1)$,
one can engineer a compatible Weinstein structure with the attachment of the additional tailored pants.

First,
although not strictly necessary,
we run an argument
which in general works only for
$k = 1$
to illustrate the situation.
Consider Nadler's Weinstein structures transported to
$H_{v^0_1}, H_{v^1_1}$
whose final legs correspond to edges
which are different from
$e^1_{1 1}$.
Then up to permuting the final legs
$H_{v^0_1}, H_{v^1_1}$
glue along their Weinstein submanifold
$H_{e^1_{1 1}}$
to yield
a Weinstein manifold
$H_{v^0_1} \cup H_{v^1_1}$.
Note that
the pants decomposition from
\cite[Theorem 1']{Mik}
is nothing but the gluing of the closures of tailored pants along their boundaries.
By
\cite[Remark 5.2]{Mik}
this gluing is compatible with the canonical symplectic structures.
To the product of their boundaries with a sufficiently small open interval,
one can transport the Weinstein structure on
$H_{e^1_{1 1}}$
via radial deformation. 
See also
\cite[Section 6.2]{GS1}.
By
\pref{lem:A-local-restriction}
and
Corollary
\pref{cor:key}
we obtain an equivalence
\begin{align*}
\Fuk(H_{v^0_1} \cup H_{v^1_1})_{\bZ_2}
\simeq
\cF_A(U_{v^0_1} \cup U_{v^1_1})
\end{align*}
canonically induced by the commutative diagram
\begin{align*}
\begin{gathered}
\xymatrix{
\Fuk(H_{v^0_1})_{\bZ_2}  \ar[r] \ar[d]^{\simeq}
& \Fuk(H_{e^1_{11}})_{\bZ_2} \ar[d]^{\simeq}
& \Fuk(H_{v^1_1})_{\bZ_2} \ar[l] \ar[d]^{\simeq} \\
\cF_A(U_{v^0_1}) \ar[r]
& \cF_A(U_{e^1_{11}})
& \cF_A(U_{v^1_1}), \ar[l]
}
\end{gathered}
\end{align*}
where the upper horizontal arrows are Viterbo restrictions
and
the lower horizontal arrows are the restrictions of
$\cF_A$.

Consider Nadler's Weinstein structures transported to
$H_{v^0_1}, H_{v^1_1}, H_{v^1_2}$
whose final legs correspond to different edges from
$e^1_{1 1}, e^1_{1 2}$
and
possibly existing edge
$e^{11}_{12}$
connecting
$v^1_1$
to
$v^1_2$.
Note that
each of
$H_{v^0_1}, H_{v^1_1}, H_{v^1_2}$
has at least one free leg
which is not involved in this gluing. 
Then up to permuting the final legs
$H_{v^0_1}, H_{v^1_1}, H_{v^1_2}$
glue along their Weinstein submanifolds
$H_{e^1_{1 1}}, H_{e^1_{1 2}}, H_{e^{11}_{1 2}}$
to yield a Weinstein manifold
$H_{v^0_1} \cup H_{v^1_1} \cup H_{v^1_2}$.
By
\pref{lem:A-local-restriction},
Corollary
\pref{cor:key}
and
the previous step
we obtain an equivalence
\begin{align*}
\Fuk(H_{v^0_1} \cup H_{v^1_1} \cup H_{v^1_2})_{\bZ_2}
\simeq
\cF_A(U_{v^0_1} \cup U_{v^1_1} \cup U_{v^1_2})
\end{align*}
canonically induced by the commutative diagram
\begin{align*}
\begin{gathered}
\xymatrix{
\Fuk(H_{v^0_1} \cup H_{v^1_1})_{\bZ_2}  \ar[r] \ar[d]^{\simeq}
& \Fuk(H_{e^1_{12}} \cup H_{e^{11}_{12}})_{\bZ_2} \ar[d]^{\simeq}
& \Fuk(H_{v^1_2})_{\bZ_2} \ar[l] \ar[d]^{\simeq} \\
\cF_A(U_{v^0_1} \cup U_{v^1_1}) \ar[r]
& \cF_A(U_{e^1_{12}} \cup U_{e^{11}_{12}})
& \cF_A(U_{v^1_2}), \ar[l]
}
\end{gathered}
\end{align*}
where
the upper horizontal arrows are Viterbo restrictions
and
the lower horizontal arrows are the restrictions of
$\cF_A$.
Iteratively,
we obtain an equivalence
\begin{align*}
\Fuk(H_{v^0_1} \cup \bigcup^{l_1}_{i=1} H_{v^1_i})_{\bZ_2}
\simeq
\cF_A(U_{v^0_1} \cup \bigcup^{l_1}_{i=1} U_{v^1_i}).
\end{align*}

Unlike the case
$k = 1$
and
the setting of
\cite{GS1},
it is in general impossible to transport Nadler's Weinstein structures to all pieces in the fixed pants decomposition,
which glue to yield a global Weinstein structure.
However,
there exists a global Weinstein structure
which fits into Corollary
\pref{cor:key}
yielding the desired equivalence under an additional assumption on the restriction functors of
$\cF_A$.

Suppose that
there is an equivalence
\begin{align} \label{eq:hypothesis}
\Fuk(H_{v^0_1} \cup \bigcup^{l_1}_{i_1 = 1} H_{v^1_{i_1}} \cup \cdots \cup \bigcup^{l_{k-1}}_{i_{k-1} = 1} H_{v^{k-1}_{i_{k-1}}}, \lambda_{k-1})_{\bZ_2}
\simeq
\cF_A(U_{v^0_1} \cup \bigcup^{l_1}_{i_1 = 1} U_{v^1_{i_1}} \cup \cdots \cup \bigcup^{l_{k-1}}_{i_{k-1} = 1} U_{v^{k-1}_{i_{k-1}}})
\end{align}
for some Liouville structure
$\lambda_{k-1}$.
Suppose further that
via
\pref{lem:A-section_unique}
the equivalence
\pref{eq:hypothesis}
restricts to the canonical equivalence
\begin{align*}
\Fuk(H_{v^{k-1}_{i_{k-1}}}, \lambda_{k-1}(v^{k-1}_{i_{k-1}}) |_{H_{v^{k-1}_{i_{k-1}}}})_{\bZ_2}
\simeq
\cF_A(U_{v^{k-1}_{i_{k-1}}})
\end{align*}
for any Weinstein structure
$\lambda_{k-1}(v^{k-1}_{i_{k-1}})$
on
$H_{v^0_1} \cup \bigcup^{l_1}_{i_1 = 1} H_{v^1_{i_1}} \cup \cdots \cup \bigcup^{l_{k-1}}_{i_{k-1} = 1} H_{v^{k-1}_{i_{k-1}}}$
turning
$H_{v^{k-1}_{i_{k-1}}}$
into a Weinsten submanifold.
In other words,
the restriction of
$\cF_A$
along the inclusion
$U_{v^{k-1}_{i_{k-1}}}
\hookrightarrow
U_{v^0_1} \cup \bigcup^{l_1}_{i_1 = 1} U_{v^1_{i_1}} \cup \cdots \cup \bigcup^{l_{k-1}}_{i_{k-1} = 1} U_{v^{k-1}_{i_{k-1}}}$
is induced by $\bZ_2$-folding of the ind-completion of the Viterbo restriction
\begin{align*}
\Fuk(H_{v^0_1} \cup \bigcup^{l_1}_{i_1 = 1} H_{v^1_{i_1}} \cup \cdots \cup \bigcup^{l_{k-1}}_{i_{k-1} = 1} H_{v^{k-1}_{i_{k-1}}}, \lambda_{k-1}(v^{k-1}_{i_{k-1}}))
\to
\Fuk(H_{v^{k-1}_{i_{k-1}}}, \lambda_{k-1}(v^{k-1}_{i_{k-1}}) |_{H_{v^{k-1}_{i_{k-1}}}}).
\end{align*}
By definition of the restriction functors of
$\cF_A$
this is true for
$k = 0$.

When
$v^k_i$
is connected to only one vertex
$v^{k-1}_j$,
consider Nadler's Weinstein structures transported to
$H_{v^{k-1}_j}, H_{v^k_i}$
whose final legs correspond to different edges from
$e^k_{j i}$.
Then up to permuting the final legs
$H_{v^{k-1}_j}, H_{v^k_i}$
glue along their Weinstein submanifold
$H_{e^k_{j i}}$
to yield
a Weinstein manifold
$H_{v^{k-1}_j} \cup H_{v^k_i}$.
Let
$\alpha^k_{ji} \in A$
be the vertex of the triangulation
$\cT$
corresponding to a connected component
$C^k_{ji}$
of
$\bR^{d+1} \setminus \Pi_\Sigma$,
which is uniquely determined by requiring its boundary
$\del \Pi_\Sigma$
to contain
$e^k_{ji}$
but the tropical final legs of
$H_{v^{k-1}_j}, H_{v^k_i}$.
Regard
$\alpha^k_{ji}$
as a point in
$\bR^{d+1} \setminus \Pi_\Sigma$
via the fixed inner product
$T \bT^\vee \cong T^* \bT^\vee$.
Translate by
$\alpha^k_{ji}$
the canonical Weinstein structure on
$\bT^\vee_\bC$
and
then restrict to
$H_{v^0_1} \cup \bigcup^{l_1}_{i_1 = 1} H_{v^1_{i_1}} \cup \cdots \cup \bigcup^{l_{k-1}}_{i_{k-1} = 1} H_{v^{k-1}_{i_{k-1}}} \cup H_{v^k_i}$.
%If necessary,
%modify the induced structure on some cylindrical ends by radial deformations
%so that
%the Liouville vector field become outward pointing along their ideal boundaries.
%Up to small perturbation of
%$\alpha^k_{ji}$,
The result
$\lambda_k(v^{k-1}_j, v^k_i)$
is an extension of the Weinstein structure on
$H_{v^{k-1}_j} \cup H_{v^k_i}$.
In particular,
$H_{e^k_{j i}}$
is a Weinstein submanifold of
$H_{v^0_1} \cup \bigcup^{l_1}_{i_1 = 1} H_{v^1_{i_1}} \cup \cdots \cup \bigcup^{l_{k-1}}_{i_{k-1} = 1} H_{v^{k-1}_{i_{k-1}}}$.

Locally around
$\alpha^k_{ji}$
we are in the same situation as
\cite[Section 6.2]{GS1}.
In particular,
near the gluing region we are in the same situation as
\pref{eg:key}.
Hence by Corollary
\pref{cor:key}
and
the additional assumption
we obtain an equivalence
\begin{align} \label{eq:induction}
\Fuk(H_{v^0_1} \cup \cdots \cup \bigcup^{l_{k-1}}_{i_{k-1} = 1} H_{v^{k-1}_{i_{k-1}}} \cup H_{v^k_i}, \lambda_k(v^{k-1}_j, v^k_i))_{\bZ_2}
\simeq
\cF_A(U_{v^0_1} \cup \cdots \cup \bigcup^{l_{k-1}}_{i_{k-1} = 1} U_{v^{k-1}_{i_{k-1}}} \cup U_{v^k_i})
\end{align}
canonically induced by the commutative diagram
\begin{align*}
\begin{gathered}
\xymatrix{
\Fuk(H_{v^0_1} \cup \bigcup^{l_1}_{i_1 = 1} H_{v^1_{i_1}} \cup \cdots \cup \bigcup^{l_{k-1}}_{i_{k-1} = 1} H_{v^{k-1}_{i_{k-1}}}, \lambda_{k-1}(v^{k-1}_j))_{\bZ_2}  \ar[r] \ar[d]^{\simeq}
& \Fuk(H_{e^k_{j i}})_{\bZ_2} \ar[d]^{\simeq}
& \Fuk(H_{v^k_i})_{\bZ_2} \ar[l] \ar[d]^{\simeq} \\
\cF_A(U_{v^0_1} \cup \bigcup^{l_1}_{i_1 = 1} U_{v^1_{i_1}} \cup \cdots \cup \bigcup^{l_{k-1}}_{i_{k-1} = 1} U_{v^{k-1}_{i_{k-1}}}) \ar[r]
& \cF_A(U_{e^k_{j i}})
& \cF_A(U_{v^k_i}) \ar[l]
}
\end{gathered}
\end{align*}
where
the upper horizontal arrows are Viterbo restrictions
and
the lower horizontal arrows are the restrictions of
$\cF_A$.
Note that
$\lambda_{k-1}(v^{k-1}_j)$
coincides with the restriction of
$\lambda_k(v^{k-1}_j, v^k_i)$,
which defines a Weinstein structure on
$H_{v^0_1} \cup \bigcup^{l_1}_{i_1 = 1} H_{v^1_{i_1}} \cup \cdots \cup \bigcup^{l_{k-1}}_{i_{k-1} = 1} H_{v^{k-1}_{i_{k-1}}}$. 
Note also that
$\lambda_k(v^{k-1}_j, v^k_i)$
is an extension of the Weinstein structure on
$H_{v^{k-1}_j} \cup H_{v^k_i}$,
which is in turn obtained by gluing Nadler's Weinstein structures transported to
$H_{v^{k-1}_j}, H_{v^k_i}$.
Hence the left commutative diagram shares the middle vertical arrow with the right one.

It remains to check that the additional assumption on
$\cF_A$
keeps holding.
By construction the equivalence
\pref{eq:induction}
restricts to
\begin{align*}
\begin{gathered}
\Fuk(H_{v^0_1} \cup \bigcup^{l_1}_{i_1 = 1} H_{v^1_{i_1}} \cup \cdots \cup \bigcup^{l_{k-1}}_{i_{k-1} = 1} H_{v^{k-1}_{i_{k-1}}}, \lambda_{k-1}(v^{k-1}_j))_{\bZ_2}
\simeq
\cF_A(U_{v^0_1} \cup \bigcup^{l_1}_{i_1 = 1} U_{v^1_{i_1}} \cup \cdots \cup \bigcup^{l_{k-1}}_{i_{k-1} = 1} U_{v^{k-1}_{i_{k-1}}}), \\
\Fuk(H_{v^{k-1}_j}, \lambda_k(v^{k-1}_j, v^k_i) |_{H_{v^{k-1}_j}})_{\bZ_2}
\simeq
\cF_A(U_{v^{k-1}_j}), \
\Fuk(H_{v^k_i}, \lambda_k(v^{k-1}_j, v^k_i) |_{H_{v^k_i}})_{\bZ_2}
\simeq
\cF_A(U_{v^k_i})
\end{gathered}
\end{align*}
where the 
second
and
third
ones are the canonical equivalences from
\pref{lem:A-section_unique}.
Let
$\lambda^\prime_k(v^{k-1}_j, v^k_i)$
be any Weinstein structure on
$H_{v^0_1} \cup \bigcup^{l_1}_{i_1 = 1} H_{v^1_{i_1}} \cup \cdots \cup \bigcup^{l_{k-1}}_{i_{k-1} = 1} H_{v^{k-1}_{i_{k-1}}} \cup H_{v^k_i}$
turning
$H_{v^k_i}$
into a Weinsten submanifold.
Then we have the commmutative diagram
\begin{align*}
\begin{gathered}
\xymatrix{
\Fuk(H_{v^0_1} \cup \bigcup^{l_1}_{i_1 = 1} H_{v^1_{i_1}} \cup \cdots \cup \bigcup^{l_{k-1}}_{i_{k-1} = 1} H_{v^{k-1}_{i_{k-1}}} \cup H_{v^k_i}, \lambda^\prime_k(v^{k-1}_j, v^k_i))_{\bZ_2}  \ar[r] \ar[d]^{\simeq}
& \Fuk(H_{v^k_i}, \lambda^\prime_k(v^{k-1}_j, v^k_i))_{\bZ_2} \ar[d]^{\simeq} \\
\Fuk(H_{v^0_1} \cup \bigcup^{l_1}_{i_1 = 1} H_{v^1_{i_1}} \cup \cdots \cup \bigcup^{l_{k-1}}_{i_{k-1} = 1} H_{v^{k-1}_{i_{k-1}}} \cup H_{v^k_i}, \lambda_k(v^{k-1}_j, v^k_i))_{\bZ_2}\ar[r]
& \Fuk(H_{v^k_i}, \lambda_k(v^{k-1}_j, v^k_i))_{\bZ_2}
}
\end{gathered}
\end{align*}
where the vertical arrows are the canonical equivalences from
\pref{lem:A-section_unique}.
Hence via
\pref{lem:A-section_unique}
the equivalence
\pref{eq:induction}
restricts to the canonical equivalence
\begin{align*}
\Fuk(H_{v^{k-1}_{i_{k-1}}}, \lambda^\prime_k(v^{k-1}_j, v^k_i) |_{H_{v^{k-1}_{i_{k-1}}}})_{\bZ_2}
\simeq
\cF_A(U_{v^{k-1}_{i_{k-1}}}), \
\Fuk(H_{v^k_i}, \lambda^\prime_k(v^{k-1}_j, v^k_i) |_{H_{v^k_i}})_{\bZ_2}
\simeq
\cF_A(U_{v^k_i}).
\end{align*}
The same argument also works for the vertices
$v^{k-1}_{i_{k-1}}$.

One can similarly argue also when
$v^k_i$
is connected to more than one vertices
$v^{k-1}_{j_1}, \ldots, v^{k-1}_{j_l}$
by single edges.
Iteratively,
we obtain an equivalence
\begin{align*}
\Fuk(H_{v^0_1} \cup \bigcup^{l_1}_{i_1 = 1} H_{v^1_{i_1}} \cup \cdots \cup \bigcup^{l_k}_{i_k = 1} H_{v^k_{i_k}})_{\bZ_2}
\simeq
\cF_A(U_{v^0_1} \cup \bigcup^{l_1}_{i_1 = 1} U_{v^1_{i_1}} \cup \cdots \cup \bigcup^{l_k}_{i_k = 1} U_{v^k_{i_k}}).
\end{align*}
\end{proof}

\subsection{$B$-side constructible sheaves of categories for very affine hypersurfaces}
Recall that
$\Pi_\Sigma$
is the dual cell complex of
$\cT$,
which we assume to be an adapted unimodular triangulation of the convex lattice polytope
$\Delta^\vee \subset M^\vee_\bR$.
In particular,
each vertex
$v \in \Pi_\Sigma$
bijectively corresponds to a cell congruent to a standard simplex under the $\GL(d+1, \bZ)$-action.
The cell in turn bijectively corresponds to a cone
$\sigma_v \in \Sigma_Y$,
which defines an affine open subvariety
$Y_v = \Spec \bC[\sigma^\vee_v \cap (M \times \bZ)] \subset Y$
isomorphic to
$\bA^{d+2}$.
Introduce coordinates
$(y_{e^v_1}, \ldots, y_{e^v_{d+2}})$
on
$Y_v$,
where
$e^v_i$
stand for edges adjacent to
$v$
and
dual to facets
$\sigma_{e^v_i}$
of
$\sigma_v$
so that
$Y_{e^v_i}
=
\Spec \bC[\sigma^\vee_{e^v_i} \cap (M \times \bZ)]
\subset
Y_v$
are the open subvarieties defined by
$y_{e^v_i} \neq 0$.

%Dfn
Recall from
\cite[Proposition A.3.1]{Pre}
that
$\MF^\infty$
is a sheaf on
$Y$. 
As we explained, we have a dictionary relating open subsets of $\Pi_\Sigma$ and open subsets of $Y$: in symbols, this gives a functor 
$$
\Open(\Pi_\Sigma) \to \Open(Y)
$$

%Dfn
\begin{definition}
The
\emph{$B$-side constructible sheaf of categories}
for
$H$
is the sheaf on $\Pi_\Sigma$ given by the composite
\begin{align*}
\cF_B \colon \Open(\Pi_\Sigma)^{op} \longrightarrow \Open(Y)^{op} \, \stackrel{\MF^\infty} \longrightarrow   \,  ^{**}\DG.
\end{align*}
\end{definition}

For concreteness,
let us describe explicitly the 
sections
and
restrictions
of
$\cF_B$
on the basis
$U_{S^{(k)}}$: 
\begin{itemize}
\item
The section over
$U_{S^{(k)}}$
is   the ind-completion of the category of matrix factorizations
\begin{align*}
\cF_B(U_{S^{(k)}})
=
\MF^\infty(Y_v |_{\bigcap^k_{m = 1} \{ y_{e^v_{i_m}} \neq 0 \}}, y_{e^v_1} \cdots y_{e^v_{d+2}}),
\end{align*}
where
$e^v_{i_1}, \ldots, e^v_{i_k}$
are the edges adjacent to
$v$
determining the $k$-stratum
$S^{(k)}$.
\item
Along an inclusion
$U_{S^{(l)}} \hookrightarrow U_{S^{(k)}}$
the restriction functor
\begin{align*}
R^B_{S^{(k)}, S^{(l)}}
\colon
\MF^\infty(Y_v |_{\bigcap^k_{m = 1} \{ y_{e^v_{i_m}}\neq 0 \}}, y_{e^v_1} \cdots y_{e^v_{d+2}})
\to
\MF^\infty(Y_v |_{\bigcap^l_{n = 1} \{ y_{e^v_{j_n}}\neq 0 \}}, y_{e^v_1} \cdots y_{e^v_{d+2}})
\end{align*}
is given by the canonical restriction functor,
where
$e^v_{i_1}, \ldots, e^v_{i_k}$
and
$e^v_{j_1}, \ldots, e^v_{j_l}$
are edges adjacent to
$v$
respectively determining
$S^{(k)}$
and
$S^{(l)}$.
\end{itemize}

%%%%%%%%%%%%%%%%%%%%%%%%%%%%%%%%%%%%%%%%%%%%%%%%%%%%%%
%%%%%%%%%%%%%%%%%%%%%%%%%%%%%%%%%%%%%%%%%%%%%%%%%%%%%%
\section{Isomorphism of the constructible sheaves}
In this section,
we give a proof of HMS for very affine hypersurfaces by gluing HMS for pairs of pants established in
\cite{Nad}.
When gluing such equivalences,
the combinatorial duality over
$\Pi_\Sigma$
from Section
$3$
plays a crucial role.

\subsection{Local equivalences}
Passing to the category of matrix factorizations might be delicate because of Kn\"{o}rrer periodicity.
First,
we show how to lift Nadler's equivalence,
i.e.,
the equivalence from Corollary
\pref{cor:Nadler}
to the category of matrix factorizations preserving the compatibility with restrictions.

%Lem
\begin{lemma} \label{lem:clsk}
Let
$[H_U]$
be a Weinstein subdomain of
$[H]$.
Then the skeleton
$\Core(H_U)$
of
$H_U$
is a closed subset of the skeleton
$\Core(H)$
of
$H$.
\end{lemma}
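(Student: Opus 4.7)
The plan is to unwind the definitions and exploit the compactness of the skeleton of a Weinstein domain. Recall that for a Weinstein domain $[W]$, the core $\Core(W)$ is the locus of points that do not escape to infinity under the forward Liouville flow, or equivalently the union of the stable manifolds of the critical points of the Weinstein Morse function. In particular $\Core(W)$ is a compact isotropic subset contained in the interior of $[W]$, since the Liouville vector field points strictly outward along $\partial[W]$.

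First I would establish the set-theoretic inclusion $\Core(H_U) \subset \Core(H)$. By definition of a Weinstein subdomain, the Liouville one-form, and hence the Liouville vector field, of $H$ restricts to the Liouville datum of $H_U$, so the forward Liouville flows of $H$ and $H_U$ agree on points of $[H_U]$ as long as they remain in $[H_U]$. Now pick any $p \in \Core(H_U)$. The forward orbit of $p$ under the Liouville flow of $H_U$ stays inside the compact subdomain $[H_U]$ by definition of the skeleton. By the agreement of flows just noted, the same orbit is a forward Liouville orbit of $H$, and it remains in $[H_U] \subset [H]$. Since this orbit is bounded, $p \in \Core(H)$.

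Second, I would verify closedness. The skeleton $\Core(H_U)$ is closed in $[H_U]$ (being the decreasing intersection of the images of $[H_U]$ under the time-$t$ Liouville flow, or alternatively a finite union of stable manifolds of critical points), and $[H_U]$ is compact, so $\Core(H_U)$ is compact. Since $H$ is Hausdorff, any compact subset is closed; hence $\Core(H_U)$ is closed in $H$, and a fortiori closed in the subspace $\Core(H) \subset H$.

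The only potential subtlety, and the main obstacle worth flagging, is what is meant here by ``$[H_U]$ is a Weinstein subdomain of $[H]$''. If one interprets this strictly, so that the Liouville and Weinstein data on $H$ restrict to those on $H_U$, then the argument above applies verbatim. If instead the condition only holds up to Liouville homotopy supported away from $\Core(H_U)$, one invokes the standard invariance of skeleta under compactly supported Liouville deformations in the interior. Either way, no new Floer-theoretic or symplectic input is required beyond the definition of a Weinstein domain and the elementary topology of compact Hausdorff spaces.
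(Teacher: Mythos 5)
Your proof is correct, and it takes a genuinely different route from the paper's. The paper argues by noting that $\Core(H_U)$ is disjoint from $\del [H_U]$ and that $\Core(H)\cap([H]\setminus[H_U])$ is open in $\Core(H)$, and then identifies $\Core(H_U)$, \emph{up to deformation}, with the complement of this open set in $\Core(H)$ -- in effect asserting $\Core(H_U)=\Core(H)\cap[H_U]$ after a deformation, which is more than the lemma literally needs. You instead prove exactly the two assertions in the statement: the containment $\Core(H_U)\subset\Core(H)$ follows from the fact that the Liouville field of a Weinstein subdomain is the restriction of the ambient one, so a forward orbit trapped in $[H_U]$ is an ambient forward orbit trapped in $[H]$; and closedness follows from compactness of the skeleton of a finite-type Weinstein domain together with Hausdorffness of $H$. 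This is cleaner in that it avoids the ``up to deformation'' identification and uses only the facts the paper itself invokes (skeleta of finite-type Liouville manifolds are compact and consist of points that do not escape under the forward Liouville flow); your reading of ``Weinstein subdomain'' as carrying the restricted structure is indeed the one used in the paper (e.g.\ the legs with Nadler's structure), so the caveat you flag does not arise. Two small imprecisions, neither affecting the argument: the skeleton is the nested intersection $\bigcap_{t\geq 0}\phi^{-t}_Z([H_U])$ of images under the \emph{backward} Liouville flow (forward-flow images of $[H_U]$ increase rather than decrease), and individual stable manifolds are not closed -- it is their union over all critical points that is compact.
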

\begin{proof}
We may assume that
$\Core(H_U)$
is disjoint from
$\del [H_U] = \del_\infty H_U$.
Indeed,
skeleta of Liouville manifolds of finite type are maximal compact subsets conic with respect to Liouville vector fields.
Since any point in
$\del_\infty H_U$
escapes to infinity along the Liouville flow,
it never contributes to
$\Core(H_U)$.
We may assume further that
$\Core(H)$
is contained in
$[H]$.
Then
$\Core(H) \cap ([H] \setminus [H_U])$
is an open subset of
$\Core(H)$.
Since,
up to deformation,
$\Core(H_U)$
coincides with the complement of
$\Core(H) \cap ([H] \setminus [H_U])$
in
$\Core(H)$,
it is a closed subset of
$\Core(H)$.
\end{proof}

%Lem
\begin{lemma} \label{lem:lift1}
For each vertex
$v \in \Pi_\Sigma$
there is an equivalence
\begin{align*}
\Fuk(H_v)
\simeq
\IndCoh(Y_{v, d}), \
Y_{v, d} 
=
\{ y_{e^v_1} \cdots y_{e^v_{d+1}} = 0 \}
\subset
\bA^{d+1}
\end{align*}
compatible with restrictions.
In particular,
for every open subset
$U_{S^{(k)}} \subset U_v$
determined by edges
$e^v_{i_1}, \ldots, e^v_{i_k}$
adjacent to
$v$
but different from
$e^v_{d+2}$,
we have the commutative diagram
\begin{align*}
\begin{gathered}
\xymatrix{
\Fuk(H_v) \ar[r] \ar_{\simeq}[d] & \Fuk(H_{S^{(k)}}) \ar^{\simeq}[d] \\
\IndCoh(Y_{v, d}) \ar[r] & \IndCoh(Y_{v, d} |_{\bigcap^k_{j = 1} \{ y_{e^v_{i_j}} \neq 0 \} }),
}
\end{gathered}
\end{align*}
where 
the upper right horizontal arrow is the ind-completion of the Viterbo restriction
and
the lower horizontal arrow is the canonical restriction functor.
\end{lemma}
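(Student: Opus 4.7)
The plan is to chain together, in unfolded form, the equivalences already assembled in Section 2. First, by Corollary \pref{cor:A-local} the local piece $H_v$ can be canonically identified, as a Weinstein manifold, with the tailored pants $\tilde{P}_d$ endowed with Nadler's Weinstein structure $\beta^{d+2}_{\tilde{P}_d}$. The $\frakS_{d+2}$-symmetry is broken by choosing the final leg to correspond to the edge $e^v_{d+2}$; the remaining $d+1$ legs of $\tilde{P}_d$ are then in canonical bijection with the edges $e^v_1, \ldots, e^v_{d+1}$ adjacent to $v$, and dually with the $d+1$ coordinate hyperplanes making up $Y_{v,d} \subset \bA^{d+1}$.

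Next I would chain the equivalences available for the tailored pants. By \pref{lem:Fuk-mSh} one has $\Fuk(H_v) \simeq \mSh^\diamondsuit_{\Core(\tilde{P}_d)}(\Core(\tilde{P}_d))$. By Theorem \pref{lem:Symp-Symp2}, an open neighborhood of $\Core(\tilde{P}_d)$ in the symplectization of $(\tilde{P}_d \times T^1, \beta_{d+1} + dt)$ embeds as an open symplectic submanifold of $\Omega_{d+1} \subset T^*T^{d+1}$ in a way that carries $\Core(\tilde{P}_d) \times \{0\} \times \bR$ to $\Lambda^\infty_{d+1} \cap \Omega_{d+1}$. Since large microlocal sheaves only depend on a conic neighborhood of the underlying Lagrangian, this yields a canonical equivalence $\mSh^\diamondsuit_{\Core(\tilde{P}_d)}(\Core(\tilde{P}_d)) \simeq \mSh^\diamondsuit_{\Lambda_{d+1}}(\Omega_{d+1})$. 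Theorem \pref{thm:mSh-Coh} then identifies the latter with $\IndCoh(Y_d)$, which under the labelling fixed in the first paragraph is exactly $\IndCoh(Y_{v,d})$. This produces the equivalence claimed in the first part of the lemma.

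For the restriction compatibility, I would use \pref{lem:A-local-restriction}: the Viterbo restriction $\Fuk(H_v) \to \Fuk(H_{S^{(k)}})$ is the quotient by the cocores of $H_v$ not lying in $H_{S^{(k)}}$. Under $H_v \simeq \tilde{P}_d$ these are the cocores indexed by the legs $I = \{i_1, \ldots, i_k\} \subset \{1, \ldots, d+1\}$ corresponding to the edges $e^v_{i_1}, \ldots, e^v_{i_k}$ bounding $S^{(k)}$. Via \pref{lem:Fuk-mSh} this quotient translates into the microlocal restriction $\rho_I \colon \mSh^\diamondsuit_{\Lambda_{d+1}}(\Omega_{d+1}) \to \mSh^\diamondsuit_{\Lambda_I}(\Omega_I)$, and via the commutative square in Theorem \pref{thm:mSh-Coh} into the canonical functor $\tau_I \colon \IndCoh(Y_d) \to \Qcoh(\bA^I)$, which is precisely the restriction to the open subvariety $Y_{v,d}|_{\bigcap_j \{y_{e^v_{i_j}} \neq 0\}}$.

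The main subtlety is the hypothesis that none of the $e^v_{i_j}$ equals $e^v_{d+2}$. Nadler's construction is intrinsically asymmetric: the $d+1$ coordinates of the affine chart $Y_v$ used to define $Y_{v,d}$ are adapted to the distinguished final leg, and the cocores of $\tilde{P}_d$ that correspond to the coordinate hyperplanes of $Y_{v,d}$ are exactly those associated with the $d+1$ non-final legs. The A-side quotient thus matches the B-side localization at the coordinates $y_{e^v_{i_j}}$ only when the strata $S^{(k)}$ are determined by non-final legs. This is why the hypothesis $e^v_{i_j} \neq e^v_{d+2}$ is essential; the case of strata bounded by $e^v_{d+2}$ is handled by choosing a different final leg and invoking the $\frakS_{d+2}$-symmetry implicit in Lemma \pref{lem:tailored}, but it does not enter directly in the diagram considered here.
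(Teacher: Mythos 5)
The first half of your proposal (the equivalence $\Fuk(H_v)\simeq\IndCoh(Y_{v,d})$ obtained by chaining \pref{lem:Fuk-mSh}, \pref{lem:Symp-Symp2} and \pref{thm:mSh-Coh}) is essentially the paper's route and is fine. The problem is the restriction-compatibility step, which is the actual content of the lemma. You identify the Viterbo restriction with the microlocal restriction $\rho_I\colon\mSh^\diamondsuit_{\Lambda_{d+1}}(\Omega_{d+1})\to\mSh^\diamondsuit_{\Lambda_I}(\Omega_I)$ and then, via the square in \pref{thm:mSh-Coh}, with $\tau_I\colon\IndCoh(Y_d)\to\Qcoh(\bA^I)$, claiming that $\tau_I$ "is precisely the restriction to the open subvariety". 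This conflates two different functors. In \pref{thm:mSh-Coh}, $\tau_I$ is the component functor of the limit presentation, i.e.\ (hyperbolic) restriction to the \emph{closed} coordinate subspace $\bA^I=\{y_j=0,\ j\in I^c\}$, with target $\Qcoh(\bA^I)$; the lower arrow of \pref{lem:lift1} is restriction to the \emph{open} subvariety $Y_{v,d}|_{\bigcap_j\{y_{e^v_{i_j}}\neq 0\}}$, whose category is $\Qcoh((\bC^*)^k)\otimes\IndCoh(Y_{d-k})$. These targets are not equivalent: already for $k=d$ the intersection of $d$ legs is symplectomorphic to $(\bC^*)^d$, so the Viterbo restriction lands in $\Fuk((\bC^*)^d)\simeq\Qcoh((\bC^*)^d)$, not in $\Qcoh(\bA^d)$. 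So the commutative square you invoke from \pref{thm:mSh-Coh} is not the square asserted in the lemma, and your argument does not close.

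What is actually needed (and what the paper does) is a comparison of two localization sequences rather than of the hyperbolic-restriction squares. On the A-side one identifies $\Core(H_{S^{(k)}})$, via unwinding \pref{lem:Symp-Symp1} and \pref{lem:Symp-Symp2}, with the closed subset $\Lambda^\infty_{S^{(k)}}=\bigcap_j\{\xi_{i_j}=0\}\subset\Lambda^\infty_{d+1}$ (closedness is \pref{lem:clsk}), so that $\Lambda^\infty_{d+1}\setminus\Lambda^\infty_{S^{(k)}}=\bigcup_j\Lambda^\infty_{\{i_j\}^c}$; then, using the argument at the end of \cite[Section 4]{GS1} together with \cite[Theorem 1.4]{GPS3}, the ind-completed Viterbo restriction is exhibited as the Verdier quotient of $\mSh^\diamondsuit(\Lambda^\infty_{d+1})$ by the subcategory of objects supported on $\bigcup_j\Lambda^\infty_{\{i_j\}^c}$ (equivalently, the quotient by the cocores of $[H_v]\setminus[H_{S^{(k)}}]$, as in \pref{lem:A-local-restriction}). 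On the B-side this kernel corresponds to (Ind)coherent sheaves supported on the closed union $\bigcup_j\{y_{e^v_{i_j}}=0\}=\bigcup_j\bA^{\{i_j\}^c}$, and the quotient is exactly the open restriction, via the cofiber sequence extracted from the proof of \pref{thm:mSh-Coh}. Concatenating these two exact sequences (the paper's diagrams \pref{eq:connector1} and \pref{eq:connector2}) yields the commutative square of the lemma. Your closing remarks about the role of the hypothesis $e^v_{i_j}\neq e^v_{d+2}$ are reasonable, but the middle step of your argument must be replaced by this quotient-by-supports comparison; as written it matches the wrong pair of functors.
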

\begin{proof}
From the argument in the proof of
\pref{thm:mSh-Coh}
we obtain a commutative diagram
\begin{align} \label{eq:connector1}
\begin{gathered}
\xymatrix{
\Coh(\bigcup^k_{j=1} \bA^{ \{ i_j \}^c}) \ar[r]^-{} \ar^{\simeq}[d]
&
\Coh(Y_{v, d}) \ar[r]^-{} \ar^{\simeq}[d]
&
\Coh(Y_{v, d} |_{\bigcap^k_{j = 1} \{ y_{e^v_{i_j}} \neq 0 \} }) \ar[r]^-{} \ar^{\simeq}[d]
&
0 \\
\mSh(\bigcup^k_{j=1} \Lambda^\infty_{ \{ i_j \}^c}) \ar[r]^-{} 
&
\mSh(\Lambda^\infty_{d+1}) \ar[r]_-{}
&
\mSh(\Lambda^\infty_{d+1}) / \mSh(\bigcup^k_{j=1} \Lambda^\infty_{ \{ i_j \}^c}) \ar[r]^-{}
&
0
}
\end{gathered}
\end{align}
where the horizontal arrows form cofiber sequences.
Here,
the upper left horizontal arrow is the canonical functor to the colimit
and
the lower left horizontal arrow is left adjoint to the restriction. 

Recall that
$H_{S^{(k)}}$
is symplectomorphic to the intersection of legs
$L_{d, i_1}(K), \ldots, L_{d, i_k}(K) \subset \tilde{P}_d$.
Consider Nadler's Weinstein structure transported to
$H_{S^{(k)}}$
whose final leg corresponds to
$e^v_{d + 2}$.
We write 
$\Lambda_{S^{(k)}}$
for the isomorphic image of
$\Core(H_{S^{(k)}}) \times \{ 0 \} \times \bR$
under the symplectomorphism
$U_d \times T^1 \times \bR
\hookrightarrow
\Omega_{d+1}$
from
\pref{lem:Symp-Symp2}.
Let
$\tilde{P}_I \subset T^* T^I$
be the tailored pants for
$I = \{ i_1, \ldots, i_k \}$.
We write
$\frakL_I$
for the isomorphic image of
$\Core(\tilde{P}_I)$
under the symplectomorphism from
\pref{lem:Symp-Symp2}.
Unwinding the proof of
\pref{lem:Symp-Symp1}
and
\pref{lem:Symp-Symp2},
one sees that
\begin{align*}
\Core(H_{S^{(k)}})
\cong
T^{I^c} \times \Core(\tilde{P}_I)
\cong
\bT^{I^c} \times \frakL_I
\cong
T^{I^c} \times s(\Gamma_{\frakL_I, -f |_{\frakL_I}})
\cong
T^{I^c} \times \Lambda^\infty_I
\subset
\Lambda^\infty_{d+1}
\end{align*}
and
$\Lambda^\infty_{S^{(k)}} \subset \Lambda^\infty_{d+1}$
is defined as
$\bigcap^{k}_{j = 1} \{ \xi_{i_j} = 0 \}$.
Hence we obtain
\begin{align*}
\bigcup^k_{j=1} \Lambda^\infty_{ \{ i_j \}^c}
=
\Lambda^\infty_{d+1} \setminus \Lambda^\infty_{S^{(k)}}, \
\mSh(\bigcup^k_{j=1} \Lambda^\infty_{ \{ i_j \}^c})
=
\mSh(\Lambda^\infty_{d+1} \setminus \Lambda^\infty_{S^{(k)}}).
\end{align*}

From the argument in the end of
\cite[Section 4]{GS1}
we obtain another commutative diagram
\begin{align} \label{eq:connector2}
\begin{gathered}
\xymatrix{
\mSh^\diamondsuit(\Lambda^\infty_{d+1} \setminus \Lambda^\infty_{S^{(k)}}) \ar[r]^-{} \ar@{=}[d]
&
\mSh^\diamondsuit(\Lambda^\infty_{d+1}) \ar[r]_-{} \ar@{=}[d]
&
\mSh^\diamondsuit(\Lambda^\infty_{S^{(k)}}) \ar[r]^-{} \ar@{=}[d]
&
0 \\
\langle \text{Cocores of} \ [H_v] \setminus [H_{S^{(k)}}] \rangle \ar[r]^-{}
&
\Fuk(H_v) \ar[r]_-{}
&
\Fuk(H_{S^{(k)}}) \ar[r]^-{} 
&
0
}
\end{gathered}
\end{align}
where
the horizontal arrows form exact sequences
and
the middle and right vertical arrows are the equivalences from
\cite[Theorem 1.4]{GPS3}.
Here,
the upper left horizontal arrow is left adjoint to the restriction to
$\Lambda^\infty_{d+1} \setminus \Lambda^\infty_{S^{(k)}}$
which is open by
\pref{lem:clsk},
and
the lower right horizontal arrow is the ind-completion of the Viterbo restriction.
Hence we may concatenate
\pref{eq:connector1}
and
\pref{eq:connector2}
to obtain a commutative diagram
\begin{align} \label{eq:connector3}
\begin{gathered}
\xymatrix{
\IndCoh(\bigcup^k_{j=1} \bA^{ \{ i_j \}^c})\ar[r]^-{} \ar^{\simeq}[d]
&
\IndCoh(Y_{v, d}) \ar[r]^-{} \ar^{\simeq}[d]
&
\IndCoh(Y_{v, d} |_{\bigcap^k_{j = 1} \{ y_{e^v_{i_j}} \neq 0 \} } ) \ar[r]^-{} \ar^{\simeq}[d]
&
0 \\
\langle \text{Cocores of} \ [H_v] \setminus [H_{S^{(k)}}] \rangle \ar[r]^-{}
&
\Fuk(H_v) \ar[r]_-{}
&
\Fuk(H_{S^{(k)}}) \ar[r]^-{} 
&
0.
}
\end{gathered}
\end{align}
\end{proof}

%Rmk
\begin{remark}
Recall that
Nadler broke the symmetry of
$\tilde{P}_d$
so that
the final leg attracts the Liouville flow
while leaving the other legs symmetric.
In particular,
each of the other legs defines a Weinstein submanifold,
which is isomorphic to a product of
$\bC^*$
and
a tailored pants
which is one dimension lower.
Via the combinatorial duality incorporated into the definition of
$\cF_B$,
the Viterbo restriction to such a Weinstein submanifold
associated with the $j$-th leg 
corresponds to
the restriction to the open subset defined by
$y_{e^v_j} \neq 0$
for
$j = 1, \ldots, d+1$. 
\end{remark}

Consider the natural projection
\begin{align*}
Y_{v, d+1} = \{ y_{e^v_1} \cdots y_{e^v_{d+1}} y_{e^v_{d+2}} = 0 \}
\subset
Y_v
\cong
\bA^{d+2}
\to
Y^i_{v, d+1} = \{ y_{e^v_1} \cdots \widehat{y_{e^v_i}} \cdots y_{e^v_{d+2}} = 0 \}
\subset
\bA^{d+1}
\end{align*}
of the union of the coordinate hyperplanes.
Up to canonical equivalence defined by cyclic permutation of the coordinates,
by
\pref{lem:Coh-MF}
the pullback induces an equivalence
\begin{align} \label{eq:Orlov}
\IndCoh(Y^i_{v, d+1})_{\bZ_2}
\simeq
\MF^\infty(\bA^{d+2}, y_{e^v_1} \cdots y_{e^v_{d+1}} y_{e^v_{d+2}}).
\end{align}

%Thm
\begin{theorem} \label{thm:lift2}
For each vertex
$v \in \Pi_\Sigma$
there is an equivalence
\begin{align*}
\cF_A(U_v) \simeq \cF_B(U_v)
\end{align*}
compatible with restrictions.
In particular,
for every open subset
$U_{S^{(k)}} \subset U_v$
determined by edges
$e^v_{i_1}, \ldots, e^v_{i_k}$
adjacent to
$v$,
we have the commutative diagram
\begin{align*}
\begin{gathered}
\xymatrix{
\cF_A(U_v) \ar[r]^-{R^A_{v, S^{(k)}}} \ar_{\simeq}[d] & \cF_A(U_{S^{(k)}}) \ar^{\simeq}[d] \\
\cF_B(U_v) \ar[r]_-{R^B_{v, S^{(k)}}} & \cF_B(U_{S^{(k)}}).
}
\end{gathered}
\end{align*}
\end{theorem}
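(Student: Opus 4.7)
The plan is to upgrade the $\bZ$-graded equivalence from Lemma~\pref{lem:lift1} by applying the $\bZ_2$-folding functor and then invoking Orlov's equivalence \pref{eq:Orlov}. Starting from $\Fuk(H_v)\simeq\IndCoh(Y_{v,d})$, folding yields
\[
\cF_A(U_v)\;=\;\Fuk(H_v)_{\bZ_2}\;\simeq\;\IndCoh(Y_{v,d})_{\bZ_2}.
\]
After the cyclic relabeling of coordinates that identifies $Y_{v,d}$ with $Y^i_{v,d+1}$ for a choice of $i$, composing with \pref{eq:Orlov} produces $\IndCoh(Y_{v,d})_{\bZ_2}\simeq\MF^\infty(\bA^{d+2},y_{e^v_1}\cdots y_{e^v_{d+2}})=\cF_B(U_v)$, which is the asserted equivalence.

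For compatibility with restrictions I would split into two cases according to whether the distinguished edge $e^v_{d+2}$ lies among $e^v_{i_1},\dots,e^v_{i_k}$. When it does not, Lemma~\pref{lem:lift1} already furnishes the commutative square for the $\bZ$-graded categories. Since $\bZ_2$-folding is functorial, and Orlov's equivalence intertwines localization in the ambient affine space (i.e.\ restriction of $\IndCoh$ to $\{\prod y_{e^v_{i_j}}\neq 0\}$) with the usual open restriction of matrix factorizations, the corresponding $\cF_B$-diagram commutes as well. So this case reduces to bookkeeping.

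The main obstacle is the second case, where $e^v_{d+2}$ appears among the restriction edges. Here Nadler's asymmetric treatment of $e^v_{d+2}$ as the final leg means Lemma~\pref{lem:lift1} does not directly apply to the restriction being considered. The key observation is that the $\frakS_{d+2}$-symmetry broken by the choice of Weinstein structure on $\tilde P_d$ is restored by the matrix factorization category: the group $\frakS_{d+2}$ acts on $\MF^\infty(\bA^{d+2},y_{e^v_1}\cdots y_{e^v_{d+2}})$ by permutation of coordinates. To dispatch this case, I would re-apply Lemma~\pref{lem:lift1} with a different choice of Nadler's Weinstein structure, picking as final leg some $e^v_{j}\notin\{e^v_{i_1},\dots,e^v_{i_k}\}$ (which exists since the subset is proper in $U_v$). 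That alternative equivalence does cover the required restriction, and so after $\bZ_2$-folding and \pref{eq:Orlov} one obtains the commutative square for the given $S^{(k)}$.

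The nontrivial consistency check is that the equivalences $\cF_A(U_v)\simeq\cF_B(U_v)$ produced by different choices of final leg coincide. On the $A$-side this is \pref{lem:A-section_unique}, which guarantees invariance of $\Fuk(H_v)$ under deformations of the Liouville structure; on the $B$-side it is the manifest $\frakS_{d+2}$-symmetry of $\MF^\infty(\bA^{d+2},y_{e^v_1}\cdots y_{e^v_{d+2}})$ together with the naturality of \pref{eq:Orlov} under cyclic permutations. Verifying this compatibility is where the bulk of the technical work lies, but once it is in place both cases assemble into the desired commutative square, and so the theorem follows.
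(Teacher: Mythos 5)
Your construction of the equivalence is the same as the paper's (fold \pref{lem:lift1}, compose with \pref{eq:Orlov} and the canonical equivalence of Corollary \pref{cor:A-local}), and your Case 1 is essentially the intended bookkeeping. The problem is Case 2. There you propose to re-run \pref{lem:lift1} with a different final leg and then assert that the two composite equivalences $\cF_A(U_v)\simeq\cF_B(U_v)$ (one built with final leg $e^v_{d+2}$, one with final leg $e^v_j$ and the Orlov equivalence taken with respect to a different added variable) coincide, citing \pref{lem:A-section_unique} on the $A$-side and the $\frakS_{d+2}$-symmetry of $\MF^\infty$ plus ``naturality of Orlov'' on the $B$-side. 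Those ingredients only give you a comparison \emph{within} each side separately; they do not show that Nadler's microlocal equivalence intertwines the GPS pushforward equivalence (comparing the two Weinstein structures on $\tilde P_d$) with the coordinate permutation on matrix factorizations. That coherence statement is the entire content of the difficulty: the theorem requires a \emph{single} equivalence $\varphi_v$ commuting with all restrictions simultaneously (this is what the gluing in \pref{thm:main1} consumes), so ``for each $S^{(k)}$ there is some equivalence making the square commute'' is not enough, and the agreement of the differently-built equivalences is left unproved in your sketch. The paper's own remark after \pref{thm:main1}, about the impossibility of choosing $i$ in \pref{eq:Orlov} compatibly in general, is a warning sign that such comparisons of choices are not harmless.

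The paper avoids this issue altogether: it fixes the one equivalence $\varphi_v$ and checks restriction compatibility at the level of generators. By \pref{lem:A-local-restriction}, $R^A_{v,S^{(k)}}$ is the Verdier quotient by the cocores of $H_v$ not in $H_{S^{(k)}}$; the equivalence of Corollary \pref{cor:A-local} preserves the Lagrangian cocore planes, and $\varphi_v$ sends them to the generators $\underline{\scrO}^i_{d+1}$ of $\MF^\infty(\bA^{d+2},y_{e^v_1}\cdots y_{e^v_{d+2}})$ (with $\underline{\scrO}^{d+2}_{d+1}$ in the stable envelope of the others--this is exactly how the broken $\frakS_{d+2}$-symmetry is restored on the $B$-side); and $R^B_{v,S^{(k)}}$ is the quotient by the corresponding $\underline{\scrO}^i$'s. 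Matching localizations at matching sets of objects gives the commutative square uniformly, whether or not $e^v_{d+2}$ is among $e^v_{i_1},\dots,e^v_{i_k}$, with no need to compare equivalences attached to different final legs. To repair your argument you would either have to prove the coherence you flagged (essentially an equivariance statement for Nadler's equivalence under relabeling of legs), or switch to this generator-level argument.
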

\begin{proof}
On each vertex
$v \in \Pi_\Sigma$
we have the extended Nadler's equivalence
\begin{align*}
\varphi^v_{d+2}
\colon
\Fuk(H_v)_{\bZ_2}
\to
\IndCoh(Y^{d+2}_{v, d+1})_{\bZ_2}
\to
\MF^\infty(\bA^{d+2}, y_{e^v_1} \cdots y_{e^v_{d+1}} y_{e^v_{d+2}}),
\end{align*}
which is the composition of $\bZ_2$-folding of the equivalence from
\pref{lem:lift1}
with that
\pref{eq:Orlov}
for
$i = d+1$.
Composing with the canonical equivalence from Corollary
\pref{cor:A-local},
we obtain
\begin{align*}
\varphi_v
\colon
\cF_A(U_v)
\to
\MF^\infty(\bA^{d+2}, y_{e^v_1} \cdots y_{e^v_{d+1}} y_{e^v_{d+2}}).
\end{align*}
Recall that
$\MF^\infty(\bA^{d+2}, y_{e^v_1} \cdots y_{e^v_{d+1}} y_{e^v_{d+2}})$
is split-generated by
$\{ \underline{\scrO}^i_{d+1} \}^{d+2}_{i=1}$
and
$\underline{\scrO}^{d+2}_{d+1}$
belongs to the stable envelope of the others.
Recall also that
the equivalence from Corollary
\pref{cor:A-local}
preserves the Lagrangian cocore planes of
$H_v$
which generates
$\Fuk(H_v)$.
Since
$\varphi^v_{d+2}$
is determined by the images of these Lagrangian cocore planes,
the composition
$\varphi_v$
is a canonical lift of
$\varphi^v_{d+2}$.
By
\pref{lem:A-local-restriction}
the restriction
$R^A_{v, S^{(k)}}$
is the quotient by cocores not in
$H_{S^{(k)}}$.
Via the combinatorial duality incorporated into the definition of
$\cF_B$,
such cocores correspond to some of
$\{ \underline{\scrO}^i_{d+1} \}^{d+2}_{i=1}$
which are killed by
$R^B_{v, S^{(k)}}$.
Hence
$\varphi_v$
is compatible with restrictions.
\end{proof}

\subsection{Gluing equivalences}
Finally,
we glue the local equivalence from
\pref{thm:lift2}
on each vertex
$v \in \Pi_\Sigma$
to obtain a global equivalence compatible with restrictions.

%Thm
\begin{theorem} \label{thm:main1}
There is an equivalence
\begin{align*}
\cF_A(\Pi_\Sigma) \simeq \cF_B(\Pi_\Sigma)
\end{align*}
compatible with restrictions.
\end{theorem}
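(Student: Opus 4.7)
The plan is to upgrade the vertex-wise equivalences from \pref{thm:lift2} to a global equivalence by exploiting that both $\cF_A$ and $\cF_B$ are sheaves for the topology from Definition \pref{dfn:topology}, so both their global sections can be computed as the limit over the \v{C}ech diagram of the cover $\{U_v\}_{v \in \VVert(\Pi_\Sigma)}$:
\begin{align*}
\cF_{\bullet}(\Pi_\Sigma)
\simeq
\lim\left( \prod_{v} \cF_{\bullet}(U_v)
\rightrightarrows
\prod_{e} \cF_{\bullet}(U_e)
\to
\prod_{S^{(2)}} \cF_{\bullet}(U_{S^{(2)}})
\to
\cdots
\right).
\end{align*}
For $\cF_A$ this is precisely \pref{thm:global} (together with Definition \pref{dfn:cF_A}). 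For $\cF_B$ it is immediate from Preygel's Zariski descent \cite[Proposition A.3.1]{Pre} and the fact that the open sets $U_{S^{(k)}} \subset \Pi_\Sigma$ map to a Zariski open cover of the relevant stratum of $Y$ via the combinatorial duality built into $\cF_B$.

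Given that both sides are limits of the same shape, it suffices to produce an equivalence of \v{C}ech diagrams, i.e.\ a natural equivalence of functors $\cF_A \simeq \cF_B$ on the poset of basic open sets $U_{S^{(k)}}$. The vertex-level equivalences $\varphi_v \colon \cF_A(U_v) \to \cF_B(U_v)$ are already provided by \pref{thm:lift2}, and each is already compatible with the restrictions to all $U_{S^{(k)}} \subset U_v$. The real content is thus checking the compatibility on overlaps: for an edge $e = v_1 v_2 \subset \Pi_\Sigma$, the restrictions $\varphi_{v_1}|_{U_e}$ and $\varphi_{v_2}|_{U_e}$ must coincide (up to canonical natural isomorphism) as equivalences $\cF_A(U_e) \to \cF_B(U_e)$, and analogously for higher codimension strata $S^{(k)}$ adjacent to several vertices.

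I expect this overlap compatibility to be the main obstacle. On the $A$-side, by \pref{lem:A-section_unique} and Corollary \pref{cor:A-local}, $\cF_A(U_e)$ is canonically identified with the wrapped Fukaya category of the tailored leg $L_{d, j}(K) \cong \tilde P_{d-1} \times \bC^*$ independently of which bounding pair of pants one uses; this canonical identification intertwines the two restrictions $R^A_{v_i, e}$ via the inductive compatibility \pref{eq:recursive}. On the $B$-side, $\cF_B(U_e)$ is the category of matrix factorizations on the localization $\{y_e \neq 0\}$ in either of the affine charts $Y_{v_1}, Y_{v_2}$, and these two localizations are canonically identified via the toric gluing coordinate change $y_{v_1, i} = v^{\beta_i - \alpha_i} y_{v_2, i}$, which corresponds under Orlov and Kn\"orrer reduction to the canonical equivalence of the two presentations of $\MF^\infty$ on the overlap. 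Hence both composites $\varphi_{v_i}|_{U_e}$ are constructed from the same pieces (Nadler's microlocal equivalence, $\bZ_2$-folding, and Orlov's theorem) relative to the same distinguished leg of $\tilde P_{d-1}$, and therefore agree canonically. The analogous statement for higher codimension strata $U_{S^{(k)}}$ reduces to the same argument applied inductively, using unimodularity of $\cT$ to ensure each intersection of charts is again an affine space with coordinates given by the edges through the stratum.

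Once these compatibilities are established, the collection $\{\varphi_v\}$ extends canonically to a morphism of the two \v{C}ech diagrams which is a termwise equivalence, and taking the limit produces the desired equivalence $\cF_A(\Pi_\Sigma) \simeq \cF_B(\Pi_\Sigma)$. Compatibility with further restrictions is automatic since both sides are obtained as limits of the sheaf data.
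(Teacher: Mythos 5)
Your overall architecture coincides with the paper's: both global section objects are limits over the nerve of the cover by the $U_v$ (Theorem \ref{thm:global} on the $A$-side, Zariski descent of $\MF^\infty$ on the $B$-side), and the equivalence is obtained by gluing the lifted vertex equivalences $\varphi_v$ of Theorem \ref{thm:lift2}; the paper merely organizes this gluing as a vertex-by-vertex recursion modeled on the proof of Theorem \ref{thm:global}, rather than as a one-shot morphism of \v{C}ech diagrams, but the content is the same.

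The weak point is your justification of the overlap compatibility, which you yourself single out as the crux. You argue that $\varphi_{v_1}|_{U_e}$ and $\varphi_{v_2}|_{U_e}$ agree because both are built ``relative to the same distinguished leg of $\tilde{P}_{d-1}$''. That premise is false in general: Nadler's Weinstein structures at $v_1$ and at $v_2$ are chosen with final legs along \emph{different} edges of $\Pi_\Sigma$ (neither equal to $e$), so the induced distinguished directions on $H_e \cong \tilde{P}_{d-1} \times \bC^*$, and with them the choices of the index $i$ in the Orlov/Kn\"orrer step \pref{eq:Orlov}, generally differ; the Remark following Theorem \ref{thm:main1} stresses precisely that a globally compatible system of such choices need not exist, which is why the vertex equivalences must be lifted to $\MF^\infty$ in the first place. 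The correct reason the two restrictions agree is the canonicity of that lift: by Theorem \ref{thm:lift2} and Lemma \ref{lem:A-local-restriction}, $\varphi_v$ is pinned down by sending the Lagrangian cocore planes of $H_v$ (labeled, via the combinatorial duality, by the coordinates $y_{e^v_i}$) to the generators $\underline{\scrO}^i_{d+1}$, while the restrictions are the quotient by cocores on the $A$-side and localization on the $B$-side; choosing the isomorphisms $\tilde{P}_d \cong H_v$ compatibly with the fixed pants decomposition, and correspondingly $\bA^{d+2} \cong \overline{Y}_v$ compatibly with the toric cover of $Y$, both restricted equivalences send the surviving cocores to the matching surviving generators under the toric gluing of charts, hence agree canonically. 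With this substitution for your ``same distinguished leg'' step, your argument closes and reproduces the paper's proof.
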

\begin{proof}
We will construct an equivalence of the sections of
$\cF_A$
and
$\cF_B$
over
$\Pi_k(v^0_1)$
compatible with restrictions.
The base case is
\pref{thm:lift2}.
Suppose that
there is an equivalence
\begin{align*}
\varphi_{k-1}
\colon
\cF_A(U_{v^0_1} \cup \bigcup^{l_1}_{i_1 = 1} U_{v^1_{i_1}} \cup \cdots \cup \bigcup^{l_{k-1}}_{i_{k-1} = 1} U_{v^{k-1}_{i_{k-1}}})
\to
\cF_B(U_{v^0_1} \cup \bigcup^{l_1}_{i_1 = 1} U_{v^1_{i_1}} \cup \cdots \cup \bigcup^{l_{k-1}}_{i_{k-1} = 1} U_{v^{k-1}_{i_{k-1}}})
\end{align*}
compatible with restrictions
and
that restricts to
$\varphi_v$
on each vertex
$v \in \Pi_{k-1}(v^0_1)$.
When
$v^k_i$
is connected to only one vertex
$v^{k-1}_j$,
by
assumption
and
\pref{thm:lift2}
we have the commutative diagram
\begin{align*}
\begin{gathered}
\xymatrix{
\cF_A(U_{v^0_1} \cup \bigcup^{l_1}_{i_1 = 1} U_{v^1_{i_1}} \cup \cdots \cup \bigcup^{l_{k-1}}_{i_{k-1} = 1} U_{v^{k-1}_{i_{k-1}}}) \ar[r]^-{\varphi_{k-1}} \ar^{R^A_{k-1, e^k_{ji}}}[d]
&
\cF_B(U_{v^0_1} \cup \bigcup^{l_1}_{i_1 = 1} U_{v^1_{i_1}} \cup \cdots \cup \bigcup^{l_{k-1}}_{i_{k-1} = 1} U_{v^{k-1}_{i_{k-1}}}) \ar^{R^B_{k-1, e^k_{ji}}}[d] \\
\cF_A (U_{e^k_{ji}}) \ar[r]^-{\varphi_{k-1} |_{e^k_{ji}} = \varphi_{v^{k-1}_j} |_{e^k_{ji}}} \ar[d]^-{\simeq}
&
\cF_B (U_{e^k_{ji}}) \ar[d]^-{\simeq} \\
\cF_A (U_{e^k_{ji}}) \ar[r]^-{\varphi_{v^k_i} |_{e^k_{ji}}}
&
\cF_B (U_{e^k_{ji}}) \\
\cF_A (U_{v^k_i}) \ar[r]^-{\varphi_{v^k_i}} \ar_{R^A_{v^k_i, e^k_{ji}}}[u]
&
\cF_B (U_{v^k_i}) \ar_{R^B_{v^k_i, e^k_{ji}}}[u]
}
\end{gathered}
\end{align*}
where the middle vertical arrows are the canonical equivalences induced by the gluing of coordinates.
Note that
one can take a collection of isomorphisms from
$\tilde{P}_d$
to
$H_v$
for all vertices
$v \in \Pi_\Sigma$
compatible with the fixed pants decomposition.
Via the combinatorial duality incorporated into the definition of
$\cF_B$,
it corresponds to a collection of isomorphisms from
$\bA^{d+2}$
to
$\overline{Y}_v$
for all vertices
$v \in \Pi_\Sigma$
compatible with the open cover of
$Y$.
Hence
$\varphi_{k-1}, \varphi_{v^k_i}$
glue to yield an equivalence
\begin{align*}
\cF_A(U_{v^0_1} \cup \bigcup^{l_1}_{i_1 = 1} U_{v^1_{i_1}} \cup \cdots \cup \bigcup^{l_{k-1}}_{i_{k-1} = 1} U_{v^{k-1}_{i_{k-1}}} \cup U_{v^k_i})
\to
\cF_B(U_{v^0_1} \cup \bigcup^{l_1}_{i_1 = 1} U_{v^1_{i_1}} \cup \cdots \cup \bigcup^{l_{k-1}}_{i_{k-1} = 1} U_{v^{k-1}_{i_{k-1}}} \cup U_{v^k_i})
\end{align*}
compatible with restrictions
and
that restricts to
$\varphi_v$
on each vertex
$v \in \Pi_{k-1}(v^0_1)$
and
$v^k_i$.
The same argument also works when
$v^k_i$
is connected to more than one vertices
$v^{k-1}_{j_1}, \ldots, v^{k-1}_{j_l}$
by single edges.
Iteratively,
we obtain an equivalence
\begin{align*}
\varphi_k
\colon
\cF_A(U_{v^0_1} \cup \bigcup^{l_1}_{i_1 = 1} U_{v^1_{i_1}} \cup \cdots \cup \bigcup^{l_k}_{i_k = 1} U_{v^k_{i_k}})
\to
\cF_B(U_{v^0_1} \cup \bigcup^{l_1}_{i_1 = 1} U_{v^1_{i_1}} \cup \cdots \cup \bigcup^{l_k}_{i_k = 1} U_{v^k_{i_k}})
\end{align*}
compatible with restrictions
and
that restricts to
$\varphi_v$
on each vertex
$v \in \Pi_k(v^0_1)$.
\end{proof}

%Rmk
\begin{remark}
The equivalence
$\varphi_1$
can be constructed by gluing Nadler's equivalences without lifts to sections of
$\cF_A$.
As explained in Section
$9$,
in special settings such as
\cite{GS1},
the same argument works to yield a global equivalence.
However,
such gluing requires us to choose
$i$
in
\pref{eq:Orlov}
for all vertices of
$\Pi_\Sigma$
and 
it is in general impossible to find a collection of compatible choices.
\end{remark}

%Cor
\begin{corollary}
There is an equivalence
\begin{align*}
\Fuk(H)_{\bZ_2}
\simeq
\MF^\infty(Y, W_Y).
\end{align*}
\end{corollary}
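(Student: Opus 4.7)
The plan is to chain together the three main ingredients built up over the preceding sections: the $A$-side globalization (\pref{thm:global}), the sheaf-level identification of $\cF_A$ with $\cF_B$ over $\Pi_\Sigma$ (\pref{thm:main1}), and the Zariski descent for matrix factorizations that identifies the global sections of $\cF_B$ with $\MF^\infty(Y, W_Y)$. Concretely, the equivalence we want is
\begin{align*}
\Fuk(H)_{\bZ_2}
\;\overset{(1)}\simeq\; \cF_A(\Pi_\Sigma)
\;\overset{(2)}\simeq\; \cF_B(\Pi_\Sigma)
\;\overset{(3)}\simeq\; \MF^\infty(Y, W_Y),
\end{align*}
where (1) is \pref{thm:global} and (2) is \pref{thm:main1}. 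What remains is only step (3), which is a formal consequence of the setup.

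For step (3), I would argue as follows. By definition, $\cF_B$ is obtained by restricting $\MF^\infty$ on $Y$ along the functor $\Open(\Pi_\Sigma) \to \Open(Y)$ sending $U_v$ to the affine toric chart $Y_v \cong \bA^{d+2}$, and more generally sending $U_{S^{(k)}}$ to the localization $Y_v|_{\bigcap \{y_{e^v_{i_m}} \neq 0\}}$. The collection $\{Y_v\}_{v \in \VVert(\Pi_\Sigma)}$ is an affine Zariski cover of the union of toric strata of $Y$ of codimension $\le 1$, i.e.\ of the complement $Y^\circ \subset Y$ of the codimension $\ge 2$ toric strata. By \pref{lem:B-critical}, the critical locus $\Crit(W_Y)$ is supported on the union of the toric divisors, which is already contained in $Y^\circ$. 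Since $\MF^\infty(Y, W_Y)$ is supported on $\Crit(W_Y)$ and $\MF^\infty$ is a sheaf by Preygel, the restriction map $\MF^\infty(Y, W_Y) \to \MF^\infty(Y^\circ, W_Y|_{Y^\circ})$ is an equivalence, and the latter can be computed as the limit over the Čech nerve of the affine cover $\{Y_v\}$. Unpacking, this limit is exactly $\cF_B(\Pi_\Sigma)$.

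Having established (3), combining with (1) and (2) gives the claimed equivalence. The only step with any subtlety is verifying that the combinatorial indexing of sections and restrictions in $\cF_B$ (indexed by strata of $\Pi_\Sigma$) correctly reproduces the Čech complex for the affine cover $\{Y_v\}$ of $Y^\circ$; but this is built into \pref{dfn:topology} and the definition of $\cF_B$ via the functor $\Open(\Pi_\Sigma) \to \Open(Y)$, since intersections $U_{v_1} \cap \cdots \cap U_{v_l}$ on the $\Pi_\Sigma$ side correspond precisely to the intersections $Y_{v_1} \cap \cdots \cap Y_{v_l}$ on the $Y$ side under the combinatorial duality between $\Pi_\Sigma$ and $\Sigma_Y$. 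Therefore no real obstacle appears: the corollary is a bookkeeping consequence of the results already proved, and the proof should be a two-line composition of \pref{thm:global}, \pref{thm:main1}, and Zariski descent for $\MF^\infty$.
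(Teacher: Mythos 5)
Your overall route is the paper's own: the corollary is exactly the composite of \pref{thm:global}, \pref{thm:main1}, and the identification $\cF_B(\Pi_\Sigma)=\MF^\infty(Y,W_Y)$, the last of which the paper regards as built into the definition of $\cF_B$ as the composite of the sheaf $\MF^\infty$ (Preygel's Zariski descent, \cite[Proposition A.3.1]{Pre}) with the functor $\Open(\Pi_\Sigma)\to\Open(Y)$. So steps (1) and (2) and the general shape of (3) are fine.

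Your justification of step (3), however, contains a genuine confusion of two different systems of charts, and as written it is self-defeating. The charts $Y_v$, $v\in\VVert(\Pi_\Sigma)$, entering the definition of $\cF_B$ are the affine charts $\Spec\bC[\sigma_v^\vee\cap(M\times\bZ)]\cong\bA^{d+2}$ of the \emph{maximal} $(d+2)$-dimensional cones $\sigma_v\in\Sigma_Y$ (vertices of $\Pi_\Sigma$ correspond to maximal cells of $\cT$); since every cone of $\Sigma_Y$ is a face of a maximal cone, these charts cover \emph{all} of $Y$. They are not the ray charts $Y_\alpha\cong(\bC^*)^{d+1}\times\bC$, $\alpha\in A$, of Section 3.3, which are the ones that cover only the complement $Y^\circ$ of the codimension $\geq 2$ strata. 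Moreover, your claim that $\Crit(W_Y)\subset Y^\circ$ is the opposite of what \pref{lem:B-critical} says: the critical locus is $Y\setminus\bigcup_{\alpha\in A}Y_\alpha$, i.e.\ precisely the union of the codimension $\geq 2$ toric strata, so it is \emph{disjoint} from $Y^\circ$. Taken literally, your support argument would yield $\MF^\infty(Y,W_Y)\simeq\MF^\infty(Y^\circ,W_Y|_{Y^\circ})=0$, since on each $Y_\alpha$ the superpotential is the coordinate $v_\alpha$ and its zero fiber is smooth. The fix is that no support/restriction step is needed at all: because $\{Y_v\}_{v\in\VVert(\Pi_\Sigma)}$ is already an affine Zariski cover of $Y$ and the dictionary $\Open(\Pi_\Sigma)\to\Open(Y)$ sends $U_{v_1}\cap\cdots\cap U_{v_l}$ to $Y_{v_1}\cap\cdots\cap Y_{v_l}$, the limit computing $\cF_B(\Pi_\Sigma)$ is the \v{C}ech limit of $\MF^\infty$ over this cover, hence equals $\MF^\infty(Y,W_Y)$ by descent. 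With that correction, your two-line composition is exactly the intended proof.
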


%%%%%%%%%%%%%%%%%%%%%%%%%%%%%%%%%%%%%%%%%%%%%%%%%%%%%%%%%%
%%%%%%%%%%%%%%%%%%%%%%%%%%%%%%%%%%%%%%%%%%%%%%%%%%%%%%%%%%
\section{Critical loci of Landau--Ginzburg models for complete intersections}
In this section,
following
\cite[Section 10]{AAK},
we realize the mirror pair for a complete intersection of very affine hypersufaces as critical loci of associated Landau--Ginzburg models.
They give rise to fibrations over the complete intersection of the tropical hypersurfaces
equipped with the canonical stratification.

\subsection{Landau--Ginzburg $A$-model for complete intersections}
Let
$H_1, \ldots, H_r \subset \bT^\vee_\bC$
be very affine hypersurfaces in general position defined by Laurent polynomials
\begin{align} \label{eq:Laurents} 
W_i \colon \bT^\vee_\bC \to \bC, \
x
\mapsto
\sum_{\alpha^i \in A_i} c_{\alpha^i} t^{-\rho_i(\alpha^i)}x^{\alpha^i}, \
i = 1, \ldots, r.
\end{align}
Here,
\begin{itemize}
\item
$c_{\alpha^i} \in \bC^*$
are arbitrary constants,
\item
$t$
is a sufficiently large tropical parameter,
\item
$\rho_i$
are convex piecewise linear functions on convex lattice polytopes
$\Delta^\vee_i$
whose corner loci give adapted unimodular triangulations
$\cT_i$
of
$\Delta^\vee_i$, 
and
\item
$A_i \subset M^\vee$
are the set of vertices of
$\cT_i$.
\end{itemize}
Throughout the paper,
we assume 
$H_1, \ldots, H_r$
to be in sufficiently general position for their tropical hypersurfaces
$\Pi_{\Sigma_1}, \ldots, \Pi_{\Sigma_r}$
to intersect transversely. 
We denote by
$\bfH$
the complete intersection
$H_1 \cap \cdots \cap H_r \subset \bT^\vee_\bC$.
For
$\bfX = \bT^\vee_\bC \times \bC^r$
with coordinates
$(x, u) = (x_1, \ldots, x_{d+1}, u_1, \ldots, u_r)$,
consider a Laurent polynomial
\begin{align*}
W_\bfX \colon \bfX \to \bC, \
(x, u)
\mapsto
\sum^{r}_{i = 1} u_i W_i(x).
\end{align*}

%Dfn
\begin{definition}
Let
$\bfH \subset \bT^\vee_\bC$
be the complete intersection of the very affine hypersurfaces
$H_1, \ldots, H_r$
defined by the Laurent polynomials
$W_1, \ldots, W_r$
from
\pref{eq:Laurents}.
We call the pair
$(\bfX, W_\bfX)$
the
\emph{Landau--Ginzburg $A$-model}
for
$\bfH$.
\end{definition}

%Dfn
\begin{definition}
The
\emph{Newton polytope}
$\Delta^\vee_\bfX$
of
$W_\bfX$
is the convex hull
\begin{align*}
\Conv(0, -\Delta^\vee_1 \times e_1, \ldots, -\Delta^\vee_r \times e_r)
\subset
M^\vee_\bR \times \bR^r
\end{align*}
where
$e_1, \ldots, e_r \in \bR^r$
are the standard basis vectors.
\end{definition}

%Rmk
\begin{remark} \label{rmk:triangulation}
The polytope
$\Delta^\vee_\bfX$
admits an adapted star-shaped triangulation
$\bf{T}$
canonically induced by
$\rho_1, \ldots, \rho_r$.
However,
it might not be unimodular.
\end{remark}

%Lem
\begin{lemma} \label{lem:cA-critical}
The critical locus
$\Crit(W_\bfX)$
is given by
$\bigcap^r_{i=1} \{ u_i = 0 \}
\cap
\bigcap^r_{i=1}\{ W_i = 0 \}
\subset
\bfX$.
\end{lemma}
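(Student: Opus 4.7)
The plan is to mimic the computation in \pref{lem:A-critical}, but now carefully exploit the transversality hypothesis placed on $H_1, \ldots, H_r$. First I would simply unpack the differential of $W_\bfX$. Splitting coordinates into $(x,u)$, one has
\begin{align*}
\frac{\partial W_\bfX}{\partial u_i} = W_i(x), \qquad \frac{\partial W_\bfX}{\partial x_j} = \sum_{i=1}^{r} u_i\,\frac{\partial W_i}{\partial x_j}(x),
\end{align*}
so $dW_\bfX|_{(x,u)}$ vanishes if and only if $W_i(x)=0$ for all $i$ and $\sum_i u_i\, dW_i|_x = 0$ in $T^*_x\bT^\vee_\bC$. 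The first batch of equations already places $x$ inside $\bfH$, so the containment $\{u_1=\cdots=u_r=0\}\cap\{W_1=\cdots=W_r=0\}\subset\Crit(W_\bfX)$ is immediate (at such a point the second condition trivially holds since all $u_i$ vanish).

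For the reverse inclusion the main point is that, at every $x\in\bfH$, the covectors $dW_1|_x,\ldots, dW_r|_x$ are linearly independent in $T^*_x\bT^\vee_\bC$. This is precisely the content of the general-position / transverse-intersection assumption placed on $H_1,\ldots,H_r$ at the beginning of the section: by definition of transversality of the smooth divisors $H_i=\{W_i=0\}$, the differentials $dW_i$ at any common zero span an $r$-dimensional subspace of the cotangent space. Consequently the relation $\sum_i u_i\, dW_i|_x = 0$ forces $u_1=\cdots=u_r=0$, which closes the argument.

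I do not expect a serious obstacle; the only substantive input is the transversality of the $H_i$, which has already been assumed. If one wanted to be scrupulous, the single point worth double-checking is that transversality of the hypersurfaces genuinely implies linear independence of the $dW_i$ at each common zero (not just pairwise transversality of tangent spaces); this follows by induction on $r$ from the standard fact that, for two smooth hypersurfaces meeting transversally, the two defining differentials are independent at every point of intersection. Once this is noted, the two inclusions combine to give $\Crit(W_\bfX)=\bigcap_i\{u_i=0\}\cap\bigcap_i\{W_i=0\}$, as claimed.
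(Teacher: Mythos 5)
Your proof is correct and follows essentially the same route as the paper: the $u_i$-derivatives force $W_1=\cdots=W_r=0$, and then the $x$-part of the differential, $\sum_i u_i\,dW_i|_x$, vanishes only when all $u_i=0$. If anything you are slightly more careful than the paper's terse wording, since you make explicit that transversality yields linear independence of $dW_1|_x,\ldots,dW_r|_x$ at common zeros (the paper only remarks that each $dW_i$ is nowhere vanishing), which is precisely what is needed to conclude $u_1=\cdots=u_r=0$.
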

\begin{proof}
Express the tangent map
$d W_\bfX$
of
$W_\bfX$
as a $(1, 2r)$-matrix
\begin{align*}
(u_1 dW_1, \ldots, u_r dW_r, W_1, \ldots, W_r).
\end{align*}
Since by assumption
$H_1, \ldots, H_r$
intersect transversely,
$dW_i$
nowhere vanish.
Hence
$\rank (dW_\bfX) = 0$
if and only if 
$u_1 = \cdots = u_r = 0$
and
$W_1 = \cdots = W_r = 0$.
\end{proof}

%Rmk
\begin{remark} \label{rmk:cA-fibration}
By
\pref{lem:cA-critical}
the projection
$\pr_1 \colon \bfX = \bT^\vee_\bC \times \bC^r \to \bT^\vee_\bC$
preserves
$\Crit(W_\bfX)$
and
the inclusions
$\bfH \subset H_i \hookrightarrow \Crit(W_\bfX)$.
Since by assumption
$\Pi_{\Sigma_1}, \ldots, \Pi_{\Sigma_r}$
intersect transversely,
we may assume further that
$H_1, \ldots, H_r$
intersect along their legs. 
Let
$\ret_i \colon \Pi_i \to \Pi_{\Sigma_i}$
be the continuous maps induced by the retractions.
Then the composition
\begin{align} \label{eq:cA-fibration}
\bff
\colon
\bfH
\cong
\Crit(W_\bfX)
\hookrightarrow
\bfX
\xrightarrow{\Log_{d+1} \circ \pr_1}
\bigcap^r_{i=1} \Pi_i
\xrightarrow{\ret_r \circ \cdots \circ \ret_1}
\bigcap^r_{i=1} \Pi_{\Sigma_i}
\end{align}
gives a stratified fibration.
Away from lower dimensional strata,
the fiber over a point in the intersection of
$r$
top dimensional strata one from each
$\Pi_{\Sigma_i}$
is a real $(d - r +1)$-torus.
\end{remark}

\subsection{Landau--Ginzburg $B$-model for complete intersections}
Recall that
$\cT_i$
are the chosen adapted unimodular triangulations of
$\Delta^\vee_i \subset M^\vee_\bR$
obtained as the corner loci of the convex piecewise linear functions
$\rho_i \colon \Delta^\vee_i \to \bR$.
Recall also from Remark
\pref{rmk:triangulation}
that
$\bfT$
is the adapted star-shaped triangulation of
$\Delta^\vee_\bfX \subset M^\vee_\bR \times \bR^r$
canonically induced by
$\rho_1, \ldots, \rho_r$.
Let
$\Sigma_\bfY \subset M^\vee_\bR \times \bR^r$
be the fan corresponding to
$\bfT$
and
$\bfY$
the noncompact $(d+r+1)$-dimensional toric variety associated with
$\Sigma_\bfY$.
The primitive ray generators of
$\Sigma_\bfY$
are the vectors of the form
$(- \alpha^{i,j}, e_i)$
with
$\alpha^{i,j} \in A_i$.
Such vectors span a smooth cone of
$\Sigma_\bfY$
if
$-\alpha^{i,j}$
span a cell of
$\cT_i$
for fixed
$i$.

%Rmk
\begin{remark}
Unlike the case
$r = 1$,
there might be nonsmooth cones of
$\Sigma_\bfY$
as
$\bfT$
is not necessarily unimodular.
Indeed,
consider the case
where
$d = 2, r=2$
and
the two defining polynomials are
$x_1 + x_2 + x_3, x^2_1 x_2 + x_3$.
Then the Newton polytope
$\Delta^\vee_\bfX$
is a $5$-dimensional simplex
which has twice volume of the unit simplex.
As
$x_1 + x_2 + x_3, x^2_1 x_2 + x_3$
cannot be further divided,
there is no room for subdivision of
$\Delta^\vee_\bfX$.
Possible nonsmooth cones would contain at least two rays belonging to distinct subfans of the form
\begin{align*}
\Sigma_i
=
\bR_{\geq 0} \cdot (-\cT_i \times \{ e_i \})
\subset
\Sigma_\bfY.
\end{align*}
\end{remark}

Dually,
$\bfY$
is associated with the noncompact moment polytope
\begin{align*}
\Delta_\bfY
=
\{ (m, u_1, \ldots, u_r)
\in
M_\bR \times \bR^r \ | \ u_i \geq \varphi_i (m), \ 1 \leq i \leq r \}.
\end{align*}
The facets of
$\Delta_\bfY$
correspond to the maximal domains of linearity of
$\varphi_1, \ldots, \varphi_r$.
We denote by
$\bfA$
the set of connected components of
$M_\bR \setminus \bigcup^r_{i=1} \Pi_{\Sigma_i}$
and
index each component by the tuple
\begin{align*}
\vec{\alpha}
=
(\alpha^{1,j_1}, \ldots, \alpha^{r, j_r})
\in
M^\vee \times \cdots \times M^\vee
\end{align*}
of vertices.
%Recall that
%$\alpha^{1,j_1}, \ldots, \alpha^{r, j_r}$
%correspond to some maximal domains of linearity of
%$\varphi_1, \ldots, \varphi_r$
%respectively.

%Rmk
\begin{remark}
The noncompact polytope
$\Delta_\bfY$
is homeomorphic to the image of
$\bfY$
under the composition
\begin{align} \label{eq:cmoment}
\bfY \to (\bfY)_{\geq 0} \to M_\bR \times \bR^r
\end{align}
of
the map induced by retraction to the nonnegative real points
with
the restriction of the negated algebraic moment map
\cite[Section 12.2]{CLS}.
%Here,
%in order to define
%\pref{eq:cmoment}
%we compactify
%$\bfY$
%by adding a ray
%$(0, \ldots, 0, -1, \ldots, -1) \In \Z^n^\vee_\bR \times \bR^r$
%to
%$\Sigma_\bfY$. 
\end{remark}

%Lem
\begin{lemma} \label{lem:cprojection}
Let
$\bfq \colon M_\bR \times \bR^r \to M_\bR$
be the natural projection.
Then under
$\bfq$
the union of intersections of
$r$
facets of
$\Delta_\bfY$
one from each
$\{ u_i \geq \varphi_i (m) \}$
homeomorphically maps to
$M_\bR$.
Moreover,
the union of intersections of
$r$
codimension
$2$
faces of
$\Delta_\bfY$
one from each
$\{ u_i \geq \varphi_i (m) \}$
homeomorphically maps to
$\bigcap^r_{i=1} \Pi_{\Sigma_i}$.
\end{lemma}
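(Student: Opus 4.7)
The plan is to mimic the argument of \pref{lem:projection} from the hypersurface case and to extend it in a straightforward way to accommodate the product structure induced by the $r$ superpotentials. Recall that $\Delta_\bfY = \{(m,u_1,\dots,u_r) \in M_\bR \times \bR^r \mid u_i \geq \varphi_i(m),\ 1\leq i\leq r\}$. The facets of $\Delta_\bfY$ coming from tightening the $i$-th inequality are in bijection with the vertices $\alpha^i \in A_i$, equivalently with the maximal domains of linearity $C_i^{\alpha^i}$ of $\varphi_i$: call such a facet $F^i_{\alpha^i}$. An intersection of $r$ facets, one from each slab $\{u_i \geq \varphi_i(m)\}$, is therefore indexed by a tuple $\vec\alpha = (\alpha^1,\dots,\alpha^r) \in A_1\times\cdots\times A_r$, and coincides with the locus where $u_i = \langle m,\alpha^i\rangle - \rho_i(\alpha^i)$ for every $i$ and $m$ lies in the cell $C_{\vec\alpha} \coloneqq \bigcap_i C_i^{\alpha^i} \subset M_\bR$.

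First I would show that $\bfq$ is a homeomorphism on each such intersection $\bigcap_i F^i_{\alpha^i}$. On this face, each coordinate $u_i$ is determined by $m$ via $u_i = \varphi_i(m)$, so $\bfq$ is a continuous bijection onto $C_{\vec\alpha}$; its inverse is the continuous section $m \mapsto (m,\varphi_1(m),\dots,\varphi_r(m))$, which restricted to $C_{\vec\alpha}$ equals the affine map $m\mapsto (m,\langle m,\alpha^1\rangle - \rho_1(\alpha^1),\dots,\langle m,\alpha^r\rangle - \rho_r(\alpha^r))$. Since the closures $C_{\vec\alpha}$ exhaust the components of $M_\bR \setminus \bigcup_i \Pi_{\Sigma_i}$, their union is $M_\bR$, and the local sections manifestly agree on common boundary strata. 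Hence the pieces glue to a global homeomorphism from the union of the $\bigcap_i F^i_{\alpha^i}$ onto $M_\bR$, proving the first assertion.

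For the second assertion, a codimension-$2$ face of $\Delta_\bfY$ supported on the $i$-th slab corresponds to a pair of distinct vertices $\alpha^i,\alpha'^i \in A_i$ both achieving the maximum in $\varphi_i$ on a common edge of $\cT_i$, i.e.\ to a codimension-$1$ face of $\Pi_{\Sigma_i}$. Intersecting one such face from each slab forces, under $\bfq$, the point $m$ to lie simultaneously on a face of every $\Pi_{\Sigma_i}$, hence on $\bigcap_i \Pi_{\Sigma_i}$. The same relation $u_i = \varphi_i(m)$ provides a continuous section as before, giving a cell-by-cell homeomorphism whose images cover $\bigcap_i \Pi_{\Sigma_i}$ as $\vec\alpha,\vec{\alpha'}$ range over all admissible pairs.

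The step I expect to require care is the global gluing, in particular verifying that the polyhedral decomposition of the union of $r$-fold facet intersections matches the stratification of $M_\bR$ induced by $\bigcup_i \Pi_{\Sigma_i}$, and similarly for the codimension-$2$ statement. This is precisely where the transversality assumption on $\Pi_{\Sigma_1},\dots,\Pi_{\Sigma_r}$ enters: it ensures that the combinatorics of the codimension-$2$ faces of $\Delta_\bfY$ arising from different slabs are in bijection with the strata of $\bigcap_i \Pi_{\Sigma_i}$, so no strata are collapsed or missed under $\bfq$.
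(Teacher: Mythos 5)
Your proposal is correct and follows essentially the same route as the paper: identify the facets of each slab $\{u_i \geq \varphi_i(m)\}$ with the maximal domains of linearity of $\varphi_i$, note that $\bfq$ on the relevant face unions is inverted by the graph section $m \mapsto (m,\varphi_1(m),\dots,\varphi_r(m))$, and reduce the codimension-$2$ statement to the hypersurface argument of \pref{lem:projection} applied slab by slab. Making the continuous section explicit is a nice touch, but it is only a more detailed rendering of what the paper's "by construction of $\Sigma_\bfY$" step already asserts.
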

\begin{proof}
By construction of
$\Sigma_\bfY$
under
$\bfq$
the intersection of
$r$
facets of
$\Delta_\bfY$
one from each
$\{ u_i \geq \varphi_i (m) \}$
homeomorphically maps to the intersection of
$r$
maximal domains of linearity
one from each
$\varphi_i$
corresponding to the same
$\alpha^{i,j_i} \in A_i$.
When
$\vec{\alpha}
=
(\alpha^{1,j_1}, \ldots, \alpha^{r, j_r})$
runs through
$\bfA$,
the closure of the latter covers
$M_\bR$.
Then the second statement follows from the same argument as in the proof of
\pref{lem:projection}.
\end{proof}

For each
$\vec{\alpha} \in \bfA$
let
$\bfY_{\vec{\alpha}}
=
\bT_\bC \times \bC^r$
with coordinates
$y_{\vec{\alpha}}
=
(y_{\vec{\alpha}_,1}, \ldots, y_{\vec{\alpha}, d+1}, v_{\vec{\alpha}, 1}, \ldots, v_{\vec{\alpha}, r})$,
where
$y_{\vec{\alpha}_,1}, \ldots, y_{\vec{\alpha}, d+1}$
are the monomials with weights
\begin{align*}
\eta_1 = (-1, \ldots, 0, -\alpha^{1, j_1}_1, \ldots, -\alpha^{r, j_r}_1),
\ldots,
\eta_{d+1} = (0, \ldots, -1, -\alpha^{1, j_1}_{d+1}, \ldots, -\alpha^{r, j_r}_{d+1})
\in
M \times \bZ^r
\end{align*}
and
$v_{\vec{\alpha}, 1}, \ldots, v_{\vec{\alpha}, r}$
are the monomials with weights
\begin{align*}
\eta_{d+2} = (0, \ldots, 0, 1, \ldots, 0),
\ldots,
\eta_{d+r+1} = (0, \ldots, 0, 0, \ldots, 1)
\in
M \times \bZ^r.
\end{align*}
Pairing of the former monomials with the monomials with weight
\begin{align*}
u_{\xi_1} = (-\alpha^{1, j_1}, e_1),
\ldots,
u_{\xi_r} =(-\alpha^{r, j_r}, e_r)
\in
M^\vee \times \bZ^r
\end{align*}
yield
$0$
while that of the latter monomials yield
$1$.

%Lem
\begin{lemma} \label{lem:cchart}
The complex algebraic variety
$\bfY_{\vec{\alpha}}$
is the affine open subset of
$\bfY$
associated with the cone spanned by
$u_{\xi_1}, \ldots, u_{\xi_r} \subset M^\vee \times \bZ^r$.
\end{lemma}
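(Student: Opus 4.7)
The plan is to mimic the proof of the hypersurface analogue \pref{lem:chart}, which pinned down the cone associated with $Y_\alpha$ by reading off pairing constraints from the vanishing/nonvanishing behavior of the toric monomials on the chart. Here we repeat the argument one ray at a time, using now an $r$-dimensional cone and the $r$ monomials $v_{\vec{\alpha}, 1}, \ldots, v_{\vec{\alpha}, r}$ in place of the single $v_\alpha$.

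Concretely, suppose that $\sigma \in \Sigma_\bfY$ is the cone associated with the affine open subset $\bfY_{\vec{\alpha}} \subset \bfY$. Using the standard toric divisor formulas
\begin{align*}
\ddiv(y_{\vec{\alpha}, i}^{\pm 1}) = \sum_{\xi \in \sigma(1)} \langle \pm \eta_i, u_\xi \rangle D_\xi, \quad
\ddiv(v_{\vec{\alpha}, j}) = \sum_{\xi \in \sigma(1)} \langle \eta_{d+1+j}, u_\xi \rangle D_\xi,
\end{align*}
I would first exploit that $y_{\vec{\alpha},1}^{\pm 1}, \ldots, y_{\vec{\alpha}, d+1}^{\pm 1}$ never vanish on $\bfY_{\vec{\alpha}}$. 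This forces $\langle \eta_i, u_\xi \rangle = 0$ for all $\xi \in \sigma(1)$ and $i = 1, \ldots, d+1$, so every primitive ray generator of $\sigma$ lies in the $r$-dimensional sublattice $\{(v, w) \in M^\vee \times \bZ^r \mid v_i = -\sum_{k} w_k \alpha^{k, j_k}_i \text{ for all } i\}$.

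Next, I would intersect this constraint with the list of primitive ray generators of $\Sigma_\bfY$ itself, which by construction are of the form $(-\alpha, e_k)$ with $\alpha \in A_k$. The sublattice constraint then immediately forces $\alpha = \alpha^{k, j_k}$, so $\sigma(1) \subset \{u_{\xi_1}, \ldots, u_{\xi_r}\}$. Finally, evaluating $\langle \eta_{d+1+j}, u_{\xi_k} \rangle = \delta_{jk}$ shows that on $\bfY_{\vec{\alpha}}$ the divisor of $v_{\vec{\alpha}, j}$ is $D_{\xi_j}$ if $\xi_j \in \sigma(1)$ and is zero otherwise; since $\bfY_{\vec{\alpha}} \cong \bT_\bC \times \bC^r$ has exactly $r$ irreducible toric divisors, namely the coordinate hyperplanes $\{v_{\vec{\alpha}, j} = 0\}$, every $\xi_j$ must in fact belong to $\sigma(1)$. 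Combined with the previous inclusion, this gives $\sigma = \Cone(u_{\xi_1}, \ldots, u_{\xi_r})$.

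There is no substantive obstacle: the argument is essentially bookkeeping. The only mild subtlety compared to the hypersurface case is to check that the cone $\Cone(u_{\xi_1}, \ldots, u_{\xi_r})$ is a bona fide smooth cone of $\Sigma_\bfY$ (so the chart really is isomorphic to $\bT_\bC \times \bC^r$), which follows by completing $\{(-\alpha^{k, j_k}, e_k)\}_{k=1}^r$ to a $\bZ$-basis of $M^\vee \times \bZ^r$ using the standard basis vectors of $M^\vee$; the resulting change of basis matrix has determinant $\pm 1$. After this check the smoothness ambiguity mentioned in the preceding remark does not interfere with the proof of the lemma.
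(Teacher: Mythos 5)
Your proof is correct and follows essentially the same route as the paper's: identify the cone $\sigma$ attached to $\bfY_{\vec{\alpha}}$ via the toric divisor formulas, use the nonvanishing of $y_{\vec{\alpha},1}^{\pm1},\ldots,y_{\vec{\alpha},d+1}^{\pm1}$ to force $\langle\eta_i,u_\xi\rangle=0$, and use the pairings with $\eta_{d+j+1}$ to pin down the rays, exactly as in the complete-intersection analogue of \pref{lem:chart}. The extra bookkeeping you supply (matching the constraint against the explicit ray generators $(-\alpha,e_k)$, counting the $r$ toric divisors of $\bT_\bC\times\bC^r$, and the unimodularity check) only makes explicit steps the paper leaves terse.
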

\begin{proof}
Suppose that
$\sigma \in \Sigma_\bfY(r)$
is the cone associated with the affine open subset
$\bfY_{\vec{\alpha}} \subset \bfY$.
We have
\begin{align*}
\ddiv (y^{\pm 1}_{\vec{\alpha}, i})
=
\sum_{\xi \in \sigma(1)}
\langle \pm \eta_i, u_\xi \rangle D_\xi, \
\ddiv (v_{\vec{\alpha}, j})
=
\sum_{\xi \in \sigma(1)}
\langle \eta_{d+j+1}, u_\xi \rangle D_\xi
\end{align*}
for
$1 \leq i \leq d+1$
and
$1 \leq j \leq r$,
where
$u_\xi$
are the primitive ray generators of
$\xi$
and
$D_\xi = \overline{O(\xi)}$
are the closures of the orbits corresponding to
$\xi$.
Since
$y^{\pm 1}_{\vec{\alpha}, 1}, \ldots, y^{\pm 1}_{\vec{\alpha}, d+1}$
never vanish on
$\bfY_{\vec{\alpha}}$,
pairing of
$\eta_i$
with the the primitive ray generators in
$\sigma$
must yield
$0$
for
$1 \leq i \leq d+1$.
On the other hand,
pairing of
$\eta_{d+j+1}$
with
$u_{\xi_k}$
must yield
$\delta_{jk}$
for
$1 \leq j, k \leq d+1$.
\end{proof}

Due to the above lemma,
$\bfY_{\vec{\alpha}}$
covers the open stratum of
$\bfY$
and
the open strata of the irreducible toric divisors corresponding to
$\alpha^{1, j_1}, \ldots, \alpha^{r, j_r}$.
If
$\alpha^{i, j_i}, \beta^{i, k_i}$
are connected by an edge in
$\cT_i$
for some
$1 \leq i \leq d+1$,
then we glue
$\bfY_{\vec{\alpha}}$
to
$\bfY_{\vec{\beta}}$
with the coordinate transformations
\begin{align*}
y_{\vec{\alpha}, l}
=
v^{\beta^{i, k_i}_l - \alpha^{i, j_i}_l}_{\vec{\beta}, i} y_{\vec{\beta}, l}, \
v_{\vec{\alpha}, i}
=
v_{\vec{\beta}, i}, \
1 \leq l \leq d+1.
\end{align*}
Thus the coordinate charts
$\{ \bfY_{\vec{\alpha}} \}_{\vec{\alpha} \in \bfA}$
cover the complement in
$\bfY$
of the codimension more than
$1$
strata.

We may write
$v_i$
for
$v_{\vec{\alpha}, i}$
as they do not depend on the choice of
$\vec{\alpha} \in \bfA$. 
Since the weights
\begin{align*}
(0, \ldots, 0, 1, \ldots, 0),
\ldots,
(0, \ldots, 0, 0, \ldots, 1)
\in
M \times \bZ^r
\end{align*}
pair nonnegatively with the primitive ray generators of
$\Sigma_\bfY$,
the polynomial
$v_1 + \cdots + v_r$
defines a regular function on
$\bfY$,
which we denote by
$W_\bfY$.

%Dfn
\begin{definition}
Let
$\bfH \subset \bT^\vee_\bC$
be the complete intersection of very affine hypersurfaces
$H_1, \ldots, H_r$
defined by the Laurent polynomials
$W_1, \ldots, W_r$
from
\pref{eq:Laurents}.
We call the pair
$(\bfY, W_\bfY)$
the
\emph{Landau--Ginzburg $B$-model}
for
$\bfH$.
\end{definition}

%Rmk
\begin{remark}
The pair
$(\bfY, W_\bfY)$
is a conjectural SYZ mirror to
$\bfH$
\cite[Theorem 1.6]{AAK}.
\end{remark}

%Lem
\begin{lemma} \label{lem:cB-critical}
The critical locus
$\Crit(W_\bfY)$
is given by
$\bigcap^r_{i = 1} \Crit(v_i)$.
\end{lemma}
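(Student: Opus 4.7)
\medskip
\noindent \textbf{Proof plan.} The inclusion $\bigcap_{i=1}^r \Crit(v_i) \subseteq \Crit(W_\bfY)$ is immediate from the linearity $dW_\bfY = \sum_i dv_i$. My plan for the reverse inclusion is to reduce it to a local coordinate computation on an affine toric cover of $\bfY$.

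First I observe that on each chart $\bfY_{\vec{\alpha}}$ supplied by \pref{lem:cchart}, the functions $v_1,\ldots,v_r$ are literally the coordinates $v_{\vec{\alpha},1},\ldots,v_{\vec{\alpha},r}$. Hence the differentials $dv_1,\ldots,dv_r$ are nowhere-vanishing linearly independent coordinate one-forms on $\bfY_{\vec{\alpha}}$, which immediately rules out individual critical points of each $v_i$ and forces $dW_\bfY = dv_{\vec{\alpha},1} + \cdots + dv_{\vec{\alpha},r} \neq 0$ throughout $\bfY_{\vec{\alpha}}$. Both sides of the proposed equality are thereby confined to the closed complement $\bfY \setminus \bigcup_{\vec{\alpha}} \bfY_{\vec{\alpha}}$, a union of toric strata of codimension at least two.

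The main step is then a direct computation on a smooth affine toric chart $\bfY_\sigma$ corresponding to a higher-dimensional cone $\sigma$. If $u_{\xi_k} = (-\alpha^{i_k,j_k}, e_{i_k})$ enumerate the primitive ray generators of $\sigma$ and $w_1,\ldots,w_n$ denote the dual local coordinates, then the pairing computations already used in \pref{lem:cchart} give $v_i = \prod_{k \in S_i} w_k$ with $S_i = \{k : i_k = i\}$. Therefore
\begin{align*}
dW_\bfY \,=\, \sum_{i=1}^r dv_i \,=\, \sum_{k=1}^n c_k \, dw_k, \qquad c_k \,=\, \prod_{k' \in S_{i_k} \setminus \{k\}} w_{k'}.
\end{align*}
Since the $dw_k$'s are linearly independent, $dW_\bfY|_p = 0$ amounts to $c_k(p) = 0$ for every $k$. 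Grouping by the index $i = i_k$, this condition says that for each $i$ at least two of the $w_k$'s with $k \in S_i$ must vanish at $p$, which is precisely the condition for the simultaneous vanishing of every $dv_i|_p$.

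The principal obstacle is the possibility of non-smooth cones in $\Sigma_\bfY$, since $\bfT$ need not be unimodular (\pref{rmk:triangulation}). My plan is to handle this by pulling back along a toric resolution $\pi \colon \tilde{\bfY} \to \bfY$ coming from a smooth refinement of $\Sigma_\bfY$: the previous computation then applies on the smooth variety $\tilde{\bfY}$ to $\pi^* W_\bfY = \sum_i \pi^* v_i$, and the statement on $\bfY$ follows since $\pi$ is an isomorphism over the smooth locus and the singular locus of $\bfY$ lies inside the same high-codimension strata that already contain both sides of the equation.
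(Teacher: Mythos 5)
The local computation you carry out is fine on the chart of a \emph{smooth, full-dimensional} cone (and the grouping of the ray coordinates by the index $i$ is exactly the right observation there), but the step by which you dispose of the nonsmooth cones is a genuine gap. Passing to a toric resolution $\pi \colon \tilde{\bfY} \to \bfY$ gives you the equality $\Crit(\pi^* W_\bfY) = \bigcap_i \Crit(\pi^* v_i)$ upstairs, and this transfers to $\bfY$ only over the locus where $\pi$ is an isomorphism, i.e.\ away from the orbits of the cones that were subdivided. Over those orbits the upstairs equality says nothing: knowing the critical set of $\pi^* W_\bfY$ along an exceptional fiber does not tell you whether the point underneath is critical for $W_\bfY$, nor whether all $dv_i$ vanish there (indeed you never say what $\Crit$ even means at a singular point of $\bfY$). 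Your closing justification --- that the singular locus sits inside the same codimension-two strata that already contain both sides --- is a non sequitur: two subsets of a common closed stratum need not coincide, and the whole content of the lemma lives precisely on such strata. To close the argument you would need a direct treatment of points lying only in nonsmooth charts, e.g.\ the computation in the singular affine chart itself, or the observation that the $\bC^*$-action with cocharacter $(0, e_i^\vee)$ pairs with $dW_\bfY$ to give $v_i$, forcing every $v_i$ to vanish at a critical point, and then an analysis of where the $dv_i$ vanish.

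By contrast, the paper never resolves and never needs smooth full-dimensional charts: it first shows that a critical point must lie on at least one divisor $D_{\xi_{i,j_i}}$ from each subfan $\Sigma_i$ (if the divisors of some $\Sigma_j$ are avoided, $v_j$ is a unit nearby and there is no critical point), then excludes the smooth codimension-$r$ charts $\bfY_{\vec{\alpha}} \cong \bT_\bC \times \bC^r$ of \pref{lem:cchart} by the explicit coordinates, and finally identifies what remains with $\bigcap_i \Crit(v_i)$ via the hypersurface computation of \pref{lem:B-critical}. A secondary, fixable imprecision in your write-up: on the chart of a smooth but non-maximal cone, $v_i$ equals $\prod_{k \in S_i} w_k$ only up to a unit monomial in the torus coordinates, some $S_i$ may be empty, and $dW_\bfY$ has components along the torus directions as well, so your displayed formula for $dW_\bfY$ is not literally valid there; this is harmless if you restrict to maximal cones, but doing so makes the unaddressed nonsmooth-cone case all the more central.
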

\begin{proof}
Since we have
$\bigcap^r_{i = 1} \Crit(v_i)
\subset
\Crit(W_\bfY)$,
it remains to show the opposite inclusion.
For
$y \in \Crit(W_\bfY)$
there are
$r$
rays
$\xi_{i, j_i} = \Cone(-\alpha^{i, j_i} \times e_i) \in \Sigma_\bfY$
one from each
$\Sigma_i$
such that
$y
\in
\bigcap^r_{i = 1} D_{\xi_{i, j_i}}$
where
$D_{\xi_{i, j_i}}
=
\overline{O(\xi_{i, j_i})}$
are the closures of the orbits corresponding to
$\xi_{i, j_i}$.
Indeed,
we have
\begin{align*}
\Crit(W_\bfY)
\subset
\bfY \setminus (\bT_\bC \times (\bC^*)^r)
=
\bigcup_{\xi \in \Sigma_\bfY(1)} D_\xi
=
\bigcup^r_{i=1} \bigcup_{\xi_i \in \Sigma_i (1)} D_{\xi_i}.
\end{align*}
If
$y
\notin
\bigcup_{\xi_j \in \Sigma_j (1) } D_{\xi_j}$
for some
$1 \leq j \leq r$,
then there is a neighborhood
$y \in U \subset \bfY$
such that
$U
\cap
\bigcup_{\xi_j \in \Sigma_j (1) } D_{\xi_j}
=
\emptyset$
and
$\Crit(W_\bfY |_U) = \emptyset$,
as
$v_j$
never vanishes on
$U$.
Hence
$y$
belongs to at least one
$D_{\xi_{i, j_i}}$
for each
$i$
and
we obtain
\begin{align*}
\Crit(W_\bfY)
\subset
\bigcup_{\vec{\alpha} \in \bfA} \bigcap^r_{i=1} D_{\xi_{i, j_i}}.
\end{align*}

From the proof of
\pref{lem:cprojection}
it follows that
under
$\bfq$
the intersection of
$r$
facets of
$\Delta_\bfY$
corresponding to
$D_{\xi_{1, j_1}}, \ldots, D_{\xi_{r, j_r}}$
maps to the closure
$\bar{C}_{\vec{\alpha}}$
of the connected component
$C_{\vec{\alpha}} \subset M_\bR \setminus \bigcup^r_{i=1} \Pi_{\Sigma_i}$
indexed by
$\vec{\alpha} = (\alpha^{1, j_1}, \ldots, \alpha^{r, j_r})$.
By
\pref{lem:cchart}
under
\pref{eq:cmoment}
$\bfY_{\vec{\alpha}}$
maps to the intersection of
$r$
facets of
$\Delta_\bfY$
corresponding to
$D_{\xi_{1, j_1}}, \ldots, D_{\xi_{r, j_r}}$.
As explained above,
on
$\bfY_{\vec{\alpha}}$
we are given the coordinates
\begin{align*}
(w^{-1}_1 v^{-\alpha^{1, j_1}_1}_1 \cdots v^{-\alpha^{r, j_r}_1}_r, 
\ldots,
w^{-1}_r v^{-\alpha^{1, j_1}_{d+1}}_1 \cdots v^{-\alpha^{r, j_r}_{d+1}}_r,
v_1, \ldots, v_r)
\end{align*}
which implies
$\Crit(W_\bfY) |_{\bfY_{\vec{\alpha}}} = \emptyset$.
Hence we obtain
\begin{align*}
\Crit(W_\bfY)
\subset
\bigcup_{\vec{\alpha} \in \bfA}
\left(
\bigcap^r_{i=1}
D_{\xi_{i, j_i}}
\setminus
\bfY_{\vec{\alpha}}
\right)
=
\bigcap^r_{i=1} \Crit(v_i)
\end{align*}
as we have
$\bigcup_{\alpha^{i, j_i} \in A_i}
(D_{\xi_{i, j_i}}
\setminus
(Y_{\alpha^i} \times \bC^{r-1}))
=
\Crit(v_i)$
by
\pref{lem:B-critical}.
\end{proof}

%Rmk
\begin{remark} \label{rmk:cB-fibration}
Since the map
\pref{eq:cmoment}
sends each $k$-th intersection of
$D_{\xi_{i, j_i}}, \alpha^{i, j_i} \in \bfA$
to a codimension
$k$
face of
$\Delta_\bfY$,
by
\pref{lem:cB-critical}
it sends
$\Crit(W_\bfY)$
to the union of codimension
$2r$
faces.
On the other hand,
by
\pref{lem:cprojection}
the map
$\bfq \colon M_\bR \times \bR^r \to M_\bR$
homeomorphically sends the union of intersections of
$r$
codimension
$2$
faces of
$\Delta_\bfY$
one from each
$\{ u_i \geq \varphi_i (m) \}$
to
$\bigcap^r_{i=1} \Pi_{\Sigma_i}$.
Hence the composition
\begin{align} \label{eq:cB-fibration}
\bfg
\colon
\Crit(W_\bfY)
\hookrightarrow
\bfY
\xrightarrow{\bfq \circ \pref{eq:cmoment}}
\bigcap^r_{i=1} \Pi_{\Sigma_i}
\end{align}
gives a stratified fibration.
The fiber over a point in the intersection of
$r$
top dimensional strata
one from each
$\Pi_{\Sigma_i}$
is a real $(d - r +1)$-torus
\cite[Prop 12.2.3(b)]{CLS}. 
\end{remark}

%%%%%%%%%%%%%%%%%%%%%%%%%%%%%%%%%%%%%%%%%%%%%%%%%%%%%%
%%%%%%%%%%%%%%%%%%%%%%%%%%%%%%%%%%%%%%%%%%%%%%%%%%%%%%
\section{Equivariantization and de-equivariantization}
In this section,
following
\cite[Section 4]{She22},
we review the last piece of our proof,
i.e.,
equivariantization
and
de-equivaiantization
of presentable dg categories with certain group actions.
The fact that,
they give mutually inverse equivalences of the categories
we will consider,
enables us to deduce our main result for nonunimodular case from unimodular case.

\subsection{Equivariantization}
For simplicity we keep working over
$\bC$.
Let
$G \subset (\bC^*)^N$
be any subgroup.
Assume that
$G$
acts on a presentable dg category
$\scrC$.
Namely,
there is a monoidal functor
$G \to \End(\scrC)$.
Then
$\scrC$
becomes a module over the monoidal category
$(\Qcoh(G), \star)$,
where
$\star$
is the convolution product induced by the multiplication on
$G$.
Let
$(\Qcoh(BG), \otimes)$
be the monoidal category of $G$-representations.
Taking $G$-invariants defines a functor
\begin{align*}
\scrC
\mapsto
\scrC^G
=
\Hom_{\Qcoh(G)}(\Mod(\bC), \scrC)
\end{align*}
from the category of
$(\Qcoh(G), \star)$-modules
to the category of
$(\Qcoh(BG), \otimes)$-modules,
called
\emph{$G$-equivariantization}.
Here,
the action on
$\Mod(\bC)$
is trivial.

\subsection{De-equivariantization}
We denote by
$G^\vee$
the character group
$\Hom(G, \bC^*)$
of
$G$.
Assume that
$G^\vee$
acts on a presentable dg category
$\scrD$.
This is the same as an action of the monoidal category of $G^\vee$-graded $\bC$-modules,
which in turn is equivalent to
$(\Qcoh(BG), \otimes)$.
Hence 
$\scrD$
becomes a module over
$(\Qcoh(BG), \otimes)$.
Taking $G$-coinvariants defines a functor
\begin{align*}
\scrD
\mapsto
\scrD_{BG}
=
\Mod(\bC) \otimes_{\Qcoh(BG)} \scrD
\end{align*}
from the category of
$(\Qcoh(BG), \otimes)$-modules
to the category of
$(\Qcoh(G), \star)$-modules,
called
\emph{$G$-de-equivariantization}.
Also here,
the action on
$\Mod(\bC)$
is trivial.

\subsection{Mutually inverse equivalences}
For a presentable dg category
$\scrD$
with a $G^\vee$-action,
taking its $G^\vee$-invariants
is equivalent to
taking its $G$-coinvariants.
Since
equivariantization
and
de-equivariantization
give mutually inverse equivalences,
one obtains

%Lem
\begin{lemma}[{\cite[Lemma 8]{She}}] \label{lem:Shende}
Let
$G \subset (\bC^*)^N$
be any subgroup
and
$G^\vee = \Hom(G, \bC^*)$ . 
Then
$G$-equivariantization
and
$G^\vee$-equivariantization
give mutually inverse equivalences between
the category of presentable dg categories with a $G$-action
and
that with a $G^\vee$-action.
\end{lemma}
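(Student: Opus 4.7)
The plan is to combine two ingredients. First, because $G \subset (\bC^*)^N$ is diagonalizable, Fourier--Mukai provides a symmetric monoidal equivalence
\[
(\Qcoh(G), \star) \;\simeq\; (\Qcoh(BG^\vee), \otimes),
\]
and the analogous one with $G$ and $G^\vee$ exchanged. Under this equivalence a $G$-action on a presentable dg category --- i.e.\ a $(\Qcoh(G),\star)$-module structure --- is the same datum as a $(\Qcoh(BG^\vee),\otimes)$-module structure, and symmetrically on the other side. Consequently the assertion of the lemma reduces to the claim that taking $G$-invariants
\[
\scrC \longmapsto \scrC^G = \Hom_{\Qcoh(G)}(\Mod(\bC),\scrC)
\]
and its left adjoint, de-equivariantization, are mutually inverse equivalences between presentable dg categories with a $(\Qcoh(G),\star)$-action and those with a $(\Qcoh(BG),\otimes)$-action.

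For this second claim I would appeal to descent along the faithfully flat atlas $\Spec \bC \to BG$. The trivial $G$-module $\Mod(\bC)$, viewed as a $\Qcoh(BG)$-module, satisfies $\End_{\Qcoh(BG)}(\Mod(\bC)) \simeq \Qcoh(G)$ via Beck--Chevalley base change for the Hopf groupoid $G \rightrightarrows \Spec \bC$. The Barr--Beck--Lurie theorem then applies to the functor $- \otimes_{\Qcoh(BG)} \Mod(\bC)$: conservativity and continuity are inherited from those of $\Spec \bC \to BG$, and the resulting monad is precisely convolution with $\Qcoh(G)$. This promotes the adjunction to an equivalence, and unwinding definitions identifies the two inverse functors with equivariantization and de-equivariantization in the sense of the paper.

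Combining the two steps yields the lemma: equivariantization turns a $G$-action into a $\Qcoh(BG)$-module, which via Fourier--Mukai becomes a $G^\vee$-action, and the whole construction is involutive by the symmetry $G \leftrightarrow G^\vee$. The main obstacle I expect is bookkeeping the monoidal structures through Fourier duality, in particular verifying that convolution $\star$ transports to tensor $\otimes$ compatibly with the formation of invariants on both sides. Once this compatibility is pinned down on the unit objects --- the regular representation of $G$ matching the structure sheaf of $BG^\vee$ --- naturality will promote it to the full categorical statement, and the lemma follows.
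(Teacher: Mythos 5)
Your route is essentially the paper's route (which in turn is Shende's): identify a $G^\vee$-action with a $(\Qcoh(BG),\otimes)$-module structure via the Fourier duality of diagonalizable groups, identify a $G$-action with a $(\Qcoh(G),\star)$-module structure, and then invoke that invariants and coinvariants along $\Spec\bC \to BG$ are mutually inverse. The paper does exactly this bookkeeping in Section 7 and treats the last statement as the cited input from \cite{She22}; you instead try to prove it by Barr--Beck--Lurie. That extra step is where your sketch is too quick: the assertion that conservativity (and preservation of colimits) of $\Mod(\bC)\otimes_{\Qcoh(BG)}(-)$ on \emph{arbitrary} presentable $\Qcoh(BG)$-module categories is ``inherited'' from faithful flatness of the atlas is precisely the non-formal content of the statement -- it is the $1$-affineness of $BG$, which is a theorem (Gaitsgory, for affine algebraic groups in characteristic zero, with a more elementary argument available for diagonalizable $G$ using that $\Rep(G)$ is generated by invertible characters), and it genuinely fails for other group stacks, so it cannot be deduced formally from descent along $\Spec\bC\to BG$. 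The identification $\End_{\Qcoh(BG)}(\Mod(\bC))\simeq(\Qcoh(G),\star)$ and the matching of the resulting monad with convolution are fine. So: same approach as the paper, with the caveat that the one step the paper outsources to the literature should in your write-up also be either cited ($1$-affineness / \cite{She22}) or proved using the specific structure of $\Rep(G)$ for diagonalizable $G$, rather than claimed as automatic.
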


\subsection{Quotient construction of toric varieties}
Let
$Y_\Sigma$
be the toric variety associated with a fan
$\Sigma \subset M^\vee_\bR$.
Assume that
$Y_\Sigma$
has no torus factors,
i.e.,
$M^\vee_\bR$
is spanned by primitive ray generators
$u_\rho$
for all
$\rho \in \Sigma(1)$
\cite[Proposition 3.3.9]{CLS}. 
Then
\cite[Theorem 4.1.3]{CLS}
gives the short exact sequence
\begin{align*}
0
\to
M
\to
\bZ^{\Sigma(1)}
=
\bigoplus_{\rho \in \Sigma(1)} \bZ D_\rho
\to
\Cl(Y_\Sigma)
\to
0,
\end{align*}
where
$m \subset M$
maps to
$\ddiv(\chi^m) = \Sigma_{\rho \in \Sigma(1)} \langle m, u_\rho \rangle D_\rho$
and
$\Cl(Y_\Sigma)$
is the divisor class group.
Applying
$\Hom(-, \bC^*)$,
one obtains another short exact sequence
\begin{align*}
1
\to
G = \Hom(\Cl(Y_\Sigma) , \bC^*)
\to
\Hom(\bZ^{\Sigma(1)} , \bC^*)
\cong
(\bC^*)^{\Sigma(1)}
\to
\Hom(M , \bC^*)
\cong
M^\vee_\bC
\to
1.
\end{align*}

%Lem
\begin{lemma}[{\cite[Lemma 5.1.1]{CLS}}] \label{lem:G}
The subgroup
$G \subset (\bC^*)^{\Sigma(1)}$
is isomorphic to a product of
an algebraic torus
and
a finite group.
More explicitly,
we have
\begin{align} \label{eq:G}
G
=
\{ (t_\rho) \in (\bC^*)^{\Sigma(1)} | \prod_{\rho \in \Sigma(1)} t^{\langle m, u_\rho \rangle}_\rho = 1 \ \text{for all} \ m \subset M \}.
\end{align}  
\end{lemma}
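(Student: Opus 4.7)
The plan is to deduce both assertions directly from the short exact sequence
$$0 \to M \to \bZ^{\Sigma(1)} \to \Cl(Y_\Sigma) \to 0$$
already displayed in the paper, together with basic properties of the functor $\Hom(-,\bC^*)$. The structural claim will follow from the structure theorem for finitely generated abelian groups applied to $\Cl(Y_\Sigma)$, while the explicit description of $G$ as a subgroup of $(\bC^*)^{\Sigma(1)}$ will come from an unwinding of the dual map.

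First I would handle the structural statement. Since $\bZ^{\Sigma(1)}$ is finitely generated, so is its quotient $\Cl(Y_\Sigma)$, and by the structure theorem we may write $\Cl(Y_\Sigma) \cong \bZ^k \oplus T$ for some $k \geq 0$ and some finite abelian group $T$. Applying $\Hom(-,\bC^*)$ turns direct sums into direct products, yielding
$$G = \Hom(\Cl(Y_\Sigma),\bC^*) \cong (\bC^*)^k \times \Hom(T,\bC^*),$$
where $(\bC^*)^k$ is an algebraic torus and $\Hom(T,\bC^*)$ is finite (in fact non-canonically isomorphic to $T$, since $\bC^*$ contains all roots of unity).

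For the explicit description, I would apply $\Hom(-,\bC^*)$ to the short exact sequence. Because $\bC^*$ is a divisible abelian group, it is an injective $\bZ$-module, so $\Hom(-,\bC^*)$ is exact and produces
$$1 \to G \to (\bC^*)^{\Sigma(1)} \to M^\vee_\bC \to 1,$$
which is the second short exact sequence already recorded. It remains to identify the middle arrow. The map $M \to \bZ^{\Sigma(1)}$ sends $m$ to $\ddiv(\chi^m) = \sum_{\rho} \langle m, u_\rho \rangle D_\rho$, so under the canonical isomorphism $\Hom(\bZ^{\Sigma(1)}, \bC^*) \cong (\bC^*)^{\Sigma(1)}$ the dual map sends a tuple $(t_\rho)$ to the character $m \mapsto \prod_\rho t_\rho^{\langle m, u_\rho \rangle}$ on $M$. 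Its kernel is precisely the set of tuples for which this character is trivial on every $m \in M$, giving the description \eqref{eq:G}.

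The only non-routine point is the exactness of $\Hom(-,\bC^*)$, which rests on divisibility of $\bC^*$; after that the argument is essentially diagram chasing. A mild subtlety worth noting is that $\Hom(T,\bC^*) \cong T$ is non-canonical, but this is immaterial since the statement only asks for $G$ to be isomorphic to a product of a torus and a finite group.
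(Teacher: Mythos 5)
Your proposal is correct and is essentially the argument of the cited source: the paper gives no proof of its own for this lemma, quoting it from \cite[Lemma 5.1.1]{CLS}, whose proof is exactly your route — dualize the divisor sequence with $\Hom(-,\bC^*)$, use divisibility of $\bC^*$ for exactness, the structure theorem for the finitely generated group $\Cl(Y_\Sigma)$ for the torus-times-finite statement, and the explicit dual of $m \mapsto \ddiv(\chi^m)$ for the description \eqref{eq:G}. The identification of the kernel and the structural claim are both carried out correctly, so there is nothing to add.
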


Let
$S = \bC [y_\rho \ | \ \rho \in \Sigma(1)]$
be the total coordinate ring of
$Y_\Sigma$.
Then we have
$\bC^{\Sigma(1)} = \Spec S$
and
the irrelevant ideal is defined as
\begin{align*}
B(\Sigma)
=
\langle y^{\hat{\sigma}} \ | \ \sigma \in \Sigma \rangle
=
\langle y^{\hat{\sigma}} \ | \ \sigma \in \Sigma_{\max} \rangle
\subset S, \
y^{\hat{\sigma}} = \prod_{\rho \notin \sigma(1)} y_\rho.
\end{align*}
We denote by
$Z(\Sigma)$
the zero locus
$V(B(\Sigma)) \subset \bC^{\Sigma(1)}$
of
$B(\Sigma)$.
Via inclusion
$G \subset (\bC^*)^{\Sigma(1)}$
the canonical action of
$(\bC^*)^{\Sigma(1)}$
on
$\bC^{\Sigma(1)}$
induces a $G$-action on
$\bC^{\Sigma(1)} \setminus Z(\Sigma)$.

%Lem
\begin{lemma}[{\cite[Proposition 5.1.9, Theorem 5.1.11]{CLS}}] \label{lem:quotient}
There is a toric morphism
\begin{align*}
\pi
\colon
\bC^{\Sigma(1)} \setminus Z(\Sigma)
\to
Y_\Sigma
\end{align*}
which is constant on $G$-orbit
and
gives an isomorphism
\begin{align*}
Y_\Sigma
\cong 
\bC^{\Sigma(1)} \setminus Z(\Sigma) / G.
\end{align*}
Namely,
$\pi$
is an almost geometric quotient for the $G$-action.
It is a geometric quotient
if and only if
$\Sigma$
is simplicial.
\end{lemma}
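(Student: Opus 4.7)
The plan is to construct $\pi$ locally on the toric affine charts of $Y_\Sigma$ and then verify the quotient structure affine-locally. For each cone $\sigma \in \Sigma$ consider the open subset
\begin{align*}
U^\Sigma_\sigma
=
\{ (t_\rho) \in \bC^{\Sigma(1)} \ | \ t_\rho \neq 0 \ \text{for} \ \rho \notin \sigma(1) \}
=
\Spec \bC[y_\rho, y^{-1}_{\rho^\prime} \ | \ \rho \in \Sigma(1), \rho^\prime \notin \sigma(1)].
\end{align*}
These form an open cover of $\bC^{\Sigma(1)} \setminus Z(\Sigma)$ indexed by cones of $\Sigma$. I would define $\pi_\sigma \colon U^\Sigma_\sigma \to U_\sigma = \Spec \bC[\sigma^\vee \cap M]$ via the ring homomorphism $\chi^m \mapsto \prod_{\rho \in \Sigma(1)} y^{\langle m, u_\rho \rangle}_\rho$; for $m \in \sigma^\vee \cap M$ the exponents on $y_\rho$, $\rho \in \sigma(1)$, are nonnegative while the remaining factors are units on $U^\Sigma_\sigma$, so the image lies in the coordinate ring. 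The first step is to check that these local morphisms are compatible: for $\tau \preceq \sigma$ one has $U^\Sigma_\tau \subset U^\Sigma_\sigma$ and $U_\tau \subset U_\sigma$, and the formulas for $\pi_\sigma$ and $\pi_\tau$ restrict to the same morphism on overlaps, so they glue to a global $\pi$.

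That $\pi$ is constant on $G$-orbits is then immediate from \pref{eq:G}: by the very definition of $G$, it acts trivially on every monomial $\prod_\rho y^{\langle m, u_\rho \rangle}_\rho$ with $m \in M$, and such monomials generate the image of $\pi^*$.

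To establish the isomorphism $Y_\Sigma \cong (\bC^{\Sigma(1)} \setminus Z(\Sigma))/G$ as an almost geometric (categorical) quotient, I would work affine-locally on each $G$-stable open set $U^\Sigma_\sigma$. The key computation is to identify the $G$-invariants inside $\bC[y_\rho, y^{-1}_{\rho^\prime}]$. By \pref{eq:G}, a Laurent monomial $\prod_\rho y^{a_\rho}_\rho$ is $G$-invariant exactly when its exponent vector $(a_\rho)$ lies in the image of $M \to \bZ^{\Sigma(1)}$, $m \mapsto (\langle m, u_\rho \rangle)$; allowing $a_\rho < 0$ only for $\rho \notin \sigma(1)$ carves out precisely $\sigma^\vee \cap M$. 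This realizes $\bC[\sigma^\vee \cap M]$ as the ring of $G$-invariants and identifies $U_\sigma$ with the categorical quotient of $U^\Sigma_\sigma$ by $G$. The local identifications glue to a global quotient via the compatibility of $\pi_\sigma$ with the toric gluing established above.

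Finally, the dichotomy between geometric and almost geometric quotients will be settled by analyzing $G$-orbits chart by chart. If every $\sigma \in \Sigma$ is simplicial, then the primitive generators $u_\rho$, $\rho \in \sigma(1)$, are linearly independent, so the homomorphism from $G$ into the torus acting on the coordinates indexed by $\sigma(1)$ is injective modulo a finite subgroup, and every $G$-orbit in $U^\Sigma_\sigma$ is closed. If some cone of $\Sigma$ is non-simplicial, then a linear dependence among the corresponding $u_\rho$ produces a one-parameter subgroup of $G$ whose orbits on the associated chart fail to be closed; one thus only recovers the categorical quotient. The main subtle point is this last step: the invariant-ring computation is essentially formal, whereas tracking the failure of orbit-closedness in the non-simplicial regime requires an explicit case analysis on the face structure of $\Sigma$ and the resulting stabilizers.
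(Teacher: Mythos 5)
The paper itself does not prove this lemma: it is imported verbatim from Cox--Little--Schenck (Proposition 5.1.9 and Theorem 5.1.11 there), so your proposal should be measured against that standard argument, which it largely reproduces. The chart-by-chart construction of $\pi$, the gluing over faces, the $G$-invariance, and the identification $\bC[y_\rho, y^{-1}_{\rho^\prime}]^G = \bC[\sigma^\vee \cap M]$ (valid because the action is diagonal, $G$ is reductive by \pref{lem:G}, and $M \to \bZ^{\Sigma(1)} \to \Cl(Y_\Sigma)$ is exact under the standing no-torus-factor assumption) are all correct and are exactly the CLS argument. Two caveats: ``almost geometric'' is stronger than ``good/categorical'' --- you still need to observe that $\pi$ is geometric over a dense open set, e.g.\ over the torus $T_N \subset Y_\Sigma$, where the fiber is a single coset of $G$ in $(\bC^*)^{\Sigma(1)}$.

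The genuine gap is in the ``if'' direction of the dichotomy. Your claim that for simplicial $\sigma$ the projection $G \to (\bC^*)^{\sigma(1)}$ is injective modulo a finite subgroup is false for non-maximal cones (for $\bP^1 \times \bP^1$ and $\sigma = \Cone(e_1)$ the kernel is a full $\bC^*$), and even where it holds it does not by itself force closed orbits: the diagonal $\bC^*$ in $(\bC^*)^2$ injects into the coordinate torus yet has non-closed orbits on $\bC^2$; closedness in the Cox construction crucially uses the removal of $Z(\Sigma)$ together with simpliciality, and the standard proof instead shows (via the orbit--cone correspondence and the linear independence of $u_\rho$, $\rho \in \sigma(1)$) that each fiber of $\pi$ over $U_\sigma$ is a single closed $G$-orbit. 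Your ``only if'' sketch is the right idea but also needs completion: from an integral relation $\sum_{\rho \in \sigma(1)} a_\rho u_\rho = 0$ (necessarily with coefficients of both signs, by pointedness of $\sigma$) one gets a one-parameter subgroup $\lambda(t) \subset G$, and one must then exhibit a specific point --- e.g.\ with zeros exactly on $\{ \rho : a_\rho < 0 \}$ and $1$'s elsewhere, which lies in $U^\Sigma_\sigma$ --- whose limit under $\lambda(t)$ as $t \to 0$ exists in the chart but has a strictly larger vanishing set, hence lies outside the orbit; this produces the non-closed orbit contradicting a geometric quotient. As written, the final step is a heuristic rather than a proof, so either supply these arguments or simply cite CLS as the paper does.
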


Now,
we drop the assumption that
$Y_\Sigma$
has no torus factors.
Then the primitive ray generators
$u_\rho, \rho \in \Sigma(1)$
span a proper subspace
$(M^\vee_\bR)^\prime \subsetneq M^\vee_\bR$.
Pick the complement
$(M^\vee)^{\prime \prime}$
of
$(M^{\vee})^\prime = (M^{\vee}_\bR)^\prime \cap M^\vee$
so that
$M^\vee = (M^{\vee})^\prime \oplus (M^\vee)^{\prime \prime}$.
The cones of
$\Sigma$
defines a fan
$\Sigma^\prime \subset (M^\vee_\bR)^\prime$.
Note that
we have
$\Sigma^\prime(1) = \Sigma(1)$
and
$B(\Sigma^\prime) = B(\Sigma) \subset S$.
We denote by
$G^\prime$
the subgroup
$\Hom(\Cl(Y_{\Sigma^\prime}), \bC^*)
\subset
(\bC^*)^{\Sigma^\prime(1)}$.
Since
$Y_{\Sigma^\prime}$
has no torus factors,
from the above argument it follows
\begin{align*}
Y_\Sigma
\cong
Y_{\Sigma^\prime} \times (M^\vee_\bC)^{\prime \prime}
\cong
(\bC^{\Sigma^\prime(1)} \setminus Z(\Sigma^\prime) / G^\prime)
\times
(\bC^*)^r.
%\cong
%((\bC^{\Sigma^\prime(1)} \times (\bC^*)^r)
%\setminus
%(Z(\Sigma^\prime) \times (\bC^*)^r))
%//
%G^\prime
\end{align*}
%where
%$G^\prime$
%acts trivially 
%on
%$(\bC^*)^r$.
%Regarding
%$(\bC^*)^r$
%as
%$\bC^r \setminus V(y_1, \ldots, y_r)$,
%one obtains
%\begin{align*}
%Y_\Sigma
%\cong
%(\bC^{\Sigma^\prime(1) + r}
%\setminus
%((Z(\Sigma^\prime) \times \bC^r)
%\cup
%(\bC^{\Sigma^\prime(1)} \times V(y_1, \ldots, y_r)))
%//
%G^\prime.
%\end{align*}

\subsection{Model case}
Let
$Y_\Sigma$
be a simplicial affine toric variety associated with a fan
$\Sigma \subset M^\vee_\bR$.
Then
$Y_\Sigma$
has no torus factors.
Since
$\Sigma$
contains only one top dimensional cone,
by definition of
$B(\Sigma)$
and
\pref{lem:quotient}
we obtain
$Z(\Sigma) = \emptyset$
and
\begin{align*}
Y_\Sigma \cong \bA^{d+r+1} / G, \
d+r+1 = \# \Sigma(1).
\end{align*}
If also the rank of
$M$
is
$d+r+1$,
then the inclusion
$M \to \bZ^{\Sigma(1)}$
induces a finite cover
\begin{align*}
h^\vee
\colon
T^* (\bR^{d+r+1} / M)
\to
T^* (\bR^{d+r+1} / \bZ^{d+r+1}).
\end{align*}
Consider the tailored pants
$\tilde{P}_{d+r}$
in the target induced by the pants
\begin{align*}
\{ 1+ \Sigma^{d+r}_{i=1} x_{i}x_{d+r+1} = 0 \}
=
\{ x^{-1}_{d+r+1} + \Sigma^{d+r}_{i=1} x_{i} = 0 \}
\subset
(\bC^*)^{d+r+1}.
\end{align*}
Let
$\tilde{P}_{d+r-1}
\subset
\tilde{P}_{d+r}$
be its closed subset induced by setting
$x_{d+r+1} = 1$.
From
$G^\vee \cong \bZ^{d+r+1} / M$
it follows
\begin{align*}
\tilde{P}_{d+r-1}
\cong
(h^\vee)^{-1}(\tilde{P}_{d+r-1}) / G^\vee.
\end{align*}
Hence we obtain
\begin{align*}
\Fuk(\tilde{P}_{d+r-1})
\simeq
\Fuk((h^\vee)^{-1}(\tilde{P}_{d+r-1}))^{G^\vee}
\simeq
\Fuk((h^\vee)^{-1}(\tilde{P}_{d+r-1}))_{BG}.
\end{align*} 
Taking $G$-invariants,
we obtain
\begin{align*}
\Fuk(\tilde{P}_{d+r-1})^G
\simeq
(\Fuk((h^\vee)^{-1}(\tilde{P}_{d+r-1}))_{BG})^G
\simeq
\Fuk((h^\vee)^{-1}(\tilde{P}_{d+r-1})).
\end{align*} 
Via HMS for pairs of pants $\bZ_2$-folding of the left most term is equivalent to
\begin{align*}
\MF^\infty(\bA^{d+r+1}, y_1 \cdots y_{d+r+1})^G
\simeq
\MF^\infty([\bA^{d+r+1} / G], y_1 \cdots y_{d+r+1}).
\end{align*}
Note that
by
\pref{lem:G}
the product
$y_1 \cdots y_{d+r+1}$
is invariant under the $G$-action.
Thus we obtain
\begin{align*}
\MF^\infty([\bA^{d+r+1} / G], y_1 \cdots y_{d+r+1})
\simeq
\Fuk((h^\vee)^{-1}(\tilde{P}_{d+r-1}))_{\bZ_2}.
\end{align*}

%%%%%%%%%%%%%%%%%%%%%%%%%%%%%%%%%%%%%%%%%%%%%%%%%%%%%%
%%%%%%%%%%%%%%%%%%%%%%%%%%%%%%%%%%%%%%%%%%%%%%%%%%%%%%
\section{Intersections and categories}
As explained in Remark
\pref{rmk:cA-fibration},
\pref{rmk:cB-fibration},
the critical loci
$\Crit(W_\bfX), \Crit(W_\bfY)$
dually project onto
$\bigcap^r_{i = 1} \Pi_{\Sigma_i}$
under
$\bff, \bfg$. 
We introduce a topology on
$\bigcap^r_{i = 1} \Pi_{\Sigma_i}$
induced by that on
$\Pi_{\Sigma_i}$
defined as in Definition
\pref{dfn:topology}.
Then
$\bigcap^r_{i = 1} \Pi_{\Sigma_i}$
admits an open cover by the intersections
$\bigcap^r_{i = 1} U_{S^{(k_i)}_i}$
for $k_i$-strata
$S^{(k_i)}_i$
of
$\Pi_{\Sigma_i}$.
In this section,
we establish equivalences of corresponding categories over
$\bigcap^r_{i = 1} U_{S^{(k_i)}_i}$
and
glue them to yield HMS for complete intersections of very affine hypersurfaces.
 
\subsection{Covering complete intersections}
By
\pref{lem:cA-critical}
we have
\begin{align*}
\Crit(W_\bfX)
=
\bigcap^r_{i=1} \{ u_i = 0 \}
\cap
\bigcap^r_{i=1} \{ W_i = 0 \}
\subset
(\bC^*)^{d+1} \times \bC^r.
\end{align*}
Since
$H_1, \ldots, H_r$
are in sufficiently general position,
we may assume that
$\Crit(W_\bfX)$
is the union
\begin{align*}
\bigcup_{S^{(k_1)}_1, \ldots, S^{(k_r)}_r}
\left(
\bigcap^r_{i=1} \{ u_i = 0 \}
\cap
\bigcap^r_{i=1} \left( H_{S^{(k_i)}_i} \times \bC^r \right)
\right).
\end{align*}
Under
$\bff$
it maps to
$\bigcup_{S^{(k_1)}_1, \ldots, S^{(k_r)}_r} \bigcap^r_{i=1} S^{(k_i)}_i$.

We denote by
$\sigma(S^{(k_i)}_i)$
the cones of the subfans
$\Sigma_i \subset \Sigma_\bfY \subset \bR^{d+r+1}$
corresponding to
$S^{(k_i)}_i$
and
by
$\xi_1(S^{(k_i)}_i), \ldots, \xi_{d+2-k_i}(S^{(k_i)}_i) \in \Sigma_i(1)$
the rays spanning
$\sigma(S^{(k_i)}_i)$.
Since
$\Pi_{\Sigma_1}, \ldots, \Pi_{\Sigma_r}$
intersect transversely,
the rays
\begin{align} \label{eq:rays}
\xi_1(S^{(k_1)}_1), \ldots, \xi_{d+2-k_1}(S^{(k_1)}_1),
\ldots,
\xi_1(S^{(k_r)}_r), \ldots, \xi_{d+2-k_r}(S^{(k_r)}_r)
\in
\Sigma_\bfY(1)
\end{align}
span a $(\Sigma^r_{i=1} d+2-k_i)$-dimensional cone
$\sigma(S^{(k_1)}_1, \ldots, S^{(k_r)}_r) \in \Sigma_\bfY$.
Note that
we have
\begin{align*}
\sum^r_{i=1} d + 2 - k_i
\leq
d + r + 1.
\end{align*}
Under
$\bfg$
the union of intersections of
\begin{align*}
U(S^{(k_1)}_1, \ldots, S^{(k_r)}_r)
=
\Spec \bC[\sigma^\vee(S^{(k_1)}_1, \ldots, S^{(k_r)}_r) \cap (M \times \bZ^r)]
\end{align*}
and
$\Crit(W_\bfY)$
projects onto
$\bigcup_{S^{(k_1)}_1, \ldots, S^{(k_r)}_r} \bigcap^r_{i=1} S^{(k_i)}_i$.

\subsection{Local $A$-side categories for complete intersections}
Let
$\vec{\beta}$
be a $\Sigma^r_{i=1} l_i$-tuple of vertices
\begin{align*}
\beta^{1, 1}, \ldots, \beta^{1, l_1} \in A_1,
\ldots,
\beta^{r, 1}, \ldots, \beta^{r, l_r} \in A_r
\end{align*}
which together with the origin define rays
\begin{align*}
\xi_{d + 3 - k_1}(S^{(k_1)}_1), \ldots, \xi_{d + 2 - k_1 + l_1}(S^{(k_1)}_1)
\in
\Sigma_1(1),
\ldots,
\xi_{d + 3 - k_r}(S^{(k_r)}_r), \ldots, \xi_{d + 2 - k_r + l_r}(S^{(k_r)}_r)
\in
\Sigma_r(1)
\end{align*}
spanning a top dimensional cone
$\sigma(S^{(k_1)}_1, \ldots, S^{(k_r)}_r, \vec{\beta})
\in
\Sigma_{\bfY, \max}$
together with the rays
\pref{eq:rays}.
We denote by
$\Delta^\vee(S^{(k_1)}_1, \ldots, S^{(k_r)}_r, \vec{\beta})$
the $(d+r+1)$-simplex
\begin{align*}
\Conv(0, (-\alpha^{1,1}, e_1), \ldots, (-\alpha^{1, d + 2 - k_1 + l_1}, e_1), \ldots, (-\alpha^{r, 1}, e_r), \ldots, (-\alpha^{r, d + 2 - k_r + l_r}, e_r))
\end{align*}
with
$\xi_{j}(S^{(k_i)}_i)
=
\Cone(-\alpha^{i, j}, e_i)$
and
$\alpha^{i, d + 2 - k_i + j_i} = \beta^{i, j_i}$.

Recall that
the tailored
$\Delta^\vee(S^{(k_1)}_1, \ldots, S^{(k_r)}_r, \vec{\beta})$-pants
$\tilde{P}(S^{(k_1)}_1, \ldots, S^{(k_r)}_r, \vec{\beta})$
is the inverse image of
$\tilde{P}_{d+r}$
under the map
\begin{align*}
h^\vee(S^{(k_1)}_1, \ldots, S^{(k_r)}_r, \vec{\beta}) \colon (\bC^*)^{d+r+1} \to (\bC^*)^{d+r+1},
\end{align*}
whose restriction gives a finite cover of
$\tilde{P}_{d+r}$.
Here,
$h^\vee(S^{(k_1)}_1, \ldots, S^{(k_r)}_r, \vec{\beta})$
is induced by a homomorphism
\begin{align*}
h(S^{(k_1)}_1, \ldots, S^{(k_r)}_r, \vec{\beta}) \colon \bZ^{d+r+1} \to \bZ^{d+r+1}
\end{align*}
of lattices
which sends
$\Delta^\vee_{d+r+1}$
to
$\Delta^\vee(S^{(k_1)}_1, \ldots, S^{(k_r)}_r, \vec{\beta})$.
Now,
assume that
$\Delta^\vee(S^{(k_1)}_1, \ldots, S^{(k_r)}_r, \vec{\beta})$
is unimodular.
Then
$h^\vee(S^{(k_1)}_1, \ldots, S^{(k_r)}_r, \vec{\beta})$
becomes an isomorphism
and
the monomials
\begin{align*}
x^{-\alpha^{1,1}, e_1}, \ldots, x^{-\alpha^{1, d + 2 - k_1 + l_1}, e_1},
\ldots, 
x^{-\alpha^{r,1}, e_r}, \ldots, x^{-\alpha^{r, d + 2 - k_r + l_r}, e_r}
\end{align*}
give coordinates on the target
$(\bC^*)^{d+r+1}$.

Consider the product
\begin{align*}
\tilde{H}(S^{(k_1)}_1, \vec{\beta})
\times
\cdots
\times
\tilde{H}(S^{(k_r)}_r, \vec{\beta})
\subset
(\bC^*)^{d + 2 - k_1 + l_1}
\times
\cdots
\times
(\bC^*)^{d + 2 - k_r + l_r}
=
(\bC^*)^{d + r + 1}
\end{align*}
of very affine hypersurfaces
\begin{align*}
\tilde{H}(S^{(k_i)}_i, \vec{\beta})
=
\{ \sum^{d + 2 - k_i + l_i}_{j_i = 1} c_{\alpha^{i,j_i}} t^{-\rho_i(\alpha^{i,j_i})} x^{-\alpha^{i,j_i}, e_i} = 0 \}
\subset
(\bC^*)^{d + 2 - k_i + l_i}
\subset
(\bC^*)^{d + r + 1}.
\end{align*}
As explained in Section
$3.1$,
without loss of generality we will assume
$c_{\alpha^{i,j_i}} = 1$
for all
$\alpha^{i,j_i}$.
We denote by
$H(S^{(k_i)}_i, \vec{\beta})$
the quotients of
$\tilde{H}(S^{(k_i)}_i, \vec{\beta})$
by the $\bC^*_{u_i}$-action
\begin{align*}
(u_i, x^{-\alpha^{i,1}, e_i}, \ldots, x^{-\alpha^{i, d + 2 - k_i + l_i}, e_i})
\mapsto
(u_i x^{-\alpha^{i,1}, e_i}, \ldots, u_i x^{-\alpha^{i, d + 2 - k_i + l_i}, e_i}),
\end{align*}
which are isomorphic to $(d-k_i+l_i)$-dimensional tailored pants up to $\frakS_{(d+2-k_i+l_i)}$-equivariant Hamiltonian isotopy of symplectic submanifolds of
$(\bC^*)^{d + 1 - k_i + l_i}$.
Locally,
$\bfH$
is given by a product of such 
$r$
lower dimensional tailored pants.

%Lem
\begin{lemma} \label{lem:cA-description}
Assume that
$\Delta^\vee(S^{(k_1)}_1, \ldots, S^{(k_r)}_r, \vec{\beta})$ 
is unimodular.
Then the product
\begin{align*}
H(S^{(k_1)}_1, \vec{\beta})
\times
\cdots
\times
H(S^{(k_r)}_r, \vec{\beta})
\subset
(\bC^*)^{d + 1 - k_1 + l_1}
\times
\cdots
\times
(\bC^*)^{d + 1 - k_r + l_r}
=
(\bC^*)^{d + 1}
\end{align*}
is isomorphic to the intersection
\begin{align*}
\bigcap^r_{i=1}
\{ u_i = 0 \}
\cap
\bigcap^r_{i=1}
\{ \sum^{d + 2 - k_i + l_i}_{j_i = 1} x^{-\alpha^{i, j_i}} = 0 \}
\subset
(\bC^*)^{d+1} \times \bC^r.
\end{align*}
\end{lemma}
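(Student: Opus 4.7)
The plan is to produce an explicit isomorphism of algebraic tori $\Phi \colon (\bC^*)^{d+1} \xrightarrow{\sim} \prod_{i=1}^r (\bC^*)^{d+1-k_i+l_i}$ coming from the unimodularity hypothesis, and then match the two sides of the claimed isomorphism under $\Phi$ by a direct computation with Laurent monomials. The dimension count $\sum_i (d+1-k_i+l_i) = (d+r+1) - r = d+1$ ensures such an isomorphism is at least numerically possible; unimodularity will pin down a canonical one.

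First I would extract the combinatorial input. The hypothesis says the $d+r+1$ lattice vectors $\{(-\alpha^{i,j}, e_i)\}$ form a $\bZ$-basis of $M^\vee \oplus \bZ^r$. Performing the obvious unimodular basis change that replaces $(-\alpha^{i,j}, e_i)$ by $(\alpha^{i,1}-\alpha^{i,j}, 0)$ for each $j \geq 2$, while keeping the $r$ vectors $(-\alpha^{i,1}, e_i)$, one still has a $\bZ$-basis. Projecting to $M^\vee$ along the second factor, the $d+1$ differences $\{\alpha^{i,1}-\alpha^{i,j}\}_{i=1,\ldots,r;\, j=2,\ldots, d+2-k_i+l_i}$ then form a $\bZ$-basis of $M^\vee$. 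Using this basis I would define $\Phi$ via the monomials $w^{i,j} := x^{\alpha^{i,1}-\alpha^{i,j}}$, grouped so that the $i$-th group of $d+1-k_i+l_i$ monomials gives coordinates on the $i$-th factor of the product torus. By construction $\Phi$ is an isomorphism of algebraic tori.

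Next I would translate the defining equations. Factoring out the nowhere-vanishing monomial $x^{-\alpha^{i,1}}$ gives $W_i(x) = x^{-\alpha^{i,1}}\bigl(1 + \sum_{j \geq 2} w^{i,j}\bigr)$, so under $\Phi$ the hypersurface $H_i = \{W_i = 0\}$ corresponds to $\prod_{i' \neq i}(\bC^*)^{d+1-k_{i'}+l_{i'}} \times \{1 + \sum_{j\geq 2} w^{i,j} = 0\}$, whose nontrivial $i$-th factor is exactly the tailored pants that one gets from $\tilde{H}(S^{(k_i)}_i, \vec{\beta})$ after quotienting by the $\bC^*_{u_i}$-scaling (the coordinates on the quotient $(\bC^*)^{d+1-k_i+l_i}$ are precisely the ratios $x^{-\alpha^{i,j},e_i}/x^{-\alpha^{i,1},e_i} = x^{\alpha^{i,1}-\alpha^{i,j}} = w^{i,j}$). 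Intersecting over $i$ yields $\Phi(\bigcap_i H_i) = \prod_i H(S^{(k_i)}_i, \vec{\beta})$. On the right-hand side of the lemma, intersecting with $\bigcap_i \{u_i = 0\}$ simply collapses the $\bC^r$-factor to the origin, canonically identifying the intersection with $\bigcap_i H_i \subset (\bC^*)^{d+1}$ via the projection, and the two descriptions then agree via $\Phi$.

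There is no serious analytic obstacle: the proof is a change-of-coordinates argument made rigid by unimodularity. The only delicate point is the first step—verifying that the differences $\{\alpha^{i,1}-\alpha^{i,j}\}$ form a \emph{$\bZ$-basis} of $M^\vee$ rather than merely a finite-index sublattice. This is precisely where unimodularity is essential: without it one obtains only a finite cover of tori, not an isomorphism, which is exactly the reason why the nonunimodular case must be handled separately via the equivariantization/de-equivariantization machinery developed in the preceding section.
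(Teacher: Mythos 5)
Your proposal is correct and is essentially the computation the paper carries out: unimodularity of $\Delta^\vee(S^{(k_1)}_1, \ldots, S^{(k_r)}_r, \vec{\beta})$ turns the monomials attached to its vertices into torus coordinates, and a monomial change of variables identifies $\bigcap_i\{u_i=0\}\cap\bigcap_i\{W_i=0\}$ with the product of the lower-dimensional pants $H(S^{(k_i)}_i,\vec{\beta})$. The only difference is packaging: the paper realizes the product as the quotient of $\times_i\tilde{H}(S^{(k_i)}_i,\vec{\beta})$ by the free $(\bC^*)^r_{u}$-action and then takes the slice $\{u_i=\epsilon\}$, whereas you write down the section explicitly via the ratio coordinates $w^{i,j}=x^{\alpha^{i,1}-\alpha^{i,j}}$, verifying (correctly) that these exponents form a $\bZ$-basis of $M^\vee$ — the same argument in slightly more explicit form.
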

\begin{proof}
The quotient of
$\times^r_{i=1} \tilde{H}(S^{(k_i)}_i, \vec{\beta})$
by the $(\bC^*)^r_{u_1, \ldots, u_r}$-action
\begin{align*}
&(u_1, \ldots, u_r, x^{-\alpha^{1,1}, e_1}, \ldots, x^{-\alpha^{1, d + 2 - k_1 + l_1}, e_1},
\ldots, 
x^{-\alpha^{r,1}, e_r}, \ldots, x^{-\alpha^{r, d + 2 - k_r + l_r}, e_r}) \\
\mapsto
&(u_1 x^{-\alpha^{1,1}, e_1}, \ldots, u_1 x^{-\alpha^{1, d + 2 - k_1 + l_1}, e_1},
\ldots, 
u_r x^{-\alpha^{r,1}, e_r}, \ldots, u_r x^{-\alpha^{r, d + 2 - k_r + l_r}, e_r})
\end{align*}
is
$\times^r_{i=1} H(S^{(k_i)}_i, \vec{\beta})$.
Since we have
\begin{align*}
\times^r_{i=1} \tilde{H}(S^{(k_i)}_i, \vec{\beta})
=
\bigcap^r_{i=1}
\left(
\{ \sum^{d + 2 - k_i + l_i}_{j_i = 1} t^{-\rho_i(\alpha^{i,j_i})} x^{-\alpha^{i,j_i}, e_i} = 0 \}
\times
(\bC^*)^{r - 1 + k_i - l_i}
\right),
\end{align*}
the quotient is isomorphic to
\begin{align*}
\bigcap^r_{i=1}
\{ u_i = \epsilon \}
\cap
\bigcap^r_{i=1}
\{ \sum^{d + 2 - k_i + l_i}_{j_i = 1} t^{-\rho_i(\alpha^{i,j_i})} x^{-\alpha^{i,j_i}, e_i} = 0 \}
\subset
(\bC^*)^{d+r+1},
\end{align*}
which in turn is isomorphic to
\begin{align*}
\bigcap^r_{i=1}
\{ u_i = 0 \}
\cap
\bigcap^r_{i=1}
\{ \sum^{d + 2 - k_i + l_i}_{j_i = 1} t^{-\rho_i(\alpha^{i,j_i})} x^{-\alpha^{i, j_i}} = 0 \}
\subset
(\bC^*)^{d+1} \times \bC^r.
\end{align*}
\end{proof}

%Cor
\begin{corollary} \label{cor:cA-description}
Assume that
$\Delta^\vee(S^{(k_1)}_1, \ldots, S^{(k_r)}_r, \vec{\beta})$ 
is unimodular.
Then there is an equivalence
\begin{align*}
\Fuk(\bigcap^r_{i=1}
\{ u_i = 0 \}
\cap
\bigcap^r_{i=1}
\{ \sum^{d + 2 - k_i + l_i}_{j_i = 1} x^{-\alpha^{i, j_i}} = 0 \})
\simeq
\bigotimes^r_{i=1}
\Fuk(H(S^{(k_i)}_i, \vec{\beta})).
\end{align*}
\end{corollary}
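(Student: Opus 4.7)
The plan is to reduce the statement to a direct application of Künneth for wrapped Fukaya categories, once we have a product description of the left-hand side as symplectic manifolds. The previous lemma \pref{lem:cA-description} already produces an isomorphism of complex manifolds between the complete intersection on the left and the product $\prod_{i=1}^r H(S_i^{(k_i)}, \vec{\beta})$; I would begin by upgrading this to a symplectic, in fact Liouville, isomorphism.

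Concretely, first I would observe that the quotient construction from the proof of \pref{lem:cA-description} realizes the complete intersection as the $(\bC^*)^r_{u_1,\ldots,u_r}$-quotient of the product $\prod_{i=1}^r \tilde H(S_i^{(k_i)}, \vec{\beta})$. Under the unimodularity assumption on $\Delta^\vee(S^{(k_1)}_1, \ldots, S^{(k_r)}_r, \vec{\beta})$, the monomial coordinates give an algebraic isomorphism of ambient tori $(\bC^*)^{d+r+1}\cong (\bC^*)^{d+r+1}$ which transports the standard log-coordinates to the log-coordinates adapted to each factor. After restricting and applying the Hamiltonian isotopies from \pref{lem:tailored} in each factor separately, the symplectic form on the intersection, inherited from $\bT_\bC^\vee\times \bC^r$, matches the product of the Liouville forms on each $H(S_i^{(k_i)}, \vec{\beta})$, up to a Liouville homotopy of finite type. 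Invariance of the wrapped Fukaya category under such homotopies (the version of \pref{lem:A-section_unique} we already used) then guarantees that the two Fukaya categories agree canonically.

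Second, I would apply the Künneth formula for wrapped Fukaya categories of Ganatra--Pardon--Shende \cite{GPS2}, which yields
\begin{align*}
\Fuk\!\left(\prod_{i=1}^r H(S_i^{(k_i)}, \vec{\beta})\right) \simeq \bigotimes_{i=1}^r \Fuk\bigl(H(S_i^{(k_i)}, \vec{\beta})\bigr),
\end{align*}
valid because each factor is a Weinstein manifold of finite type (indeed, a $(d-k_i+l_i)$-dimensional tailored pants up to $\frakS_{d+2-k_i+l_i}$-equivariant Hamiltonian isotopy). Combining this with the Liouville identification of the previous step gives the claimed equivalence.

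The step I expect to require the most care is not the Künneth input but the verification that the $(\bC^*)^r_{u_1,\ldots,u_r}$-quotient of \pref{lem:cA-description} produces a \emph{Liouville} isomorphism, rather than merely a complex-analytic one. The $u_i$-directions carry nontrivial weights under this action, and one must check that after passing to the tailored replacements in each factor, the resulting Liouville form is Liouville-homotopic through finite-type structures to the one restricted from $\bT^\vee_\bC\times\bC^r$. This is the same kind of radial deformation argument used earlier in the hypersurface case (cf.\ the discussion following \pref{lem:A-local-restriction} and \cite[Section 6.2]{GS1}), now carried out factor-by-factor; once it is in place, the corollary follows immediately.
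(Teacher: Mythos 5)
Your proposal is correct and follows essentially the same route as the paper: take the product description of the intersection from \pref{lem:cA-description}, note that it carries a Stein (hence Weinstein) structure as an open subset of the closed submanifold $\bfH \subset (\bC^*)^{d+r+1}$, and conclude by the K\"unneth formula for wrapped Fukaya categories from \cite{GPS2}. The extra care you devote to matching the Liouville structures is exactly what the paper compresses into the remark that the Stein structure induces the Weinstein structure, together with the deformation invariance already recorded in \pref{lem:A-section_unique}.
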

\begin{proof}
As an open submanifold of a closed submanifold
$\bfH$
of
$(\bC^*)^{d+r+1}$,
the intersection  
\begin{align*}
\bigcap^r_{i=1}
\{ u_i = 0 \}
\cap
\bigcap^r_{i=1}
\{ \sum^{d + 2 - k_i + l_i}_{j_i = 1} t^{-\rho_i(\alpha^{i,j_i})} x^{-\alpha^{i, j_i}} = 0 \}
\end{align*}
carries a Stein structure,
which in turn defines a Weinstein structure.
Then the claim follows from
\cite[Theorem 1.5, Corollary 1.18]{GPS2}.
\end{proof}

%Rmk 
\begin{remark}\label{rmk:restriction}
Consider the product
\begin{align*}
\tilde{H}(S^{(k_1)}_1)
\times
\cdots
\times
\tilde{H}(S^{(k_r)}_r)
\subset
(\bC^*)^{d + 2 - k_1 + l_1}
\times
\cdots
\times
(\bC^*)^{d + 2 - k_r + l_r}
=
(\bC^*)^{d + r + 1}
\end{align*}
of very affine hypersurfaces
\begin{align*}
\tilde{H}(S^{(k_i)}_i)
=
\{ \sum^{d + 2 - k_i}_{j_i = 1} t^{-\rho_i(\alpha^{i,j_i})} x^{-\alpha^{i,j_i}, e_i} = 0 \}
\subset
(\bC^*)^{d + 2 - k_i + l_i}
\subset
(\bC^*)^{d + r + 1}.
\end{align*}
We denote by
$H(S^{(k_i)}_i)$
the quotients of
$\tilde{H}(S^{(k_i)}_i)$
by the $\bC^*_{u_i}$-action
\begin{align*}
(u_i, x^{-\alpha^{i,1}, e_i}, \ldots, x^{-\alpha^{i, d + 2 - k_i + l_i}, e_i})
\mapsto
(u_i x^{-\alpha^{i,1}, e_i}, \ldots, u_i x^{-\alpha^{i, d + 2 - k_i + l_i}, e_i}),
\end{align*}
which are isomorphic to intersections of $l_i$ legs of $(d - k_i + l_i)$-dimensional tailored pants up to $\frakS_{(d+2-k_i+l_i)}$-equivariant Hamiltonian isotopy of symplectic submanifolds of
$(\bC^*)^{d + 1 - k_i + l_i}$.
From the above proof it follows that
the product
\begin{align*}
H(S^{(k_1)}_1)
\times
\cdots
\times
H(S^{(k_r)}_r)
\subset
(\bC^*)^{d + 1 - k_1 + l_1}
\times
\cdots
\times
(\bC^*)^{d + 1 - k_r + l_r}
=
(\bC^*)^{d + 1}
\end{align*}
is isomorphic to the intersection
\begin{align*}
\bigcap^r_{i=1}
\{ u_i = 0 \}
\cap
\bigcap^r_{i=1}
\{ \sum^{d + 2 - k_i}_{j_i = 1} t^{-\rho_i(\alpha^{i,j_i})} x^{-\alpha^{i, j_i}} = 0 \}
\subset
(\bC^*)^{d+1} \times \bC^r
\end{align*}
and
there is an equivalence
\begin{align*}
\Fuk(\bigcap^r_{i=1}
\{ u_i = 0 \}
\cap
\bigcap^r_{i=1}
\{ \sum^{d + 2 - k_i}_{j_i = 1} t^{-\rho_i(\alpha^{i,j_i})} x^{-\alpha^{i, j_i}} = 0 \})
\simeq
\bigotimes^r_{i=1}
\Fuk(H(S^{(k_i)}_i)).
\end{align*}
\end{remark}

%Dfn
\begin{definition} \label{dfn:topology-CI}
We define a topology on
$\bigcap^r_{i=1} \Pi_{\Sigma_i}$
induced by its canonical stratification.
Namely,
for each vertex
$\bfv \in \bigcap^r_{i=1} \Pi_{\Sigma_i}$,
which is an intersection
$\bigcap^r_{i=1} S^{(k_i)}_i$
with
$\sigma(S^{(k_1)}_1, \ldots, S^{(k_r)}_r)
\in
\Sigma_{\bfY, \max}$,
we define the associated open subset
$U_\bfv$
as the union
$\bigcap^r_{i=1} U_{S^{(k_i)}_i}$
of all strata adjacent to
$\bigcap^r_{i=1} S^{(k_i)}_i$.
For each edge
$\mathbf{e} \subset \bigcap^r_{i=1} \Pi_{\Sigma_i}$
connecting two vertices
$\bfv_1, \bfv_2$
we define the associated open subset
$U_{\mathbf{e}}$
as the intersection
$U_{\bfv_1} \cap U_{\bfv_2}$.
Similarly,
for each $k$-stratum
$\bfS^{(k)} \subset \Pi_\Sigma$
adjacent to
$l$
vertices
$\bfv_1, \ldots, \bfv_l$
we define the associated open subset
$U_{\bfS^{(k)}}$
as the intersection
$U_{\bfv_1} \cap \cdots \cap U_{\bfv_l}$.
\end{definition}

Fix a pants decomposition of each
$H_i \subset \bT^\vee_\bC \cong T^* T^{d+1}$
\cite[Theorem 1']{Mik}.
Equip
$\Pi_{\Sigma_i}$
with the topology from Definition
\pref{dfn:topology}
and
then equip
$\bigcap^r_{i=1} \Pi_{\Sigma_i}$
with the topology from Definition
\pref{dfn:topology-CI}.
The basis of the latter is given by the open subsets
$U_{\bfS^{(k)}}$.

%Dfn
\begin{definition}
The
\emph{$A$-side partially defined presheaf}
$\cF^{pre}_\bfA$
of categories for
$\bfH$
is a collection
\begin{align*}
\{ \cF^{pre}_\bfA(U_{\bfS^{(k)}}), R^\bfA_{\bfS^{(k)}, \bfS^{(l)}} \}
\end{align*}
of
sections
and
restriction functors
defined on the basis
$U_{\bfS^{(k)}} = \bigcap^r_{i=1} U_{S^{(k_i)}_i}$
as follows:
\begin{itemize}
\item
The section over
$U_{\bfS^{(k)}}$
is given by
\begin{align*}
\cF^{pre}_\bfA(U_{\bfS^{(k)}})
=
\bigotimes^r_{i=1} \Fuk(H(S^{(k_i)}_i))_{\bZ_2}
\end{align*}
where we equip
$H(S^{(k_i)}_i)$
with the induced Weinstein structure by the canonical one on
$\tilde{P}_{d - k_i + l_i}$.
\item
Along an inclusion
$U_{\bfS^{(l)}}
\hookrightarrow
U_{\bfS^{(k)}}$
the restriction functor
\begin{align*}
R^\bfA_{\bfS^{(k)}, \bfS^{(l)}}
\colon
\cF^{pre}_\bfA(U_{\bfS^{(k)}})
\to
\cF^{pre}_\bfA(U_{\bfS^{(l)}})
\end{align*}
is induced by termwise restriction functors
$R^A_{S^{(k_i)}_i,S^{(l_i)}_i}$.
\end{itemize}
\end{definition}

Since
$U_{\bfS^{(k)}}$
form a basis of the topology of
$\bigcap^r_{i=1} \Pi_{\Sigma_i}$,
we may pass to the sheafification.

%Dfn
\begin{definition}
The
\emph{$A$-side constructible sheaf of categories}
for
$\bfH$
is the sheafification
\begin{align*}
\cF_\bfA \colon \Open(\bigcap^r_{i=1} \Pi_{\Sigma_i})^{op} \to ^{**}\DG,
\end{align*}
where
$\Open(\bigcap^r_{i=1} \Pi_{\Sigma_i})$
is the category of open subsets of
$\bigcap^r_{i=1} \Pi_{\Sigma_i}$
with respect to the topology from Definition
\pref{dfn:topology-CI}.
\end{definition}

%Thm
\begin{theorem}
\label{thm:global2}
There is an equivalence
\begin{align*}
\Fuk(\bfH)_{\bZ_2}
\simeq
\cF_\bfA(\bigcap^r_{i=1} \Pi_{\Sigma_i}).
\end{align*}
\end{theorem}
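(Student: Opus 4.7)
The plan is to adapt the recursive argument of Theorem~\ref{thm:global} to the complete intersection setting, using as new ingredients the local K\"unneth identifications of Corollary~\ref{cor:cA-description} and Remark~\ref{rmk:restriction}. First I would enumerate the vertices of $\bigcap_{i=1}^r \Pi_{\Sigma_i}$ by distance from a chosen external vertex $\bfv^0_1$, in the same way as in the proof of Theorem~\ref{thm:global}. A vertex $\bfv = \bigcap_i S^{(k_i)}_i$ corresponds to a top-dimensional cone $\sigma(S^{(k_1)}_1,\ldots,S^{(k_r)}_r,\vec\beta)\in\Sigma_\bfY$, and around $\bfv$ the local piece of $\bfH$ is, by Corollary~\ref{cor:cA-description}, a product $\prod_i H(S^{(k_i)}_i,\vec\beta)$ (a finite cover of such a product when the corresponding simplex is not unimodular), so that its $\bZ_2$-folded wrapped Fukaya category equals $\bigotimes_i \Fuk(H(S^{(k_i)}_i,\vec\beta))_{\bZ_2}$, which is precisely $\cF_\bfA(U_\bfv)$. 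The same K\"unneth identification, applied through Remark~\ref{rmk:restriction}, identifies the local section over a lower-dimensional stratum $\bfS^{(k)}$ with the tensor product of leg intersections, and identifies the restriction functors of $\cF_\bfA$ with tensor products of the Viterbo restrictions appearing in the hypersurface case. The finite-cover cases are handled by the equivariantization/de-equivariantization machinery of Section~6 applied factorwise, following the model computation at the end of that section.

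I would then run the same recursion on $k$ as in Theorem~\ref{thm:global}: construct at step $k$ a Weinstein structure $\boldsymbol{\lambda}_k$ on the preimage $\bfH_k(\bfv^0_1)$ under \pref{eq:cA-fibration} of a suitably shrunk union of basic opens around vertices at distance $\leq k$, together with an equivalence $\Fuk(\bfH_k(\bfv^0_1),\boldsymbol{\lambda}_k)_{\bZ_2}\simeq \cF_\bfA(\bigcup U_{\bfv^j_i})$ restricting to the canonical local equivalences at each vertex. To attach a new vertex $\bfv^k_i$ to vertices $\bfv^{k-1}_j$ at distance $k-1$, I would translate the canonical Liouville form on $\bT^\vee_\bC\times\bC^r$ by a point lying in the connected component of $M_\bR\setminus\bigcup_i\Pi_{\Sigma_i}$ dual to the attaching stratum, exactly mirroring the construction of $\lambda_k(v^{k-1}_j,v^k_i)$ in Theorem~\ref{thm:global}. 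By Liouville homotopy invariance (Lemma~\ref{lem:A-section_unique}) this is compatible with the previous layer, and by Corollary~\ref{cor:key} it produces a fiber square matching the \v{C}ech diagram of $\cF_\bfA$ over the enlarged open.

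The hard step will be to verify the separatedness hypothesis of Corollary~\ref{cor:key} for a gluing of products (or finite covers of products) of pants, and to check that compatibility with tensor product restrictions propagates through each attachment. In the hypersurface case this confinement of Reeb chords was the content of Example~\ref{eg:key}; in the product setting the Reeb flow decomposes componentwise, so in every factor the argument of Example~\ref{eg:key} applies, and no time-$1$ Reeb trajectory of a cocore not in the attaching region can meet a cocore not on the other side. Combined with the termwise definition of $R^\bfA_{\bfS^{(k)},\bfS^{(l)}}$ and the K\"unneth formula for wrapped Fukaya categories of products of Weinstein manifolds, this supplies the inductive step. The recursion terminates after finitely many steps once all vertices of $\bigcap_i\Pi_{\Sigma_i}$ have been exhausted, yielding the claimed global equivalence $\Fuk(\bfH)_{\bZ_2}\simeq\cF_\bfA(\bigcap_i\Pi_{\Sigma_i})$.
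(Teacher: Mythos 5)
Your proposal is correct in substance and uses the same toolkit as the paper: the local K\"unneth identifications of Corollary \pref{cor:cA-description} and Remark \pref{rmk:restriction}, Liouville homotopy invariance (\pref{lem:A-section_unique}), and the gluing fiber square of Corollary \pref{cor:key}, fed into a recursion modeled on \pref{thm:global}. The organization differs, though. You run a single vertex-by-vertex recursion over $\bigcap_i\Pi_{\Sigma_i}$, enumerating vertices by distance from one external vertex and attaching a product (or finite cover of a product) of pants at each step. The paper instead iterates the \emph{hypersurface} recursion factor by factor: fix $S^{(k_2)}_2,\ldots,S^{(k_r)}_r$, run the argument of \pref{thm:global} along all connected components of the first factor, then repeat in the second factor with the first frozen, and so on. The paper's nested scheme buys two things for free: it handles the possible disconnectedness of the slices of $\bigcap_i\Pi_{\Sigma_i}$ (your distance-based enumeration only reaches the component of the chosen vertex, so you must run it per component — harmless, e.g.\ when $r=d$ and the intersection is a finite set of points, but it should be said), and at each attachment only one tensor factor of $\cF_\bfA$ moves, so the compatibility of the glued equivalence with the termwise restriction functors $R^{\bfA}_{\bfS^{(k)},\bfS^{(l)}}$ is literally the hypersurface statement already proved. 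Your single recursion needs the same fact but must extract it from the product structure at each step. Also note that a vertex of $\bigcap_i\Pi_{\Sigma_i}$ corresponds to a maximal cone $\sigma(S^{(k_1)}_1,\ldots,S^{(k_r)}_r)\in\Sigma_{\bfY,\max}$; the simplex $\Delta^\vee(\ldots,\vec\beta)$ and the equivariantization machinery of Section 6 only enter later (for the B-side comparison and the nonunimodular case), so invoking them here is not needed for this statement.

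The one place where your argument is weaker than the paper's is the separatedness hypothesis of Corollary \pref{cor:key}. You assert that "the Reeb flow decomposes componentwise, so in every factor the argument of Example \pref{eg:key} applies." But the Weinstein structure you propose to use near an attachment is a translate of the ambient Liouville form restricted to $\bfH$, and the identification of the local piece with a product of tailored pants holds only up to (equivariant Hamiltonian) isotopy; it is not a literal product of Liouville manifolds, so the Reeb dynamics do not split on the nose and the factorwise confinement needs an actual argument. The authors explicitly do not pursue this route: in the complete intersection setting the separatedness is established in the Appendix via semi-tropicalization (Proposition \pref{prop:trap} and \pref{thm:main1-Peng}), which shows Hamiltonian trajectories are trapped in the leaves of a transverse foliation over the tropical base. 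To close your proof you should either cite that result or genuinely verify the product-type confinement near the gluing hypersurface after the isotopies, rather than assert it.
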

\begin{proof}
By the construction of
$\cF_\bfA$
and
Remark
\pref{rmk:restriction},
the caim follows from the same argument as in the proof of
\pref{thm:global}.
Indeed,
let
$\bfv \in \bigcap^r_{i=1} \Pi_{\Sigma_i}$
be a vertex
$\bigcap^r_{i=1} S^{(k_i)}_i$
with
$\sigma(S^{(k_1)}_1, \ldots, S^{(k_r)}_r)
\in
\Sigma_{\bfY, \max}$
such that
$H(S^{(k_1)}_1)$
is an external $(d-k_i)$-dimensional pants
which makes sense
whenever
$S^{(k_2)}_2, \ldots, S^{(k_r)}_r$
fixed.
Run the same argument as in the proof of
\pref{thm:global}
for all connected components of the first factor.
Fixing
the first factors of the results
and
$S^{(k_3)}_3, \ldots, S^{(k_r)}_r$,
repeat the same process.
Inductively,
we obtain the desired equivalence.
\end{proof}

\subsection{Local $B$-side categories for complete intersections}
Also in this subsection,
we assume
$\Delta^\vee(S^{(k_1)}_1, \ldots, S^{(k_r)}_r, \vec{\beta})$ 
to be unimodular.
Let
\begin{align*}
y_1(S^{(k_1)}_1), \ldots, y_{d + 2- k_1 + l_1}(S^{(k_1)}_1),
\ldots, 
y_1(S^{(k_r)}_r), \ldots, y_{d + 2 - k_r + l_r}(S^{(k_r)}_r)
\end{align*}
be local coordinates for
\begin{align*}
\bA^{d + 2 - k_1 + l_1} \times \cdots \times \bA^{d + 2 - k_r + l_r}
&\cong
U(S^{(k_1)}_1, \ldots, S^{(k_r)}_r, \vec{\beta}) \\
&=
\Spec \bC[\sigma^\vee(S^{(k_1)}_1, \ldots, S^{(k_r)}_r, \vec{\beta}) \cap (M \times \bZ^r)]
\subset
\bfY.
\end{align*}

%Lem
\begin{lemma} \label{lem:cB-description}
Assume that
$\Delta^\vee(S^{(k_1)}_1, \ldots, S^{(k_r)}_r, \vec{\beta})$ 
is unimodular.
Then there is an equivalence
\begin{align*}
\MF^\infty(U(S^{(k_1)}_1, \ldots, S^{(k_r)}_r, \vec{\beta}), W_\bfY)
\simeq
\bigotimes^r_
{i=1} \MF^\infty(\bA^{d+2-k_i+l_i}, y_1(S^{(k_i)}_i) \cdots y_{d+2-k_i+l_i}(S^{(k_i)}_i)).
\end{align*}
\end{lemma}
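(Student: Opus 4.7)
The plan is to exhibit the chart $U(S^{(k_1)}_1, \ldots, S^{(k_r)}_r, \vec{\beta})$ as a product of affine spaces in such a way that $W_\bfY$ decomposes as a Thom--Sebastiani sum of pairwise variable-disjoint monomials, and then to invoke Preygel's Thom--Sebastiani theorem for matrix factorizations to split $\MF^\infty$ as the tensor product.

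First I would unpack the unimodularity hypothesis exactly as in \pref{lem:cchart}, but now for the top-dimensional cone $\sigma = \sigma(S^{(k_1)}_1, \ldots, S^{(k_r)}_r, \vec{\beta})$. Its primitive generators $\xi_j(S^{(k_i)}_i) = \Cone(-\alpha^{i,j}, e_i)$ for $1 \leq i \leq r$ and $1 \leq j \leq d+2-k_i+l_i$ form a $\bZ$-basis of $M^\vee \times \bZ^r$, the total count being $d+r+1 = \dim \bfY$. Taking the dual basis furnishes the monomial coordinates $y_j(S^{(k_i)}_i)$ uniquely characterized by $\langle y_j(S^{(k_i)}_i), (-\alpha^{i',j'}, e_{i'}) \rangle = \delta_{(i,j),(i',j')}$. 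Because these coordinates are grouped by the index $i$ with no cross-relations, the coordinate ring of the chart factors as $\bigotimes_{i=1}^r \bC[y_1(S^{(k_i)}_i), \ldots, y_{d+2-k_i+l_i}(S^{(k_i)}_i)]$, identifying the chart with $\prod_{i=1}^r \bA^{d+2-k_i+l_i}$.

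Next I would compute $W_\bfY = v_1 + \cdots + v_r$ in these coordinates. The monomial $v_i$ has weight $e_{d+1+i} \in M \times \bZ^r$, and pairing this weight against a ray $(-\alpha^{i',j'}, e_{i'})$ yields $\delta_{i,i'}$, independent of $j'$. Hence on the chart
\[
v_i = \prod_{j=1}^{d+2-k_i+l_i} y_j(S^{(k_i)}_i),
\]
depending only on the $i$-th block of coordinates. Consequently $W_\bfY$ is the external Thom--Sebastiani sum $\sum_i \pi_i^* W_i$ with $W_i = y_1(S^{(k_i)}_i) \cdots y_{d+2-k_i+l_i}(S^{(k_i)}_i)$ on the $i$-th factor.

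The final step is to invoke Preygel's Thom--Sebastiani theorem for matrix factorizations (\cite{Pre}; cf.\ the discussion in the introduction), which gives
\[
\MF^\infty\bigl(\prod_i \bA^{d+2-k_i+l_i}, \sum_i \pi_i^* W_i\bigr) \simeq \bigotimes_{i=1}^r \MF^\infty(\bA^{d+2-k_i+l_i}, W_i)
\]
and yields the claim. All substantive content lives in the coordinate setup; no genuinely new geometric input is needed. The main obstacle is really only bookkeeping, namely tracking that every spanning ray of $\sigma$ lives in exactly one of the subfans $\Sigma_i$ so that the tensor factorization of the coordinate ring respects the $r$-fold splitting and that each $v_i$ involves only the $i$-th block. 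Both facts follow cleanly from the construction of $\Sigma_\bfY$ via $\bfT$, so the proof is essentially forced once the correct coordinates are chosen.
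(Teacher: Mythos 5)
Your proposal is correct and follows essentially the same route as the paper: use unimodularity to identify the chart with $\prod_{i=1}^r \bA^{d+2-k_i+l_i}$ with block coordinates, observe that $W_\bfY$ restricts to the block-wise sum of monomials $y_1(S^{(k_i)}_i)\cdots y_{d+2-k_i+l_i}(S^{(k_i)}_i)$, and conclude by Preygel's Thom--Sebastiani theorem \cite[Theorem 4.1.3]{Pre}. The only point the paper makes that you leave implicit is the verification of the support hypothesis in Preygel's theorem, which is why it also cites \pref{lem:B-critical} and \pref{lem:cB-critical}: each factor's critical locus lies in the zero fiber of its superpotential, so the critical locus of the sum intersected with its zero fiber is the product of the factor-wise loci, and the tensor decomposition applies to the full (unsupported) categories $\MF^\infty$.
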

\begin{proof}
Since we have
\begin{align*}
W_\bfY |_{U(S^{(k_1)}_1, \ldots, S^{(k_r)}_r, \vec{\beta})}
=
y_1(S^{(k_1)}_1) \cdots y_{d+2-k_1+l_1}(S^{(k_1)}_1)
+ \cdots + 
y_1(S^{(k_r)}_r) \cdots y_{d+2-k_r+l_r}(S^{(k_r)}_r),
\end{align*}
the claim follows from
\pref{lem:B-critical},
\pref{lem:cB-critical}
and
\cite[Theorem 4.1.3]{Pre}.
\end{proof}

Recall from
\cite[Proposition A.3.1]{Pre}
that
$\MF^\infty$
is a sheaf on
$\bfY$. 
%Dfn
\begin{definition}
The
\emph{$B$-side constructible sheaf of categories}
for
$\bfH$
is the composite: 
$$
\cF_\bfB \colon \Open(\Pi_\Sigma)^{op} 
\longrightarrow \Open(\bfY)^{op} \stackrel{\MF^\infty}\longrightarrow \, ^{**}\DG.
$$
\end{definition}

For concreteness, let us describe sections and restrictions of the sheaf $\cF_\bfB$ on the basis
$U_{\bfS^{(k)}} = \bigcap^r_{i=1} U_{S^{(k_i)}_i}$. 
\begin{itemize}
\item
The section of $\cF_\bfB$ over
$U_{\bfS^{(k)}}$
is given by
\begin{align*}
\cF_\bfB(U_{\bfS^{(k)}})
=
\bigotimes^r_
{i=1} \MF^\infty(\bA^{d+2-k_i+l_i}, y_1(S^{(k_i)}_i) \cdots y_{d+2-k_i+l_i}(S^{(k_i)}_i)).
\end{align*}
\item
Along an inclusion
$U_{\bfS^{(l)}}
\hookrightarrow
U_{\bfS^{(k)}}$
the restriction functor
\begin{align*}
R^\bfB_{\bfS^{(k)}, \bfS^{(l)}}
\colon
\cF_\bfB(U_{\bfS^{(k)}})
\to
\cF_\bfB(U_{\bfS^{(l)}})
\end{align*}
is induced by termwise restriction functors
$R^B_{S^{(k_i)}_i,S^{(l_i)}_i}$.
\end{itemize}

%Since
%$U_{\bfS^{(k)}}$
%form a basis of the topology of
%$\bigcap^r_{i=1} \Pi_{\Sigma_i}$,
%there is a canonical equivalence
%\begin{align*}
%\cF_\bfB(\bigcap^r_{i=1} \Pi_{\Sigma_i})
%=
%\MF^\infty(\bfY, W_\bfY)
%\end{align*}
%\begin{enumerate}
 %   \item %compatible with restrictions.
%\end{enumerate}

\subsection{Gluing equivalences}
%Lem
\begin{lemma} \label{lem:local}
Assume that
$\Delta^\vee(S^{(k_1)}_1, \ldots, S^{(k_r)}_r, \vec{\beta})$ 
is unimodular.
Then there is an equivalence
\begin{align*}
\Fuk(\bigcap^r_{i=1}
\{ u_i = 0 \}
\cap
\bigcap^r_{i=1}
\{ \sum^{d + 2 - k_i + l_i}_{j_i = 1} x^{-\alpha^{i, j_i}} = 0 \})_{\bZ_2}
\simeq
\MF^\infty(U(S^{(k_1)}_1, \ldots, S^{(k_r)}_r, \vec{\beta}), W_\bfY).
\end{align*}
\end{lemma}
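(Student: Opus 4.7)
The plan is to chain together two Künneth-type decompositions already established in the excerpt, one on each side of the mirror correspondence, and then apply the local HMS for pairs of pants pointwise.

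First, on the A-side, the unimodularity assumption on $\Delta^\vee(S^{(k_1)}_1, \ldots, S^{(k_r)}_r, \vec{\beta})$ is exactly what is needed to put us in the setting of \pref{lem:cA-description}, and hence of \pref{cor:cA-description}. That corollary identifies the A-side category in the statement with the tensor product
\begin{align*}
\bigotimes^r_{i=1} \Fuk(H(S^{(k_i)}_i, \vec{\beta}))_{\bZ_2},
\end{align*}
where each factor $H(S^{(k_i)}_i, \vec{\beta})$ is a $(d-k_i+l_i)$-dimensional tailored pants. This step uses nothing beyond the Künneth formula for wrapped Fukaya categories of Stein/Weinstein manifolds cited from \cite{GPS2} and the unimodular coordinate change that expresses the complete intersection as an honest product of pants.

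Next, on the B-side, I would apply \pref{lem:cB-description} under the same unimodularity hypothesis: the superpotential $W_\bfY$ restricted to the affine chart $U(S^{(k_1)}_1, \ldots, S^{(k_r)}_r, \vec{\beta}) \cong \bA^{d+2-k_1+l_1} \times \cdots \times \bA^{d+2-k_r+l_r}$ is a sum of monomials in disjoint sets of variables, so Preygel's Thom--Sebastiani theorem yields
\begin{align*}
\MF^\infty(U(S^{(k_1)}_1, \ldots, S^{(k_r)}_r, \vec{\beta}), W_\bfY)
\simeq
\bigotimes^r_{i=1} \MF^\infty(\bA^{d+2-k_i+l_i}, y_1(S^{(k_i)}_i) \cdots y_{d+2-k_i+l_i}(S^{(k_i)}_i)).
\end{align*}

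It remains to exhibit a compatible equivalence factor by factor. For each $i$, the factor $H(S^{(k_i)}_i, \vec{\beta})$ is a $(d-k_i+l_i)$-dimensional tailored pants, so Nadler's local HMS from \pref{cor:Nadler} supplies an equivalence
\begin{align*}
\Fuk(H(S^{(k_i)}_i, \vec{\beta}))_{\bZ_2}
\simeq
\MF^\infty(\bA^{d+2-k_i+l_i}, y_1(S^{(k_i)}_i) \cdots y_{d+2-k_i+l_i}(S^{(k_i)}_i)).
\end{align*}
Tensoring these local equivalences and composing with the two decompositions above produces the desired global equivalence.

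The step I expect to require the most care is verifying that the two tensor product decompositions are compatible in a way that lets the factor-wise Nadler equivalences be assembled: in particular, one has to check that the $\bZ_2$-folding commutes with the tensor products appearing on both sides, and that the identifications of coordinates coming from the unimodular simplex $\Delta^\vee(S^{(k_1)}_1, \ldots, S^{(k_r)}_r, \vec{\beta})$ on the B-side match the pants factorization on the A-side in a manner that respects the $\frakS$-symmetries used in Nadler's construction. Once this bookkeeping is done, no further Floer-theoretic input is needed, and the lemma is a formal consequence of the three displayed equivalences above.
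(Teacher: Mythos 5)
Your proposal is correct and follows essentially the same route as the paper: reduce via \pref{cor:cA-description} and \pref{lem:cB-description} to a factor-wise equivalence, then invoke local HMS for the tailored pants factors. The only minor difference is that the paper cites the restriction-compatible refinement \pref{thm:lift2} rather than \pref{cor:Nadler}; this does not matter for the bare statement of the lemma, but it is what makes the subsequent gluing in \pref{thm:main2} go through, which is exactly the compatibility bookkeeping you flag as the delicate point.
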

\begin{proof}
Due to Corollary
\pref{cor:cA-description}
and
\pref{lem:cB-description}
it suffices to show an equivalence
\begin{align} \label{eq:loceq}
\Fuk(H(S^{(k_i)}_i, \vec{\beta}))_{\bZ_2}
\simeq
\MF^\infty(\bA^{d+2-k_i+l_i}, y^i_1(S^{(k_i)}_i) \cdots y^i_{d+2-k_i+l_i}(S^{(k_i)}_i)).
\end{align}
This follows from
\pref{thm:lift2},
since by construction
$H(S^{(k_i)}_i, \vec{\beta})$
is isomorphic to $(d-k_i+l_i)$-dimensional tailored pants up to $\frakS_{(d+2-k_i+l_i)}$-equivariant Hamiltonian isotopy of symplectic submanifolds of
$(\bC^*)^{d + 1 - k_i + l_i}$.
\end{proof}

%Thm
\begin{theorem} \label{thm:main2}
Assume that
$\Delta^\vee(S^{(k_1)}_1, \ldots, S^{(k_r)}_r, \vec{\beta})$ 
are unimodular for all
$S^{(k_1)}_1, \ldots, S^{(k_r)}_r$
such that
$\bigcap^r_{i=1} H_{S^{(k_i)}_i}
\subset
(\bC^*)^{d+1}$
is nonempty.
Then there is an equivalence
\begin{align*}
\cF_\bfA(\bigcap^r_{i=1} \Pi_{\Sigma_i})
\simeq 
\cF_\bfB(\bigcap^r_{i=1} \Pi_{\Sigma_i})
\end{align*}
compatible with restrictions.
\end{theorem}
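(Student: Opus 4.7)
The plan is to mimic the recursive gluing argument of \pref{thm:main1}, with the local building blocks now provided by \pref{lem:local} instead of \pref{thm:lift2}. The key observation is that both $\cF_\bfA$ and $\cF_\bfB$ are sheaves on $\bigcap^r_{i=1} \Pi_{\Sigma_i}$ (with respect to the topology from \pref{dfn:topology-CI}), so it suffices to construct equivalences of local sections over the basic opens $U_\bfv$ attached to vertices $\bfv \in \bigcap^r_{i=1} \Pi_{\Sigma_i}$, together with the compatible restrictions to the opens $U_{\bfS^{(k)}}$ associated with lower-dimensional strata, and verify that these equivalences can be glued along the recursion.

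First I would establish the local base case: for each vertex $\bfv = \bigcap^r_{i=1} S^{(k_i)}_i$ with $\sigma(S^{(k_1)}_1, \ldots, S^{(k_r)}_r) \in \Sigma_{\bfY, \max}$, the unimodularity assumption ensures that the simplex $\Delta^\vee(S^{(k_1)}_1, \ldots, S^{(k_r)}_r, \vec{\beta})$ is unimodular for any choice of completing vertices $\vec{\beta}$. Consequently \pref{lem:local} supplies an equivalence
\begin{align*}
\varphi_\bfv \colon \cF_\bfA(U_\bfv) \simeq \cF_\bfB(U_\bfv)
\end{align*}
which on the $A$-side is the Künneth tensor product of $r$ copies of the equivalence from \pref{thm:lift2} (via \pref{cor:cA-description}) and on the $B$-side is the Thom--Sebastiani factorization of \pref{lem:cB-description}. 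The compatibility of $\varphi_\bfv$ with the restriction maps to any $U_{\bfS^{(k)}} \subset U_\bfv$ follows by taking the tensor product of the commutative diagrams of \pref{thm:lift2}, since the restrictions on both sides are defined termwise along the factors of the Künneth/Thom--Sebastiani decomposition.

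Next, I would run the enumeration-of-vertices recursion exactly as in the proof of \pref{thm:main1}. Fix an external vertex $\bfv^0_1 \in \bigcap^r_{i=1} \Pi_{\Sigma_i}$ and enumerate the remaining vertices by distance, and build up $\bigcap^r_{i=1} \Pi_{\Sigma_i}$ layer by layer as the union $\bfPi_k(\bfv^0_1)$. Inductively, suppose one has an equivalence
\begin{align*}
\bfvarphi_{k-1} \colon \cF_\bfA(\bfPi_{k-1}(\bfv^0_1)) \simeq \cF_\bfB(\bfPi_{k-1}(\bfv^0_1))
\end{align*}
compatible with restrictions and restricting to $\varphi_\bfv$ on each vertex $\bfv$ of $\bfPi_{k-1}(\bfv^0_1)$. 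To extend $\bfvarphi_{k-1}$ to $\bfPi_k(\bfv^0_1)$, for each new vertex $\bfv^k_i$ and each edge $\bfe^k_{ji}$ connecting $\bfv^k_i$ to a previously-enumerated vertex $\bfv^{k-1}_j$, one exploits the commutative square
\begin{align*}
\xymatrix{
\cF_\bfA(\bfPi_{k-1}(\bfv^0_1)) \ar[r]^-{\bfvarphi_{k-1}} \ar[d]_{R^\bfA}
& \cF_\bfB(\bfPi_{k-1}(\bfv^0_1)) \ar[d]^{R^\bfB} \\
\cF_\bfA(U_{\bfe^k_{ji}}) \ar[r]^-{\varphi_{\bfv^{k-1}_j}|_{\bfe^k_{ji}}}
& \cF_\bfB(U_{\bfe^k_{ji}})
}
\end{align*}
together with the analogous square with $\bfv^k_i$ in place of $\bfv^{k-1}_j$; since $\varphi_{\bfv^{k-1}_j}|_{\bfe^k_{ji}}$ and $\varphi_{\bfv^k_i}|_{\bfe^k_{ji}}$ agree on the overlap (both are the canonical equivalence induced by the change-of-coordinates on $\bfY$ dual to the gluing of pairs-of-pants), the equivalences $\bfvarphi_{k-1}$ and $\varphi_{\bfv^k_i}$ glue to give $\bfvarphi_k$. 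The sheaf property of $\cF_\bfA$ and $\cF_\bfB$ and the fact that $^{**}\DG$ admits finite limits make this pushout into a legitimate construction. After finitely many steps the recursion exhausts $\bigcap^r_{i=1} \Pi_{\Sigma_i}$ and yields the global equivalence.

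The main obstacle I anticipate is checking compatibility of the middle vertical arrow in the gluing square, i.e., ensuring that the two restrictions $\varphi_{\bfv^{k-1}_j}|_{\bfe^k_{ji}}$ and $\varphi_{\bfv^k_i}|_{\bfe^k_{ji}}$ are genuinely canonically identified. On the $A$-side this requires the tensor-product Weinstein structures built from Nadler's models on the two adjacent charts to match along the overlap (up to Liouville homotopy, invoking \pref{lem:A-section_unique} applied factor-by-factor), and on the $B$-side it requires compatibility of the Thom--Sebastiani decompositions under the toric coordinate change relating $\bfY_{\vec{\alpha}}$ and $\bfY_{\vec{\beta}}$. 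Both compatibilities reduce, factor by factor in the Künneth decomposition, to the already-established hypersurface case handled inside the proof of \pref{thm:main1}, so the argument is really just $r$ independent copies of the hypersurface gluing, threaded together by the external tensor product. The only genuinely new point is that one must verify the $r$-fold Künneth formula of \cite[Theorem 1.5, Corollary 1.18]{GPS2} commutes with Viterbo restrictions to sub-products of legs, which is built into \pref{rmk:restriction}.
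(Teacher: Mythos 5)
Your proposal is correct and follows essentially the same route as the paper: the proof of \pref{thm:main2} likewise glues the local equivalences of \pref{lem:local} (which, via Corollary \pref{cor:cA-description} and \pref{lem:cB-description}, are termwise tensor products of the hypersurface equivalences of \pref{thm:lift2}) by rerunning the gluing argument of \pref{thm:main1}, with the compatibility with restrictions supplied termwise by \pref{rmk:restriction} and the construction of $\cF_\bfA, \cF_\bfB$. The only difference is organizational: the paper structures the recursion factor by factor (running the hypersurface recursion over all connected components of the first factor with the remaining strata $S^{(k_2)}_2, \ldots, S^{(k_r)}_r$ frozen, then iterating over the factors), whereas you enumerate vertices of $\bigcap^r_{i=1} \Pi_{\Sigma_i}$ by distance from an external vertex — an immaterial reshuffling, since the restriction functors on both sides are defined termwise along the Künneth/Thom--Sebastiani factors.
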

\begin{proof}
We glue the local equivalences from
\pref{lem:local}.
Again,
due to Corollary
\pref{cor:cA-description}
and
\pref{lem:cB-description}
it suffices to show the compatibility of
\pref{eq:loceq}
with gluing,
which follows from the construction of
$\cF_\bfA, \cF_\bfB$,
Remark
\pref{rmk:restriction}
and
the same argument as in the proof of 
\pref{thm:main1}.
Indeed,
let
$\bfv \in \bigcap^r_{i=1} \Pi_{\Sigma_i}$
be a vertex
$\bigcap^r_{i=1} S^{(k_i)}_i$
with
$\sigma(S^{(k_1)}_1, \ldots, S^{(k_r)}_r)
\in
\Sigma_{\bfY, \max}$
such that
$H(S^{(k_1)}_1)$
is an external $(d-k_i)$-dimensional pants
which makes sense
whenever
$S^{(k_2)}_2, \ldots, S^{(k_r)}_r$
fixed.
Run the same argument as in the proof of
\pref{thm:main1}
for all connected components of the first factor.
Fixing
the first factors of the results
and
$S^{(k_3)}_3, \ldots, S^{(k_r)}_r$,
repeat the same process.
Inductively,
we obtain the desired equivalence.
\end{proof}

%Cor
\begin{corollary}
Under the same assumption as above,
there is an equivalence
\begin{align*}
\Fuk(\bfH)_{\bZ_2}
\simeq 
\MF^\infty(\bfY, W_\bfY).
\end{align*}
\end{corollary}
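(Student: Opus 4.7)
The plan is to deduce the corollary by combining three inputs already established in the paper: Theorem~\ref{thm:main2}, which identifies the global sections of $\cF_\bfA$ and $\cF_\bfB$ over $\bigcap^r_{i=1} \Pi_{\Sigma_i}$; Theorem~\ref{thm:global2}, which identifies $\cF_\bfA(\bigcap^r_{i=1} \Pi_{\Sigma_i})$ with $\Fuk(\bfH)_{\bZ_2}$; and the descent property of matrix factorizations on $\bfY$, which identifies $\cF_\bfB(\bigcap^r_{i=1} \Pi_{\Sigma_i})$ with $\MF^\infty(\bfY, W_\bfY)$. The logical shape of the argument is therefore a straightforward three-term chain of equivalences.

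The first two steps are essentially bookkeeping: apply Theorem~\ref{thm:global2} to rewrite the $A$-side global sections as $\Fuk(\bfH)_{\bZ_2}$, and apply Theorem~\ref{thm:main2} to transport this across to $\cF_\bfB(\bigcap^r_{i=1} \Pi_{\Sigma_i})$. The remaining point is to identify this $B$-side global section with $\MF^\infty(\bfY, W_\bfY)$. For this, I would first invoke the dictionary from Definition of $\cF_\bfB$ to reinterpret it as the restriction of $\MF^\infty$ on $\bfY$ to the open cover of the image in $\bfY$ of the strata of $\bigcap^r_{i=1} \Pi_{\Sigma_i}$ under the combinatorial correspondence encoded by $U(S^{(k_1)}_1, \ldots, S^{(k_r)}_r, \vec{\beta})$. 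By \cite[Proposition A.3.1]{Pre}, the assignment $\MF^\infty$ is a Zariski sheaf on $\bfY$, so its global sections over this open cover compute $\MF^\infty$ of the union.

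The subtle point, which is where I expect the main (though still mild) obstacle to lie, is that the union $\bfU \subset \bfY$ of the affine open subsets $U(S^{(k_1)}_1, \ldots, S^{(k_r)}_r, \vec{\beta})$ produced by the combinatorial dictionary is not all of $\bfY$: it is the complement of the higher-codimension toric strata. One must show that the restriction map
\[
\MF^\infty(\bfY, W_\bfY) \longrightarrow \MF^\infty(\bfU, W_\bfY|_\bfU)
\]
is an equivalence. This follows from the fact that matrix factorizations are supported on $\Crit(W_\bfY)$ together with \pref{lem:cB-critical}, which locates $\Crit(W_\bfY)$ precisely in the union of the codimension-$r$ strata corresponding to $\bigcap^r_{i=1} D_{\xi_{i,j_i}}$; in particular $\Crit(W_\bfY) \subset \bfU$. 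Alternatively, one can argue directly using that the complement $\bfY \setminus \bfU$ has $W_\bfY$ nowhere vanishing with zero differential only on higher-codimension strata, so its contribution to the category of singularities is trivial.

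Concatenating the three equivalences then yields the stated equivalence $\Fuk(\bfH)_{\bZ_2} \simeq \MF^\infty(\bfY, W_\bfY)$. Compatibility with the various restriction functors, which one might worry about in a more structured statement, is automatic here since we only assert an equivalence of global sections and each intermediate identification is itself compatible with restrictions by construction.
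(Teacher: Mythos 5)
Your proposal matches the paper's (implicit) argument: the corollary is stated without a separate proof precisely because it is the concatenation of \pref{thm:global2}, \pref{thm:main2}, and the identification of $\cF_\bfB(\bigcap^r_{i=1}\Pi_{\Sigma_i})$ with $\MF^\infty(\bfY, W_\bfY)$ coming from the definition of $\cF_\bfB$ via the Zariski sheaf $\MF^\infty$ of \cite[Proposition A.3.1]{Pre}. Your additional observation --- that the affine charts indexed by the strata of $\bigcap^r_{i=1}\Pi_{\Sigma_i}$ need not cover all of $\bfY$, but their union does contain $\Crit(W_\bfY)$ by \pref{lem:cB-critical}, so that restriction of $\MF^\infty$ to this union is an equivalence --- is correct and fills in a point the paper leaves implicit.
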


\subsection{Nonunimodular case}
Finally,
we drop the assumption on
$\Delta^\vee(S^{(k_1)}_1, \ldots, S^{(k_r)}_r, \vec{\beta})$
to be unimodular.
The simplicial affine toric variety
$U(S^{(k_1)}_1, \ldots, S^{(k_r)}_r, \vec{\beta})$ 
becomes isomorphic to the geometric quotient
\begin{align*}
\bA^{d+r+1} / G(S^{(k_1)}_1, \ldots, S^{(k_r)}_r, \vec{\beta}).
\end{align*}
by the finite subgroup 
$G(S^{(k_1)}_1, \ldots, S^{(k_r)}_r, \vec{\beta})$
of
$(\bC^*)^{d+r+1}$
with canonically induced action.
%\cite[Theorem 5.1.11]{CLS}
The inclusion extends to a short exact sequence
\begin{align} \label{eq:SES}
0
\to
G(S^{(k_1)}_1, \ldots, S^{(k_r)}_r, \vec{\beta})
\to
(\bC^*)^{d+r+1}
\to
M^\vee(S^{(k_1)}_1, \ldots, S^{(k_r)}_r, \vec{\beta})_{\bC}
\to
0,
\end{align}
where
$M^\vee(S^{(k_1)}_1, \ldots, S^{(k_r)}_r, \vec{\beta})$
is the cocharacter lattice associated with
$U(S^{(k_1)}_1, \ldots, S^{(k_r)}_r, \vec{\beta})$.
Then we have
\begin{align*}
\MF^\infty([\bA^{d+r+1} / G(S^{(k_1)}_1, \ldots, S^{(k_r)}_r, \vec{\beta})], W_\bfY)
\simeq
\MF^\infty(\bA^{d+r+1}, W_\bfY)^{G(S^{(k_1)}_1, \ldots, S^{(k_r)}_r, \vec{\beta})}.
\end{align*}

On the $A$-side,
$\tilde{P}(S^{(k_1)}_1, \ldots, S^{(k_r)}_r, \vec{\beta})$
becomes a finite cover of
$\tilde{P}_{d+r}$.
Since
$U(S^{(k_1)}_1, \ldots, S^{(k_r)}_r, \vec{\beta})$
has no torus factors,
\pref{eq:SES}
is obtained from the short exact sequence
\begin{align*}
0
\to
M(S^{(k_1)}_1, \ldots, S^{(k_r)}_r, \vec{\beta})
\to
\bZ^{d+r+1}
\to
G^\vee(S^{(k_1)}_1, \ldots, S^{(k_r)}_r, \vec{\beta})
\to
0
\end{align*}
by taking
$\Hom(-, \bC^*)$.
Here,
we use the symbol
$G^\vee(S^{(k_1)}_1, \ldots, S^{(k_r)}_r, \vec{\beta})$
to denote the divisor class group of
$U(S^{(k_1)}_1, \ldots, S^{(k_r)}_r, \vec{\beta})$,
which acts on
\begin{align*}
(\bC^*)^{d+r+1}
\cong
T^* (\bR^{d+r+1} / M(S^{(k_1)}_1, \ldots, S^{(k_r)}_r, \vec{\beta})).
\end{align*}
Then the finite cover is given by
\begin{align*}
\tilde{P}(S^{(k_1)}_1, \ldots, S^{(k_r)}_r, \vec{\beta})
\to
\tilde{P}(S^{(k_1)}_1, \ldots, S^{(k_r)}_r, \vec{\beta})/G^\vee(S^{(k_1)}_1, \ldots, S^{(k_r)}_r, \vec{\beta})
=
\tilde{P}_{d+r}.
\end{align*}
Similarly,
replace
\begin{align*}
\times^r_{i=1} \tilde{H}(S^{(k_i)}_i, \vec{\beta})
=
\bigcap^r_{i=1}
\left(
\{ \sum^{d + 2 - k_i + l_i}_{j_i = 1} x^{-\alpha^{i,j_i}, e_i} = 0 \}
\times
(\bC^*)^{r - 1 + k_i - l_i}
\right)
=
\bigcap^r_{i=1}
\{ \sum^{d + 2 - k_i + l_i}_{j_i = 1} x^{-\alpha^{i,j_i}, e_i} = 0 \}
\end{align*}
and
$\times^r_{i=1} H(S^{(k_i)}_i, \vec{\beta})$
with their inverse images.
Then we have
\begin{align} \label{eq:A-bridge1}
\Fuk((\times^r_{i=1} H(S^{(k_i)}_i, \vec{\beta})) /G^\vee(S^{(k_1)}_1, \ldots, S^{(k_r)}_r, \vec{\beta}))
\simeq
\Fuk(\times^r_{i=1} H(S^{(k_i)}_i, \vec{\beta}))^{G^\vee(S^{(k_1)}_1, \ldots, S^{(k_r)}_r, \vec{\beta})}.
\end{align}

As explained above,
there is an equivalence to the de-equivariantization
\begin{align} \label{eq:A-bridge2}
\Fuk(\times^r_{i=1} H(S^{(k_i)}_i, \vec{\beta}))^{G^\vee(S^{(k_1)}_1, \ldots, S^{(k_r)}_r, \vec{\beta})}
\simeq
\Fuk(\times^r_{i=1} H(S^{(k_i)}_i, \vec{\beta}))_{BG(S^{(k_1)}_1, \ldots, S^{(k_r)}_r, \vec{\beta})}
\end{align}
with respect to
$G(S^{(k_1)}_1, \ldots, S^{(k_r)}_r, \vec{\beta})$.
Combining
\pref{eq:A-bridge1},
\pref{eq:A-bridge2}
and
the equivalence for unimodular case,
we obtain
\begin{align*}
\MF^\infty(\bA^{d+r+1}, W_\bfY)
\simeq
(\Fuk(\times^r_{i=1} H(S^{(k_i)}_i, \vec{\beta}))_{\bZ_2})_{BG(S^{(k_1)}_1, \ldots, S^{(k_r)}_r, \vec{\beta})}.
\end{align*}
Passing to the equivariantization,
we obtain
\begin{align*}
\MF^\infty([\bA^{d+r+1}/G(S^{(k_1)}_1, \ldots, S^{(k_r)}_r, \vec{\beta})], W_\bfY)
\simeq
((\Fuk(\times^r_{i=1} H(S^{(k_i)}_i, \vec{\beta}))_{\bZ_2})_{BG(S^{(k_1)}_1, \ldots, S^{(k_r)}_r, \vec{\beta})})^{G(S^{(k_1)}_1, \ldots, S^{(k_r)}_r, \vec{\beta})}
\end{align*}
which by
\pref{lem:Shende}
implies
\begin{align} \label{eq:loceq2}
\MF^\infty(U(S^{(k_1)}_1, \ldots, S^{(k_r)}_r, \vec{\beta}), W_\bfY)
\simeq
\Fuk((\times^r_{i=1} H(S^{(k_i)}_i, \vec{\beta})))_{\bZ_2}.
\end{align}

We glue the above local equivalences.
It suffices to show the compatibility of
\pref{eq:loceq2}
with gluing,
which follows from the same argument as the proof of
\pref{thm:main2}
extended in a straight forward way.
Indeed,
the compatibility of the actions of
$G(S^{(k_1)}_1, \ldots, S^{(k_r)}_r, \vec{\beta})$
and
$G^\vee(S^{(k_1)}_1, \ldots, S^{(k_r)}_r, \vec{\beta})$
with gluing follows from
\pref{lem:G}
and
the combinatorial duality.

%%%%%%%%%%%%%%%%%%%%%%%%%%%%%%%%%%%%%%%%%%%%%%%%%%%%%%%%%%
\section{HMS for very affine hypersurfaces revisited}
In this section,
we recover HMS for very affine hypersurfaces established by Gammage--Shende,
whose $B$-side category is the derived category of coherent sheaves on the toric boundary divisor.
Under their assumption,
our computations carried out in Section
$4, 5$
become simpler
and
yields the original statement as a byproduct.
We stress that
there could be more than one approach to comparing our results in this paper to the results of Gammage–Shende.
If the goal was only to show that
one can derive from our results an HMS statement
where the mirror category is the category of coherent sheaves on the toric boundary divisor (as in Gammage–Shende),
then one could try to prove this by arguing only on the B-side. Namely, one would need a generalization of the Orlov dictionary between MF and coherent sheaves (i.e.
\pref{lem:Coh-MF}
above) to the global stacky toric setting.

In the following we shall give a different argument,
for two reasons.
First,
the generalization of Orlov’s dictionary we would need is not well-documented in the literature.
Second, this kind of proof would not clarify the relationship between our methods and Gammage–Shende.
We argue instead by showing that
in the setting considered by Gammage–Shende the local skeleta
we build in Section
$4$
actually globalize.
This,
together with an iterated application of
\pref{lem:Coh-MF},
yields the desired comparison.

\subsection{$A$-side category}
As explained above,
the equivalence
\begin{align*}
\Fuk(H)_{\bZ_2}
\simeq
\cF_A(\Pi_\Sigma)
\simeq
\cF_B(\Pi_\Sigma)
=
\MF^\infty(Y, W_Y)
\end{align*}
from
\pref{thm:main1}
can be obtained
whenever we choose an adapted unimodular triangulation
$\cT$
of the lattice polytope
$\Delta^\vee \subset M^\vee_\bR$.
Now,
we assume further that
$\cT$
is star-shaped.
Then
the set
$A$
of vertices of
$\cT$
must contain
$0$
and
all
$v \in \VVert(\Pi_\Sigma)$
belong to
$\del \Pi^0_\Sigma$,
where
$\Pi^0_\Sigma$
is the closure of the connected component of
$\bR^{d+1} \setminus \Pi_\Sigma$
corresponding to
$0 \in A$.

%Rmk
\begin{remark}
Gammage--Shende also assumed that
the associated fan
$\Sigma \subset M^\vee_\bR$
with
$\cT$
is perfectly centered,
i.e.,
$\Pi^0_\Sigma$
is perfectly centered
\cite[Definition 6.2.1]{GS1}.
Namely,
for each nonempty face of
$\Pi^0_\Sigma$
its relative interior intersects its normal cone transported to
$M^\vee_\bR$
via the inner product
$M^\vee_\bR \cong M_\bR$.
This additional assumption was used
when they constructed a global skeleton of
$H$
by gluing its open cover
\cite[Theorem 6.2.4]{GS1}.
More precisely,
the assumption allows one to define an open cover of
$H$
which restricts to the open cover of the skeleton.
\end{remark}

Now,
one can modify the argument in the proof of
\pref{thm:global}
as follows.
Suppose that
the canonical functor
\begin{align*}
\Fuk(H_{v^0_1} \cup \bigcup^{l_1}_{i_1 = 1} H_{v^1_{i_1}} \cup \cdots \cup \bigcup^{l_{k-1}}_{i_{k-1} = 1} H_{v^{k-1}_{i_{k-1}}})_{\bZ_2}
\to
\cF_A(U_{v^0_1} \cup \bigcup^{l_1}_{i_1 = 1} U_{v^1_{i_1}} \cup \cdots \cup \bigcup^{l_{k-1}}_{i_{k-1} = 1} U_{v^{k-1}_{i_{k-1}}})
\end{align*}
is an equivalence
and
$v^k_i \in \VVert(\Pi_\Sigma)$
is connected by an edge to at least one
$v^{k-1}_j$.
Consider Nadler's Weinstein structures transported to
$H_{v^0_1}, H_{v^1_{i_1}}, \ldots, H_{v^{k-1}_{i_{k-1}}}, H_{v^k_i}$
whose final legs correspond to the edges dual to boundary facets.
Since
$\cT$
is star-shaped,
these final legs are free
and
the transported Nadler's Weinstein structures glue to
yield a global Weinstein structure without modification.
By Corollary
\pref{cor:key}
we obtain a canonically induced equivalence
\begin{align*}
\Fuk(H_{v^0_1} \cup \bigcup^{l_1}_{i_1 = 1} H_{v^1_{i_1}} \cup \cdots \cup \bigcup^{l_{k-1}}_{i_{k-1} = 1} H_{v^{k-1}_{i_{k-1}}} \cup H_{v^k_i} )_{\bZ_2}
\to
\cF_A(U_{v^0_1} \cup \bigcup^{l_1}_{i_1 = 1} U_{v^1_{i_1}} \cup \cdots \cup \bigcup^{l_{k-1}}_{i_{k-1} = 1} U_{v^{k-1}_{i_{k-1}}} \cup U_{v^k_i}).
\end{align*} 
Iteratively,
we obtain an equivalence 
$\Fuk(H)_{\bZ_2}
\simeq
\cF_A(\Pi_\Sigma)$.

The above argument shows that
also the skeleta of the transported Nadler's Weinstein structures glue to yield a global skeleton
$\Core(H)$
of
$H$.
From
\pref{lem:Symp-Symp2}
it follows that
$\Core(H)$
can be identified with the spherical projectivization
$- \del_\infty \bL_\Sigma \subset S^* \bT^\vee$
of the negated FLTZ skeleton.
Then by
\cite[Theorem 1.4]{GPS3}
there is a canonical equivalence
$\Fuk(H) = \mSh^\diamondsuit_{\del_\infty \bL_\Sigma}(\del_\infty \bL_\Sigma)$.
Here,
we obtain
$\Core(H)$
by gluing its closed cover,
while
in
\cite[Theorem 6.2.4]{GS1}
it was obtained by gluing its open cover.

Summarizing,
we obtain

%Lem
\begin{lemma} \label{lem:GS-A}
If the chosen adapted unimodular triangulation
$\cT$
is star-shaped,
then there is an equivalence
\begin{align*}
\cF_A(\Pi_\Sigma)
\simeq
\Fuk(H)_{\bZ_2}
=
\mSh^\diamondsuit_{\del_\infty \bL_\Sigma}(\del_\infty \bL_\Sigma)_{\bZ_2}.
\end{align*}
\end{lemma}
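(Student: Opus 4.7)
The plan is to revisit the recursive argument in the proof of \pref{thm:global} and exploit the star-shaped hypothesis to replace the step-by-step modification of Weinstein structures with a single globally coherent choice. First I would fix the enumeration $v^0_1, v^1_1, \ldots, v^1_{l_1}, \ldots$ of the vertices of $\Pi_\Sigma$ introduced earlier and equip each local piece $H_{v^k_i}$ with Nadler's Weinstein structure whose final leg is the one corresponding to the edge dual to a facet of $\Pi^0_\Sigma$. Because $\cT$ is star-shaped, every vertex of $\Pi_\Sigma$ lies on $\del \Pi^0_\Sigma$, so such a distinguished edge exists at each vertex and is a free edge (not shared with any other vertex of $\cT$). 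This is the crucial point: unlike the general case treated in \pref{thm:global}, the final legs never collide along the gluing loci, so the transported Nadler Weinstein forms $\beta^{\mathrm{final}}_{\tilde{P}_d}|_{H_{v^k_i}}$ agree on overlaps up to canonical deformation and assemble into a global Weinstein structure on $H$.

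Next I would run the induction of \pref{thm:global} but keep the Weinstein structure $\lambda_k$ fixed throughout, rather than rebuilding it at each stage via the translation trick. At each step of the recursion, the gluing of $H_{v^0_1} \cup \bigcup H_{v^1_{i_1}} \cup \cdots$ with the next tailored pants $H_{v^k_i}$ along a shared leg $H_{e^k_{ji}}$ satisfies the separatedness hypothesis of \pref{lem:key2} (this is exactly \pref{eg:key}, and holds by the same Reeb-flow argument since the final legs are disjoint from the gluing locus). Hence Corollary \pref{cor:key} gives the fiber product description of $\Fuk$ of the union, and combining with \pref{lem:A-local-restriction} and the inductive hypothesis yields the canonical equivalence with the corresponding section of $\cF_A$. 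After exhausting $\Pi_\Sigma$ we obtain $\Fuk(H)_{\bZ_2} \simeq \cF_A(\Pi_\Sigma)$, now with the additional feature that the Weinstein structure on $H$ is globally defined from the start.

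To conclude with the microlocal sheaf identification, I would observe that the skeleta $\Core(H_{v^k_i}) \subset H_{v^k_i}$, which by \pref{lem:Symp-Symp2} are identified locally with the spherical projectivization of $-\Lambda^\infty_{d+1}$ restricted to $\Omega_{d+1}$, glue along the shared legs in a manner compatible with the coordinate changes on the dual toric variety. Tracking through the local identification of $\Core(\tilde{P}_d)$ with pieces of $\Lambda^\infty_{d+1}$ from \pref{lem:Symp-Symp1} and \pref{lem:Symp-Symp2}, and matching the combinatorics with the FLTZ prescription, the global skeleton is canonically $-\del_\infty \bL_\Sigma \subset S^*\bT^\vee$. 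Applying \pref{lem:Fuk-mSh} (i.e.\ \cite[Theorem 1.4]{GPS3}) to this globally defined Weinstein structure then gives $\Fuk(H) \simeq \mSh^\diamondsuit_{\del_\infty \bL_\Sigma}(\del_\infty \bL_\Sigma)$, and passing to $\bZ_2$-folding completes the proof.

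The main obstacle is verifying that the local-to-global gluing of Nadler's Weinstein forms really produces a single Weinstein structure on $H$ (and not merely a chain of Liouville homotopies as in the general case). This requires checking that on each overlap $H_{e^k_{ji}}$ the two induced Liouville forms from the tailored pants on either side of the edge actually coincide, not just up to a compactly supported exact one-form; the star-shaped assumption makes this possible because the free final-leg direction of both pants points away from the overlap, so the symmetric directions used in Nadler's prescription line up. Once this coherence is verified the rest is bookkeeping, and the identification of $\Core(H)$ with $-\del_\infty \bL_\Sigma$ is then forced by the local model of \pref{lem:Symp-Symp2} together with the combinatorics of the FLTZ skeleton.
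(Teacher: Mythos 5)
Your proposal follows essentially the same route as the paper: under star-shapedness every vertex of $\Pi_\Sigma$ admits a free edge (the one dual to the boundary facet of the corresponding simplex, i.e.\ the facet opposite $0$ --- your phrase ``edge dual to a facet of $\Pi^0_\Sigma$'' is a slight misnomer, but the free-edge property you actually use is the right one), so choosing the final legs there makes the transported Nadler Weinstein structures glue to a single global Weinstein structure, and the recursion of \pref{thm:global} then runs with this fixed structure via \pref{lem:A-local-restriction}, \pref{eg:key} and Corollary \pref{cor:key}. Your concluding steps --- gluing the local skeleta, identifying the global skeleton with $-\del_\infty \bL_\Sigma$ through \pref{lem:Symp-Symp1} and \pref{lem:Symp-Symp2}, applying \pref{lem:Fuk-mSh} and passing to $\bZ_2$-folding --- are exactly the paper's argument.
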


%Rmk
\begin{remark}
Here,
the Weinstein structure on
$H$
can be obtained by restricting the canonical one on
$T^* \bT^\vee$.
By
\cite[Corollary 6.2.6]{GS1}
the pair
$(T^* \bT^\vee, H)$
defines a Liouville pair
whose relative skeleton is
$-\bL_\Sigma$.
We denote by
$\Fuk(T^*\bT^\vee, H)$
the ind-completion of its partially wrapped Fukaya category. 
Applying
\cite[Theorem 1.1, Corollary 7.22]{GPS3},
one obtains a commutative diagram
\begin{align*}
\begin{gathered}
\xymatrix{
\mSh^\diamondsuit_{\del_\infty \bL_\Sigma}(\del_\infty \bL_\Sigma) \ar[r]^-{} \ar@{=}[d]
& \Sh^\diamondsuit_{\bL_\Sigma}(\bT^\vee) \ar@{=}[d] \\
\Fuk(H) \ar[r]^-{}
& \Fuk(T^*\bT^\vee, H).
}
\end{gathered}
\end{align*}
\end{remark}

\subsection{$B$-side category}
Under the additional assumption,
the diagram
\begin{align*}
\prod_{v \in \VVert(\Pi_\Sigma)} \MF^\infty(Y_v, y_{e^v_1} \cdots y_{e^v_{d+2}}) \rightarrow
\prod_{e \in \Edge(\Pi_\Sigma)} \MF^\infty(Y_v |_{\{ y_e \neq 0 \}}, y_{e^v_1} \cdots y_{e^v_{d+2}}) \rightarrow \cdots
\end{align*}
whose limit is
$\cF_B(\Pi_\Sigma) = \MF^\infty(Y, W_Y)$
has an alternative description.
Since
$\cT$
is star-shaped,
all the top dimensional cones of
$\Sigma_Y$
share the ray
$\Cone(0, \ldots, 0, 1) \subset M^\vee_\bR \times \bR$.
For each
$v \in \VVert(\Pi_\Sigma)$
let
$e^v_{d+2}$
stand for the edge dual to the boundary facet
$\sigma_{e^v_{d+2}}$
of
$\sigma_v$.
By
\pref{lem:Coh-MF}
we have the equivalence
\begin{align*}
\IndCoh(Y^{d+2}_{v, d+1})_{\bZ_2}
\simeq
\MF^\infty(y_{e^v_1} \cdots y_{e^v_{d+2}}).
\end{align*}
If
$e$
is an edge connecting two vertices
$v, v^\prime$,
then the associated restriction functors become
\begin{align*}
\begin{gathered}
\bar{R}^B_{v, e}
\colon
\IndCoh(Y^{d+2}_{v, d+1})_{\bZ_2}
\to
\IndCoh(Y^{d+2}_{v, d+1} |_{\{ y_{e^v_i} \neq 0 \}})_{\bZ_2}, \\
\bar{R}^B_{v^\prime, e}
\colon
\IndCoh(Y^{d+2}_{v^\prime, d+1})_{\bZ_2}
\to
\IndCoh(Y^{d+2}_{v^\prime, d+1} |_{\{ y_{e^{v^\prime}_j}\neq 0 \}})_{\bZ_2}.
\end{gathered}
\end{align*}

Recall that
$\Pi_\Sigma$
encodes all the combinatorial information to recover both
$H$
and
$Y$
from pieces.
In particular,
it gives a coordinate transformation
\begin{align} \label{eq:transformation4}
(y_{e^{v^\prime}_1}, \cdots, y_{e^{v^\prime}_{d+2}})
\to
(y_{e^v_1}, \cdots, y_{e^v_{d+2}})
\end{align}
on
$Y_e$.
We denote by
$T_\Sigma$
the closed toric subvariety of
$Y$
associated with the quotient
$\Sigma$
of
$\Sigma_Y = \Cone(-\cT \times \{ 1 \})$
by the ray
$\Cone(0, \ldots, 0, 1)$.
Then
\pref{eq:transformation4}
induces a coordinate transformation
\begin{align} \label{eq:transformation5}
(\tilde{y}_{e^{v^\prime}_1}, \cdots, \tilde{y}_{e^{v^\prime}_{d+1}})
\to
(\tilde{y}_{e^v_1}, \cdots, \tilde{y}_{e^v_{d+1}})
\end{align}
on the affine piece
$\tilde{Y}_e
=
\tilde{Y}_v \cap \tilde{Y}_{v^\prime}
\cong
Y^{d+2}_{v, d+1}
\cap
Y^{d+2}_{v^\prime, d+1}$
of
$T_\Sigma$
corresponding to
$Y_e$,
which defines a gluing datum
\begin{align*}
\IndCoh(Y^{d+2}_{v, d+1} |_{\{ y_{e^v_i} \neq 0 \}})_{\bZ_2}
\simeq
\IndCoh(Y^{d+2}_{v^\prime, d+1} |_{\{ y_{e^{v^\prime}_j}\neq 0 \}})_{\bZ_2}.
\end{align*}
As
$\tilde{Y}_v$
form an open cover of
$T_\Sigma$,
such a gluing datum is compatible with further restrictions.
Hence
$\cF_B(\Pi_\Sigma)$
is also the limit of the diagram
\begin{align*}
\prod_{v \in \VVert(\Pi_\Sigma)} \IndCoh(Y^{d+2}_{v, d+1})_{\bZ_2}
\rightarrow
\prod_{e \in \Edge(\Pi_\Sigma)} \IndCoh(Y^{d+2}_{v, d+1} |_{\{ y_{e^v_i} \neq 0 \}})_{\bZ_2}
\rightarrow
\cdots.
\end{align*}
Here,
the compatible equivalences on pieces glue to yield a canonical equivalence
\begin{align*}
\MF^\infty(Y, W_Y)
\simeq
\IndCoh(\del T_\Sigma)_{\bZ_2}
\end{align*}
for the toric boundary divisor
$\del T_\Sigma$,
as
$\IndCoh$
satisfies Zariski descent
\cite[Proposition 4.2.1]{Gai}.

Summarizing,
we obtain

%Lem
\begin{lemma} \label{lem:GS-B}
If the chosen adapted unimodular triangulation
$\cT$
is star-shaped,
then there is a canonical equivalence
\begin{align*}
\cF_B(\Pi_\Sigma)
=
\MF^\infty(Y, W_Y)
\simeq
\IndCoh(\del T_\Sigma)_{\bZ_2}.
\end{align*}
\end{lemma}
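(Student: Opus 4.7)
The plan is to make rigorous the gluing argument already sketched in the paragraph preceding the statement. We know from \pref{thm:main1} that $\cF_B(\Pi_\Sigma)=\MF^\infty(Y,W_Y)$ is the limit of the \v{C}ech diagram with vertices $\MF^\infty(Y_v,y_{e^v_1}\cdots y_{e^v_{d+2}})$ over $v\in\VVert(\Pi_\Sigma)$ and restriction functors along the inclusions of overlaps $Y_e\subset Y_v$. Our task is to identify this diagram with a \v{C}ech diagram for $\IndCoh$ on a Zariski cover of $\del T_\Sigma$, and then invoke descent.

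The star-shaped hypothesis on $\cT$ is used in the following key way: every maximal cone of $\Sigma_Y=\Cone(-\cT\times\{1\})$ contains the ray $\Cone(0,\ldots,0,1)\subset M^\vee_\bR\times\bR$. Thus at every vertex $v\in\VVert(\Pi_\Sigma)$ there is a canonical distinguished edge $e^v_{d+2}$ dual to the boundary facet of $\sigma_v$ normal to this common ray, and the corresponding coordinate on $Y_v\cong\bA^{d+2}$ is $y_{e^v_{d+2}}$. First, I would apply \pref{lem:Coh-MF} to each $Y_v$ with this canonical choice: letting $Y^{d+2}_{v,d+1}\subset\tilde Y_v\cong\bA^{d+1}$ be the union of the remaining coordinate hyperplanes, we obtain equivalences
\begin{align*}
\Phi_v\colon \MF^\infty(Y_v,y_{e^v_1}\cdots y_{e^v_{d+2}})\xrightarrow{\sim}\IndCoh(Y^{d+2}_{v,d+1})_{\bZ_2}.
\end{align*}
Here $\tilde Y_v$ is the affine open of the closed toric subvariety $T_\Sigma\subset Y$ associated with the quotient fan $\Sigma$ of $\Sigma_Y$ by $\Cone(0,\ldots,0,1)$, and the collection $\{\tilde Y_v\}$ is a Zariski affine cover of $T_\Sigma$.

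The main step, and the one that requires care, is to show that the $\Phi_v$ glue: namely, that for every edge $e$ joining $v$ to $v'$, the square formed by $\Phi_v,\Phi_{v'}$ and the two restrictions intertwines, up to the canonical identification $\tilde Y_e=\tilde Y_v\cap\tilde Y_{v'}$ provided by the coordinate transformation \pref{eq:transformation5} on $T_\Sigma$. Unwinding the definition of $\Phi_v$ from the proof of \pref{lem:Coh-MF}, the equivalence is induced by pullback along the linear projection $Y_v\to\tilde Y_v$ that annihilates $y_{e^v_{d+2}}$; so the compatibility reduces to verifying that the transition map \pref{eq:transformation4} on $Y_e$ restricts on the hyperplane $\{y_{e^v_{d+2}}=0\}=\{y_{e^{v'}_{d+2}}=0\}$ precisely to \pref{eq:transformation5}. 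This is where the star-shaped hypothesis is essential: only under this hypothesis does the ray $\Cone(0,\ldots,0,1)$ lie in every $\sigma_v$, so that the coordinate $y_{e^v_{d+2}}$ is uniformly dual to this same ray for all $v$, and the transition map preserves $y_{\bullet,d+2}=y_{\bullet',d+2}$ modulo lower-order terms that vanish on the hyperplane. The check itself is a direct calculation in the toric dictionary, but it is the content-bearing step.

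Once the $\Phi_v$ are glued to a morphism of diagrams, Zariski descent for $\IndCoh$ (\cite[Proposition 4.2.1]{Gai}) identifies
\begin{align*}
\lim_{v\in\VVert(\Pi_\Sigma)}\IndCoh(Y^{d+2}_{v,d+1})_{\bZ_2}\simeq \IndCoh(\del T_\Sigma)_{\bZ_2},
\end{align*}
because the cover $\{Y^{d+2}_{v,d+1}\}_v$ of $\del T_\Sigma=\bigcup_v Y^{d+2}_{v,d+1}$ is Zariski, and $\bZ_2$-folding commutes with small limits of stable presentable dg categories. Combined with the sheaf property of $\MF^\infty$ on $Y$ (\cite[Proposition A.3.1]{Pre}) and the description of $\cF_B(\Pi_\Sigma)$ recalled at the beginning of this outline, we obtain the desired equivalence $\MF^\infty(Y,W_Y)\simeq\IndCoh(\del T_\Sigma)_{\bZ_2}$. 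The canonicity is inherited from that of the Orlov equivalences $\Phi_v$.
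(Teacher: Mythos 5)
Your proposal is correct and follows essentially the same route as the paper: apply \pref{lem:Coh-MF} at each vertex using the distinguished coordinate $y_{e^v_{d+2}}$ singled out by the common ray $\Cone(0,\ldots,0,1)$ (the star-shaped hypothesis), check compatibility of these Orlov equivalences with restrictions via the transition maps \pref{eq:transformation4} inducing \pref{eq:transformation5} on $T_\Sigma$, and conclude by Zariski descent for $\IndCoh$. (One small simplification you could note: since $v=y_{e^v_{d+2}}=W_Y$ is chart-independent, the transition preserves the last coordinate exactly, not just modulo terms vanishing on the hyperplane.)
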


%Rmk
\begin{remark}
In general,
for a smooth quasiprojective stacky fan
$\Sigma$
the canonical morphism
\begin{align*}
\colim_{0 \neq \sigma \in \Sigma}\overline{O(\sigma)}
\to
\del T_\Sigma
\end{align*}
of algebraic stacks is an isomorphism
\cite[Lemma 3.4.1]{GS1}.
Namely,
as an algebraic stack
$\del T_\Sigma$
is the union of the nontrivial orbit closures.
Here,
we do not need this result,
since by definition the toric boundary divisors of affine open pieces glue to yield
$\del T_\Sigma$.
\end{remark}

\subsection{Homological mirror symmetry}
Recall that the equivalence
$\cF_A(\Pi_\Sigma) \simeq \cF_B(\Pi_\Sigma)$
from
\pref{thm:main1}
is obtained by gluing the local equivalences
$\cF_A(U_v) \simeq \cF_B(U_v)$
for all
$v \in \VVert(\Pi_\Sigma)$,
each of which comes from the extended Nadler's equivalence
\begin{align*}
\varphi^v_{d+2}
\colon
\Fuk(H_v)_{\bZ_2}
=
\mSh^\diamondsuit(\Lambda^\infty_{d+1})_{\bZ_2}
\to
\IndCoh(Y^{d+2}_{v, d+1})_{\bZ_2}
\to
\MF^\infty(\bA^{d+2}, y_{e^v_1} \cdots y_{e^v_{d+1}} y_{e^v_{d+2}})
\end{align*}
from
\pref{thm:lift2}.
Now,
fix a Nadler's equivalence
$\varphi^v_{d+2}$
for each
$v \in \VVert(\Pi_\Sigma)$.
Suppose that
$v$
is connected to another vertex
$v^\prime$
by an edge
$e$.
Then
$\Sigma_Y$
gives the correspondences
\begin{align} \label{eq:transformation6}
(y_{e^v_1}, \ldots, y_{e^v_{d+2}})
\leftrightarrow
(y_{e^{v^\prime}_1}, \ldots, y_{e^{v^\prime}_{d+2}})
\end{align}
of the coordinates on
$Y_e$
inducing that
\begin{align} \label{eq:transformation7}
(\tilde{y}_{e^v_1}, \ldots, \tilde{y}_{e^v_{d+1}})
\leftrightarrow
(\tilde{y}_{e^{v^\prime}_1}, \ldots, \tilde{y}_{e^{v^\prime}_{d+1}})
\end{align}
on
$\tilde{Y}_e$.
Since the final legs are free,
by construction of Nadler's equivalences
$\varphi^{v^\prime}_{d+2}$
is compatible with
$\varphi^v_{d+2}$
and
we have the commutative diagram
\begin{align} \label{eq:commute2}
\begin{gathered}
\xymatrix{
\varphi^{v^\prime}_{d+2} \colon
\Fuk(H_{v^\prime})_{\bZ_2} \ar[r]^-{} \ar[d]
&
\IndCoh(Y^{d+2}_{v^\prime, d+1})_{\bZ_2} \ar[r]^-{} \ar^{\tilde{R}^B_{v^\prime, e}}[d]
&
\MF^\infty(\bA^{d+2}, y_{e^{v^\prime}_1}, \ldots, y_{e^{v^\prime}_{d+1}} y_{e^{v^\prime}_{d+2}}) \ar^{R^B_{v^\prime, e}}[d] \\
\varphi^{v^\prime}_{d+2} |_{e^{v^\prime}_{i^\prime}} \colon
\Fuk(H_e)_{\bZ_2} \ar[r]^-{} \ar^{\simeq}[d]
&
\IndCoh(Y^{d+2}_{v^\prime, d+1} |_{\{ y_e \neq 0 \}})_{\bZ_2} \ar[r]^-{} \ar^{\simeq}[d]
&
\MF^\infty(y_{e^{v^\prime}_1} \ldots y_{e^v_{d+1}} y_{e^{v^\prime}_{d+2}} |_{\{ y_e \neq 0 \}}) \ar^{\simeq}[d] \\
\varphi^v_{d+2} |_{e^v_i} \colon
\Fuk(H_e)_{\bZ_2} \ar[r]^-{}
&
\IndCoh(Y^{d+2}_{v^\prime, d+1} |_{\{ y_e \neq 0 \}})_{\bZ_2} \ar[r]^-{}
&
\MF^\infty(y_{e^v_1} \cdots y_{e^v_{d+1}} y_{e^v_{d+2}} |_{\{ y_e \neq 0 \}}) \\
\varphi^v_{d+2} \colon
\Fuk(H_v)_{\bZ_2} \ar[r]^-{} \ar[u]
&
\IndCoh(Y^{d+2}_{v^\prime, d+1})_{\bZ_2} \ar[r]^-{} \ar_{\tilde{R}^B_{v, e}}[u]
&
\MF^\infty(\bA^{d+2}, y_{e^v_1} \cdots y_{e^v_{d+1}} y_{e^v_{d+2}}) \ar_{R^B_{v, e}}[u]
}
\end{gathered}
\end{align}
where the $B$-side middle vertical arrows are the canonical equivalences induced by
\pref{eq:transformation6},
\pref{eq:transformation7}.
Clearly,
it yields the commutative diagrams for further restrictions.

Suppose further that
$v^\prime$
is connected to another vertex
$v^{\prime \prime}$
by an edge
$e^{\prime}$.
Then
$\Sigma_Y$
gives the correspondences
\begin{align*}
(y_{e^{v^\prime}_1}, \ldots, y_{e^{v^\prime}_{d+2}})
\leftrightarrow
(y_{e^{v^{\prime \prime}}_1}, \ldots, y_{e^{v^{\prime \prime}}_{d+2}})
\end{align*}
of the coordinates on
$Y_{e^\prime}$
inducing that
\begin{align} \label{eq:transformation8}
(\tilde{y}_{e^{v^\prime}_1}, \ldots, \tilde{y}_{e^{v^\prime}_{d+1}})
\leftrightarrow
(\tilde{y}_{e^{v^{\prime \prime}}_1}, \ldots, \tilde{y}_{e^{v^{\prime \prime}}_{d+1}})
\end{align}
on
$\tilde{Y}_{e^\prime}$.
Since
$\varphi^{v^{\prime \prime}}_{d+2}$
is compatible with
$\varphi^{v^\prime}_{d+2}$,
we have the same commutative diagram as
\pref{eq:commute2}.
On the intersection
$Y_v \cap Y_{v^\prime} \cap Y_{v^{\prime \prime}}$,
the fan
$\Sigma_Y$
also gives the correspondences
\begin{align*}
(y_{e^v_1}, \ldots, y_{e^v_{d+2}})
\leftrightarrow
(y_{e^{v^{\prime \prime}}_1}, \ldots, y_{e^{v^{\prime \prime}}_{d+2}})
\end{align*}
of the coordinates inducing that
\begin{align} \label{eq:transformation9}
(\tilde{y}_{e^v_1}, \ldots, \tilde{y}_{e^v_{d+1}})
\leftrightarrow
(\tilde{y}_{e^{v^{\prime \prime}}_1}, \ldots, \tilde{y}_{e^{v^{\prime \prime}}_{d+1}})
\end{align}
on
$\tilde{Y}_v
\cap
\tilde{Y}_{v^\prime}
\cap
\tilde{Y}_{v^{\prime \prime}}$.
Since
$\varphi^{v^{\prime \prime}}_{d+2}$
is compatible with
$\varphi^v_{d+2}$
and
the affine pieces
$\tilde{Y}_v, \tilde{Y}_{v^\prime}, \tilde{Y}_{v^{\prime \prime}}$
glue to yield an open subvariety of
$\del T_\Sigma$,
the correspondences
\pref{eq:transformation9}
are compatible with
\pref{eq:transformation7}
and
\pref{eq:transformation8}.
Hence by Corollary
\pref{cor:key}
the equivalences
$\varphi^v_{d+2}, \varphi^{v^\prime}_{d+2}, \varphi^{v^{\prime \prime}}_{d+2}$
glue to yield an equivalence
\begin{align*}
\varphi^v_{d+2}
\cup
\varphi^{v^\prime}_{d+2}
\cup
\varphi^{v^{\prime \prime}}_{d+2}
\colon
\Fuk(H_v \cup H_{v^\prime} \cup H_{v^{\prime \prime}})_{\bZ_2}
\to
\cF_B(U_v \cup U_{v^\prime} \cup U_{v^{\prime \prime}})
\end{align*}
factorizing through
$\IndCoh(\tilde{Y}_v \cup \tilde{Y}_{v^\prime} \cup \tilde{Y}_{v^{\prime \prime}})_{\bZ_2}$.
Iteratively,
we obtain a compatible system
$\{ \varphi^{v}_{d+2} \}_{v \in \VVert(\Pi_\Sigma)}$
of Nadler's equivalences
whose gluing
\begin{align*}
\bigcup_{v \in \VVert(\Pi_\Sigma)} \varphi^{v}_{d+2}
\colon
\cF_A(\Pi_\Sigma)
\to
\cF_B(\Pi_\Sigma)
\end{align*}
gives the equivalence from
\pref{thm:main1}
factorizing through
$\IndCoh(\del T_\Sigma)_{\bZ_2}$. 

Summarizing,
from
\pref{lem:GS-A}
and
\pref{lem:GS-B}
we obtain

%Lem
\begin{lemma} \label{lem:GS-HMS}
If the chosen adapted unimodular triangulation
$\cT$
is star-shaped,
then there is an equivalence
\begin{align*}
\cF_A(\Pi_\Sigma)
\simeq
\Fuk(H)_{\bZ_2}
=
\mSh^\diamondsuit_{\del_\infty \bL_\Sigma}(\del_\infty \bL_\Sigma)_{\bZ_2}
\simeq
\IndCoh(\del T_\Sigma)_{\bZ_2}
\simeq
\MF^\infty(Y, W_Y)
=
\cF_B(\Pi_\Sigma).
\end{align*}
\end{lemma}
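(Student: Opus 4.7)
The plan is to assemble the displayed chain of equivalences from ingredients already at hand, the one genuinely new ingredient being a globally compatible choice of Nadler's local equivalences. The two outer steps come for free: \pref{lem:GS-A} supplies $\cF_A(\Pi_\Sigma) \simeq \Fuk(H)_{\bZ_2}$, and the identification $\Fuk(H)_{\bZ_2} = \mSh^\diamondsuit_{\del_\infty \bL_\Sigma}(\del_\infty \bL_\Sigma)_{\bZ_2}$ is \pref{lem:Fuk-mSh}; similarly \pref{lem:GS-B} supplies $\MF^\infty(Y,W_Y) \simeq \IndCoh(\del T_\Sigma)_{\bZ_2}$, and the last equality $\MF^\infty(Y,W_Y) = \cF_B(\Pi_\Sigma)$ is definitional. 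Thus the real content is the middle equivalence $\mSh^\diamondsuit_{\del_\infty \bL_\Sigma}(\del_\infty \bL_\Sigma)_{\bZ_2} \simeq \IndCoh(\del T_\Sigma)_{\bZ_2}$, and the strategy is to obtain it by gluing the extended Nadler equivalences $\varphi^v_{d+2}$ of \pref{thm:lift2} along $\Pi_\Sigma$.

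First, for each vertex $v\in\VVert(\Pi_\Sigma)$ I would fix a Nadler equivalence $\varphi^v_{d+2}$ whose final leg corresponds to the edge $e^v_{d+2}$ dual to the boundary facet of $\sigma_v$. The star-shaped hypothesis on $\cT$ is what makes this choice simultaneously possible at every vertex: every cone $\sigma_v$ has a facet lying on the boundary of $\Pi^0_\Sigma$, so each $v$ admits a free final leg, and the transported Nadler Weinstein structures glue without any ad hoc modification of Liouville forms, giving directly the global skeleton $-\del_\infty \bL_\Sigma$. Next I would verify the pairwise compatibility along each edge $e$ joining vertices $v$ and $v^\prime$: the transition data of $\Sigma_Y$ induce coordinate changes on $Y_e$ and on the associated affine piece $\tilde Y_e \subset \del T_\Sigma$, and the free-final-leg choice ensures that $\varphi^{v}_{d+2}|_{U_e}$ and $\varphi^{v^\prime}_{d+2}|_{U_e}$ differ precisely by these canonical identifications, which is the content of diagram \pref{eq:commute2}.

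Then I would handle higher overlaps by induction on the distance enumeration of vertices already used in \pref{thm:global}. The inductive step rests on two ingredients. On the $A$-side, \pref{cor:key} provides the fiber product presentation of the Fukaya category of the union, using the separatedness hypothesis whose validity in our geometric context is to be taken from Remark \pref{rmk:adv} and the Appendix. On the $B$-side, Zariski descent for $\IndCoh$ (and Preygel's Thom--Sebastiani-type descent for $\MF^\infty$) yields the matching fiber product description. Comparing the two fiber products along the already-established edge-level equivalences produces the equivalence over the enlarged open subset, and this restricts correctly to each $U_v$ by construction. Sheafification then globalizes to the desired $\mSh^\diamondsuit_{\del_\infty \bL_\Sigma}(\del_\infty \bL_\Sigma)_{\bZ_2} \simeq \IndCoh(\del T_\Sigma)_{\bZ_2}$.

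The main obstacle is the cocycle consistency on triple overlaps: a priori the coordinate changes on $\tilde Y_v \cap \tilde Y_{v^\prime} \cap \tilde Y_{v^{\prime\prime}}$ could be incompatible with the two step identifications through $\tilde Y_{v^\prime}$, which would spoil the gluing. This is exactly where the star-shaped assumption pays its dividend, for a second reason: all chosen final legs share the same global combinatorial meaning (dual to boundary facets of $\sigma_v$), hence the entire system of transitions is controlled by the toric combinatorics of $\Sigma_Y$ and cocycles automatically. Once that is verified, concatenating with \pref{lem:GS-A} and \pref{lem:GS-B} yields the complete chain, and gives in particular the alternative proof of Gammage--Shende's theorem advertised at the start of the section.
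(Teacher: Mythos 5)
Your proposal is correct and follows essentially the same route as the paper: both reduce the statement to Lemmas \pref{lem:GS-A} and \pref{lem:GS-B} plus a compatible system of Nadler equivalences $\varphi^v_{d+2}$ with free final legs dual to boundary facets (available precisely because $\cT$ is star-shaped), check edge and triple-overlap compatibility via the toric transition data of $\Sigma_Y$ as in \pref{eq:commute2}, and glue with Corollary \pref{cor:key} so that the global equivalence of \pref{thm:main1} factors through $\IndCoh(\del T_\Sigma)_{\bZ_2}$. The only cosmetic difference is that you invoke \pref{lem:Fuk-mSh} directly for the middle equality, whereas the paper packages that (together with the identification of the glued skeleton with $-\del_\infty\bL_\Sigma$) inside \pref{lem:GS-A}.
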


\subsection{Nonunimodular case}
Finally,
we drop the assumption on
$\cT$
to be unimodular.
Note that
$\cT$
is still
adapted
and
star-shaped.
Then by
\cite[Lemma 2.5.3]{GS1}
the polytope
$\Delta^\vee_X, \Delta^\vee$
are the convex hulls of the stacky primitives of smooth quasiprojective stacky fans
$\Sigma_Y, \Sigma$.
Roughly speaking,
a stacky fan is a simplicial fan together with a choice of integer point along each ray.
See
\cite[Definition 2.4]{GS15}
for the definition. 
The homomorphism
$\beta \colon L \to N$
there corresponds to
$\bZ^{d+2} \to M^\vee \times \bZ,
\bZ^{d+1} \to M^\vee$
whose cokernels are the quotients by the stacky primitives.
Since
$\Coker \beta$
is finite,
$\beta$
induces a surjection of tori 
\begin{align*}
T_\beta
\colon
T_L
=
\Hom(\Hom(L, \bZ), \bC^*)
\to
T_N
=
\Hom(\Hom(N, \bZ), \bC^*).
\end{align*}
Let
$G(\Delta^\vee_X) \subset (\bC^*)^{\Sigma_Y(1)}, G(\Delta^\vee) \subset (\bC^*)^{\Sigma(1)}$
be the subgroups corresponding to
$\Ker \beta$.
Then the associated toric stacks are
$[\bA^{\Sigma_Y(1)} / G(\Delta^\vee_X)], [\bA^{\Sigma(1)} / G(\Delta^\vee)]$.

For each top dimensional cone
$\sigma \in \Sigma_Y$,
consider open substacks
$[\bA^{\sigma(1)} / G(\Delta^\vee_X)],
[\bA^{\bar{\sigma}(1)} / G(\Delta^\vee)]$
where
$\bA^{\sigma(1)}, \bA^{\bar{\sigma}(1)}$
are the open subvarieties determined by the rays
whose images span
$\sigma, \bar{\sigma}$.
Since the induced homomorphisms
$\bZ^{\sigma(1)}
\to
M^\vee \times \bZ,
\bZ^{\bar{\sigma}(1)}
\to
M^\vee$
have finite cokernels,
their dual yield the short exact sequences
\begin{align*}
0
\to
M \times \bZ
\to
\bZ^{\sigma(1)}
\to
G^\vee(\sigma)
\to
0, \
0
\to
M
\to
\bZ^{\bar{\sigma}(1)}
\to
G^\vee(\sigma)
\to
0.
\end{align*}
Taking
$\Hom(-, \bC^*)$,
we obtain the short exact sequences
\begin{align*}
0
\to
G(\sigma)
\to
(\bC^*)^{\sigma(1)}
\to
M^\vee_\bC \times \bC^*
\to
0, \
0
\to
G(\sigma)
\to
(\bC^*)^{\bar{\sigma}(1)}
\to
M^\vee_\bC
\to
0
\end{align*}
where
$G(\sigma)$
is the restriction
$G(\Delta^\vee_X) \cap (\bC^*)^{\sigma(1)}
=
G(\Delta^\vee) \cap (\bC^*)^{\bar{\sigma}(1)}$.
The inclusion
$M \times \bZ
\hookrightarrow
\bZ^{\sigma(1)},
M
\to
\bZ^{\bar{\sigma}(1)}$
induce finite covers
\begin{align*}
h^\vee(\sigma)
\colon
T^* (\bR^{d+2} / \bZ^{\sigma(1)})
\to
T^* (\bR^{d+2} / M \times \bZ), \
h^\vee(\bar{\sigma})
\colon
T^* (\bR^{d+1} / \bZ^{\bar{\sigma}(1)})
\to
T^* (\bR^{d+1} / M),
\end{align*}
which are the quotients by
$G^\vee(\sigma)$.

Applying the same argument as in Section
$8.5$,
we obtain
\begin{align} \label{eq:preGS}
\Fuk(H(\Delta^\vee))_{\bZ_2}
\simeq
\MF^\infty([\bA^{\Sigma_Y(1)} / G(\Delta^\vee_X)], W_Y),
\end{align}
where
$H(\Delta^\vee)$
is the gluing of the inverse images of
$\tilde{P}_d$
under
$h^\vee(\bar{\sigma})$.
Note that
the $G(\Delta^\vee_X)$-action preserves
$W_Y$
as it is defined by the height function.
Since the argument in the previous section is compatible with the actions of
$G(\sigma), G(\bar{\sigma})$,
the equivalence
\pref{eq:preGS}
factors through
$\IndCoh(\del T_\Sigma(\Delta^\vee))_{\bZ_2}$,
where
$\del T_\Sigma(\Delta^\vee)$
is the toric boundary divisor of
$[\bA^{\Sigma(1)} / G(\Delta^\vee)]$.

Summarizing,
we obtain

%Thm
\begin{theorem}[cf. {\cite[Theorem 1.0.1]{GS1}}] 
Let
$\cT$
be an adapted star-shaped triangulation of
$\Delta^\vee$.
Then there is an equivalence
\begin{align*}
\Fuk(H(\Delta^\vee))_{\bZ_2}
\simeq
\mSh^\diamondsuit_{\del_\infty \bL_\Sigma(\Delta^\vee)}(\del_\infty \bL_\Sigma(\Delta^\vee))_{\bZ_2}
\simeq
\IndCoh(\del T_\Sigma(\Delta^\vee))_{\bZ_2}
\simeq
\MF^\infty([\bA^{\Sigma_Y(1)} / G(\Delta^\vee_X)], W_Y),
\end{align*}
where
$\del_\infty \bL_\Sigma(\Delta^\vee)$
is the corresponding finite cover of
$\del_\infty \bL_\Sigma$.
\end{theorem}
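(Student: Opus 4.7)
The plan is to reduce the general star-shaped case to the unimodular case of Lemma~\pref{lem:GS-HMS} by passing to suitable finite covers and applying the equivariantization/de-equivariantization machinery of Section~7. By \cite[Lemma~2.5.3]{GS1}, since $\cT$ is adapted and star-shaped, $\Delta^\vee_X$ and $\Delta^\vee$ are the convex hulls of the stacky primitives of smooth quasiprojective stacky fans $\Sigma_Y, \Sigma$, whose associated toric Deligne--Mumford stacks are the global quotients $[\bA^{\Sigma_Y(1)}/G(\Delta^\vee_X)]$ and $[\bA^{\Sigma(1)}/G(\Delta^\vee)]$. I would first work locally on each top-dimensional cone $\sigma \in \Sigma_Y$: the dual of the inclusion $M \times \bZ \hookrightarrow \bZ^{\sigma(1)}$ with finite cokernel $G^\vee(\sigma)$ yields the short exact sequence $0 \to G(\sigma) \to (\bC^*)^{\sigma(1)} \to M^\vee_\bC \times \bC^* \to 0$, and analogously for $\bar\sigma \in \Sigma$ on the pants side. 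These sequences produce a finite cover $h^\vee(\bar\sigma)$ of the tailored pants $\tilde P_d$ with deck group $G^\vee(\bar\sigma)$, and a stacky local chart $[\bA^{\sigma(1)}/G(\sigma)]$ on which $W_Y$ descends, since $W_Y$ is the height coordinate and is $G(\sigma)$-invariant.

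Next, I would apply the unimodular local equivalence of Theorem~\pref{thm:lift2} to the finite cover, obtaining
$$
\Fuk\bigl(h^\vee(\bar\sigma)^{-1}(\tilde P_d)\bigr)_{\bZ_2} \simeq \MF^\infty(\bA^{\sigma(1)}, W_Y),
$$
which factorizes through $\IndCoh$ of the appropriate union of coordinate hyperplanes, exactly as in the proof of Lemma~\pref{lem:GS-HMS}. By Lemma~\pref{lem:Shende}, taking $G^\vee(\bar\sigma)$-invariants on the $A$-side is equivalent to $G(\sigma)$-de-equivariantization, and combined with $G(\sigma)$-equivariantization on the $B$-side this produces the desired local three-term equivalence on the stacky chart. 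On the microlocal-sheaf side the corresponding descent is governed by the same group, and the skeleton of the cover is identified with the finite cover $\del_\infty \bL_\Sigma(\Delta^\vee)$ of $\del_\infty \bL_\Sigma$.

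The main obstacle will be checking that these local equivalences glue compatibly across the charts of $[\bA^{\Sigma_Y(1)}/G(\Delta^\vee_X)]$, now that each chart carries a nontrivial $G(\sigma)$-action. Concretely, on overlaps $\tilde Y_\sigma \cap \tilde Y_{\sigma'}$ the coordinate transformations dictated by the combinatorial duality over $\Pi_\Sigma$ must be equivariant for the compatible inclusions $G(\sigma), G(\sigma') \hookrightarrow G(\Delta^\vee_X)$, and the de-equivariantizations must glue to a global section of the equivariantized sheaf. The point is that $G(\Delta^\vee_X)$ is defined globally, so its restrictions to charts embed coherently, and the star-shaped hypothesis ensures, as in the proof of Lemma~\pref{lem:GS-A}, that Nadler's final legs remain free and assemble to a global Weinstein structure on the cover.

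Once this equivariant compatibility is in place, I would iterate the cone-by-cone gluing argument of Theorem~\pref{thm:main1} verbatim at the level of the cover, and descend via functoriality of equivariantization. The three-way identification among $\Fuk(H(\Delta^\vee))_{\bZ_2}$, microlocal sheaves on $\del_\infty \bL_\Sigma(\Delta^\vee)$, $\IndCoh(\del T_\Sigma(\Delta^\vee))_{\bZ_2}$, and $\MF^\infty([\bA^{\Sigma_Y(1)}/G(\Delta^\vee_X)], W_Y)$ then follows by transporting each of the three equivalences of Lemma~\pref{lem:GS-HMS} through the finite covers and invoking Lemma~\pref{lem:Shende} to pass to the stacky quotients.
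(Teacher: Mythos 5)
Your overall architecture coincides with the paper's: interpret the adapted star-shaped nonunimodular data via \cite[Lemma 2.5.3]{GS1} as smooth stacky fans, work chart-by-chart over the top-dimensional cones, introduce the finite covers $h^\vee(\bar{\sigma})$ with deck group $G^\vee(\sigma)$, use \pref{lem:Shende} to pass between cover and stacky quotient, and glue using the $G(\sigma)$-equivariance of the star-shaped argument of \pref{lem:GS-HMS}. However, your local input contains a concrete mis-step. \pref{thm:lift2} applies to the tailored pants $\tilde{P}_d$, not to its finite cover $h^\vee(\bar{\sigma})^{-1}(\tilde{P}_d)$ (which is not a pants unless $G(\sigma)$ is trivial), and your displayed equivalence $\Fuk(h^\vee(\bar{\sigma})^{-1}(\tilde{P}_d))_{\bZ_2} \simeq \MF^\infty(\bA^{\sigma(1)}, W_Y)$ is false whenever $G(\sigma) \neq 1$: the affine-space matrix factorization category is mirror to the \emph{base} pants, while the cover is mirror to the \emph{stacky} chart. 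The correct bookkeeping (the paper's Section 7.5/8.5 maneuver) is: apply the unimodular equivalence to the base, $\Fuk(\tilde{P}_d)_{\bZ_2} \simeq \MF^\infty(\bA^{\sigma(1)}, W_Y)$; identify $\Fuk(\tilde{P}_d) \simeq \Fuk(h^\vee(\bar{\sigma})^{-1}(\tilde{P}_d))^{G^\vee(\sigma)} \simeq \Fuk(h^\vee(\bar{\sigma})^{-1}(\tilde{P}_d))_{BG(\sigma)}$, so that by \pref{lem:Shende} one has $\Fuk(h^\vee(\bar{\sigma})^{-1}(\tilde{P}_d)) \simeq \Fuk(\tilde{P}_d)^{G(\sigma)}$; and then equivariantize \emph{both} sides to obtain $\Fuk(h^\vee(\bar{\sigma})^{-1}(\tilde{P}_d))_{\bZ_2} \simeq \MF^\infty(\bA^{\sigma(1)}, W_Y)^{G(\sigma)} \simeq \MF^\infty([\bA^{\sigma(1)}/G(\sigma)], W_Y)$. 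Relatedly, your phrase ``combined with $G(\sigma)$-equivariantization on the B-side'' is not a legal move on its own: one cannot equivariantize only one side of an equivalence; the group must act compatibly through the equivalence, which is exactly what the identification above supplies.

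Once the local statement is repaired in this way, the remainder of your plan is the paper's argument: $W_Y$ descends because it is the height function and hence $G(\Delta^\vee_X)$-invariant; the gluing data are equivariant because $G(\sigma) = G(\Delta^\vee_X) \cap (\bC^*)^{\sigma(1)} = G(\Delta^\vee) \cap (\bC^*)^{\bar{\sigma}(1)}$ all restrict from the globally defined groups, compatibly with the combinatorial duality over $\Pi_\Sigma$; and the star-shaped hypothesis keeps the final legs free, so the construction of \pref{lem:GS-A} and \pref{lem:GS-HMS} carries over to $H(\Delta^\vee)$, giving the factorization through $\IndCoh(\del T_\Sigma(\Delta^\vee))_{\bZ_2}$ and the identification of the skeleton with the finite cover $\del_\infty \bL_\Sigma(\Delta^\vee)$.
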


%Rmk
\begin{remark}
In
\cite{GS1}
the authors also showed the compatibility of the equivalence with coherent--constructible correspondence
\cite{Kuw}
\begin{align*}
\Fuk(T^* (\bR^{d+1} / \bZ^{\bar{\sigma}(1)}), H(\Delta^\vee))
=
\Sh_{\bL_\Sigma(\Delta^\vee)}(\bR^{d+1} / \bZ^{\bar{\sigma}(1)})
\simeq
\Coh(T_\Sigma(\Delta^\vee)).
\end{align*}
\end{remark}

%%%%%%%%%%%%%%%%%%%%%%%%%%%%%%%%%%%%%%%%%%%%%%%%%%%%%%%%%%
\appendix
\section{Local-to-Global on A-side}
The main result of the Appendix is a proof of the locality of the Fukaya category in the complete intersection setting. More precisely, if
$Z_t$
is a tropical family of smooth complete intersections in
$(\bC^*)^n$,
we will compute its Fukaya category as the global sections of a sheaf of Fukaya categories over the tropical 
base
$Z^{trop}$.
This statement has been already proved in the main text as Theorem
\ref{thm:global}
and
\ref{thm:global2},
albeit with a different and somewhat more ad hoc argument,
so it is useful to clarify in
which way this Appendix complements that material. 

First of all, the proofs of Theorem
\ref{thm:global}
and
\ref{thm:global2}
actually depend on the results of this Appendix.
Indeed,
they rely on the fact
that
the separatedness assumption in
\pref{lem:key2}
and
Corollary
\pref{cor:key}
hold in our setting.
At least in the hypersurface case,
this follows from the discussion in
\pref{eg:key},
which describes the geometry of the Reeb flow
when we glue  together two pairs-of-pants.
We believe that
similar arguments can be made to work in the complete intersection setting, however we do not pursue this line of argument in the current article.
Instead,
in this Appendix,
we shall show that
the separatedness assumption holds using the technique of semi-tropicalization,
which was introduced in a previous paper by the third author
\cite{Zho}. 

Additionally,
in this Appendix we will explain a general approach to construct Weinstein structures adapted to a gluing of pants (or more generally covers of products of pants). 
Namely,
we shall show how to explicitly engineer global Weinstein structures
that have the required local properties
by considering appropriate potentials in the tropical base.
This approach is more flexible than the one explained in the proof of 
\pref{thm:global},
where we work with the standard Weinstein structure in the ambient torus (up to translation).
We believe that
this clarifies our argument,
and
that
it might be of independent interest
as it should be applicable in more general contexts than the one considered in the present paper.

\subsection{Setup}
We recall the setup from before. 
Fix positive integers
$n$
and
$r \leq n$.
Consider
$r$
subsets
$A_1, \cdots, A_r \subset \bZ^n$.
Fix the following functions
$$ \rho_i \colon A_i \to \bR. $$
For any fixed
$t>1$
we consider the following Laurent polynomials on
$(\bC^*)^n$
$$ W_{i,t}(z) = \sum_{a \in A_i}   c_i(a) t^{-\rho_i(a)} z^a $$
where
$z^a = z_1^{a_1} \cdots z_n^{a_n}$
is a monomial on
$(\bC^*)^n$
and
$c_i(a) \in \bC^*$.
Let
$Z_t = \bigcap_{i=1}^r H_{i,t}$
be the complete intersection of
$H_{i,t} = W^{-1}_{i, t}(0)$. 

We also define the tropical version of the above data. 
Recall the piecewise linear function
$$ W_i^{trop} \colon \bR^n \to \bR, \
x \mapsto \max_{a \in A_i} \{ \langle x, a \rangle - \rho_i(a) \}. $$
Let 
$Z^{trop} = \bigcap_i H_i^{trop}$
be the complete intersection of the corner loci
$H_i^{trop}$
of
$W_i^{trop}$. 

Consider the rescaled log-modulus map
$$ \Log_t \colon \bC^* \to \bR, \quad z \mapsto \frac{\log |z|} {\log t}, $$
and
extend naturally to
$(\bC^*)^n \to \bR^n$
componentwise. 
We have Gromov-Hausdorff convergence of subsets in
$\bR^n$
\cite{Mik}
$$ \lim_{t \to \infty} \Log_t (H_{i,t}) = H_i^{trop}, \quad \lim_{t \to \infty} \Log_t (Z_{t}) = Z^{trop}. $$
Since
$H_i^{trop}$
intersect transversely,
$H_{i,t}$
intersect transversely
and
$Z_t$
is smooth for all
$t \gg 1$. 

We will often consider a subset
$P \subset \bR^n$
and
the preimage
$\Log_t^{-1}(P)$.
We denote
$$ Z_t |_P \coloneqq \Log_t^{-1}(P) \cap Z_t. $$

For each
$W_i^{trop}$
we have a piecewise constant function
$$  \bR^n \to \{\text{subsets of $A_i$}\},\quad x \mapsto A_i(x) = \{ a\in A_i \mid W_i^{trop}(x) = \langle x, a \rangle - \rho_i(a) \}. $$
The level set of such a function defines a stratification
$\cS_i$
of
$\bR^n$,
where for each strata
$\tau_i$
we write
$A_{i,\tau_i}$
for the value of the function. 

The intersection of
$\cS_i$
is a common refinement stratification
$\cS$,
and
a strata $ \tau \in \cS$
is denoted as
$(\tau_1, \cdots, \tau_r)$
for
$\tau_i \in \cS_i$.
The subset
$Z^{trop} \subset \bR^n$
has the induced stratification
$\cS_Z$
of
$\cS$.

\begin{definition}
For each
$\tau_i \in \cS_i$
we have a truncated local defining function
$$ W_{i,t,\tau_i}(z) = \sum_{a \in A_{i,\tau_i}} c_i(a) t^{- \rho_i(a)} z^a $$
and
call
$H_{i,t,\tau_i} = W_{i,t,\tau_i}^{-1}(0)$
the truncated hypersurface. 
\end{definition}

For any strata
$\tau =(\tau_1, \cdots, \tau_r) \in \cS_Z$
we have
$$ Z_{t,\tau} = \bigcap_{i=1}^r H_{i,t,\tau_i}. $$

\subsection{Semi-Tropicalization}
We first discuss the semi-tropicalization of a hypersurface. 
Let
$H_t \subset (\bC^*)^n$
be the zero set of a tropicalized Laurent polynomial
$$ W_t(z) = \sum_{a \in A} c(a) t^{-\rho(a)} z^a$$
with
monomial set
$A \subset \bZ^n$
and
height function
$\rho$.
Assume that
for each
$a \in A$
there is a region
$C_a \subset \bR^n$
where
$W^{trop}(x) = \langle x, a \rangle - \rho(a)$
holds. 

Let
$\cS$
be the natural stratification on
$\bR^n$
given by
$W^{trop}$.
For each strata
$\tau \in \cS$
let 
$$ A_\tau = \{a \in A \mid W^{trop}(x) = \langle x, a \rangle - \rho(a) \}.$$  

Consider a canonical distance function from a point
$x$
to the cell
$C_a$
defined as
$$ d(x,C_a) = W^{trop}(x) - ( \langle x, a \rangle - \rho(a) ). $$

Let
$\epsilon_t  = 1 / \log t$.
Then as 
$t \to \infty$,
we have
$\epsilon_t \to 0$. 

%Rmk
\begin{remark}
The ``thickness'' of the rescaled amoeba
$\Log_t(H_{t})$
depends on the order of
$\epsilon_t$.
We choose the cut-off function on the scale
$\epsilon^{1/2}_t \gg \epsilon_t$,
which will guarantee that
the semi-tropicalized hypersurface sufficiently close to the original one.
See
\cite{Zho}
for more details about the semi-tropicalization procedure. 
\end{remark}

Let
$\chi(r)$
be a smooth cut-off function on
$\bR_{\geq 0}$
with
$\chi'(r) \leq 0$,
$\chi|_{[0,1/2]}=1$
and
$\chi_{[1,\infty)}=0$.
Define
$$\chi_{a,t}(x) = \chi\left(\frac{d(x,C_a)}{\epsilon_t^{1/2}}\right).$$

For any
$\delta \in (0, 1)$
and
$a \in A$
let 
$$ U^\delta_{a,t} = \{d (x, C_a) < \epsilon_t^\delta\}.$$
For any strata
$\tau \in \cS$
define
$$ U^\delta_{\tau,t}
=
\bigcap_{a \in A_\tau} U^\delta_{a,t}, \quad
V^\delta_{\tau,t}
=
U^\delta_{\tau,t}
\cap
\left(\bigcap_{a \in A \setminus A_\tau} (U^\delta_{a,t})^{c}\right). $$
If
$\delta = 1/2$,
then we drop the superscript
$\delta$. 

The subset
$U_{\tau,t} \subset \bR^n$
is an open neighborhood of
$\tau$.
And
$V_{\tau,t}$
forms a partition of
$\bR^n$
in the sense that 
$$ \bR^n = \bigsqcup_{\tau \in \cS} V_{\tau,t}.$$

%Dfn
\begin{definition}
We define the
\emph{semi-tropicalized function}
$$ W^{\chi}_t(z)
=
\sum_{a \in A} \chi_{a,t}(\Log_t(z))  c(a) t^{-\rho(a)} z^a. $$
and call 
$H^\chi_t = (W^{\chi}_t)^{-1}(0)$
the semi-tropicalized hypersurface. 
\end{definition}

There is a linear interpolation between
the original function
and
the semi-tropicalization
$$ W^{\chi,s}_t(z) = (1-s) W_t(z) + s W^{\chi}_t(z). $$.
We denote by
$H^{\chi,s}_t$
the hypersurface
$(W^{\chi,s}_t)^{-1}(0)$. 

Finally,
we return to complete intersection.
We define
$$ Z^\chi_t = \bigcap_{i=1}^r H_{i,t}^\chi. $$
For a strata
$\tau=(\tau_1,\cdots, \tau_r) \in \cS$
the subsets
$U_{\tau,t}, V_{\tau,t}$
are defined by intersections of
$U_{\tau_i,t}, V_{\tau_i,t}$
in the obvious way. 

\subsection{Convex domain and Convex function}
By a convex potential on
$\bR^n$,
we will mean a  strictly convex smooth proper function 
$$ \Phi \colon \bR^n \to \bR.$$ 
Fix
any convex potential
$\Phi$
and
any
$t \gg 1$.
The pullback
$$ \Log_t^*(\Phi) \colon (\bC^*)^n \to \bR$$
defines a K\"{a}hler potential,
with K\"{a}hler form
$\omega_{\Phi,t}$
and
Liouville 1-form
$\lambda_{\Phi,t}$
on
$(\bC^*)^n$
respectively given by 
$$ \omega_{\Phi,t} = dd^c \Log_t^*(\Phi), \quad \lambda_{\Phi,t} = d^c \Log_t^*(\Phi).$$ 
Then for any
$c \in \bR$
the subset 
$$ ((\bC^*)^n)_{t,\Phi \leq c}
=
\Log_t^{-1}((\bR^n)_{\Phi \leq c}), \quad \text{where} \ \ (\bR^n)_{\Phi \leq c}
=
\{x \in \bR^n \mid \Phi(x) \leq c \} $$
is a Weinstein subdomain of
$((\bC^*)^n, \omega_{\Phi,t}, \lambda_{\Phi,t})$. 

The K\"{a}hler structure on
$(\bC^*)^n$
restricts to any complex submanifold,
in particular to the complete intersection
$Z_t$.
For any
$c \in \bR$
the sublevel set 
$$ Z_{t, \Phi \leq c} \coloneqq Z_t \cap ((\bC^*)^n)_{t,\Phi \leq c} $$
is also a Weinstein subdomain of
$Z_t$. 

If the level set
$\{\Phi = c\}$
intersects
$Z^{trop}$
transversely,
then for large enough
$t$,
the semi-tropicalized complete intersection
$$ Z^\chi_{t, \Phi \leq c} \coloneqq Z^\chi_t \cap ((\bC^*)^n)_{t,\Phi \leq c} $$
is also a Weinstein subdomain of
$Z^\chi_t$,
and
is homotopic to
$Z_{t, \Phi \leq c}$.

\subsection{Local Tropical Moment map}
Recall that we have the stratification
$\cS_i$
of
$\bR^n$
coming from each tropoical hypersurface
$H_i^{trop}$,
and
we have their common refinement
$\cS$.
The tropical intersection
$Z^{trop} \subset \bR^n$
inherits a stratification
$\cS_Z$. 

Let
$\Phi \colon \bR^n \to \bR$
be a convex potential.
We call its Legendre transofrmation
$\cL_\Phi$
the
\emph{tropical moment map},
which is defined as  
$$ \cL_\Phi \colon \bR^n \to (\bR^n)^\vee, \quad x \mapsto d\Phi|_x. $$
Since
$\Phi$
is strictly convex,
$\cL_\Phi$
is injective.
The moment map of the Hamiltonian action of
$U(1)^n$
on
$(\bC^*)^n$
with symplectic form
$\omega_{\Phi,t}$
is the composition (up to scaling by a constant) 
$$ \cL_\Phi \circ \Log_t \colon (\bC^*)^n \to (\bR^n)^\vee. $$

For any linear subspace
$V \hookrightarrow  \bR^n$,
we have
$\pi_V \colon (\bR^n)^\vee \twoheadrightarrow V^\vee$.
We define 
$$ \mu_V = \pi_V \circ \cL_\Phi \colon \bR^n \to V^\vee. $$

For any strata
$\tau$
let
$(\bR^n)_\tau \subset \bR^n$
be the linear subspace tangent to
$\tau$,
and
let 
$$ (\bZ^n)_\tau = (\bR^n)_\tau \cap \bZ^n.$$ 
Define 
$$ \mu_\tau
\coloneqq
\mu_{V_\tau}
\colon
\bR^n
\to
(\bR^n)^\vee_\tau. $$

%Lem
\begin{lemma}
The restriction
$\mu_\tau|_\tau \colon \tau \to (\bR^n)_\tau^\vee$
is injective.
\end{lemma}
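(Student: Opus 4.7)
The plan is to derive injectivity directly from strict convexity of $\Phi$. First I would recall that each stratum $\tau$ of $\cS$ is (the relative interior of) an affine flat parallel to $V_\tau = (\bR^n)_\tau$; equivalently, for any two points $x, y \in \tau$, the difference $x - y$ lies in $V_\tau$. I would also note the definition $\mu_\tau = \pi_{V_\tau} \circ \cL_\Phi$, where $\pi_{V_\tau} \colon (\bR^n)^\vee \to V_\tau^\vee$ is the restriction map dual to the inclusion $V_\tau \hookrightarrow \bR^n$, so that for any $\xi \in (\bR^n)^\vee$ one has $\pi_{V_\tau}(\xi) = \xi|_{V_\tau}$.

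Next, I would argue by contradiction: suppose $x, y \in \tau$ are distinct with $\mu_\tau(x) = \mu_\tau(y)$. Unpacking, this means $(d\Phi|_x - d\Phi|_y)$ vanishes when restricted to $V_\tau$. In particular, evaluating on the specific vector $v = x - y \in V_\tau \setminus \{0\}$ gives
\[
\langle d\Phi|_x - d\Phi|_y, \, x - y \rangle = 0.
\]
On the other hand, strict convexity of $\Phi$ implies that the gradient map $\cL_\Phi$ is strictly monotone, i.e.
\[
\langle d\Phi|_x - d\Phi|_y, \, x - y \rangle > 0 \quad \text{whenever } x \neq y.
\]
This yields the desired contradiction, proving that $\mu_\tau|_\tau$ is injective.

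I do not anticipate any real obstacle here: the statement is a direct consequence of strict monotonicity of the gradient of a strictly convex function, combined with the tautological fact that $\tau$ moves only in the directions of $V_\tau$. The only point that warrants care is the identification of $\pi_{V_\tau}$ as the \emph{restriction} of covectors to $V_\tau$ (rather than an orthogonal projection in $(\bR^n)^\vee$), so that the condition $\pi_{V_\tau}(d\Phi|_x - d\Phi|_y) = 0$ is literally the vanishing of the covector on all of $V_\tau$, which is what allows us to evaluate on $v = x - y$.
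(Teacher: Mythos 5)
Your proof is correct and is essentially the same argument as the paper's: the paper simply observes that $\mu_\tau|_\tau$ is the Legendre transform (gradient map) of the strictly convex function $\Phi|_\tau$, and your strict-monotonicity computation $\langle d\Phi|_x - d\Phi|_y,\, x-y\rangle > 0$ is precisely the standard unpacking of why that gradient map is injective. Your care about $\pi_{V_\tau}$ being restriction of covectors (dual to $V_\tau \hookrightarrow \bR^n$) matches the paper's definition, so there is nothing to add.
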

\begin{proof}
The restriction
$\mu_\tau|_\tau$
is the Legendre transformation of
$\Phi|_\tau \colon \tau \to \bR$,
which is strictly convex. 
\end{proof}

For a strata
$\tau=(\tau_1,\cdots,\tau_r) \in \cS$
we have the star open neighborhood 
$$ U_{\geq \tau} = \bigcup_{\sigma \geq \tau} \sigma. $$
Here we introduce a slightly shrunk version of the star neighborhood
$$ V_{\geq \tau, t} = \bigcup_{\sigma \geq \tau} V_{\sigma,t}. $$
It has the property that \footnote{Given a family
$\{ V_t \}_{(R, \infty)}$
of sets,
we define
$$ \limsup_{t \to \infty} V_t \coloneqq \bigcap_{s > R} \bigcup_{s < t} V_t, \quad \liminf_{t \to \infty} V_t \coloneqq \bigcup_{s > R} \bigcap_{s < t} V_t. $$
The limit exists if $\liminf_{t \to \infty} V_t =\limsup_{t \to \infty} V_t$. }
$$ V_{\geq \tau, t} \subset U_{\geq \tau}, \quad \lim_{t \to \infty} V_{\geq \tau, t} = U_{\geq \tau}.$$

%Prop
\begin{proposition}\label{prop:lmom}
For any strata
$\tau \in \cS$
the local semi-tropicalized complete intersection
$$ Z^\chi_{t, \geq \tau} \coloneqq Z^\chi_t|_{V_{\geq \tau, t}}$$
is invariant under the Hamiltonian torus action by 
$$ U(1)_\tau^n \coloneqq (\bZ^n)_\tau \otimes_\bZ U(1), $$
and the moment map is
$$ \mu^\chi_{t, \geq \tau}
\colon
Z^\chi_{t, \geq \tau}
\to 
(\bR^n)_\tau^\vee, \quad z
\mapsto
\mu_\tau(\Log_t(z)). $$
\end{proposition}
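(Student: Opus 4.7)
\medskip

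The proof will have two parts: verifying torus invariance of $Z^\chi_{t, \geq \tau}$, and identifying the moment map. The plan is to reduce both to a single structural observation about the local form of the semi-tropicalized functions $W^\chi_{i,t}$ on $V_{\geq \tau, t}$.

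\medskip

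The first step is to describe $W^\chi_{i,t}$ restricted to $V_{\geq \tau, t}$. On each $V_{\sigma,t}$ with $\sigma \geq \tau$ the cut-off $\chi_{i,a,t}$ vanishes for $a \notin A_{i,\sigma_i}$, and $A_{i,\sigma_i} \subset A_{i,\tau_i}$. Hence on all of $V_{\geq \tau, t}$ we have
\[ W^\chi_{i,t}(z) = \sum_{a \in A_{i,\tau_i}} \chi_{i,a,t}(\Log_t(z))\, c_i(a)\, t^{-\rho_i(a)}\, z^a. \]
The key observation, which is the heart of the argument, is that for any $a, a' \in A_{i,\tau_i}$ the difference $a - a'$ annihilates $(\bR^n)_\tau$: indeed, $a$ and $a'$ both achieve the maximum defining $W_i^{trop}$ on the stratum $\tau_i \supseteq \tau$, so $\langle x, a\rangle - \rho_i(a) = \langle x, a'\rangle - \rho_i(a')$ for $x \in \tau$, whence $\langle v, a - a'\rangle = 0$ for every $v \in (\bR^n)_\tau$.

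\medskip

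Fixing $a_i^* \in A_{i,\tau_i}$, the previous paragraph lets me factor
\[ W^\chi_{i,t}(z) = z^{a_i^*} \sum_{a \in A_{i,\tau_i}} \chi_{i,a,t}(\Log_t(z))\, c_i(a)\, t^{-\rho_i(a)}\, z^{a - a_i^*}. \]
The monomials $z^{a - a_i^*}$ pair trivially with any cocharacter in $(\bZ^n)_\tau$, so they are $U(1)^n_\tau$-invariant; the cut-off factors depend only on $\Log_t(z)$, which is also $U(1)^n_\tau$-invariant. Therefore $W^\chi_{i,t}$ transforms under $U(1)^n_\tau$ purely by the phase of $z^{a_i^*}$, and in particular its zero set $H^\chi_{i,t}|_{V_{\geq \tau,t}}$ is preserved. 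Intersecting over $i$ yields the desired invariance of $Z^\chi_{t,\geq \tau}$.

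\medskip

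For the moment map computation, the $U(1)^n$-action on $((\bC^*)^n, \omega_{\Phi,t})$ is Hamiltonian with moment map $\cL_\Phi \circ \Log_t$, since $\Log_t^* \Phi$ is a $U(1)^n$-invariant K\"ahler potential. Restricting to the subtorus $U(1)^n_\tau \subset U(1)^n$, the moment map is the composition with the projection $\pi_{V_\tau} \colon (\bR^n)^\vee \twoheadrightarrow (\bR^n)^\vee_\tau$, i.e.\ $\mu_\tau \circ \Log_t$. Pulling back to the invariant complex submanifold $Z^\chi_{t,\geq\tau}$ preserves both the symplectic form and the action, so the moment map on $Z^\chi_{t,\geq\tau}$ is the restriction $z \mapsto \mu_\tau(\Log_t(z))$, which is the claimed formula. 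I expect the main subtlety to be precisely the lemma-free observation that $a - a'$ annihilates $(\bR^n)_\tau$; once this is in hand both parts of the proposition are short.
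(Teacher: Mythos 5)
Your proposal is correct and follows essentially the same route as the paper: on $V_{\geq\tau,t}$ the semi-tropicalized defining equations reduce to sums over $A_{i,\tau_i}$, and since any two exponents in $A_{i,\tau_i}$ differ by an element of $\tau_i^\perp \subset \tau^\perp$, all terms acquire a common phase under $U(1)^n_\tau$, giving invariance; the moment map then comes from restricting the standard toric moment map $\cL_\Phi\circ\Log_t$ to the invariant complex (hence symplectic) submanifold. The only cosmetic difference is that the paper phrases the reduction as "it suffices to treat the hypersurface case" while you argue factor-by-factor and also spell out the moment-map restriction step, which the paper leaves implicit.
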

\begin{proof}
It suffices to show the hypersurface case,
as we have
$U(1)^n_\tau = \bigcap_i U(1)^n_{\tau_i}$
and
$V_{\geq \tau,t} = \bigcap_i V_{\geq \tau_i, t}$.
Note that the defining equation of the hypersurface over
$V_{\geq \tau_i, t}$
is
$$ \sum_{a \in A_{i,\tau_i}} \chi_{a,t}(\Log_t(z)) c_i(a) t^{-\rho_i(a)} z^a = 0 $$
Under the action of
$U(1)_{\tau_i}^n$,
$\Log_t(z)$
and
hence the cut-off factor
$\chi_{a,t}(\Log_t(z))$
is invariant.
The monomial
$z^a$
will change by a factor of
$\lambda^a$
for
$\lambda \in U(1)_{\tau_i}^n \subset U(1)^n \subset (\bC^*)^n$.
However,
for
$a, b \in A_{i,\tau_i} \subset \bZ^n$
we have
$a-b \in \tau_i^\perp$
and
$\lambda^a = \lambda^b$.
Hence all the terms in the local defining equation will acquire the same phase under the action by
$\lambda \in U(1)_{\tau_i}^n$,
which implies the invariance of the equation. 
\end{proof}

\subsection{Transverse Foliation and locality of Hamiltonian flow}
Let
$\Phi \colon \bR^n \to \bR$
be a convex potential.
Let
$\cS$
be a piecewise linear stratification of
$\bR^n$.
For each strata
$\tau$
let
$\mu_\tau \colon \bR^n \to (\bR^n)^\vee_\tau$
be the local tropical moment map.
Recall that
$V_{\tau,t}$
is a ``truncated version'' of a tubular neighborhood of
$\tau$
and
$V_{\geq \tau, t} = \cup_{\sigma \geq \tau} V_{\sigma,t}$
is a shrunk version of the star neighborhood of
$\tau$. 

For each point
$x \in \tau$
consider the following subsets
$$ S_x = \mu_\tau^{-1}(\mu_\tau(x)), \quad
S_{x,\tau, t} = S_x \cap U_{\tau,t} , \quad
S_{x, \geq \tau, t} = S_x \cap V_{\geq \tau,t}. $$

%Dfn
\begin{definition}
A
\emph{system of transverse foliation}
is a collection
$\{ (B_{\tau, t}, T_{\tau, t}, \pi_{\tau, t}) \}_{\tau \in \cS}$
of triples
where for each
$\tau$
the triple consists of
an open subset
$B_{\tau, t} \subset \tau$,
a tubular neighborhood
$T_{\tau, t}$
of
$B_{\tau, t}$
and
a map
$\pi_{\tau, t} \colon T_{\tau, t} \to B_{\tau, t}$.
These data must satisfy the following conditions:
\begin{enumerate}
\item
For any
$x \in B_{\tau, t}$ 
\begin{align*}
S_{x, \tau,t}
\subset
T_{x, \tau, t}
\coloneqq
\pi_{\tau, t}^{-1}(x)
\subset
S_{x, \geq \tau, t}.
\end{align*}
\item
For each
$\tau \in \cS$
we have
$\bar{\tau}
\subset 
T_{\leq \tau, t}
\coloneqq
\bigcup_{\sigma \leq \tau} T_{\sigma, t}$. 
\item
If
${T_{x, \tau, t}} \cap T_{x^\prime, \tau^\prime, t} \neq \emptyset$
for
$\tau^\prime > \tau$
and
$x \in B_{\tau, t}, x^\prime \in B_{\tau^\prime, t}$,
then
$T_{x^\prime, \tau^\prime, t} \subset {T_{x, \tau, t}}$.  
\item
$\lim_{t \to \infty} T_{\tau, t} = \tau$. 
\end{enumerate}
\end{definition}

%Prop
\begin{proposition}
For any
$t \gg 0$
and
convex potential
$\Phi \colon \bR^n \to \bR$
there exists a system
$\{ (B_{\tau, t}, T_{\tau, t}, \pi_{\tau, t}) \}_{\tau \in \cS}$
of transverse foliation.
\end{proposition}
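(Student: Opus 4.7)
My plan is to construct the system $\{(B_{\tau,t}, T_{\tau,t}, \pi_{\tau,t})\}_{\tau \in \cS}$ by induction on codimension of strata, building the tubes over top-dimensional strata first and working downwards. The key geometric observation is the compatibility
\[
\mu_\tau = p_{\sigma\tau} \circ \mu_\sigma \quad \text{for } \tau \leq \sigma,
\]
where $p_{\sigma\tau} \colon (\bR^n)^\vee_\sigma \twoheadrightarrow (\bR^n)^\vee_\tau$ is the surjection dual to the inclusion $(\bR^n)_\tau \hookrightarrow (\bR^n)_\sigma$. This factorization makes the nesting condition (3) essentially automatic once the fibers are defined via $\mu_\tau$.

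\textbf{Base case.} For a top-dimensional (open) stratum $\tau$, we have $(\bR^n)_\tau = \bR^n$, hence $\mu_\tau = \cL_\Phi$ is injective by strict convexity of $\Phi$ and $S_x = \{x\}$ for every $x \in \tau$. Choose $B_{\tau,t}$ to be a slight shrinking of $V_{\tau,t}$, set $T_{\tau,t} = B_{\tau,t}$, and take $\pi_{\tau,t} = \id$. Conditions (1)--(4) hold tautologically.

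\textbf{Inductive step.} Suppose the data has been constructed for all strata $\sigma$ with $\dim\sigma > k$. For a stratum $\tau$ with $\dim\tau = k$, first declare
\[
B_{\tau,t} \coloneqq \big\{x \in \tau \;:\; x \notin T_{>\tau,t} \text{ and } d(x,\partial\tau) > \eta_t\big\},
\]
where $T_{>\tau,t} = \bigcup_{\sigma > \tau} T_{\sigma,t}$ and $\eta_t \to 0$ slowly enough as $t\to\infty$. Then put
\[
T_{\tau,t} \coloneqq \big\{y \in V_{\geq\tau,t} \;:\; \mu_\tau(y) \in \mu_\tau(B_{\tau,t})\big\}\setminus T_{>\tau,t}
\]
and define $\pi_{\tau,t}(y) \coloneqq (\mu_\tau\vert_{B_{\tau,t}})^{-1}(\mu_\tau(y))$, which is single-valued by injectivity of $\mu_\tau$ on $\tau$. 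With this definition, the fiber $T_{x,\tau,t}$ is an open piece of $S_x = \mu_\tau^{-1}(\mu_\tau(x))$ containing $x$; the inclusion $S_{x,\tau,t} \subset T_{x,\tau,t}$ holds because points of $S_x \cap U_{\tau,t}$ are so close to $\tau$ (for $t$ large) that they lie in $V_{\geq\tau,t}\setminus T_{>\tau,t}$, while $T_{x,\tau,t} \subset S_{x,\geq\tau,t}$ is by construction. Condition (2) follows from the fact that $B_{\tau,t}$ exhausts $\tau$ up to the portion already contained in the inductively built $T_{>\tau,t}$. Condition (3) is the key payoff: if $y \in T_{x,\tau,t} \cap T_{x',\tau',t}$ with $\tau' > \tau$, then $\mu_{\tau'}(y) = \mu_{\tau'}(x')$; applying $p_{\tau'\tau}$ and using the compatibility yields $\mu_\tau(x') = \mu_\tau(x)$, and for any $z \in T_{x',\tau',t}$ the same computation gives $\mu_\tau(z) = \mu_\tau(x)$, so $z \in T_{x,\tau,t}$. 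Condition (4) follows because $V_{\geq\tau,t} \to U_{\geq\tau}$ and $B_{\tau,t}\to \tau$ as $t\to\infty$.

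\textbf{Main obstacle.} The delicate point is the simultaneous control of the shrinking scales: the cut-off radii defining $V_{\sigma,t}$, the boundary buffer $\eta_t$ in the definition of $B_{\tau,t}$, and the tubular thickness of each $T_{\sigma,t}$ must all be coordinated so that (a) the $B_{\tau,t}$ together still cover $\bar\tau$ under $\pi_{\leq\tau,t}$, and (b) the fibers $T_{x,\tau,t}$ are genuinely contained in the star region $V_{\geq\tau,t}$ and do not escape across facets of higher-dimensional strata in an uncontrolled way. The geometric intuition---that the Legendre foliation by fibers of $\mu_\tau$ is transverse to $\tau$ and compatible along all faces by the formula $\mu_\tau = p_{\sigma\tau}\circ\mu_\sigma$---makes all of this possible in principle; the technical content of the proof is verifying that a suitably slow schedule of scales $\epsilon_t^{1/2} \gg \eta_t \gg \epsilon_t$ realizes it.
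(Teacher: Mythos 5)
Your overall architecture mirrors the paper's proof (a collar-shrunk base $B_{\tau,t}\subset\tau$, tubes defined as $\mu_\tau$-preimages of $\mu_\tau(B_{\tau,t})$ inside $V_{\geq\tau,t}$, the compatibility $\mu_\tau=p_{\sigma\tau}\circ\mu_\sigma$, and an induction from top-dimensional strata downwards), but the quantitative heart of the statement is missing and your proposed scale schedule is inverted. Condition (1) hinges on the inclusion $S_{x,\tau,t}=S_x\cap U_{\tau,t}\subset V_{\geq\tau,t}$ (the paper in fact proves $S_{x,\tau,t}\subset V_{\tau,t}$), and this holds only because $\operatorname{diam}(S_{x,\tau,t})=O(\epsilon_t^{1/2})$ while the collar removed from $\tau$ to form $B_{\tau,t}$ has width $\epsilon_t^{\delta_2}$ with $\delta_2<1/2$, i.e.\ collar width $\gg$ slice diameter. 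You require $\epsilon_t^{1/2}\gg\eta_t\gg\epsilon_t$, i.e.\ collar $\ll$ slice: with that choice, for $x\in B_{\tau,t}$ at distance $\approx\eta_t$ from $\partial\tau$ the slice $S_x\cap U_{\tau,t}$ contains points $y$ with $d(y,C_a)<\epsilon_t^{1/2}$ for some $a\notin A_\tau$, hence points outside $V_{\geq\tau,t}$ (and, for the same reason, points inside the higher tubes you subtract), so the sandwich $S_{x,\tau,t}\subset T_{x,\tau,t}\subset S_{x,\geq\tau,t}$ is impossible. Moreover, you explicitly defer ``verifying that a suitably slow schedule of scales realizes it''; that verification is precisely the content of the proposition. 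The paper fixes two exponents $0<\delta_1<\delta_2<1/2$, uses the collar $\epsilon_t^{\delta_2}$, adds a lateral cutoff $U^{\delta_1}_{\tau,t}$ to the tube, and exploits the hierarchy $\epsilon_t^{1/2}\ll\epsilon_t^{\delta_2}\ll\epsilon_t^{\delta_1}$.

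Two further issues. First, your justification of (2) is off target: the inductively built $T_{>\tau,t}$ never meets $\tau$ (each $T_{\sigma,t}\subset V_{\geq\sigma,t}$, which is disjoint from $\tau$ for $\sigma>\tau$), so ``$B_{\tau,t}$ exhausts $\tau$ up to $T_{>\tau,t}$'' says nothing; what (2) actually requires is that the collar you removed near $\partial\tau$ be covered by the tubes of the faces $\sigma<\tau$, which in your downward induction are not yet constructed and whose lateral reach must exceed the collar width — exactly what the second scale $\epsilon_t^{\delta_1}\gg\epsilon_t^{\delta_2}$ provides and what a single scale $\eta_t$ does not. Second, your verification of (3) contradicts your own definition: since $T_{\tau,t}$ is defined with ``$\setminus T_{>\tau,t}$'', tubes of comparable strata are disjoint, so the hypothesis of (3) never occurs (it holds vacuously) and the conclusion you assert, $z\in T_{x,\tau,t}$ for $z\in T_{x',\tau',t}$, would be false if it did. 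The paper instead keeps the preliminary tubes overlapping and enlarges each fiber by the entire higher leaves it meets, setting $T_{x,\tau,t}=T^{pre}_{x,\tau,t}\cup\bigcup_{\sigma>\tau}\pi_{\sigma,t}^{-1}(\pi_{\sigma,t}(T^{pre}_{x,\tau,t}\cap T_{\sigma,t}))$, which is what makes (3) non-vacuous and usable in the subsequent Hamiltonian confinement argument.
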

\begin{proof}
Fix 
$0 < \delta_1 < \delta_2 < 1/2$.
Then we have
\begin{align*}
U^{\delta_1}_{\tau, t}
\supset
U^{\delta_2}_{\tau, t}
\supset
U^{1/2}_{\tau, t}
=
U_{\tau, t}.
\end{align*}
For each
$\tau \in \cS$
we set
\begin{align*}
B_{\tau, t}
\coloneqq
\tau \setminus \bigcup_{\sigma < \tau} \bar{U}^{\delta_2}_{\sigma, t}.
\end{align*}
We claim that
for all
$\tau \in \cS, x \in B_{\tau, t}$
we have
\begin{align*}
S_{x, \tau, t}
\subset
V_{\tau, t}
\end{align*}
when
$t$
is sufficiently large.
Indeed,
for any fixed Euclidean distance on
$\bR^n$
we have
\begin{align*}
\epsilon^{1/2}_t \ll \epsilon^{\delta_2}_t, \
\sup_{x \in \tau} diam(S_{x, \tau, t})
=
O(\epsilon^{1/2}_t), \
dist(U_{\tau, t} \setminus V_{\tau, t}, B_{\tau, t})
=
O(\epsilon^{\delta_2}_t)
\end{align*}
as
$t \to \infty$
and
$\epsilon_t = 1 / \log t \to 0$.
This implies
\begin{align*}
S_{x, \tau, t} \cap (U_{\tau, t} \setminus V_{\tau, t})
=
\emptyset
\end{align*}
for all
$\tau \in \cS, x \in B_{\tau, t}$.
From
$S_{x, \tau, t} \subset U_{\tau, t}$
and
$V_{\tau, t} \subset U_{\tau, t}$
it follows
$S_{x, \tau, t} \subset V_{\tau, t}$.

Assume that
$t$
is sufficiently large for the above condition.
Then we define
\begin{align*}
T^{pre}_{\tau, t}
\coloneqq
V_{\geq \tau, t}
\cap
U^{\delta_1}_{\tau, t}
\cap 
\mu^{-1}_\tau(\mu_\tau(B_{\tau, t})), \
\pi^{pre}_{\tau, t}
\coloneqq
(\mu_\tau |_{B_{\tau, t}})^{-1} \circ \mu_\tau
\colon
T^{pre}_{\tau, t}
\to
B_{\tau, t}.
\end{align*}
By
construction
and
$S_{x, \tau, t} \subset V_{\tau, t}$
we have
\begin{align*}
S_{x, \tau, t}
\subset
T_{x, \tau, t}
=
(\pi^{pre}_{\tau, t})^{-1}(x)
=
V_{\geq \tau, t} \cap U^{\delta_1}_{\tau, t} \cap S_x 
\subset
S_{x, \geq \tau, t}
\end{align*}
for all
$\tau \in \cS, x \in B_{\tau, t}$.
From
$\epsilon^{\delta_2}_t \ll \epsilon^{\delta_1}_t$
it follows
\begin{align*}
\bar{\tau} \subset T^{pre}_{\leq \tau, t}
=
\bigcup_{\sigma \leq \tau} T^{pre}_{\sigma, t}.
\end{align*}
Hence
$\{ (B_{\tau, t}, T^{pre}_{\tau, t}, \pi^{pre}_{\tau, t} ) \}_{\tau \in \cS}$
satisfies condition
(1)
and
(2).
As for condition
(4),
we have
\begin{align*}
B_{\tau, t} \subset T^{pre}_{\tau, t} \subset U^{\delta_1}_{\tau, t}, \
\lim_{t \to \infty} B_{\tau, t}
=
\lim_{t \to \infty} U^{\delta_1}_{\tau, t}
=
\tau.
\end{align*}

Now,
we correct
$\{ (B_{\tau, t}, T^{pre}_{\tau, t}, \pi^{pre}_{\tau, t} ) \}_{\tau \in \cS}$
for condition
(3).
If
$\tau$
is top dimensional,
then condition
(3)
is null
and
we set
$T_{x, \tau, t} \coloneqq T^{pre}_{x, \tau, t}$.
Suppose that
for all
$\sigma \in \cS$
with
$\dim \sigma > \dim \tau$
we have chosen
$\pi_{\sigma, t} \colon T_{\sigma, t} \to B_{\sigma, t}$
satisfying
\begin{align*}
T^{pre}_{x, \sigma, t}
\subset
T_{x, \sigma, t}
\subset
S_{x, \geq \sigma, t}
\end{align*}
and
condition
(3).
Then we set
\begin{align*}
T_{x, \tau, t}
\coloneqq
T^{pre}_{x, \tau, t}
\cup
\bigcup_{\sigma > \tau} \pi^{-1}_{\sigma, t} (\pi_{\sigma, t}(T^{pre}_{x, \tau, t} \cap T_{\sigma, t})).
\end{align*}
Clearly,
$T_{x, \tau, t}$
satisfies condition
(2), (3)
and
(4).
From
$V_{\geq \sigma, t} \subset V_{\geq \tau, t}$
and
$\pi^{pre}_{\tau, t} \circ \pi_{\sigma, t} = \pi^{pre}_{\tau, t}$
on
$V_{\geq \sigma, t}$
we obtain
\begin{align*}
\pi^{-1}_{\sigma, t} (\pi_{\sigma, t}(T^{pre}_{x, \tau, t} \cap T_{\sigma, t}))
\subset
S_{x, \tau, t}.
\end{align*}
Hence the corrected fiber
$T_{x, \tau, t}$
still satisfies condition
(1).
\end{proof}

By a
\emph{base-Hamiltonian}
we will mean any smooth function
$g \colon \bR^n \to \bR$.
It induces
a Hamiltonian function
\begin{align*}
G^\chi_t
=
\Log^*_t(g) |_{Z^\chi_t}
\colon
Z^\chi_t
\to
\bR
\end{align*}
and
Hamiltonian vector field
$X_{G^\chi_t}$
with respect to
$\omega_{\Phi, t}$.
Recall from Proposition
\pref{prop:lmom}
that
$Z^\chi_t$
locally respects the Hamiltonian torus actions of
$U(1)^n_\tau$.
There actions preserve
$G^\chi_t$
and
$X_{G^\chi_t}$
is compatible with the local moment maps
$\mu_{t, \geq \tau}$.
We write
$z(u)$
for the time
$u \in \bR$
trajectory of any point
$z = z(0) \in Z^\chi_t$
under the Hamiltonian flow.

%Prop
\begin{proposition} \label{prop:trap}
For any
$t \gg 0$
and
convex potential
$\Phi$
let
$\{ (B_{\tau, t}, T_{\tau, t}, \pi_{\tau, t}) \}_{\tau \in \cS}$
be a system of transverse foliation. 
Let
$g: \bR^n \to \bR$
be a base-Hamiltonian,
$G^\chi_t$
the induced function on
$Z^\chi_t$
and
$X_{G^\chi_t}$
the Hamiltonian vector field with respect to
$\omega_{\Phi,t}$.
Then the Hamiltonian trajectory respects the transverse foliation in the sense that,
if
$\Log_t(z)  \in T_{x, \tau, t}$
for some
$\tau \in \cS_Z$,
then
$\Log_t(z(u)) \in T_{x, \tau, t}$
for all
$u \in \bR$.
\end{proposition}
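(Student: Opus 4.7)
\medskip

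\noindent\textbf{Proof plan.} The argument rests on a Noether-type conservation law combined with the nested structure of the transverse foliation. The key abstract input is that the Hamiltonian $G^\chi_t$ is pulled back from the tropical base, so it is automatically invariant under every local toric symmetry of $Z^\chi_t$. Concretely, since $G^\chi_t = g \circ \Log_t|_{Z^\chi_t}$ factors through $\Log_t$, it is invariant under the full $U(1)^n$-action on $(\bC^*)^n$, and in particular under each local subtorus $U(1)^n_\sigma$ acting on $Z^\chi_{t,\geq \sigma}$ produced by Proposition~\pref{prop:lmom}. Since the symplectic form $\omega_{\Phi,t}$ is $U(1)^n$-invariant (as $\Phi$ is pulled back from $\bR^n$), the same is true of the Hamiltonian vector field $X_{G^\chi_t}$ on $Z^\chi_{t,\geq \sigma}$.

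By the classical moment-map conservation law (Noether's theorem), the flow of an invariant Hamiltonian preserves the moment map of the invariant action. Applied to each stratum $\sigma \in \cS$, this gives the conclusion that $\mu^\chi_{t,\geq\sigma} = \mu_\sigma \circ \Log_t$ is constant along any segment of the trajectory $z(u)$ that remains inside $Z^\chi_{t,\geq\sigma}$. Moreover, for $\sigma \geq \tau$ we have the inclusion $(\bR^n)_\tau \subset (\bR^n)_\sigma$, which dualizes to a surjection $(\bR^n)_\sigma^\vee \twoheadrightarrow (\bR^n)_\tau^\vee$ factoring $\mu_\tau = \pi_{\sigma \to \tau} \circ \mu_\sigma$. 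Hence conservation of $\mu_\sigma$ automatically implies conservation of $\mu_\tau$, so on any segment of $z(u)$ contained in $Z^\chi_{t,\geq\sigma}$ for any $\sigma \geq \tau$, we have $\mu_\tau(\Log_t(z(u))) = \mu_\tau(x)$.

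With the conservation law in hand, the plan is to propagate the inclusion $\Log_t(z(u)) \in T_{x,\tau,t}$ along the trajectory. Recall from the construction of the transverse foliation that $T_{x,\tau,t}$ is built by starting from $T^{pre}_{x,\tau,t} = V_{\geq\tau,t} \cap U_{\tau,t}^{\delta_1} \cap \mu_\tau^{-1}(\mu_\tau(x))$ and enlarging it by $\pi_{\sigma,t}^{-1}(\pi_{\sigma,t}(T^{pre}_{x,\tau,t} \cap T_{\sigma,t}))$ for each $\sigma > \tau$. In particular, by axiom (3) any leaf $T_{x',\sigma,t}$ with $\sigma > \tau$ that meets $T_{x,\tau,t}$ is \emph{entirely} contained in $T_{x,\tau,t}$. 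Consequently, $T_{x,\tau,t}$ is saturated by the fibers of the foliation at higher strata. Combining this saturation with the conservation law: at every time $u$ for which $\Log_t(z(u))$ lies in some $V_{\sigma,t}$ with $\sigma \geq \tau$, the stronger conserved quantity $\mu_\sigma$ pins $\Log_t(z(u))$ to a single $T_{x',\sigma,t}$, which by (3) is contained in $T_{x,\tau,t}$, so $z(u)$ cannot escape into a different leaf over $B_{\tau,t}$.

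The main obstacle is ruling out the possibility that $\Log_t(z(u))$ exits $V_{\geq\tau,t}$ entirely, crossing into some $V_{\sigma,t}$ with $\sigma \not\geq \tau$, where the Noether argument breaks down. To handle this, suppose toward a contradiction that $u_0$ is the first time this occurs; then $\Log_t(z(u_0))$ lies on the common boundary of $V_{\geq\tau,t}$ and some $V_{\sigma,t}$ with $\sigma \not\geq \tau$, and by continuity and the conservation $\mu_\tau(\Log_t(z(u_0))) = \mu_\tau(x)$. Using axiom (4) together with the finiteness of $\cS$, boundary points of $V_{\geq\tau,t}$ accessible from its interior can only lie in closures of strata $\sigma \geq \tau$ (this is ensured by the thickness hierarchy $\epsilon_t^{1/2} \ll \epsilon_t^{\delta_2} \ll \epsilon_t^{\delta_1}$ entering the construction of $V_{\tau,t}$), contradicting $\sigma \not\geq \tau$. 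This boundary analysis — verifying that the geometric hierarchy encoded in the cut-off scales forces the flow to remain inside $V_{\geq\tau,t}$ — is the main technical content of the argument; the rest is a formal consequence of Noether's theorem and the axioms of the transverse foliation.
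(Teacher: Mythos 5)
Your first half — $G^\chi_t$ is pulled back from the base, hence invariant under each local torus $U(1)^n_\sigma$ of \pref{prop:lmom}, so the flow conserves $\mu_\sigma\circ\Log_t$ wherever that action is defined, and $\mu_\tau$ factors through $\mu_\sigma$ for $\sigma\geq\tau$ — is exactly the mechanism the paper uses ("the flow is tangent to the fibers of the local moment map"). The gap is in how you convert conservation into confinement in the \emph{leaf}. Conservation of $\mu_\sigma$ pins $\Log_t(z(u))$ only to a fiber $S_{x'}$ of the local moment map, not to the leaf $T_{x',\sigma,t}$, which is that fiber cut down by $U^{\delta_1}_{\sigma,t}$ and $V_{\geq\sigma,t}$ and then corrected; your phrase "the stronger conserved quantity $\mu_\sigma$ pins $\Log_t(z(u))$ to a single $T_{x',\sigma,t}$" conflates the two. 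To know the trajectory stays in the higher leaf you need precisely the statement of the proposition for $\sigma$, i.e. a downward induction on $\dim\tau$. That is how the paper argues: the base case is a top-dimensional stratum of $\cS_Z$, where $Z^\chi_t|_{T_{\tau,t}}$ is a Lagrangian $(n-r)$-torus fibration and the full moment map pins the trajectory to a single fiber; in the inductive step the flow cannot exit a leaf through an interior point (tangency to moment fibers) nor through its boundary, because the boundary is foliated by leaves of strata $\sigma>\tau$ (condition (3) together with the correction in the construction) which are flow-invariant by the induction hypothesis. Your proposal never sets up this induction or its base case, so the saturation axiom alone does not close the argument.

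Your proposed resolution of the "main obstacle" is also not correct as stated. The complement of $V_{\geq\tau,t}$ is exactly $\bigcup_{\sigma'\not\geq\tau}V_{\sigma',t}$, so $\del V_{\geq\tau,t}$ is by definition the interface with such regions, and plenty of its points are accessible from the interior: for instance, for $\tau$ a vertex and $e>\tau$ an edge, the far end of $V_{e,t}$ abuts the region of the opposite vertex, and near the faces $\sigma'<\tau$ one abuts regions where additional monomials activate. The thickness hierarchy $\epsilon_t^{1/2}\ll\epsilon_t^{\delta_2}\ll\epsilon_t^{\delta_1}$ does not remove these interfaces (it only guarantees $S_{x,\tau,t}\subset V_{\tau,t}$ and condition (2)), so the contradiction you aim for does not materialize from set-theoretic considerations about $\del V_{\geq\tau,t}$. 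What actually prevents escape is dynamical: before the trajectory can reach $\del V_{\geq\tau,t}$ it must cross regions $V_{\sigma,t}$ with $\sigma>\tau$, where by the induction hypothesis it is trapped in a higher leaf which, by condition (3), lies inside $T_{x,\tau,t}$. In short, replace the boundary-of-$V_{\geq\tau,t}$ argument by the paper's induction on stratum dimension (with the torus-fibration base case), and the rest of your Noether/saturation reasoning goes through.
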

\begin{proof}
If
$\tau \in \cS_Z$
is top dimensional,
then
$Z^\chi_t |_{T_{\tau, t}}$
is a real $(n-r)$-dimensional torus fibration over
$B_{\tau, t}$.
Then the trajectory of
$z$
under the Hamiltonian flow is contained in the torus fiber.
Suppose that
the claim holds for all strata with dimension more than
$\dim \tau$.
The Hamiltonian flow of
$X_{G^\chi_t}$
is tangent to the fiber of the local moment map, hence the flow trajectory cannot exit the leaf 
at any interior point.
Since the boundary
$\del T_{m, \tau, t}$
of a leaf
$T_{m, \tau, t}$
is foliated by leaves transverse to strata
$\sigma > \tau$,
by induction hypothesis the boundary is closed under the Hamiltonian flow.
Thus the Hamiltonian trajectory respects the transverse foliation.
\end{proof}

\subsection{Localization of Fukaya category}
Let
$\Open(\bR^n)$
be the category of open subsets of
$\bR^n$.
We denote by
$\Open_{conv}(\bR^n)$
the full subcategory of convex open subsets.
Recall the category
$\Open(\cS_Z)$
of open subsets of
$\cS_Z$
with respect to the topology from Definition
\pref{dfn:topology}.
There is an obvious ``saturation'' functor
\begin{align*}
\lceil - \rceil
\colon
\Open(\bR^n)
\to
\Open(\cS_Z), \
P
\mapsto
\{ \tau \in \cS_Z \ | \ \tau \cap P \neq \emptyset\}.
\end{align*}
We denote by
$\Open_{conv}(\cS_Z)$
its image of
$\Open_{conv}(\bR^n)$.
For
$P_1, P_2 \in \Open_{conv}(\bR^n)$
we write
$P_1 \sim P_2$
if
$\lceil P_1 \rceil = \lceil P_2 \rceil$.
Clearly,
this defines an equivalence relation.
Let
$| \lceil P \rceil |$
be the set of open subsets of
$\bR^n$
equivalent to
$P$.
We equip
$| \lceil P \rceil |$
with the natural topology coming from the Gromov--Hausdorff metric.

%Lem
\begin{lemma} \label{lem:contractible}
For any
$P \in \Open_{conv}(\bR^n)$
the space
$| \lceil P \rceil |$
is contractible.
If
$P_1, P_2 \in | \lceil P \rceil |$
then there is a finite sequence
\begin{align*}
P_1
\hookleftarrow
Q_1
\hookrightarrow
Q_{12}
\hookleftarrow
Q_2
\hookrightarrow
P_2
\end{align*}
of zig-zag inclusions in
$| \lceil P \rceil |$.
\end{lemma}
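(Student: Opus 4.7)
The plan is to establish the zig-zag statement first, and then deduce contractibility from it by a continuous interpolation argument.

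For the zig-zag, the key structural observation is that $\lceil P \rceil$ is upward-closed in $\cS_Z$ under the adjacency order: if $\tau \in \lceil P \rceil$ and $\tau \subset \bar{\sigma}$ for some other stratum $\sigma$, then any open neighborhood of a point of $\tau$ meets $\sigma$, so openness of $P$ forces $\sigma \in \lceil P \rceil$. Given $P_1, P_2 \in |\lceil P \rceil|$ and each $\tau \in \lceil P \rceil$, the intersection $\tau \cap P_i$ is a nonempty relatively open convex subset of $\tau$ (since $P_i$ is convex in $\bR^n$ and $\tau$ is a convex polyhedral cell), so one may pick points $p_\tau^i \in \tau \cap P_i$. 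By the upward-closure, a sufficiently small Euclidean ball around any $p_\tau^i$ meets only strata in $\lceil P \rceil$; taking $Q_i \subset P_i$ to be a small convex open neighborhood of the finite collection $\{p_\tau^i\}_{\tau}$, chosen small enough, produces $Q_i \in |\lceil P \rceil|$ with the required inclusion $Q_i \subset P_i$.

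The construction of $Q_{12}$ is the main obstacle. The naive choice $\mathrm{Conv}(Q_1 \cup Q_2)$ can cross strata outside $\lceil P \rceil$ when the two $Q_i$ lie on opposite sides of such a stratum. The plan to avoid this is to shrink the $Q_i$ further to very small neighborhoods of the finite point sets $\{p_\tau^i\}$, then construct $Q_{12}$ as a thin convex open tubular neighborhood of a piecewise linear skeleton that interpolates between the $p_\tau^1$ and $p_\tau^2$ through edges lying inside $\bigcup_{\tau \in \lceil P \rceil} \tau$. Convexity of $Q_{12}$ is ensured by arranging the skeleton to be star-shaped around a chosen basepoint in a minimal stratum of $\lceil P \rceil$ (which exists after restricting to a connected component, if necessary), and by taking the convex hull of the tubular neighborhood within a safely small region where adjacency-closure guarantees no forbidden strata are crossed. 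The hardest case is when $\lceil P \rceil$ contains strata that are not pairwise comparable in the adjacency order; here one must subdivide the skeleton carefully and possibly iterate, using that the upward-closed set $\lceil P \rceil$ corresponds to an open and path-connected (up to components) subset of $Z^{trop}$.

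Contractibility of $|\lceil P \rceil|$ in the Gromov--Hausdorff topology will then follow from the zig-zag statement combined with a continuous interpolation. The key point is that for any inclusion $A \subset B$ of convex open sets in $|\lceil P \rceil|$, the family $t \mapsto (1-t)A + tB$ of Minkowski convex combinations stays in $|\lceil P \rceil|$: every stratum met by $A$ is met by each such combination (by the convexity of $\tau$), and no new strata are added provided $A$ and $B$ share the same saturation. Concatenating such interpolations along the zig-zag, and arranging the parametric choices of the $p_\tau^i$ and neighborhood sizes to depend continuously on the endpoints, yields a continuous contraction onto any fixed reference element $P_0 \in |\lceil P \rceil|$. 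The main subtlety here is ensuring the continuous dependence, which will rely on the polyhedral nature of the stratification so that generic choices of interpolating data vary smoothly with the endpoints.
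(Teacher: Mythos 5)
Your reduction to $Q_i\subset P_i$ is fine (pick one point of $P_i$ in each stratum of $\lceil P\rceil$ and take a small neighborhood of their convex hull inside $P_i$; the sandwich $\lceil Q_i\rceil\subseteq\lceil P_i\rceil$ together with the chosen points forces $\lceil Q_i\rceil=\lceil P\rceil$), and the Minkowski interpolation $(1-t)A+tB$ along an inclusion $A\subset B$ in $|\lceil P\rceil|$ is a correct and useful observation. The genuine gap is exactly the step you yourself flag as the main obstacle: the construction of $Q_{12}$ is never carried out. A ``thin convex open tubular neighborhood of a piecewise linear skeleton'' does not exist in general: a tube around a bent PL path is not convex, and passing to a convex hull afterwards reintroduces precisely the crossings you set out to avoid. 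Moreover, insisting that the skeleton lie inside $\bigcup_{\tau\in\lceil P\rceil}\tau$ is both unnecessary and sometimes impossible: the saturation only records strata of $\cS_Z$, so $Q_{12}$ may freely pass through the complement of $Z^{trop}$, and the union of the strata in $\lceil P\rceil$ can be disconnected (already for a tropical line, a convex $P$ can meet two rays without meeting the vertex, so no path between them exists inside $Z^{trop}$), in which case your skeleton cannot exist even though a valid convex $Q_{12}$ does. Your fallback (``subdivide the skeleton carefully and possibly iterate'') concedes the step, and iteration would anyway yield a longer zig-zag than the one asserted. The paper proceeds differently: it uses only the minimal strata of $\lceil P\rceil$, takes for each representative $P_i$ the barycenters $b_{P_i,\tau}$ of $\overline{\tau\cap P_i}$, and sets $Q_{12}=U_r(\Conv(\Delta_{p(P_1)},\Delta_{p(P_2)}))$ for $r$ small; that is, essentially the ``naive'' thickened hull of the two finite point sets, which works because every stratum of $\lceil P\rceil$ lies in the star of a minimal one, so the small thickening has saturation exactly $\lceil P\rceil$.

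The second gap is the contractibility step. Deducing contractibility from the zig-zag requires all auxiliary data (the points $p^i_\tau$, the skeleton and its subdivisions, the radii) to depend continuously on the endpoint, and the concatenated homotopies to be coherent; this is precisely the hard part and it is only asserted, not proved. The paper argues in the opposite order: it builds a canonical deformation retraction of $|\lceil P\rceil|$ onto the contractible subspace $\widetilde{\Pi}_{\lceil P\rceil}$ of pairs (one point in each minimal stratum, small radius), where the canonicity of the barycenters $b_{P,\tau}$ and of $r(P)$ supplies the continuity, and then reads off the zig-zag as a corollary. As it stands, your argument would give path-connectedness of $|\lceil P\rceil|$ modulo the missing $Q_{12}$ construction, but not contractibility.
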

\begin{proof}
We construct a deformation retract from
$| \lceil P \rceil |$
to its contractible subset.
Let
$\Pi_{\lceil P \rceil} = \Pi_{\tau \in \min(\lceil P \rceil)}$
where
$\min(\lceil P \rceil) \subset \cS_Z\cap P$
is the subset of minimal elements.
Fix a Euclidean metric on
$\bR^n$.
For each point
$p = (p_\tau)_\tau \in \Pi_{\lceil P \rceil}$
we
form the convex hull
$\Delta_p = \Conv(\{ p_\tau \}_\tau)$
and 
choose sufficiently small
$\epsilon(p)$
so that
for all
$0 < r \leq \epsilon(p)$
we have
\begin{align*}
\lceil P \rceil
=
\lceil U_r(\Delta_p) \rceil, \
U_r(\Delta_p)
=
\{ x \subset \bR^n \ | \ dist(x, \Delta_p) < r \}
\in
\Open_{conv}(\bR^n).
\end{align*}
One can choose
$\epsilon(p)$
to depend on
$p$
continuously.
Hence there is an inclusion
\begin{align*}
\widetilde{\Pi}_{\lceil P \rceil}
=
\Pi_{\lceil P \rceil} \times \bR_{0 < r \leq \epsilon(p)}
\to
| \lceil P \rceil |, \
(p, r)
\mapsto
U_r(\Delta_p)
\end{align*} 
of a contractible set
$\widetilde{\Pi}_{\lceil P \rceil}$.
For any
$P \in | \lceil P \rceil |$
and
$\tau \in \min(\lceil P \rceil)$
let
$b_{P, \tau}$
be the barycenter of
$\overline{\tau \cap P}$.
We denote by
$p(P)$
the point
$(b_{\tau, P})_\tau \in \Pi_{\lceil P \rceil}$.
Let
\begin{align*}
r(P)
=
\sup \{ r \in (0, \epsilon(p(P))] \ | \ U_r(\Delta_{p(P)}) \subset P \}.
\end{align*}
Then there is a canonical isotopy from
$P$
to
$U_{r(P)}(\Delta_{p(P)})$,
which defines a deformation retract from
$\widetilde{\Pi}_{\lceil P \rceil}$.
Finally,
the desired zig-zag sequence is obtained by setting
\begin{align*}
Q_1
\coloneqq
U_r(\Delta_{p(P_1)})
\subset
P_1, \
Q_2
\coloneqq
U_r(\Delta_{p(P_2)})
\subset
P_2, \
Q_{12}
\coloneqq
U_r(\Conv(\Delta_{p(P_1)}, \Delta_{p(P_2)}))
\in
| \lceil P \rceil |
\end{align*}
for sufficiently small
$r$.
\end{proof}

For any
$P \in \Open_{conv}(\bR^n)$
let
$\Phi$
be a convex potential on
$P$
whose limit on the boundary
$\del P$
is zero.
Take
$\epsilon > 0$
sufficiently large
so that
$P_\epsilon \sim P$
for
$P_\epsilon
=
\{ x \subset \bR^n \ | \ \Phi(x) < - \epsilon \}$
is equivalent to
$P$.
Fix
$t_0 \in \bR$
such that
\begin{align*}
(H_t |_{P_\epsilon} = H_t \cap \Log^{-1}_t(P_\epsilon), \
\lambda_{\Phi, \epsilon, t} = d^c (\Log^*_t(\Phi) |_{H_t \cap \Log^{-1}_t(P_\epsilon)})
\end{align*}
are Liouville homotopic for all
$t > t_0$.
We set
\begin{align*}
\Fuk(P, \Phi, \epsilon, t)
\coloneqq
\Fuk(H_t |_{P_\epsilon}, \lambda_{\Phi, \epsilon, t}).
\end{align*}
the ind-completion of the wrapped Fukaya category. 
Up to canonical equivalence,
$\Fuk(P, \Phi, \epsilon, t)$
is independent of the admissible choices
$\Phi, \epsilon, t$
and
the space of such choices is contractible.
Hence we will fix a choice
and
write
$\Fuk(P)$
for short,
with the understanding that
one can switch choices at the cost of a canonical equivalence.
Note that
the Viterbo restriction
\begin{align*}
\Fuk(P^\prime) \to \Fuk(P)
\end{align*}
along a morphism
$P \to P^\prime$
in
$\Open_{conv}(\bR^n)$
becomes an equivalence
when
$P \sim P^\prime$.
By
\pref{lem:contractible},
up to canonical equivalence,
$\Fuk(P)$
only depends on
$\lceil P \rceil$.
Hence we may write
$\Fuk(\lceil P \rceil)$
for
$\Fuk(P)$.

\begin{remark}
If we fix a holomorphic volume form
$\Omega_{(\bC^*)^n}$,
e.g.
$\wedge_i d \log z_i$
on
$(\bC^*)^n$,
then we have a choice of holomorphic volume form on
$Z_t$
as
$$ \Omega_{Z_t}=\frac{\Omega_{(\bC^*)^n} }{\wedge_{i=1}^r dW_{i,t}}. $$
This will give a $\bZ$-grading on
$\Fuk(Z_t), \Fuk(Z^\chi_t)$
and the corresponding various
$\Fuk(P)$. 
\end{remark}

%Dfn
\begin{definition}
We define the presheaf
$\cF^{pre, conv}_A$
as the functor
\begin{align*}
\Open_{conv}(\cS_Z)^{op}
\to
^{**}\DG, \
\lceil P \rceil
\mapsto
\Fuk(\lceil P \rceil)
\end{align*}
which sends each inclusion to the associated Viterbo restriction.
\end{definition}

Let 
$\cF_{A, \bZ}$
be the sheafification of
$\cF^{pre, conv}_A$.
Note that
the sheafification does not change the value of the stalks of
$\cF^{pre, conv}_A$.
\begin{remark}
Up to $\bZ_2$-folding,
$\cF_{A, \bZ}$
defined here coincides with
$\cF_A$
from Definition
\pref{dfn:cF_A}.
\end{remark}

%Thm
\begin{theorem} \label{thm:main1-Peng}
For any
$\lceil P \rceil \in \Open_{conv}(\cS_Z)$
there is an equivalence
\begin{align*}
\Fuk(\lceil P \rceil)
\simeq
\cF_{A, \bZ}(\lceil P \rceil).
\end{align*}
\end{theorem}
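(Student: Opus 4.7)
The plan is to show that the presheaf $\cF^{pre, conv}_A$ is already a sheaf on $\Open_{conv}(\cS_Z)$, so that the sheafification map $\cF^{pre, conv}_A(\lceil P \rceil) \to \cF_{A, \bZ}(\lceil P \rceil)$ is an equivalence for all $\lceil P \rceil \in \Open_{conv}(\cS_Z)$. Concretely, this amounts to verifying the descent property: for every convex $P$ covered by a finite family of the star neighborhoods $U_v$ of vertices $v \in \lceil P \rceil$, the natural functor
\begin{align*}
\Fuk(\lceil P \rceil)
\to
\lim
\left( \prod_v \Fuk(U_v)
\rightrightarrows
\prod_{v, v^\prime} \Fuk(U_v \cap U_{v^\prime})
\to
\cdots \right)
\end{align*}
is an equivalence.

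First, I would work on the semi-tropicalized model $Z^\chi_t$, which is Liouville (and in fact Weinstein once we cut with a sublevel set of $\Log_t^*(\Phi)$) isotopic to $Z_t$. The advantage of $Z^\chi_t$ is that Proposition \ref{prop:lmom} provides a local $U(1)^n_\tau$-symmetry with an explicit moment map $\mu^\chi_{t, \geq \tau}$ on each piece $Z^\chi_{t, \geq \tau}$. Next, I would choose a convex potential $\Phi$ adapted to $P$ and a corresponding system of transverse foliations $\{(B_{\tau, t}, T_{\tau, t}, \pi_{\tau, t})\}_{\tau \in \cS}$, so that each vertex $v$ of $\lceil P \rceil$ determines a Weinstein subdomain $Z^\chi_t|_{V_{\geq v, t} \cap P}$ of $Z^\chi_t|_P$, and these subdomains cover $Z^\chi_t|_P$ along their boundaries (which are foliated by the leaves $T_{m, \tau, t}$ for $\tau$ of positive codimension in $Z^{trop}$).

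The heart of the argument, and the hardest step, will be upgrading the decomposition of $Z^\chi_t|_P$ into a decomposition of its wrapped Fukaya category. For this I would invoke the quotient description of the Viterbo restriction from \cite[Proposition 11.2]{GPS2} and the generation by cocores \cite[Theorem 1.13]{GPS2}, reducing the descent statement to a vanishing of morphisms between cocores that lie in different pieces of the decomposition. This is exactly where Proposition \ref{prop:trap} enters: it guarantees that the Hamiltonian flow of any base-Hamiltonian preserves the transverse foliation, which in turn forces the relevant Reeb chords between cocores from distinct pieces to stay confined and never cross the gluing hypersurfaces. Combining this separatedness with an iterated application of \pref{lem:key1}, exactly as in Corollary \pref{cor:key}, yields the fiber product description of $\Fuk(\lceil P \rceil)$ in terms of the $\Fuk(U_v)$ and their intersections.

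Finally, once descent is established for a single cover by the $U_v$, the general sheaf condition for arbitrary covers in $\Open_{conv}(\cS_Z)$ follows by a refinement argument, since \pref{lem:contractible} shows that any two representatives of $\lceil P \rceil$ are connected by a zig-zag of inclusions in $|\lceil P \rceil|$, each of which induces an equivalence of Fukaya categories by invariance under Liouville homotopy. Iterating the vertex-star descent across all strata of $\cS_Z \cap P$ then identifies $\Fuk(\lceil P \rceil)$ with the limit computing $\cF_{A, \bZ}(\lceil P \rceil)$. The main technical difficulty I anticipate is the careful verification of the separatedness hypothesis in the complete intersection setting, where the local pieces are finite covers of products of lower-dimensional pants rather than pants themselves; the semi-tropicalization together with Proposition \ref{prop:trap} is designed precisely to handle this complication uniformly.
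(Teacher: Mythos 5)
Your proposal is correct in substance and relies on exactly the same engine as the paper's appendix argument: the semi-tropicalized model, the local torus symmetry of \pref{prop:lmom}, the transverse foliation and confinement statement \pref{prop:trap} to obtain vanishing of morphisms (no Reeb chords) between cocores sitting over different regions of the base, the quotient description of Viterbo restriction from \cite[Proposition 11.2]{GPS2}, and the fiber-product mechanism of \pref{lem:key1}/\pref{cor:key}. Where you diverge is in the globalization step. The paper does not verify \v{C}ech descent for the vertex-star cover directly; it runs an induction over the poset $\Open_{conv}(\cS_Z)$, at each stage splitting a convex $P$ into two convex pieces $P_1, P_2$ (a generic hyperplane cut through the convex hull of chosen points $p_\tau$, thickened and then shrunk so the closures of $P_1\setminus P_2$ and $P_2\setminus P_1$ are separated), and applying the two-piece fiber product once per step. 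This buys two things automatically: the induction is well-founded because $\lceil P_1\rceil, \lceil P_2\rceil \lneq \lceil P\rceil$ are again objects of the same poset, and \pref{cor:key} applies verbatim since it is a statement about a decomposition into exactly two Weinstein pieces. Your route --- proving the presheaf satisfies descent for the cover by vertex stars $U_v$ and then refining via \pref{lem:contractible} --- is closer in spirit to the main-text proof of \pref{thm:global}; it works, but it carries an extra bookkeeping burden that you should make explicit: \pref{lem:key1} and \pref{cor:key} are binary statements, so the limit over the full \v{C}ech diagram of the $U_v$ has to be reduced to iterated two-piece gluings (adding one vertex star at a time, as in \pref{thm:global}), and at each such stage you must check that the two pieces being glued are still of a shape (convex, or a union already handled) for which the separatedness hypothesis supplied by \pref{prop:trap} applies to the pair of complements. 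Once you state that iteration carefully, your plan and the paper's proof are equivalent; the paper's binary convex induction is simply the cleaner way to package it.
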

\begin{proof}
We proceed by induction on the poset
$\Open_{conv}(\cS_Z)$.
Since
$\cF_{A, \bZ}$
has the same stalks as
$\cF^{pre, conv}_A$,
the claim holds for minimal elements,
i.e.,
those
which satisfy
$\lceil P \rceil = U_\tau$
for some
$\tau \in \cS_Z$.
Suppose that
the claim holds for all
$\lceil P^\prime \rceil$
with
$\lceil P^\prime \rceil < \lceil P \rceil$.
We may assume that
$P \in \Open_{conv}(\bR^n)$
is covered by some
$P_1, P_2 \in \Open_{conv}(\bR^n)$
with
$\lceil P_1 \rceil, \lceil P_2 \rceil \lneq \lceil P \rceil$.
Indeed,
choose
$p_\tau$
for each
$\tau \in \min(\lceil P \rceil)$
and
cut
$\Conv(\{ p_\tau \}_{\tau \in \min(\lceil P \rceil)})$
into two convex polytopes
$\Delta_{p_1}, \Delta_{p_2}$
by a general hyperplane
which does not contain any
$p_\tau$.
Let
\begin{align*}
P
=
U_r(\Conv(\{ p_\tau \}_{\tau \in \min(\lceil P \rceil)})
=
\Delta_{p_1} \cup \Delta_{p_2}), \
P_1
=
U_r(\Delta_{p_1}), \
P_2
=
U_r(\Delta_{p_2})
\end{align*}
for sufficiently small
$r$.
One can shrink
$P_1, P_2$
so that
\begin{align*}
\overline{P_1 \setminus P_2}
\cap
\overline{P_2 \setminus P_1}
\neq 
\emptyset
\end{align*}
preserving
$\lceil P_1 \rceil, \lceil P_2 \rceil$
and
$ \lceil P_1 \cap P_2 \rceil $.

Choose
a convex potential
$\Phi \colon P \to \bR$
and
sufficiently large
$t$
such that
all the semi-tropicalized hypersurfaces
$H^\chi_t |_P,
H^\chi_t |_{P_1},
H^\chi_t |_{P_2},
H^\chi_t |_{P_1 \cap P_2}$
define Liouville manifolds with respect to
$\lambda_{\Phi, t}$.
Then for any exact Lagrangians
$L_{1, t} \subset H^\chi_t |_{P_1 \setminus P_2},
L_{2, t} \subset H^\chi_t |_{P_2 \setminus P_1}$
we have
\begin{align*}
\Hom_{\Fuk(\lceil P \rceil)}(L_{1, t}, L_{2, t})
=
\Hom_{\Fuk(\lceil P \rceil)}(L_{2, t}, L_{1, t})
=
0,
\end{align*} 
since by Proposition
\pref{prop:trap}
there is no leaf
$T_{x, \tau, t}$
which intersects both
$L_{1, t}, L_{2, t}$.
Note that
the absence of such leaves
$T_{x, \tau, t}$
implies that of Reeb chords connecting the ideal boundaries of
$L_{1, t}, L_{2, t}$. 
Applying Corollary
\pref{cor:key}
to
$P = P_1 \cup P_2$,
we obtain a fiber product
\begin{align*}
\begin{gathered}
\xymatrix{
\Fuk(\lceil P \rceil) \ar[d]_{} \ar[r]^{} & \Fuk(\lceil P_1 \rceil) \ar[d]_{}\\
\Fuk(\lceil P_2 \rceil)  \ar[r]^{} & \Fuk(\lceil P_1 \cap P_2 \rceil).
}
\end{gathered}
\end{align*}
whose arrows are Viterbo restrictions. 
\end{proof}

%%%%%%%%%%%%%%%%%%%%%%%%%%%%%%%%%%%%%%%%%%%%%%%%%%%%%%%%%%


\begin{thebibliography}{999999}
\bibitem[Abo06]{Abo}
M. Abouzaid,
\emph{Homogeneous coordinate rings and mirror symmetry for toric varieties},
Geom. Topol. 10, 1097-1157 (2006).

\bibitem[AA]{AA}
M. Abouzaid and D. Auroux,
\emph{Homological mirror symmetry for hypersurfaces in $(\mathbb{C}^*)^N$},
arXiv:2111.06543

\bibitem[AAK16]{AAK}
M. Abouzaid, D. Auroux, and L. Katzarkov,
\emph{Lagrangian fibrations on blowups of toric varieties and mirror symmetry for hypersurfaces},
Publ. Math. IH\'ES. 123-199 (2016).

\bibitem[BB96]{BB}
V. Batyrev and L. A. Borisov,
\emph{On Calabi-Yau complete intersections in toric varieties},
Higher-dimensional complex varieties (Trento, 1994),
39-65 (1996).

\bibitem[CDGG]{CDGG}
B. Chantraine, G. Dimitroglou Rizell, P. Ghiggini, R. Golovko,
\emph{Geometric generation of the wrapped Fukaya category of Weinstein manifolds and sectors},
hal-01873728f

\bibitem[CE12]{CE}
K. Cieliebak and Y. Eliashberg,
\emph{From Stein and Weinstein and back},
Symplectic Geometry of Affine Complex Manifolds, Colloquium Publications Vol. 59, AMS, (2012).

\bibitem[CLS11]{CLS}
D. Cox, J. B. Little, and H. K. Schenck,
\emph{Toric varieties},
Graduate Studies in Mathematics Vol. 124, AMS, (2011).

\bibitem[DK18]{DK}
T. Dyckerhoff and M. Kapranov,
\emph{Triangulated surfaces in triangulated categories},
J. Eur. Math. Soc. 20(6), 1473-1524 (2018).

\bibitem[Eli]{Eli}
Y. Eliashberg,
\emph{Weinstein manifolds revisited},
arXiv:1707.03442

\bibitem[FOOO1]{FOOO1} 
K. Fukaya, Y.-G. Oh, H. Ohta, and K. Ono, \emph{Lagrangian intersection Floer theory: anomaly and obstruction, Part I},
Studies in Advanced Mathematics Vol. 46.1, AMS/IP, (2010).

\bibitem[FOOO2]{FOOO2} 
K. Fukaya, Y.-G. Oh, H. Ohta, and K. Ono, \emph{Lagrangian intersection Floer theory: anomaly and obstruction, Part II},
Studies in Advanced Mathematics Vol. 46.2, AMS/IP, (2010)

\bibitem[Fuk93]{Fuk}
K. Fukaya,
\emph{Morse homotopy, $A_\infty$-category, and Floer homologies},
Proceedings of GARC Workshop on Geometry and Topology ’93, Lecture Notes Ser. Vol.18, 1–102, Seoul Nat. Univ., Seoul, (1993).

\bibitem[Gai]{Gai}
D. Gaitsgory,
\emph{Ind-coherent sheaves},
arXiv:1105.4857v7 

\bibitem[GKR17]{GKR}
M. Gross, L. Katzarkov, and H. Ruddat, \emph{Towards mirror symmetry for varieties of general type},
Adv. Math. 308, 208-275 (2017).	

\bibitem[GPS1]{GPS1}
S. Ganatra, J. Pardon, and V. Shende,
\emph{Covariantly functorial wrapped Floer theory on Liouville sectors},
Publ. math. IHES. 131, 73–200 (2020).

\bibitem[GPS2]{GPS2}
S. Ganatra, J. Pardon, and V. Shende,
\emph{Sectorial descent for wrapped Fukaya categories},
arXiv:1809.03427v3

\bibitem[GPS3]{GPS3}
S. Ganatra, J. Pardon, and V. Shende,
\emph{Microlocal Morse theory of wrapped Fukaya categories},
arXiv:1809.08807v2

\bibitem[GS15]{GS15}
A Geraschenko and M. Satriano,
\emph{Toric stack I: The theory of stacky fans},
Trans. Amer. Math. Soc. 367(2),1033–1071 (2015).

\bibitem[GS1]{GS1}
B. Gammage and V. Shende,
\emph{Mirror symmetry for very affine hypersurfaces},
Acta. Math. 229(2), 287-346 (2022).

\bibitem[GS2]{GS2}
B. Gammage and V. Shende,
\emph{Homological mirror symmetry at large volume},
Tunis. J. Math. 5(1), 31-71 (2023).

\bibitem[Jef22]{Jef}
M. Jeffs,
\emph{Mirror symmetry and Fukaya categories of singular hypersurfaces},
Adv. Math. 397, 108116 (2022).

\bibitem[KW07]{KW} 
A. Kapustin and E. Witten,
\emph{Electric-magnetic duality and the geometric Langlands program},
Commun. Number Theory Phys. 1(1), 1-236 (2007).
 
\bibitem[Kon09]{Kon}
M. Kontsevich,
\emph{Symplectic geometry of homological algebra},
$\text{https://www.ihes.fr/~maxim/TEXTS/Symplectic} \_ \text{AT2009.pdf}$

\bibitem[KS94]{KS}
M. Kashiwara and P. Schapira,
\emph{Sheaves on manifolds},
Grundlehren der Mathematischen Wissenschaften Vol. 292, Springer, (1994).

\bibitem[Kuw20]{Kuw}
T. Kuwagaki,
\emph{The nonequivariant coherent constructible correspondence for toric stacks},
Duke Math. J. 169(11), 2125-2197 (2020).

\bibitem[Lee16]{Lee}
H. Lee,
\emph{Homological mirror symmetry for open riemann surfaces from pair-of-pants decompositions},
arXiv:1608.04473
 
\bibitem[Mik04]{Mik}
G. Mikhalkin,
\emph{Decomposition into pairs of pants for complex algebraic hypersurfaces},
Topology. 43(5), 1035-1065 (2004). 

\bibitem[Nad]{Nad}
D. Nadler,
\emph{Wrapped microlocal sheaves on pairs of pants},
arXiv:1604.00114

\bibitem[NZ09]{NZ}
D. Nadler and E. Zaslow,
\emph{Constructible sheaves and the Fukaya category},
J. Amer. Math. Soc. 22(1), 233-286 (2009).

\bibitem[Orl04]{Orl}
D. Orlov,
\emph{Triangulated categories of singularities and $D$-branes in Landau--Ginzburg models},
Proc. Steklov Inst. Math. 246, 227-249 (2004).

\bibitem[Pre]{Pre}
A. Preygel,
\emph{Thom--Sebastiani $\&$ duality for matrix factrizations},
arXiv:1101.5834

\bibitem[PS1]{PS1}
J. Pascaleff and N. Sibilla,
\emph{Topological Fukaya category and mirror symmetry for punctured surfaces},
Compos. Math. 155(3), 599-644 (2019).

\bibitem[PS2]{PS2}
J. Pascaleff and N. Sibilla,
\emph{Fukaya categories of higher-genus surfaces and pants decompositions},
arXiv:2103.03366v3 

\bibitem[PS3]{PS3}
J. Pascaleff and N. Sibilla,
\emph{Singularity categories of normal crossings surfaces, descent, and mirror symmetry},
arXiv:2208.03896

\bibitem[PZ98]{PZ}
A. Polishchuk and E. Zaslow,
\emph{Categorical mirror symmetry: The elliptic curve},
Adv. Theor. Math. Phys. 2(2), 443-470 (1998). 

\bibitem[Sei03]{Sei1}
P. Seidel,
\emph{Homological mirror symmetry for the quartic surface},
Mem. Am. Math. Soc. 236(1116), 1-129, AMS, (2015).

\bibitem[Sei10]{Sei}
P. Seidel,
\emph{Some speculations on pairs-of-pants decompositions and Fukaya categories},
arXiv:1004.0906v5

\bibitem[She15]{She}
N. Sheridan,
\emph{Homological mirror symmetry for Calabi–Yau hypersurfaces in projective space},
Invent. Math. 199(1), 1-186 (2015).

\bibitem[She22]{She22}
V. Shende,
\emph{Toric mirror symmetry revisited},
Comptes Rendus Math. 360, 751-759 (2022).

\bibitem[STZ14]{STZ}
N. Sibilla, D.  Treumann, and E. Zaslow.
\emph{Ribbon graphs and mirror symmetry},
Selecta Math. 20(4), 979-1002 (2014).

\bibitem[Zho20]{Zho}
P. Zhou,
\emph{Lagrangian skeleta of hypersurfaces in $(\bC^*)^n$}.
Sel. Math. New. Ser. 26(2), Article: 26 (2020).
\end{thebibliography}
\end{document}